\documentclass{amsart}

\usepackage[subsec,skipfonts,nodate]{enrabbrev}
\usepackage{eucal}

\newcommand{\dR}{\txt{dR}}
\newcommand{\DR}{\txt{DR}}
\newcommand{\dAff}{\txt{dAff}}
\newcommand{\dSt}{\txt{dSt}}
\newcommand{\Span}{\txt{Span}}
\DeclareMathOperator{\Spec}{Spec}
\newcommand{\SPAN}{\txt{SPAN}}
\newcommand{\COSPAN}{\txt{COSPAN}}
\newcommand{\OSPAN}{\overline{\SPAN}}
\newcommand{\OCOSPAN}{\overline{\COSPAN}}
\newcommand{\Cospan}{\txt{Cospan}}
\newcommand{\CAlg}{\txt{CAlg}}

\newcommand{\Mix}{\txt{$\epsilon$-dg}^{\txt{gr}}}
\newcommand{\bbS}{\bbSigma}
\newcommand{\bbL}{\bbLambda}

\newcommand{\hbbS}{\widehat{\bbS}}
\newcommand{\Dop}{\simp^{\op}}
\newcommand{\Dn}{\simp^{n}}
\newcommand{\Dnop}{\simp^{n,\op}}
\newcommand{\DnIop}{\simp^{n,\op}_{/I}}
\newcommand{\LnIop}{\bbL^{n,\op}_{/I}}

\newcommand{\eg}{e.g.\@}
\newcommand{\ie}{i.e.\@}

\DeclareMathOperator{\Tw}{Tw}
\newcommand{\Twl}{\Tw^{\ell}}

\newcommand{\cA}{\mathcal{A}}
\newcommand{\cC}{\mathcal{C}}
\newcommand{\bD}{\mathbb{D}}
\newcommand{\cD}{\mathcal{D}}
\newcommand{\bE}{\mathbb{E}}
\newcommand{\cE}{\mathcal{E}}

\newcommand{\bL}{\mathbb{L}}
\newcommand{\cM}{\mathcal{M}}

\newcommand{\bP}{\mathbb{P}}
\newcommand{\cP}{\mathcal{P}}
\newcommand{\cS}{\mathcal{S}}
\newcommand{\bT}{\mathbb{T}}

\newcommand{\cAcl}{\mathcal{A}^{2, \mathrm{cl}}}
\newcommand{\cdga}{\mathrm{cdga}}
\newcommand{\Catpo}{\mathrm{Cat}_\infty^{\mathrm{po}}}
\newcommand{\Cois}{\txt{Cois}}
\newcommand{\CoisCorr}{\txt{CoisCorr}}
\newcommand{\CoisCorrns}{\CoisCorr_{n}^{s}}
\newcommand{\CoisCorrnsnd}{\CoisCorr_{n}^{s,\txt{nd}}}
\newcommand{\Comm}{\txt{Comm}}
\newcommand{\CompCorr}{\mathrm{CompCorr}}
\newcommand{\CompCorrns}{\CompCorr_{n}^{s}}
\newcommand{\CompCorrnsnd}{\CompCorr_{n}^{s,\txt{nd}}}
\newcommand{\compat}{\mathrm{compat}}
\newcommand{\CSS}{\txt{CSS}}
\newcommand{\dArt}{\mathrm{dArt}}
\newcommand{\ddr}{\mathrm{d}_{\mathrm{dR}}}
\newcommand{\dg}{\txt{\textbf{dg}}}

\newcommand{\IsotCorr}{\mathrm{IsotCorr}}
\newcommand{\IsotCorrns}{\IsotCorr^{s}_{n}}

\newcommand{\Lagns}{\txt{Lag}_{n}^{s}}
\newcommand{\LMod}{\mathrm{LMod}}
\newcommand{\Mor}{\txt{Mor}}
\newcommand{\Pois}{\txt{Pois}}
\newcommand{\pt}{*}

\newcommand{\QCoh}{\mathrm{QCoh}}
\newcommand{\red}{\mathrm{red}}
\newcommand{\Sym}{\mathrm{Sym}}

\newcommand{\CbbS}{\mathbb{X}}

\theoremstyle{definition}
\newtheorem{assumption}[thm]{Assumption}

\usepackage{xstring}
\renewcommand{\eprint}[1]{\IfBeginWith{#1}{arXiv}{\href{https://arxiv.org/abs/#1}{#1}}{\href{#1}{#1}}}

\usepackage{subfiles}

\author{Valerio Melani}
\address{Dipartimento di Matematica, Universit\`a di Pisa, 
  Pisa, Italy}
\email{valerio.melani@unipi.it}

\author{Pavel Safronov}

\address{Institut f\"{u}r Mathematik, Universit\"{a}t Z\"{u}rich, Zurich, Switzerland}
\email{pavel.safronov@math.uzh.ch}

\title{Shifted Coisotropic Correspondences}

\date{\today.  The work of P.S.\ was supported by the NCCR SwissMAP grant of the Swiss
 National Science Foundation.
 The position of R.H.\ was funded through the grant IBS-R003-D1 of the
 Institute for Basic Science of the Republic of Korea.}

\begin{document}
\begin{abstract}
  We define (iterated) coisotropic correspondences between derived
  Poisson stacks, and construct symmetric monoidal higher categories
  of derived Poisson stacks where the $i$-morphisms are given by
  $i$-fold coisotropic correspondences. Assuming an expected
  equivalence of different models of higher Morita categories, we
  prove that all derived Poisson stacks are fully dualizable, and so
  determine framed extended TQFTs by the Cobordism Hypothesis. Along
  the way we also prove that the higher Morita category of
  $E_{n}$-algebras with respect to coproducts is equivalent to the
  higher category of iterated cospans.
\end{abstract}

\maketitle

{\bf Keywords:} derived algebraic geometry, higher categories, coisotropic structures.

{\bf MSC codes:} 14A20 - 17B63 -  18D05.

\tableofcontents

\section{Introduction}
\subsection{Canonical Relations}
\enlargethispage{1ex}
In symplectic geometry
Weinstein~\cite{WeinsteinSymplCat,WeinsteinSymplCat2} has proposed
that the ``correct'' notion of morphisms between two symplectic
manifolds $(X,\omega_X)$ and $(Y, \omega_Y)$ should be
\emph{Lagrangian correspondences} (also known as \emph{canonical
  relations}), \ie{} Lagrangian submanifolds of
$(X \times Y, \omega_X - \omega_Y)$. As one piece of evidence for this
claim, it is a well-known fact that a smooth map $X \to Y$ is a
symplectomorphism if and only if its graph is a Lagrangian
correspondence. Under certain transversality hypotheses, it is
possible to compose Lagrangian correspondences by taking an
intersection, and Weinstein suggested that a ``category'' of
symplectic manifolds and Lagrangian correspondences should in some
sense be a natural domain for geometric quantization.  However, in
general it is not possible to compose Lagrangian correspondences
(though see \cite{WehrheimWoodward} for a way to partially circumvent
this problem in the context of Floer theory).

Poisson geometry can be viewed as a generalization of symplectic
geometry where we weaken the non-degeneracy condition. In this
context, the analogue of Lagrangian correspondences between Poisson
manifolds $(X,\pi_X)$ and $(Y, \pi_Y)$ are the \emph{coisotropoic
  correspondences}, \ie{} coisotropic submanifolds of
$(X \times Y, \pi_X - \pi_Y)$. A map $X \to Y$ can be shown to be a
Poisson morphism if and only if its graph is a coisotropic
correspondence, and Weinstein~\cite{WeinsteinCoisotropic} proved that
under suitable transversality hypotheses these too can be composed by
taking intersections.

Symplectic and Poisson structures are also important in algebraic
geometry, and here similar problems arise. Indeed, to define
symplectic structures on an algebraic scheme $X$, one requires the
cotangent sheaf $\Omega^1_X$ to be a vector bundle, which means that
the scheme has to be smooth. However, intersections of smooth schemes
are not smooth in general. More generally, we may consider a version
of symplectic structures where we replace the cotangent sheaf
$\Omega^1_X$ by the cotangent complex $\bL_X$, in which case we
require the cotangent complex to be perfect. However, we again run
into the problem that the cotangent complexes of intersections of
schemes with perfect cotangent complexes are in general not themselves
perfect. Extra structures on derived Lagrangian intersections of
symplectic schemes have been studied in \cite{BehrendFantechi} and on
derived coisotropic intersections of Poisson schemes in
\cite{BaranovskyGinzburg}.

A way to deal with the problem of non-transverse intersections is to
work in the setting of \emph{derived} algebraic geometry, where
derived schemes with perfect cotangent complexes \emph{are} stable
under intersections. (Foundational references on derived algebraic
geometry include
\cite{HAG2,GaitsgoryRozenblyum1,GaitsgoryRozenblyum2,SAG}.) In this
setting, analogues of symplectic and Lagrangian structures on derived
schemes and, more generally, derived Artin stacks, have been
introduced by Pantev, To\"en, Vaqui\'e, and Vezzosi~\cite{PTVV}, while
analogues of Poisson and coisotropic structures were introduced by the
same authors together with Calaque~\cite{CPTVV} (see also
\cite{MelaniSafronov2}). More precisely, in the derived setting
differential forms naturally form a bicomplex, which allows us to
consider \emph{shifted} versions of all of these structures (so that,
for example, an $s$-shifted symplectic form gives an equivalence
$\mathbb{T}_{X} \simeq \mathbb{L}_{X}[s]$ with a shift by some integer
$s$, instead of an equivalence $\mathbb{T}_{X} \simeq \mathbb{L}_{X}$
between the tangent and cotangent complexes of a derived stack $X$).

The goal of the present paper is to introduce a notion of (iterated)
shifted coisotropic correspondences between shifted Poisson stacks and
construct higher categories whose objects are shifted Poisson stacks
and whose (higher) morphisms are (iterated) shifted coisotropic
correspondences.

\subsection{The 1-Category of Derived Poisson Stacks}
\enlargethispage{1ex}
Before we describe the contents of this paper in more detail, it is
helpful to first discuss the simplest case of our construction,
namely the 1-category $h\CoisCorr^{s}_{1}$ of $s$-shifted coisotropic correspondences.

For this we must first give a brief sketch of the definition of
$s$-shifted Poisson structures on derived stacks, due to
Calaque--Pantev--To\"en--Vaquié--Vezzosi \cite{CPTVV}. These authors
associate to every derived stack $X$ a certain symmetric monoidal
stable \icat{} $\mathcal{M}_{X}$ (thought of as the \icat{} of
quasi-coherent complexes on the de Rham stack of $X$), contravariantly
functorial in $X$, together with a commutative algebra
$\cP_X^\infty \in \mathcal{M}_{X}$ (which is only a lax functor of
$X$). (We will review
this formalism in more detail in \S\ref{sect:derivedstacks}.) An
$s$-shifted Poisson structure on $X$ is then defined to be a lift of the commutative
algebra structure on $\cP_X^\infty$ to a $\mathbb{P}_{s+1}$-algebra,
where $\mathbb{P}_{s+1}$ is the operad of dg Poisson algebras with a
bracket of degree $-s$.

\begin{remark}
  In other words, we can define an $\infty$-groupoid of $s$-shifted Poisson
  structures on $X$ as the pullback
  \[
  \begin{tikzcd}
    \Pois(X, s) \arrow{r} \arrow{d} &
    \Alg_{\mathbb{P}_{s+1}}(\mathcal{M}_{X}) \arrow{d} \\
    \{ \cP_X^\infty\} \arrow{r} & \txt{CAlg}(\mathcal{M}_{X}).
  \end{tikzcd}
\]
\end{remark}

\begin{remark}
  The additivity theorem for Poisson algebras proved by the third
  author \cite[Theorem 2.22]{SafronovAdditivity} and, independently, Rozenblyum, says that $\mathbb{E}_{n}$-algebras in $\Alg_{\mathbb{P}_{s}}(\mathcal{C})$ are the same thing as
  $\mathbb{P}_{n+s}$-algebras in $\mathcal{C}$. Since
  $\mathbb{E}_{n}$-algebras in commutative algebras are just
  commutative algebras, we could have equivalently used
  $\mathbb{E}_{n}$-algebras in $\mathbb{P}_{s-n+1}$-algebras in
  $\mathcal{M}_{X}$ in our definition of Poisson structures above.
\end{remark}

Derived stacks endowed with a $s$-shifted Poisson structure are the objects of the category $h\CoisCorr^{s}_{1}$. Next, to describe its morphisms, we outline the definition of a \emph{coisotropic correspondence}
between two $s$-shifted Poisson stacks $X$ and $X'$. This is first of
all given by a span $X \xfrom{f} Y \xto{g} X'$ of derived stacks. This
induces a cospan of commutative algebras in $\mathcal{M}_{Y}$,
\[ f^{*}\cP_X^\infty \to  \cP_Y^\infty \from g^{*}\cP_{X'}^\infty.\]
Since the symmetric monoidal structure on commutative algebras is
cocartesian, we can equivalently view this cospan as giving $\cP_Y^\infty$
the structure of an $(f^{*}\cP_X^\infty, g^{*}\cP_{X'}^\infty)$--bimodule in
$\txt{CAlg}(\mathcal{M}_{Y})$. A coisotropic correspondence is then a
lift of this structure to a bimodule in the symmetric monoidal \icat{}
$\Alg_{\mathbb{P}_{s}}(\mathcal{M}_{Y})$, where we view the
$\mathbb{P}_{s+1}$-algebra structures on $\cP_X^\infty$ and $\cP_{X'}^\infty$ as
associative algebras in $\mathbb{P}_{s}$-algebras.
\begin{remark}
  If we have chosen $s$-shifted Poisson structures for $X$ and $X'$,
  this means that the $\infty$-groupoid of compatible coisotropic
  structures on the span is given by the pullback
  \[
  \begin{tikzcd}
    \txt{Cois}_{X,X'}(f, g; s) \arrow{r} \arrow{d} &
    \txt{Mod}_{f^{*}\cP_X^\infty,g^{*}\cP_{X'}^\infty}(\Alg_{\mathbb{P}_{s}}(\mathcal{M}_{Y})) \arrow{d} \\
    \{ \cP_Y^\infty \} \arrow{r} & \txt{Mod}_{f^{*}\cP_X^\infty,g^{*}\cP_{X'}^\infty}(\txt{CAlg}(\mathcal{M}_{Y})).
  \end{tikzcd}
  \]
\end{remark}

The coisotropic correspondences are the morphisms in the category
$h\CoisCorr^{s}_{1}$ of $s$-shifted coisotropic correspondences. To compose two
coisotropic correspondences given by spans
\[
  \begin{tikzcd}
    {} & Y \arrow{dl} \arrow{dr} & & Y'\arrow{dl} \arrow{dr} \\
    X & & X' & & X''
  \end{tikzcd}
\]
we first compose the spans in the usual way, by
forming a pullback
\[
\begin{tikzcd}
{}    & & Z \arrow{dr} \arrow{dl}\\ 
    & Y \arrow{dr} \arrow{dl}& & Y'\arrow{dr} \arrow{dl}\\
  X & & X' & & X''.\\
\end{tikzcd}
\]
This pullback induces a pushout square in
$\txt{CAlg}(\mathcal{M}_{Z})$ (see Proposition~\ref{propn:Ppullbacks})
\[
\begin{tikzcd}
\cP_{X'}^\infty \arrow{r} \arrow{d} & \cP_Y^\infty \arrow{d}\\  
\cP_{Y'}^\infty \arrow{r} & \cP_Z^\infty,
\end{tikzcd}
\]
\ie{} $\cP_Z^\infty \simeq \cP_Y^\infty \otimes_{\cP_{X'}^\infty} \cP_{Y'}^\infty$ (where we omit
notation for the pullbacks to $Z$). To compose two coisotropic
correspondences we take the corresponding relative
tensor product of the bimodules $\cP_Y^\infty$ and $\cP_{Y'}^\infty$
in $\mathbb{P}_{s}$-algebras. This can be interpreted as forming a
composite in the Morita category of algebras and bimodules in
$\Alg_{\mathbb{P}_{s}}(\mathcal{M}_{Z})$ --- this has associative
algebras as objects, with morphisms from $A$ to $B$ given by
$(A, B)$-bimodules and composition given by taking relative tensor
products.

To construct the \icatl{} extension of this category we need a more
structured way of defining it. For this we consider a general notion
of ``spans with coefficients''. If $\mathcal{C}$ is an \icat{} with
pullbacks, then given a functor $F \colon \mathcal{C}^{\op} \to \CatI$,
we can define an \icat{} $\txt{Span}_{1}(\mathcal{C}; F)$ of spans
with coefficients in $F$ such that:
\begin{itemize}
\item an object of $\txt{Span}_{1}(\mathcal{C}; F)$ is a pair $(c \in
  \mathcal{C}, x \in F(c))$,
\item a morphism from $(c,x)$ to $(c',x')$ is a span $c \xfrom{f} d \xto{g}
  c'$ in $\mathcal{C}$ together with a morphism $\phi \colon F(f)(x) \to F(g)(x')$ in
  $F(d)$,
\item given another morphism from $(c',x')$ to $(c'', x'')$
  corresponding to a span $c' \xfrom{f'} d' \xto{g'} c''$ and a
  morphism $\psi \colon F(f')(x') \to F(g')(x'')$, their
  composite is given by composing the spans by taking a pullback
  \[
    \begin{tikzcd}
      {} & & e \arrow{dl}[above left]{h} \arrow{dr}{k} \\
      {} & d \arrow{dl}[above left]{f} \arrow{dr}{g}& &
      d'\arrow{dl}[above left]{f'} \arrow{dr}{g'} \\
      c & & c' & & c'',
    \end{tikzcd}
  \]
  and then composing $F(h)(\phi) \colon F(fh)(x) \to F(gh)(x')
  \simeq F(f'k)(x')$ with $F(k)(\psi) \colon F(f'k)(x')
  \to F(g'k)(x'')$ in $F(e)$.
\end{itemize}
We can apply this to the functors $\mathfrak{P}_{1}^{s},\mathfrak{C}_{1}
\colon \txt{dSt}^{\op} \to \Cat$ given by
$\mathfrak{P}_{1}^{s}(X) =
\mathfrak{alg}_{1}(\Alg_{\mathbb{P}_{s}}(\mathcal{M}_{X}))$ and
$\mathfrak{C}_{1}(X) = \mathfrak{alg}_{1}(\CAlg(\mathcal{M}_{X}))$, where
$\mathfrak{alg}_{1}(\mathcal{C})$ denotes the Morita \icat{} of a
monoidal \icat{} $\mathcal{C}$ \cite{nmorita}. The forgetful functor
from Poisson algebras to commutative algebras induces a functor
\[
  \Span_{1}(\dSt; \mathfrak{P}_{1}^{s})
  \to \Span_{1}(\dSt; \mathfrak{C}_{1}).
\]
Moreover, using the section $\cP_X^\infty \in \CAlg(\mathcal{M}_{X})$ we can
define a functor
$\Span_{1}(\dSt) \to \Span_{1}(\dSt; \mathfrak{C}_{1})$
 which takes
$X \in \dSt$ to $(X, \cP_X^\infty)$ and a span $X \xfrom{f} Z \xto{g} Y$ to
itself plus $\cP_Z^\infty$ viewed as an
$f^{*}\cP_X^\infty$--$g^{*}\cP_Y^\infty$--bimodule. This allows us to define the
\icat{} $\CoisCorr^{s}_{1}$ as the pullback
\[
  \begin{tikzcd}
    \CoisCorr^{s}_{1} \arrow{d} \arrow{r} &
    \Span_{1}(\dSt; \mathfrak{P}_{1}^{s}) \arrow{d}\\
  \Span_{1}(\dSt) \arrow{r} & \Span_{1}(\dSt; \mathfrak{C}_{1}).
  \end{tikzcd}
\]

\subsection{Overview of Results}
In \S\ref{subsec:spancoeff} we use the higher categories of ``spans
with local systems'' defined in \cite{spans} to construct the \icats{}
$\Span_{1}(\mathcal{C}; F)$ as well as their higher-dimensional
cousins $\Span_{n}(\mathcal{C}; F)$, where $F$ is a functor from
$\mathcal{C}$ to the \icat{} of $(\infty,n)$-categories. We then want
to define the $(\infty,n)$-category $\CoisCorrns$ as
a pullback
\[
  \begin{tikzcd}
    \CoisCorrns \arrow{d} \arrow{r} &
    \Span_{n}(\dSt; \mathfrak{P}_{n}^{s}) \arrow{d}\\
  \Span_{n}(\dSt) \arrow{r} & \Span_{n}(\dSt; \mathfrak{C}_{n}).
  \end{tikzcd}
\]
where the functors
$\mathfrak{P}_{n}^{s},\mathfrak{C}_{n} \colon \txt{dSt}^{\op} \to
\Cat_{(\infty,n)}$ are given by
$\mathfrak{P}_{n}^{s}(X) =
\mathfrak{alg}_{n}(\Alg_{\mathbb{P}_{s+1-n}}(\mathcal{M}_{X}))$ and
$\mathfrak{C}_{n}(X) =
\mathfrak{alg}_{n}(\CAlg(\mathcal{M}_{X}))$, with
$\mathfrak{alg}_{n}(\mathcal{C})$ denoting the Morita
$(\infty,n)$-category of $\mathcal{C}$~\cite{nmorita}. However, we
need to do some work to construct the functor $\Span_{n}(\dSt)
\to \Span_{n}(\dSt; \mathfrak{C}_{n})$; for this we prove two
results that may be of independent interest.
\begin{thm}[See Corollary \ref{cor:SPANCOSPAN}]
  Let $\Catpo$ be the subcategory of $\CatI$ whose objects
  are \icats{} with pushouts, and whose morphisms are functors that
  preserve these. Given a functor $F \colon \mathcal{C}^{\op} \to
  \Catpo$ we can form the functor $\Cospan_{n}(F) \colon
  \mathcal{C}^{\op} \to \Cat_{(\infty,n)}$. There is an equivalence of
  $(\infty,n)$-categories
  \[ \Span_{n}(\mathcal{C}; \Cospan_{n}(F)) \simeq
    \Cospan_{n}(\mathcal{F}),\]
  where $\mathcal{F} \to \mathcal{C}^{\op}$ is the cocartesian
  fibration for $F$.
\end{thm}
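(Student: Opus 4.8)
The plan is to prove the equivalence level-wise in the standard models of both sides as complete $n$-fold Segal spaces, reducing it to a straightening/unstraightening identification of decorated span grids in $\cC$ with cospan grids in the total space $\cF$. I use the model of \cite{spans} in which an $n$-fold (co)span of a category $\cD$ is a functor out of the grid-shaped indexing categories $\Sigma^{n}_{\underline{k}}$, $\underline{k}=(k_1,\dots,k_n)$, subject to the condition that the elementary squares are sent to pushouts (for $\Cospan_n$), resp.\ to pullbacks (for $\Span_n$). The Segal and completeness conditions hold by \cite{spans}, so it suffices to produce a natural level-wise equivalence of the underlying $n$-fold simplicial spaces.

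I would first record that $\cF$ admits pushouts. Since $\cC$ has pullbacks, $\cC^{\op}$ has pushouts; each fibre $F(c)$ lies in $\Catpo$ and so has pushouts; and the cocartesian pushforwards along morphisms of $\cC^{\op}$ are the functors $F(\alpha)$, which preserve pushouts because morphisms of $\Catpo$ do. The relative-colimit criterion for a cocartesian fibration over a base admitting the relevant colimits, with colimit-preserving transition functors, then shows that $\cF$ has pushouts, computed by taking the pushout of the base diagram in $\cC^{\op}$ together with the pushout in the apex fibre of the cocartesian-transported fibre diagram. This is exactly where all three hypotheses are consumed, and it guarantees that $\Cospan_n(\cF)$ is defined.

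The comparison map is then built as follows. A cospan grid $\sigma\colon \Sigma^{n}_{\underline{k}} \to \cF$ projects along $\cF \to \cC^{\op}$ to a grid in $\cC^{\op}$; since a cospan in $\cC^{\op}$ is a span in $\cC$ and pushout squares in $\cC^{\op}$ are pullback squares in $\cC$, this projection is precisely a span grid of the shape computing $\Span_n(\cC)$. The remaining data of $\sigma$ is, by the universal property of the cocartesian fibration (equivalently, by straightening over the fixed base grid), a lift of this span grid to $\cF$; transporting along the cocartesian edges turns it into a compatible system of cospan grids in the fibres, \ie{} a $\Cospan_n(F)$-valued local system on the span grid. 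By the definition of $\Span_n(\cC;\Cospan_n(F))$ as spans in $\cC$ decorated by $\Cospan_n(F)$, this yields a natural map $\Cospan_n(\cF) \to \Span_n(\cC;\Cospan_n(F))$ of $n$-fold simplicial spaces, and the decomposition above shows it is a level-wise equivalence. Packaging $\Cospan_n(F)$ as the fibrewise cospan category of $\cF \to \cC^{\op}$, this is the expected special case of the general $\SPAN$/$\COSPAN$ comparison, from which the corollary is deduced.

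The main obstacle is the bookkeeping in this last step: one must check that the pushout (Segal) conditions cutting out $\Cospan_n(\cF)$ decouple exactly into the pullback conditions on the projected span grid in $\cC$ and the fibrewise pushout conditions, uniformly in $\underline{k}$ and compatibly across all $n$ simplicial directions. This decoupling is governed by the fibrewise formula for pushouts in $\cF$ established in the first step, but promoting it from objects, $1$-morphisms, and a single composition --- as in the $1$-categorical sketch --- to a coherent statement over the whole multisimplicial diagram is the technical heart. Once that is in place, naturality in $\underline{k}$ is formal and the level-wise equivalences assemble into an equivalence of complete $n$-fold Segal spaces, hence of $(\infty,n)$-categories; if a direct verification is unwieldy, one can instead induct on $n$, peeling off a single span/cospan direction and applying the case already treated to the mapping $(\infty,n-1)$-categories.
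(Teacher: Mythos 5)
Your opening step is correct and matches the paper: pushouts in $\cF$ exist and are computed by taking the pushout in $\cC^{\op}$ and then the pushout of the cocartesian transport in the apex fibre (this is Lemma~\ref{lem:cocartcolim}), and your comparison map goes in the same direction as the paper's, from $\Cospan_{n}(\cF)$ to $\Span_{n}(\cC;\Cospan_{n}(F))$. But the step you defer as ``bookkeeping'' is not bookkeeping --- it is essentially the whole proof, and your description of it contains a genuine gap. The data of a point of $\Span_{n}(\cC;\Cospan_{n}(F))_{I}$ does not decouple into ``span grid in $\cC$ plus a lift to $\cF$'': at a vertex $A\in\bbS^{I}$ with $\Pi_{I}(A)=J$, the local-system datum is an entire cocartesian functor $\bbS^{J,\op}\to F(\gamma(A))$ (a whole cospan grid in the fibre), not a single object of $\cF$, and the coherences between these grids along the morphisms of $\bbS^{I}$ are governed, after unstraightening, by twisted arrow categories --- sections of a fibration over a diagram are a limit over $\Twr$ of that diagram, not a product over its vertices. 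So the two sides at level $I$ are spaces of diagrams of genuinely different shapes, and no direct appeal to the universal property of the cocartesian fibration identifies them; your claim that ``the decomposition above shows it is a level-wise equivalence'' asserts exactly what has to be proved.

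For contrast, here is what the paper does to bridge this. Proposition~\ref{propn:Ospancospanbifib}, using the bifibration and twisted-arrow technology of the appendix (Corollaries~\ref{cor:bifibseclfib} and~\ref{cor:Funfib}), rewrites the local-system side as a space of diagrams $\CbbS^{I}=\Twl(\bbS^{I})\times_{\Dnop}\hbbS_{n,\op}\to\cF$ subject to cocartesianness conditions. Then an intermediate poset $\mathbb{T}^{I}$ receiving functors from both $\CbbS^{I}$ and $\bbS^{I,\op}$ is constructed explicitly; the inclusion $\bbS^{I,\op}\to\mathbb{T}^{I}$ is shown to be marked anodyne by a hands-on filtration argument (Proposition~\ref{propn:markedanod}); compatibility with the pullback/pushout conditions is verified in Proposition~\ref{propn:Tcocartftr} --- this is the \emph{only} place where your fibrewise-pushout formula is consumed, and note that by itself it produces no equivalence of diagram spaces; and finally a localization statement for $\alpha_{1}\colon\CbbS^{1}\to\mathbb{T}^{1}$ gives the equivalence on the Segal fibres with all $i_{j}\leq 1$, whence Proposition~\ref{propn:spancospandesc}. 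None of these steps is formal. Your fallback of inducting on $n$ via mapping $(\infty,n-1)$-categories does not avoid the problem either: to run such an induction you must first have a globally defined comparison functor (equivalences of objects and mapping categories alone do not assemble into one), and constructing that functor is precisely the missing content.
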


\begin{thm}[See Corollary \ref{cor:cospanmorita}]
  Suppose $\mathcal{C}$ is an \icat{} with finite colimits. Then there
  is an equivalence of $(\infty,n)$-categories
  \[ \Cospan_{n}(\mathcal{C}) \simeq
    \mathfrak{alg}_{n}(\mathcal{C}^{\amalg}).\]
\end{thm}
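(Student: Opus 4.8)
The plan is to realise both sides as $n$-fold Segal spaces and to compare them directly, using that the cocartesian monoidal structure on $\mathcal{C}^{\amalg}$ collapses all of the algebra- and module-theoretic data entering the Morita construction of \cite{nmorita}. Three elementary facts about cocartesian monoidal \icats{} drive the argument: (i) for every $k\geq 0$ the forgetful functor $\Alg_{\mathbb{E}_{k}}(\mathcal{C}^{\amalg})\to\mathcal{C}$ is an equivalence, so that an object of $\mathfrak{alg}_{n}(\mathcal{C}^{\amalg})$ is simply an object of $\mathcal{C}$; (ii) for $c,c'\in\mathcal{C}$, viewed as algebras via (i), there is an equivalence $\mathrm{BMod}_{c,c'}(\mathcal{C}^{\amalg})\simeq\mathcal{C}_{c\amalg c'/}$ sending a bimodule $M$ to its pair of action maps $c\to M\leftarrow c'$; and (iii) under this identification the relative tensor product $M\otimes_{c}N$ is the pushout $M\amalg_{c}N$. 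For (iii), the two-sided bar construction computing $M\otimes_{c}N$ has $p$-th term $M\amalg c^{\amalg p}\amalg N$ with all faces induced by codiagonals, so its geometric realization is the pushout; in particular, although the relative tensor products defining $\mathfrak{alg}_{n}$ are a priori geometric realizations, in $\mathcal{C}^{\amalg}$ they reduce to these pushouts and hence already exist when $\mathcal{C}$ has finite colimits, which is precisely why finite cocompleteness is the correct hypothesis. Observe that (iii) is exactly the composition law for cospans.

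The base case $n=1$ is then immediate: combining (i), (ii) and (iii), the $(\infty,1)$-category $\mathfrak{alg}_{1}(\mathcal{C}^{\amalg})$ has as objects the objects of $\mathcal{C}$, as morphisms $c\to c'$ the cospans $c\to M\leftarrow c'$, and as composition the pushout of cospans; this is exactly $\Cospan_{1}(\mathcal{C})$, while for $n=0$ both sides are the underlying $\infty$-groupoid $\iota\mathcal{C}$. I would package these comparisons into an explicit functor $\Cospan_{n}(\mathcal{C})\to\mathfrak{alg}_{n}(\mathcal{C}^{\amalg})$ which equips each object appearing in a cospan diagram with its canonical algebra structure and each arrow with the bimodule structure induced by codiagonals; this is functorial precisely because the Morita structure maps are assembled from relative tensor products, \ie{} from pushouts by (iii).

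I would then prove this functor is an equivalence by induction on $n$, the cases $n\leq 1$ being settled above, by comparing mapping $(\infty,n-1)$-categories between two objects $c,c'$. On the cospan side the recursive description of iterated spans \cite{spans} gives $\mathrm{Map}_{\Cospan_{n}(\mathcal{C})}(c,c')\simeq\Cospan_{n-1}(\mathcal{C}_{c\amalg c'/})$. On the Morita side, the recursive structure of \cite{nmorita} expresses the corresponding mapping $(\infty,n-1)$-category through the $(c,c')$-bimodules and the relative tensor products composing them; by (ii) these bimodules are the objects of the undercategory $\mathcal{C}_{c\amalg c'/}$, and by (iii) every relative tensor product occurring in the higher composites is a coproduct in $\mathcal{C}_{c\amalg c'/}$, so that this mapping $(\infty,n-1)$-category is identified with $\mathfrak{alg}_{n-1}\bigl((\mathcal{C}_{c\amalg c'/})^{\amalg}\bigr)$. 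Since $\mathcal{C}_{c\amalg c'/}$ inherits finite colimits from $\mathcal{C}$, the inductive hypothesis applied to it equates the two mapping $(\infty,n-1)$-categories, and the induction closes.

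The main obstacle is not any single identification above but the coherence binding them together: I must show that the levelwise equivalences furnished by (i)--(iii) assemble into an equivalence of $n$-uple Segal spaces, compatibly with the Segal maps in all $n$ simplicial directions, and that the two models satisfy the same Segal and completeness conditions. Concretely, one has to check that Haugseng's indexing category for composable iterated bimodules, once every tensor is replaced by a coproduct, carries the operadic ``algebra'' constraints to exactly the pushout constraints cutting out the cospan diagrams of \cite{spans}, naturally in the multidegree. This is ultimately a bookkeeping argument, but it is the step at which the two models must genuinely be reconciled rather than merely computed termwise.
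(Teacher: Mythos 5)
Your facts (i)--(iii) are all true, and they are indeed the conceptual reason the theorem holds; but the step you defer at the end as ``ultimately a bookkeeping argument'' is not bookkeeping --- it is the entire content of the paper's proof, and your proposal contains no idea for carrying it out. The point is that $\mathfrak{alg}_{n}(\mathcal{C}^{\amalg})$ is not presented to you as ``objects, bimodules, and relative tensor products'': it is (the completion of the $n$-fold Segal space underlying) the $n$-uple category object $I \mapsto \Alg^{n}_{\DnIop}(\mathcal{C}^{\amalg})$ of \emph{composite} $\DnIop$-algebras, while $\Cospan_{n}(\mathcal{C})$ is the $n$-uple Segal space $I \mapsto \{$functors $\bbS^{I,\op}\to\mathcal{C}$ left Kan extended from $\bbL^{I,\op}\}$. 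A comparison functor is thus a family of maps between these, natural in $I\in\simp^{n}$, and declaring what it does to objects and single arrows supplies none of that data. Worse, even the \emph{levelwise} identification is not formal: $\Dop_{/[i]}$ is a generalized $\infty$-operad whose algebras encode strings of algebras and bimodules together with all structure maps, and identifying $\Alg^{1}_{\Dop_{/[i]}}(\mathcal{C}^{\amalg})$ with $\Fun(\bbS^{i,\op},\mathcal{C})$ is exactly what the paper proves, by interpolating through two auxiliary non-symmetric operads $\Dop_{/[i]}\to \BM_{i}\to \BM_{i}^{*}$, showing each step is an equivalence on algebra \icats{} by monadicity plus an explicit comparison of free algebras, and only then using the non-symmetric analogue of \cite{HA}*{Proposition 2.4.3.16} for the \emph{unital} operad $\BM_{i}^{*}$, whose fibre $(\BM_{i}^{*})_{[1]}$ is precisely $\bbS^{i,\op}$. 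Because that whole comparison is operadic, naturality in $I$ comes for free, and the case $n>1$ is then an induction at the level of algebra \icats{} (via $\Alg^{n}_{\DnIop}(\mathcal{C}^{\amalg}) \simeq \Alg^{1}_{\Dop_{/[i]}}(\Fun(\bbS^{J,\op},\mathcal{C})^{\amalg}) \simeq \Fun(\bbS^{I,\op},\mathcal{C})$), never touching mapping categories.

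There is also a second, more local, gap in your inductive scheme. Even granting a functor $\Cospan_{n}(\mathcal{C})\to\mathfrak{alg}_{n}(\mathcal{C}^{\amalg})$, the inductive hypothesis only furnishes \emph{some} equivalence $\Cospan_{n-1}(\mathcal{C}_{c\amalg c'/})\simeq \mathfrak{alg}_{n-1}\bigl((\mathcal{C}_{c\amalg c'/})^{\amalg}\bigr)$; to conclude that \emph{your} functor is fully faithful you must show that its restriction to mapping $(\infty,n-1)$-categories is intertwined, under the recursive identifications on both sides (\cite{spans}*{Proposition 8.3} on the cospan side, \cite{nmorita}*{Theorem 5.49} on the Morita side, together with an identification of the induced monoidal structure on $(c,c')$-bimodules with the cocartesian one on $\mathcal{C}_{c\amalg c'/}$), with an equivalence. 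That compatibility is a coherence statement of exactly the same kind as the one missing above, so the induction does not close. Filling both holes would in effect force you to rebuild the operadic, levelwise-natural comparison that the paper constructs directly.
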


Together, these two results lead to a simplified description of
$\Span_{n}(\dSt; \mathfrak{C}_{n})$, which allows us to prove our main
result:
\begin{thm}[See Theorem \ref{thm:coiscorradjoints}]
  There is a symmetric monoidal $(\infty,n)$-category
  $\CoisCorrns$ whose objects are derived stacks with
  $s$-shifted Poisson structures, and whose $i$-morphisms are $i$-fold
  coisotropic correspondences. Assuming all $\mathbb{E}_{n}$-algebras
  are fully dualizable, all objects of this $(\infty,n)$-category are
  fully dualizable.
\end{thm}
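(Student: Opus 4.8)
The plan is to define $\CoisCorrns$ as the pullback square in the statement and then deduce full dualizability by combining the self-duality of objects in span categories with the two structural theorems. The first task is to make the bottom horizontal arrow $\Span_{n}(\dSt) \to \Span_{n}(\dSt; \mathfrak{C}_{n})$ available. Since the symmetric monoidal structure on $\CAlg(\mathcal{M}_{X})$ is cocartesian, the equivalence $\Cospan_{n}(\mathcal{C}) \simeq \mathfrak{alg}_{n}(\mathcal{C}^{\amalg})$ identifies $\mathfrak{C}_{n}(X)$ with $\Cospan_{n}(\CAlg(\mathcal{M}_{X}))$, and then $\Span_{n}(\mathcal{C}; \Cospan_{n}(F)) \simeq \Cospan_{n}(\mathcal{F})$ identifies $\Span_{n}(\dSt; \mathfrak{C}_{n})$ with $\Cospan_{n}$ of the cocartesian fibration for $X \mapsto \CAlg(\mathcal{M}_{X})$. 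The lax section $\mathcal{P}^{\infty}$ supplies a functor into this total space and hence the desired bottom arrow, while the forgetful functor from $\mathbb{P}_{s+1-n}$-algebras to commutative algebras gives the right vertical arrow. Forming the pullback, one reads off that objects are derived stacks $X$ equipped with a lift of $\mathcal{P}_{X}^{\infty}$ to $\mathfrak{alg}_{n}(\Alg_{\mathbb{P}_{s+1-n}}(\mathcal{M}_{X}))$, that is, an $s$-shifted Poisson structure, and that $i$-morphisms unwind to $i$-fold coisotropic correspondences as in the $1$-categorical discussion.

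For the monoidal structure I would note that all four corners carry the symmetric monoidal structure induced by products in $\dSt$, and that all three defining functors are symmetric monoidal; since $\Span_{n}(-)$ and $\Cospan_{n}(-)$ are compatible with such structures, the monoidal refinements of the two structural theorems propagate, endowing the pullback $\CoisCorrns$ with a canonical symmetric monoidal structure.

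The crux is full dualizability. The key observation is that every object of a span category $\Span_{n}(\mathcal{C})$ is canonically fully dualizable, being its own dual with all evaluation, coevaluation, and higher coherence data furnished by diagonal spans. I would therefore prove a reduction lemma: an object $(c, x)$ of $\Span_{n}(\mathcal{C}; F)$ is fully dualizable exactly when $x$ is fully dualizable in the fiber $(\infty,n)$-category $F(c)$, with the span directions contributing duality data freely and adjoints of the coefficient morphisms transported along diagonals. Applying this to the three legs of the pullback, an object of $\CoisCorrns$ is fully dualizable as soon as the corresponding object of $\mathfrak{P}_{n}^{s}(X) = \mathfrak{alg}_{n}(\Alg_{\mathbb{P}_{s+1-n}}(\mathcal{M}_{X}))$ is fully dualizable, the compatibility with the $\mathfrak{C}_{n}$-leg being automatic because the forgetful functor is symmetric monoidal and so preserves the relevant duality data. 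Finally, by the additivity theorem an $\mathbb{E}_{n}$-algebra in $\Alg_{\mathbb{P}_{s+1-n}}(\mathcal{M}_{X})$ is the same as a $\mathbb{P}_{s+1}$-algebra, so the Poisson coefficient is precisely an $\mathbb{E}_{n}$-algebra in the Morita sense; under the standing assumption that all $\mathbb{E}_{n}$-algebras are fully dualizable, this coefficient is fully dualizable, and the reduction lemma upgrades this to full dualizability of every object of $\CoisCorrns$.

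The main obstacle I anticipate is the reduction lemma itself. Exhibiting the self-duality of objects of span categories is straightforward at the level of $1$-morphisms, but verifying that all the higher adjunctions needed for full — rather than merely $2$- — dualizability can be built from diagonals, and that these data glue coherently across the three symmetric monoidal functors defining the pullback, is a genuinely $(\infty,n)$-categorical coherence problem, and I expect the bulk of the technical work to lie there.
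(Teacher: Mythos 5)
Your construction of $\CoisCorrns$ and its symmetric monoidal structure follows the paper's route: the bottom arrow $\Span_n(\dArt)\to\Span_n(\dArt;\mathfrak{C}_n)$ comes from Corollary \ref{cor:cospanmorita}, Corollary \ref{cor:SPANCOSPAN} and Remark \ref{rmk:SPANCOSPANsection} applied to the section $\cP^\infty$, and the right vertical arrow from the forgetful functor on algebras. One point you elide: a lax section is not enough to invoke Remark \ref{rmk:SPANCOSPANsection} --- you need $\cP^\infty$ to take pullbacks of derived Artin stacks to pushouts of commutative algebras, which is the geometric content of Proposition \ref{propn:Ppullbacks} and Corollary \ref{cor:Pfincolim}, not formal bookkeeping.

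The genuine gap is in the dualizability argument. Everything rests on your ``reduction lemma'' --- that $(c,x)\in\Span_n(\mathcal{C};F)$ is fully dualizable \IFF{} $x$ is fully dualizable in $F(c)$ --- and nothing in the proposal discharges it; you yourself identify it as the bulk of the work. This is not a small step: the ``if'' direction amounts to an object-wise refinement of \cite{spans}*{Theorem 13.3 and Corollary 13.4}, requiring one to track that every morphism occurring in the (inductively defined, $n$-layered) duality tower of $(c,x)$ has coefficient part built from the full-dualizability data of $x$, so that the needed adjoints exist; the ``only if'' direction is doubtful and, fortunately, unused. A parallel coherence issue recurs at your pullback step, where you assert that dualizability of the $\mathfrak{P}_n^s$-image suffices, with the $\mathfrak{C}_n$-leg ``automatic.'' The paper sidesteps all of this because Conjecture \ref{conj:moritaadjoints} is stronger than ``all $\bE_n$-algebras are fully dualizable'': it asserts that the whole Morita category $\mathfrak{alg}_n(\mathcal{V})$ \emph{has duals}, i.e.\ every object is dualizable and every $i$-morphism ($i<n$) has adjoints. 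With that categorical-level hypothesis on the coefficients $\mathfrak{C}_n(X)$ and $\mathfrak{P}_n^s(X)$, Lemma \ref{lm:spandualadjoints}(iii),(iv) --- whose proof is essentially a citation of \cite{spans} --- gives that $\Span_n(\dArt;\mathfrak{P}_n^s)$ and $\Span_n(\dArt;\mathfrak{C}_n)$ have duals, and Corollary \ref{cor:dualsadjointspullback}, proven via the free adjunction $2$-category (adjoints in a pullback are detected by the two projections), transfers this to the pullback $\CoisCorrns$. So either strengthen your hypothesis to the conjecture as actually stated and run that three-step argument, or be prepared to redo the work of \cite{spans}*{\S 13} object-wise --- as written, your proof is missing exactly that.
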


It was recently proved by Gwilliam and Scheimbauer
\cite{GwilliamScheimbauer} that the Morita $(\infty,n)$-category has
duals; however, they use a geometric model of this
$(\infty,n)$-category, which is not yet known to be equivalent to the
algebraic model we use. Assuming this comparison (more precisely, see
Conjecture \ref{conj:moritaadjoints}), as well as the Cobordism
Hypothesis, we have:
\begin{cor}
  Every $s$-shifted derived Poisson stack $X$ determines a framed $n$-dimensional
  extended topological quantum field theory
  \[ \txt{Bord}^{\txt{fr}}_{0,n} \to \CoisCorrns.\]
\end{cor}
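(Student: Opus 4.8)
The plan is to deduce this directly from the Cobordism Hypothesis once full dualizability is in hand. Recall Lurie's formulation: for any symmetric monoidal $(\infty,n)$-category $\mathcal{C}$, evaluation at the (positively framed) point, $Z \mapsto Z(\pt)$, induces an equivalence of $\infty$-groupoids between the space of symmetric monoidal functors $\txt{Bord}^{\txt{fr}}_{0,n} \to \mathcal{C}$ and the maximal sub-$\infty$-groupoid of $\mathcal{C}$ spanned by the fully dualizable objects. Equivalently, $\txt{Bord}^{\txt{fr}}_{0,n}$ is the free symmetric monoidal $(\infty,n)$-category with duals generated by a single object, so that a fully dualizable object of $\mathcal{C}$ determines, essentially uniquely, such a functor whose value at the point is that object.

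So I would proceed as follows. First I would invoke Theorem \ref{thm:coiscorradjoints}, which provides the symmetric monoidal $(\infty,n)$-category $\CoisCorrns$ and, under the stated hypotheses (that all $\bE_{n}$-algebras are fully dualizable, \ie{} Conjecture \ref{conj:moritaadjoints}), shows that every object is fully dualizable. In particular the given $s$-shifted derived Poisson stack $X$, regarded as an object of $\CoisCorrns$, is fully dualizable. Applying the Cobordism Hypothesis to $\mathcal{C} = \CoisCorrns$ then produces a symmetric monoidal functor $\txt{Bord}^{\txt{fr}}_{0,n} \to \CoisCorrns$ with $Z(\pt) \simeq X$; by definition this is a framed $n$-dimensional extended TQFT valued in $\CoisCorrns$, as desired.

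I expect no genuine obstacle at this stage: all of the mathematical content has already been absorbed into the full dualizability statement of Theorem \ref{thm:coiscorradjoints} (and, through it, into the comparison of the algebraic and geometric Morita models of Conjecture \ref{conj:moritaadjoints}), while the passage from a fully dualizable object to a TQFT is a formal consequence of the Cobordism Hypothesis, which we take as a black box in the $(\infty,n)$-categorical form established by Lurie. The only points requiring minor care are bookkeeping ones: checking that the framing conventions and the direction of evaluation match those in the chosen formulation of the Cobordism Hypothesis, and that the symmetric monoidal structure on $\CoisCorrns$ used here is the one entering Theorem \ref{thm:coiscorradjoints}. Neither affects the substance of the argument.
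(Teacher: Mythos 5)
Your proposal is correct and follows exactly the paper's route: the corollary is stated there as an immediate consequence of Theorem~\ref{thm:coiscorradjoints} (full dualizability of all objects, granted Conjecture~\ref{conj:moritaadjoints}) together with the Cobordism Hypothesis, which is precisely your argument. The paper gives no further proof beyond this deduction, so nothing is missing from your write-up.
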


Note that the $(\infty, n)$-category of $s$-shifted Lagrangian
correspondences $\Lagns$ has recently been defined in
\cite{aksz}.

It is known \cite{CPTVV, PridhamPoisson} that $s$-shifted Poisson
structures satisfying a non-degeneracy condition are equivalent to
$s$-shifted symplectic structures in the sense of \cite{PTVV}, and
similarly that non-degenerate coisotropic structures are equivalent to
Lagrangian structures \cite{PridhamLagrangian, MelaniSafronov2}. In 
\S\ref{sect:LagrangianCorrespondences} we explain how we expect these
equivalences to generalize to relate $\CoisCorrns$ to a symmetric monoidal $(\infty,
n)$-category $\Lagns$ of $s$-shifted symplectic stacks and iterated
Lagrangian correspondences, which is constructed in forthcoming work
of the first author with Calaque and Scheimbauer~\cite{aksz}.

\section{Categorical Preliminaries}
In this section we carry out the preliminary categorical constructions
we require. We begin by briefly reviewing the definitions of (and
fixing our notation for) iterated Segal spaces in
\S\ref{subsec:revseg}, and then recalling the construction of higher
categories of spans from \cite{spans} in \S\ref{subsec:revspans}. In
\S\ref{subsec:spancoeff} we use this to introduce higher categories of
spans with coefficients in an $(\infty,n)$-category.
For the case of spans with coefficients in cospans we then provide a simpler
description of this construction in \S\ref{subsec:spancoeffcospan}.
In \S\ref{subsec:revmorita} we recall the definition of the higher Morita
category of $E_{n}$-algebras from \cite{nmorita}, which we use in
\S\ref{subsec:cospanmor} to prove that the higher category of cospans
is a higher Morita category.

\subsection{Review of Iterated Segal Spaces}\label{subsec:revseg}
The goal of this subsection is to provide a brief review of the theory
of iterated Segal spaces, which was introduced by Barwick in
\cite{BarwickThesis}; iterated Segal spaces will be our model for
\incats. Our discussion here is mainly intended to fix the notation we
use in the rest of the paper; we refer the reader to
\cite{spans}*{\S\S 3, 4, 7, 11} for further details and motivation.

\begin{defn}\label{def:inert}
  We write $\simp$ for the usual simplex category, with objects the
  ordered sets $[n] := \{0,1,\ldots,n\}$ and order-preserving
  functions as morphisms. A morphism $\phi \colon [n] \to [m]$ in
  $\simp$ is called \emph{inert} if it is the inclusion of a
  sub-interval, \ie{} if $\phi(i) = \phi(0)+i$ for all $i$, and
  \emph{active} if it preserves the end points, \ie{} if $\phi(0) = 0$
  and $\phi(n) = m$. We write $\simp_{\txt{int}}$ for the subcategory
  of $\simp$ containing only the inert maps.
\end{defn}

\begin{notation}
For all $n$, we have maps in $\simp$
\[ \sigma_i: [0] \to [n],\qquad \rho_i:[1] \to [n] \] where $\sigma_i$
($0 \leq i \leq n$) sends $0$ to $i$ and $\rho_i$ ($0<i\leq n$) sends
$0$ and $1$ to $i-1$ and $i$, respectively.
\end{notation}

\begin{defn}\label{defn:catobj}
  Let $\mathcal{C}$ be an \icat{} with pullbacks. A \emph{category object} in $\cC$ is a functor $X: \simp^{\op} \to \cC$ such that the natural morphisms induced by the maps $\sigma_i$ and $\rho_i$ 
  \[ X_n \to X_1 \times_{X_0} \cdots \times_{X_0} X_1 \]
  are equivalences in $\cC$, for all $n$. We let $\Cat(\cC)$ denote
  the full subcategory of $\Fun(\Dop, \mathcal{C})$ spanned by the
  category objects. 
\end{defn}

The above definition can be iterated, leading us to the following notion:
\begin{defn}\label{defn:ncatobj}
  Let $\mathcal{C}$ be an \icat{} with pullbacks. A \emph{$n$-uple
    category object} in $\cC$ is defined inductively as a category
  object in the \icat{} of $(n-1)$-category objects. We let
  $\Cat^n(\cC)$ denote the \icat{} of $n$-uple category objects in
  $\cC$, viewed as a full subcategory of $\Fun(\Dnop,
  \mathcal{C})$. If $\mathcal{C}$ is the \icat{} $\mathcal{S}$ of
  spaces, we refer to $n$-uple category objects as \emph{$n$-uple
    Segal spaces}.
\end{defn}

Among the $n$-uple Segal spaces we can single out those that describe
$(\infty,n)$-categories by imposing constancy conditions:
\begin{defn}
  Let $\cC$ be an \icat{} with pullbacks. A \emph{1-fold Segal object}
  in $\cC$ is simply a category object in $\cC$. We now say
  inductively that an \emph{$n$-fold Segal object} in $\cC$ is an
  $n$-uple category object $X$ in $\cC$ such that
  \begin{itemize}
  \item the restriction $X_{0, \bullet,\ldots, \bullet} \in \Cat^{n-1}(\cC)$ is constant;
  \item the restrictions $X_{k, \bullet,\ldots, \bullet} \in \Cat^{n-1}(\cC)$ are $(n-1)$-fold Segal objects for all $k$. 
  \end{itemize}
  We denote by $\Seg_n(\cC)$ the full subcategory of $\Cat^n(\cC)$
  spanned by $n$-fold Segal objects. If $\mathcal{C}$ is the \icat{}
  $\mathcal{S}$ of spaces, we refer to $n$-fold Segal objects as
  \emph{$n$-fold Segal spaces}.
\end{defn}

By definition, the category $\Seg_n(\cC)$ comes equipped with an inclusion functor to $\Cat^n(\cC)$.
\begin{propn}\label{propn:Useg}
  Let $\cC$ be an \icat{} with pullbacks. The inclusion
  $\Seg_n(\cC) \to \Cat^n(\cC)$ admits a right adjoint, which will be
  denoted by $U_{\Seg}^n$.
\end{propn}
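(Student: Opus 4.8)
The plan is to induct on $n$, building $U_{\Seg}^{n}$ out of $U_{\Seg}^{n-1}$ by factoring the inclusion through an intermediate subcategory. The base case $n=1$ is immediate: by definition a $1$-fold Segal object is just a category object, so $\Seg_1(\cC) = \Cat^1(\cC)$ and the inclusion is the identity. For the inductive step I would use the identification $\Cat^n(\cC) = \Cat(\Cat^{n-1}(\cC))$ and factor the inclusion as
\[ \Seg_n(\cC) \hookrightarrow \Cat(\Seg_{n-1}(\cC)) \hookrightarrow \Cat(\Cat^{n-1}(\cC)) = \Cat^n(\cC), \]
where the second arrow records that each $X_k$ lies in $\Seg_{n-1}(\cC)$ and the first records the constancy of $X_0$. (For the middle term to make sense, and for this to be an honest factorization, I would first note that $\Seg_{n-1}(\cC)$ is closed under pullbacks inside $\Cat^{n-1}(\cC)$: the Segal conditions are limit-closed, and the constancy condition is preserved by pullbacks since the constant-diagram embedding preserves limits.) Since a composite of right adjoints is again a right adjoint, it then suffices to produce a right adjoint for each of the two inclusions separately and define $U_{\Seg}^n$ as their composite.

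For the second inclusion the right adjoint is simply post-composition with $U_{\Seg}^{n-1}$. As $U_{\Seg}^{n-1}$ is a right adjoint it preserves pullbacks, so for $X \in \Cat(\Cat^{n-1}(\cC))$ the levelwise image $U_{\Seg}^{n-1}\circ X$ still satisfies the Segal condition and hence lands in $\Cat(\Seg_{n-1}(\cC))$; because mapping spaces of category objects are computed in the ambient functor category $\Fun(\Dop,-)$, the adjunction between $\Seg_{n-1}(\cC) \hookrightarrow \Cat^{n-1}(\cC)$ and $U_{\Seg}^{n-1}$ induces the required adjunction after applying $\Fun(\Dop,-)$ and restricting to category objects.

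For the first inclusion the condition is that $X_0 \in \Seg_{n-1}(\cC)$ be constant. The constant (fully discrete) objects are the essential image of the embedding $\delta\colon\cC \hookrightarrow \Seg_{n-1}(\cC)$, which is left adjoint to the iterated ``object space'' functor $\kappa$ (for $n-1=1$ this is the statement $\delta \dashv \mathrm{ev}_0$, and one iterates), so constant objects form a coreflective subcategory with coreflector $\delta\kappa$. Given $X \in \Cat(\Seg_{n-1}(\cC))$, I would let $c = \delta\kappa(X_0) \to X_0$ be the counit and define the coreflection $\widetilde{X}$ to be the base change of the category object $X$ along $c \to X_0$ --- the ``full subcategory of $X$ on the objects $c$'' --- with $\widetilde{X}_0 = c$ and $\widetilde{X}_k \simeq X_k \times_{X_0^{\times(k+1)}} c^{\times(k+1)}$. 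Each $\widetilde{X}_k$ is then a pullback of objects of $\Seg_{n-1}(\cC)$, hence again lies in $\Seg_{n-1}(\cC)$, and $\widetilde{X}_0=c$ is constant, so $\widetilde{X}\in\Seg_n(\cC)$. The universal property follows because a map $Y \to \widetilde{X}$ from $Y\in\Seg_n(\cC)$ is the same as a map $Y\to X$ whose object map $Y_0\to X_0$ factors through $c$, and when $Y_0$ is constant this factorization is automatic by the coreflection property $\mathrm{Map}(Y_0,\delta\kappa X_0)\simeq \mathrm{Map}(Y_0,X_0)$, giving $\mathrm{Map}(Y,\widetilde{X})\simeq\mathrm{Map}(Y,X)$.

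I expect the main obstacle to be exactly this last step: promoting the object-wise base-change prescription to a genuine \emph{functor} $\Cat(\Seg_{n-1}(\cC)) \to \Seg_n(\cC)$ together with a coherent counit, and verifying the adjunction at the level of mapping spaces rather than merely on objects. The cleanest way to discharge this is either to exhibit ``$X_0$ constant'' as locality against an explicit set of maps and check the associated localization is coreflective, or to construct the counit $\widetilde{X}\to X$ as a natural transformation using the naturality of $c = \delta\kappa X_0 \to X_0$ in $X$ and the functoriality of base change of category objects. Everything else --- the base case and the second inclusion --- is formal.
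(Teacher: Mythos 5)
Your argument is correct and is essentially the construction behind the proof this paper actually defers to (\cite{spans}*{Proposition 4.12}): induct on $n$, factor the inclusion through $\Cat(\Seg_{n-1}(\cC))$, apply $U_{\Seg}^{n-1}$ levelwise for one inclusion, and coreflect $X_0$ onto its constant replacement by base change for the other --- which is exactly what yields the description of $U_{\Seg}^{n}$ by iterated pullbacks that the paper invokes in Remark~\ref{rmk:spanlocsysfold}. Two small repairs: since $\cC$ is only assumed to have pullbacks, the products $X_0^{\times(k+1)}$ need not exist, so $\widetilde{X}_k$ should be formed as an iterated pullback of $X_k$ against $k+1$ copies of the counit $c \to X_0$, one vertex map at a time (and $\mathrm{cosk}_0$-type constructions avoided for the same reason); and the functoriality worry you flag at the end is harmless, because the objectwise representability of $\Map(i(-),X)$ that your mapping-space computation establishes already implies the existence of the right adjoint as a functor, by (the dual of) \cite{HTT}*{Proposition 5.2.4.2}, with no need to build a coherent counit by hand.
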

We refer to \cite[Proposition 4.12]{spans} for a proof. If $X$ is an $n$-fold Segal object in $\cC$, we refer to $U_{\Seg}^nX$ as the \emph{underlying $n$-uple category object} of $X$.

To obtain the correct \icat{} of $(\infty,n)$-categories we must
invert the fully faithful and essentially surjective morphisms. By
results of Rezk~\cite{RezkCSS} in the case $n =1$ and
Barwick~\cite{BarwickThesis} in general this localization is given by
the full subcategory of \emph{complete} objects, defined as follows:
\begin{defn}
  Let $X$ be a $n$-fold Segal space. We inductively say that $X$ is
  \emph{complete} if
  \begin{itemize}
  \item the Segal space $X_{\bullet, 0, \ldots, 0}$ is complete in the
    sense of \cite{RezkCSS};
  \item the $(n-1)$-fold Segal space $X_{1, \bullet, \ldots, \bullet}$ is complete.
  \end{itemize}
  We denote by $\CSS_{n}(\cS)$ the full subcategory of $\Seg_n(\cS)$
  spanned by complete $n$-fold Segal spaces.
\end{defn}
We also denote $\CSS_{n}(\mathcal{S})$ by $\Cat_{(\infty,n)}$; this
\icat{} is equivalent to those of other descriptions of
$(\infty,n)$-categories by \cite{BarwickSchommerPriesUnicity}.

\begin{notation}
  Let $\mathcal{D}$ be an $n$-fold Segal space, and let $x$ and $y$ be
  objects of $\mathcal{D}$ (\ie{} points of
  $\mathcal{D}_{0,\ldots,0}$). Then the $(n-1)$-fold Segal space
  $\mathcal{D}(x,y)$ of morphisms from $x$ to $y$ is defined by the
  pullback square
  \[
    \begin{tikzcd}
      \mathcal{D}(x,y) \arrow{r} \arrow{d} & \mathcal{D}_{1}\arrow{d} \\
      \{(x,y)\} \arrow{r} & \mathcal{D}_{0} \times \mathcal{D}_{0}
    \end{tikzcd}
  \]
  of $(n-1)$-fold Segal spaces.
\end{notation}

Since (complete) $n$-fold Segal spaces are models for
\incats{}, it is natural to consider a notion of monoidal structures
on these objects.
\begin{defn}\label{defn:assmonoid}
  Let $\cC$ be an \icat{} with finite products. An \emph{associative
    monoid} in $\cC$ is a functor $A: \simp^{\op} \to \cC$ such that
  the natural maps
  \[ A_n \to A_1 \times \cdots \times A_1 \] are equivalences for all
  $n$. We denote by $\Mon(\cC)$ the full subcategory of
  $\Fun(\simp^{\op}, \cC)$ spanned by associative monoids. Monoids in
  the categories $\Seg_n(\cS)$ or $\CSS^n(\cS)$ will be called
  \emph{monoidal} $n$-fold (complete) Segal spaces.
\end{defn}

We can once again iterate the above definition:
\begin{defn}\label{defn:uplemonoid}
  Inductively, a \emph{$k$-uple monoid} in $\cC$ is simply defined to
  be a $(k-1)$-uple monoid in $\Mon(\cC)$, and we denote by
  $\Mon_k(\cC)$ the category of $k$-uple monoids in $\cC$. The
  $k$-uple monoids in $\Seg_n(\cS)$ or in $\CSS_n(\cS)$ are called
  \emph{$k$-uply monoidal} $n$-fold (complete) Segal spaces.
\end{defn}

\begin{remark}
	Note that iterating Definition \ref{defn:assmonoid} does not produce any new operation, but instead adds commutativity constraints on the already existing one. This is analogous to the Eckmann-Hilton argument and to the Dunn-Lurie additivity (see also Remark \ref{rmk:EkvsDn} below).
\end{remark}

Notice that there are natural functors
$\Mon_k(\cC) \to \Mon_{k-1}(\cC)$ for all $k$, which are defined by
sending $X \in \Mon_k(\cC)$ to
$X_{1, \bullet, \ldots, \bullet} \colon \simp^{n-1,\op} \to \cC$.
\begin{defn}
  The \icat{} $\Mon_{\infty}(\cC)$ of \emph{$\infty$-uple monoids} in
  $\cC$ is defined to be the limit of the diagram
  \[ \cdots \to \Mon_k(\cC) \to \Mon_{k-1}(\cC) \to \cdots \to \Mon(\cC)
    \to \cC.  \]
  If $\cC$ is $\Seg_n(\cS)$ or $\CSS_n(\cS)$,
  elements in $\Mon_{\infty}(\cC)$ will be called \emph{$\infty$-uply
    monoidal} $n$-fold (complete) Segal spaces.
\end{defn}

\begin{remark}\label{rmk:EkvsDn}
  $k$-uply and $\infty$-uply monoidal (complete)
  $n$-fold Segal spaces can be equivalently described as
  $\bE_n$-algebras and $\bE_{\infty}$-algebras, in the sense of
  \cite{HA} (we refer to \cite[Proposition 10.12]{spans} for a precise
  statement). Therefore, $k$-uply and $\infty$-uply monoidal
  (complete) $n$-fold Segal spaces will be alternatively called
  \emph{$\bE_k$-monoidal} and \emph{symmetric monoidal} (complete)
  $n$-fold Segal spaces.
\end{remark}

\begin{remark}
  If $\mathcal{D}$ is an $n$-fold Segal space and $x$ is an object of
  $\mathcal{D}$ then $\mathcal{D}(x,x)$ is canonically a monoidal
  $(n-1)$-fold Segal space. This construction can be iterated, so that
  if we have a sequence of $(n+i)$-fold Segal spaces $\mathcal{D}_{i}$
  $(0 \leq i \leq m)$ and objects $x_{i}$ in $\mathcal{D}_{i}$ such that
  $\mathcal{D}_{i}(x_{i},x_{i}) \simeq \mathcal{D}_{i-1}$, then
  $\mathcal{D}_{0}$ is an $\bE_{m}$-monoidal $n$-fold Segal space
  (where we may have $m = \infty$).
\end{remark}

We now briefly recall what it means for an $(\infty,n)$-category to
have adjoints and duals (see \cite{LurieCob} or \cite{spans}*{\S 11} for more details):
\begin{defn}\label{defn:hasduals}
  Let $\mathcal{D}$ be a 2-fold Segal space, and let
  $h_{2}\mathcal{D}$ denote its homotopy 2-category. A 1-morphism in
  $\mathcal{D}$ is a (left or right) \emph{adjoint} if its image in
  $h_{2}\mathcal{D}$ is one. We say that $\mathcal{D}$ \emph{has
    adjoints for 1-morphisms} if every 1-morphism in $\mathcal{D}$ is
  both a left and right adjoint. If $\mathcal{D}$ is an $n$-fold Segal
  space we similarly say that $\mathcal{D}$ has adjoints for
  1-morphisms if its underlying 2-fold Segal space has adjoints for
  1-morphisms; by induction we then say that $\mathcal{D}$ has
  adjoints for $i$-morphisms for $i > 1$ if $\mathcal{D}(x,y)$ has
  adjoints for $(i-1)$-morphisms for all objects $x,y$. If
  $\mathcal{D}$ has adjoints for $i$-morphisms for all $1 \leq i < n$
  we simply say that $\mathcal{D}$ \emph{has adjoints}, while a
  ($k$-uply) monoidal $n$-fold Segal space \emph{has duals} if it has adjoints
  when viewed as an $(n+1)$-fold Segal space.
\end{defn}

We need to know that these properties are preserved under pullbacks,
which is a consequence of the following observation:
\begin{propn}
  Let
  \[
    \begin{tikzcd}
      \cC \arrow{d} \arrow{r} & \cD_1 \arrow{d} \\
      \cD_2 \arrow{r} & \cE
    \end{tikzcd}
  \]
  be a pullback in the \icat{} $\CSS_{2}(\mathcal{S})$ of $(\infty,2)$-categories. Then a morphism in
  $\mathcal{C}$ has a left (right) adjoint \IFF{} its images in
  $\mathcal{D}_{1}$ and $\mathcal{D}_{2}$ have left (right) adjoints.
\end{propn}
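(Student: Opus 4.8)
The plan is to reduce the statement to the single fact that, in any $(\infty,2)$-category $\cD$, the space of coherent data exhibiting a fixed $1$-morphism as an adjoint is either empty or contractible, and then to exploit that corepresentable functors out of $\CSS_{2}(\mathcal{S})$ send limits to limits. Concretely, let $\mathbf{2}$ denote the walking $1$-morphism and let $\mathrm{Adj}$ denote the walking (homotopy coherent) adjunction, with $i \colon \mathbf{2} \to \mathrm{Adj}$ the inclusion of the morphism that becomes the \emph{left} adjoint; these are honest $2$-categories, hence objects of $\CSS_{2}(\mathcal{S})$. By the theory of adjunctions in $(\infty,2)$-categories (see \cite{LurieCob}, \cite{spans}*{\S 11}, building on the Riehl--Verity theory of homotopy coherent adjunctions), restriction along $i$ gives, for every $\cD$, a map $i^{*}\colon \mathrm{Map}(\mathrm{Adj}, \cD) \to \mathrm{Map}(\mathbf{2}, \cD)$ whose fiber over a $1$-morphism $g$ --- call it $\Phi_{\cD}(g)$ --- is the space of right-adjoint data for $g$; this fiber is empty if the image of $g$ in $h_{2}\cD$ is not part of an adjunction and contractible otherwise. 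The case of left adjoints is identical, using the other inclusion $\mathbf{2} \to \mathrm{Adj}$ (or passing to the $2$-opposite). I take this empty-or-contractible dichotomy as the main external input.

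First I would observe that for any fixed $X \in \CSS_{2}(\mathcal{S})$ the functor $\mathrm{Map}(X,-)$ preserves limits, so applying it with $X = \mathbf{2}$ and $X = \mathrm{Adj}$ converts the given pullback square into two pullback squares of spaces, related by the natural transformation $i^{*}$. Taking the fiber over a chosen $1$-morphism $f$ of $\cC$, with images $f_{1}, f_{2}, f_{\cE}$ in $\cD_{1}, \cD_{2}, \cE$ (these are automatically compatible, by functoriality of $\mathrm{Map}(\mathbf{2},-)$ applied to the pullback), and using that forming fibers commutes with pullbacks --- both being limits --- I obtain an equivalence
\[
  \Phi_{\cC}(f) \simeq \Phi_{\cD_{1}}(f_{1})
  \times_{\Phi_{\cE}(f_{\cE})} \Phi_{\cD_{2}}(f_{2}).
\]

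With this identification the proposition becomes a formal statement about pullbacks of empty-or-contractible spaces. For the "only if" direction, a right-adjoint datum for $f$ projects to such data for $f_{1}$ and $f_{2}$, so nonemptiness of the left-hand side forces nonemptiness of both factors. For the "if" direction, suppose $\Phi_{\cD_{1}}(f_{1})$ and $\Phi_{\cD_{2}}(f_{2})$ are nonempty, hence contractible. The structure map of the pullback produces a map from the nonempty space $\Phi_{\cD_{1}}(f_{1})$ to $\Phi_{\cE}(f_{\cE})$, so the latter is nonempty as well --- equivalently, the functor $\cD_{1} \to \cE$ carries the adjunction witnessing $f_{1}$ to one witnessing $f_{\cE}$ --- and being empty-or-contractible it is therefore contractible. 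A pullback of contractible spaces over a contractible space is contractible, so $\Phi_{\cC}(f)$ is contractible, in particular nonempty, and $f$ has a right adjoint.

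The main obstacle is securing the external input of the first paragraph in the chosen model of $(\infty,2)$-categories (complete $2$-fold Segal spaces): one needs that $\mathrm{Adj}$ corepresents coherent adjunctions and that $\Phi_{\cD}(g)$ is empty-or-contractible. I would obtain this from the Riehl--Verity theory together with the unicity of $(\infty,2)$-categories (as cited in the excerpt via \cite{BarwickSchommerPriesUnicity}), which transports the corepresentability and contractibility statements into the complete Segal space model; the definition of adjoint via $h_{2}\cD$ used in the excerpt then matches nonemptiness of $\Phi_{\cD}(g)$. Everything after this is formal manipulation of limits of spaces, so no further genuinely hard step remains.
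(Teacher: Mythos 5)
Your proposal is correct and follows essentially the same route as the paper's proof: both use the Riehl--Verity walking adjunction $\txt{Adj}$, the fact that the fibers of restriction along $\Delta^{1} \to \txt{Adj}$ are empty or contractible, and the observation that applying $\Map(\txt{Adj},-)$ and $\Map(\Delta^{1},-)$ to the pullback square yields a pullback square of fibers, from which the conclusion follows by the empty-or-contractible dichotomy. The only (harmless) additions on your part are spelling out explicitly why the fiber over $\cE$ is nonempty and flagging the model-comparison issue for transporting the Riehl--Verity result into complete $2$-fold Segal spaces, which the paper leaves implicit.
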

\begin{proof}
  Let $\txt{Adj}$ denote the free adjunction 2-category. This is
  described explicitly in \cite{RiehlVerityAdj}, where it is proved
  that an adjunction in an $(\infty,2)$-category $\mathcal{K}$ is
  equivalent to a functor $\txt{Adj} \to \mathcal{K}$, from which it
  is clear that any functor of $(\infty,2)$-categories must preserve
  adjunctions. It thus suffices to show the ``if'' direction, which we
  do for the case of left adjoints.

  Let $l \colon \Delta^{1} \to \txt{Adj}$ denote the inclusion of the
  1-morphism that is a left adjoint. By \cite{RiehlVerityAdj}*{Theorem
    4.4.18}, for any $(\infty,2)$-category $\mathcal{K}$ the fibres of
  \[ l^{*} \colon \Map_{\Cat_{(\infty,2)}}(\txt{Adj}, \mathcal{K}) \to
    \Map_{\Cat_{(\infty,2)}}(\Delta^{1}, \mathcal{K})\] are either
  empty or contractible, and a 1-morphism in $\mathcal{K}$ is a left
  adjoint precisely when the fibre is non-empty. Moreover, our
  pullback square gives a commutative cube
  \[
    \begin{tikzcd}[cramped]
      \Map(\txt{Adj}, \mathcal{C}) \arrow{rr} \arrow{dr} \arrow{dd} & & \Map(\txt{Adj}, \mathcal{D}_{1})
      \arrow{dr} \arrow{dd}
      \\
      & \Map(\txt{Adj}, \mathcal{D}_{2})
       \arrow[crossing over]{rr} & & \Map(\txt{Adj}, \mathcal{E}) \arrow{dd}\\
      \Map(\Delta^{1}, \mathcal{C}) \arrow{rr} \arrow{dr} & & \Map(\Delta^{1}, \mathcal{D}_{1}) \arrow{dr}
      \\
       & \Map(\Delta^{1}, \mathcal{D}_{2})
       \arrow[leftarrow,crossing over]{uu} \arrow{rr} & & \Map(\Delta^{1}, \mathcal{E}),      
    \end{tikzcd}
  \]
  where the top and bottom faces are pullbacks. Given a 1-morphism $f$ in
  $\mathcal{C}$ we get a pullback square of fibres, which shows that
  if the images of $f$ in $\mathcal{D}_{1}$ and $\mathcal{D}_{2}$ are
  left adjoints, then $f$ is a left adjoint.
\end{proof}

Since the notions of ``having duals'' and ``having adjoints'' are
defined in terms of adjunctions in $(\infty,2)$-categories, we get the
following as an immediate consequence:
\begin{cor}\label{cor:hasdualspb}
Let
\[
\begin{tikzcd}
\cC \arrow{d} \arrow{r} & \cD_1 \arrow{d} \\
\cD_2 \arrow{r} & \cE
\end{tikzcd}
\]
be a pullback of symmetric monoidal $(\infty, n)$-categories.
\begin{enumerate}[(i)]
\item If $\cD_1$ and $\cD_2$ have adjoints, then $\cC$ has adjoints.
\item If $\cD_1$ and $\cD_2$ have duals, then $\cC$ has duals.
\end{enumerate}
\label{cor:dualsadjointspullback}
\end{cor}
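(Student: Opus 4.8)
Both parts should follow by unwinding the inductive definition of ``having adjoints'' (Definition~\ref{defn:hasduals}) and feeding the resulting statements about individual morphisms into the preceding proposition. The one structural fact I rely on throughout is that the given square is a \emph{levelwise} pullback: since $\CSS_m(\cS)$ is a reflective localization of $\Fun(\simp^{m,\op}, \cS)$, a pullback of $(\infty,m)$-categories is also a pullback in the presheaf \icat{}, hence is computed levelwise in $\cS$. Consequently any functor on $(\infty,m)$-categories assembled from restriction along a map of indexing categories, from finite products, and from further pullbacks will again send our square to a pullback.

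For part (i) I induct on $n$, the case $n=1$ being vacuous. By definition $\cC$ has adjoints \IFF{} (a) the underlying $(\infty,2)$-category of $\cC$ has adjoints for $1$-morphisms, and (b) $\cC(x,y)$ has adjoints for all objects $x,y$. For (a) I pass to underlying $(\infty,2)$-categories: this functor is the restriction $X \mapsto X_{\bullet,\bullet,0,\dots,0}$ (which sends complete objects to complete objects), so by the remark above the square of underlying $(\infty,2)$-categories is again a pullback in $\CSS_2(\cS)$. Every $1$-morphism of $\cC$ maps to $1$-morphisms of $\cD_1$ and $\cD_2$ that are both left and right adjoints, so by the preceding proposition it is itself both a left and right adjoint. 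For (b) I use that mapping objects commute with the pullback: chasing the defining pullback square for $\cD(x,y)$ through the levelwise description yields an equivalence of $(\infty,n-1)$-categories
\[ \cC(x,y) \simeq \cD_1(x_1,y_1) \times_{\cE(x_e,y_e)} \cD_2(x_2,y_2), \]
where $x_i,y_i$ and $x_e,y_e$ are the images of $x,y$. This is again a pullback, and since $\cD_1,\cD_2$ have adjoints each factor on the right has adjoints, so the inductive hypothesis in dimension $n-1$ gives that $\cC(x,y)$ has adjoints.

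For part (ii) I reduce to (i) by delooping. A symmetric monoidal $(\infty,n)$-category has duals exactly when its one-object delooping $B(-)$, viewed as an $(\infty,n+1)$-category, has adjoints. The delooping of a monoid $M$ has $k$-simplices $M^{\times k}$, so finite products commuting with pullbacks gives $(B\cC)_k \simeq (B\cD_1)_k \times_{(B\cE)_k}(B\cD_2)_k$ levelwise; hence $B\cC \simeq B\cD_1 \times_{B\cE} B\cD_2$ is a pullback of $(\infty,n+1)$-categories. As $\cD_1,\cD_2$ have duals, $B\cD_1,B\cD_2$ have adjoints, and part (i) in dimension $n+1$ shows $B\cC$ has adjoints, \ie{} $\cC$ has duals.

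The genuinely substantive point---and the only step I expect to require care---is the compatibility of the three constructions (underlying $(\infty,2)$-category, mapping $(\infty,n-1)$-category, and delooping) with the pullback square. In each case this reduces to the levelwise computation of pullbacks in $\CSS_m(\cS)$ together with the fact that the construction is built from limit-preserving operations; once this is checked, the proposition and the inductive definition of adjoints do the rest.
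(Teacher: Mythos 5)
Your proof is correct and follows essentially the same route as the paper, which states the corollary as an immediate consequence of the preceding proposition on the grounds that ``having adjoints'' and ``having duals'' are defined inductively in terms of adjunctions in $(\infty,2)$-categories. What you have done is spell out the details the paper leaves implicit---that the underlying $(\infty,2)$-category, the mapping $(\infty,n-1)$-categories, and the delooping all commute with the pullback square (via the levelwise computation of limits of complete Segal objects)---so that the proposition can be fed through the induction.
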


\begin{remark}
  In the case of duals for objects, this is also a consequence of
  \cite{HA}*{Proposition 4.6.1.11}.
\end{remark}

\subsection{Review of Higher Categories of
  Spans}\label{subsec:revspans}

In this subsection we will review the definition of higher categories
of iterated spans from \cite{spans}.

\begin{defn}
  We write $\bbS^{n}$ for the partially ordered set of pairs $(i,j)$
  with $0 \leq i \leq j \leq n$, with $(i,j) \leq (i',j')$ if
  $i \leq i'$ and $j' \leq j$. A map $\phi \colon [n] \to [m]$ in
  $\simp$ determines a functor $\bbS^{n} \to \bbS^{m}$ taking $(i,j)$
  to $(\phi(i), \phi(j))$, yielding a functor
  $\bbS^{\bullet} \colon \simp \to \Cat$. We write
  $\bbS^{i_{1},\ldots,i_{n}} := \bbS^{i_{1}} \times \cdots \times
  \bbS^{i_{n}}$,
  which gives functors
  $\bbS^{\bullet,\ldots,\bullet} \colon \simp^{n} \to \Cat$. We let
  $\hbbS^{n} \to \simp^{n,\op}$ denote the cartesian fibration for
  this functor.
\end{defn}

\begin{defn}
  If $\mathcal{C}$ is an \icat{}, we let $\OSPAN^{+}(\mathcal{C}) \to
  \simp^{n,\op}$ be the cocartesian fibration for the functor
  $\Fun(\bbS^{\bullet,\ldots,\bullet}, \mathcal{C}) \colon
  \simp^{n,\op} \to \CatI$. We also write $\OSPAN(\mathcal{C}) \to
  \simp^{n,\op}$ for its underlying left fibration, corresponding to
  the functor  $\Map(\bbS^{\bullet,\ldots,\bullet}, \mathcal{C}) \colon
  \simp^{n,\op} \to \mathcal{S}$.
\end{defn}

\begin{remark}\label{rmk:OSPAN+univprop}
The \icat{} $\OSPAN^{+}(\mathcal{C})$ has a universal property by
\cite{freepres}*{Proposition 7.3}: For any \icat{} $\mathcal{K}$ over
$\simp^{n,\op}$ we have a natural equivalence
\[ \Map_{/\simp^{n,\op}}(\mathcal{K}, \OSPAN^{+}(\mathcal{C})) \simeq \Map(\mathcal{K}
\times_{\simp^{n,\op}} \hbbS^{n}, \mathcal{C}).\]
\end{remark}

\begin{defn}
  Let $\bbL^{i}$ denote the full subcategory of $\bbS^{i}$ spanned by
  the pairs $(i,j)$ with $j-i \leq 1$. These subcategories are
  preserved by inert maps in $\simp$, giving a functor
  $\bbL^{\bullet} \colon \simp_{\txt{int}} \to \Cat$. Similarly, we
  define $\bbL^{i_{1},\ldots,i_{n}} := \bbL^{i_{1}}\times \cdots
  \times \bbL^{i_{k}}$, which gives a functor
  $\bbL^{\bullet,\ldots,\bullet} \colon \simp_{\txt{int}}^{n} \to
  \Cat$ with a natural transformation $\bbL^{\bullet,\ldots,\bullet}
  \to \bbS^{\bullet,\ldots,\bullet}|_{\simp_{\txt{int}}^{n}}$.
\end{defn}

\begin{defn}
  Let $\mathcal{C}$ be an \icat{} with pullbacks. We say a functor
  $f \colon \bbS^{i_{1},\ldots,i_{k}}\to \mathcal{C}$ is
  \emph{cartesian} if it is a right Kan extension of its restriction
  to $\bbL^{i_{1},\ldots,i_{k}}$. We write
  $\SPAN^{+}_{n}(\mathcal{C})$ and $\SPAN_{n}(\mathcal{C})$ for the
  full subcategories of $\OSPAN^{+}_{n}(\mathcal{C})$ and
  $\OSPAN_{n}(\mathcal{C})$, respectively, spanned by the cartesian
  functors.
\end{defn}

We then have:
\begin{itemize}
\item The restricted projection
  $\SPAN^{+}_{n}(\mathcal{C}) \to \simp^{n,\op}$ is a cocartesian
  fibration, by \cite{spans}*{Corollary 5.12}.
\item The corresponding functor $\simp^{n,\op} \to \CatI$ is an
  $n$-uple category object, by \cite{spans}*{Proposition 5.14}.
\end{itemize}
Similarly, $\SPAN_{n}(\mathcal{C})$ is an $n$-uple Segal space.

\begin{defn}
  We let $\Span_{n}(\mathcal{C}) := U^n_{\Seg}(\SPAN_n(\cC))$ be the underlying $n$-fold Segal
  space of $\SPAN_{n}(\mathcal{C})$.
\end{defn}

\begin{notation}
  If $\mathcal{C}$ is an \icat{} with pushouts, we also write
  $\COSPAN_{n}(\mathcal{C}) := \SPAN_{n}(\mathcal{C}^{\op})$ and
  $\Cospan_{n}(\mathcal{C}) := \Span_{n}(\mathcal{C}^{\op})$.
\end{notation}

We have the following results from \cite{spans}:
\begin{itemize}
\item The $n$-fold Segal space $\Span_{n}(\mathcal{C})$ is complete,
  by \cite{spans}*{Corollary 8.5}.
\item For objects, $x,y \in \mathcal{C}$, the $(n-1)$-fold Segal space
  of maps $\Span_{n}(\mathcal{C})(x,y)$ is naturally equivalent to
  $\Span_{n-1}(\mathcal{C}_{/x, y})$, by
  \cite{spans}*{Proposition 8.3}. Here $\mathcal{C}_{/x,y} :=
  \mathcal{C}_{/x} \times_{\mathcal{C}} \mathcal{C}_{/y}$ is the
  \icat{} of spans $x \from c \to y$ with $x$ and $y$ fixed.
\item As a consequence, if $\mathcal{C}$ has a terminal object (\ie{}
  $\mathcal{C}$ has all finite limits), then the $(\infty,n)$-category
  $\Span_{n}(\mathcal{C})$ has a natural symmetric monoidal structure,
  as in \cite{spans}*{Proposition 12.1}.
\item The symmetric monoidal $(\infty,n)$-category
  $\Span_{n}(\mathcal{C})$ has duals, by \cite{spans}*{Corollary 12.5}.
\end{itemize}

Following \cite{spans}*{Section 6}, we also consider a variant of the definition of
$\Span_{n}(\mathcal{C})$, giving a higher category of ``iterated spans
with local systems'' in a category object in $\mathcal{C}$:
\begin{notation}
  Let $\Pi \colon \widehat{\bbS} \to \Dop$ be the functor taking
  $([n], (i,j))$ to $[j-i]$, and a morphism $([n], (i,j)) \to ([m],
  (i',j'))$ given by a morphism $\phi \colon [m] \to [n]$ in $\simp$
  such that $(i,j) \leq (\phi(i'),\phi(j'))$ to the morphism $[j'-i']
  \to [j-i]$ given by $s \mapsto \phi(i'+s)-i$. We write $\Pi^{n}$ for
  the product of $n$ copies of $\Pi$, and $\Pi_{I} \colon \bbS^{I} \to
  \Dnop$ for its
  restriction to $\bbS^{I}$.
\end{notation}

\begin{defn}
  Let $\mathcal{C}$ be an \icat{} with pullbacks. Given a functor $F
  \colon \Dnop \to \mathcal{C}$, we write $\OSPAN^{+}_{n}(\mathcal{C};
  F) \to \Dnop$ for the cocartesian fibration corresponding to the
  functor $I \mapsto \Fun(\bbS^{I}, \mathcal{C})_{/F \circ
    \Pi_{I}}$. 
\end{defn}

\begin{remark}
  $\OSPAN^{+}_{n}(\mathcal{C};F)$ can also be described as the \icat{} of
  commutative diagrams
  \[
    \begin{tikzcd}
      \bbS^{I} \arrow{r} \arrow{d}{\Pi_{I}} & \mathcal{C}^{\Delta^{1}}
      \arrow{d}{\txt{ev}_{1}} \\
      \Dnop \arrow{r}{F} & \mathcal{C}
    \end{tikzcd}
  \]
  If we write $\mathcal{C}_{//F}$ for the pullback
  \[
    \begin{tikzcd}
      \mathcal{C}_{//F}  \arrow{d} \arrow{r} &
      \mathcal{C}^{\Delta^{1}} \arrow{d}{\txt{ev}_{1}} \\
      \Dnop \arrow{r}{F} & \mathcal{C},
    \end{tikzcd}
  \]
  this means we can describe $\OSPAN_{n}^{+}(\mathcal{C};F)$ as the pullback
  \[
    \begin{tikzcd}
      \OSPAN_{n}^{+}(\mathcal{C};F) \arrow{r} \arrow{d}&
      \OSPAN^{+}_{n}(\mathcal{C}_{//F}) \arrow{d} \\
      \Dnop \arrow{r} & \OSPAN_{n}^{+}(\Dnop),
    \end{tikzcd}
  \]
  where the bottom horizontal map is the section of
  $\OSPAN_{n}^{+}(\Dnop) \to \Dnop$ corresponding to $\Pi^{n}$ under
  the equivalence
  \[ \Map_{/\Dnop}(\Dnop, \OSPAN^{+}(\Dnop)) \simeq
    \Map(\hbbS^{n}, \Dnop)\]
  of Remark~\ref{rmk:OSPAN+univprop}.
\end{remark}

\begin{defn}
  Suppose $\mathcal{C}$ is an \icat{} with pullbacks and $F \colon
  \Dnop \to \mathcal{C}$ is an $n$-uple category object. (Then
  $\Pi_{I}F \colon \bbS^{I} \to \mathcal{C}$ is cartesian for all $I$
  by \cite{spans}*{Lemma 6.4}.) We define
  $\SPAN_{n}^{+}(\mathcal{C}; F)$ as the pullback
  \[
    \begin{tikzcd}
      \SPAN_{n}^{+}(\mathcal{C};F) \arrow{r} \arrow{d} &
      \OSPAN_{n}^{+}(\mathcal{C}; F) \arrow{d} \\
      \SPAN_{n}^{+}(\mathcal{C}) \arrow{r} & \OSPAN_{n}^{+}(\mathcal{C}).
    \end{tikzcd}
  \]
\end{defn}
Then $\SPAN_{n}^{+}(\mathcal{C};F) \to \Dnop$ is a cocartesian
fibration, being a fibre product of cocartesian fibrations over
$\Dnop$ along functors that preserve cocartesian morphisms. Moreover,
it corresponds to an $n$-uple category object in $\CatI$ by
\cite{spans}*{Proposition 6.7}. We write $\SPAN_{n}(\mathcal{C};F)$
for the underlying left fibration, which corresponds to an $n$-uple
Segal space, and $\Span_{n}(\mathcal{C};F)$ for its underlying
$n$-fold Segal space.

\begin{remark}\label{rmk:spanlocsysfold}
  Using the description of the right adjoint $U_{\Seg}^{n}$ in terms
  of iterated pullbacks in the
  proof of \cite[Proposition 4.12]{spans}, it is easy to see that for
  an $n$-uple category object $F \colon \Dnop \to \mathcal{C}$, we
  have
  \[ \Span_{n}(\mathcal{C}; F) \simeq U^{n}_{\Seg} \SPAN_{n}(\mathcal{C}; F) \simeq
    \Span_{n}(\mathcal{C}; U_{\Seg}^{n}F).\]
\end{remark}

If $\mathcal{C}$ is an \icat{} with finite limits, and $\xi,\eta$ are
objects of $\Span_{n}(\mathcal{C};F)$, corresponding to morphisms $\xi
\colon x
\to F_{0,\ldots,0}$, $\eta \colon y \to F_{0,\ldots,0}$ in
$\mathcal{C}$, then by \cite{spans}*{Proposition 9.3} we can identify the
$(n-1)$-fold Segal space of maps $\Span_{n}(\mathcal{C};F)(\xi,\eta)$
with $\Span_{n-1}(\mathcal{C}; F_{\xi,\eta})$, where $F_{\xi,\eta}$ is
the functor $\simp^{n-1,\op} \to \mathcal{C}$ defined as the pullback
\[
  \begin{tikzcd}
    F_{\xi,\eta} \arrow{r} \arrow{d} & F_{1}\arrow{d} \\
    x \times y \arrow{r}{\xi \times \eta} & F_{0} \times F_{0}.
  \end{tikzcd}
\]
Here it will be convenient to slightly reformulate this, using the
following observation:
\begin{lemma}\label{lem:SpanFslice}
  Suppose $\mathcal{C}$ is an $\infty$-category with pullbacks. Given a functor $F
  \colon \Dnop \to \mathcal{C}_{/x}$ there is a natural equivalence
\[ \Span_{n}(\mathcal{C}_{/x}; F) \simeq \Span_{n}(\mathcal{C}; F). \]
\end{lemma}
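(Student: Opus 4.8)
The plan is to carry out the comparison one layer at a time, starting from the ``comma'' \icats{} underlying the pullback description of $\OSPAN^{+}_{n}$ recorded in the preceding remark, and then propagating the equivalence upward through $\SPAN^{+}_{n}$, underlying left fibrations, and underlying $n$-fold Segal spaces. Write $\pi \colon \cC_{/x} \to \cC$ for the slice projection and $\overline{F} := \pi \circ F \colon \Dnop \to \cC$ for the composite, which is what $F$ denotes on the right-hand side. Since $\cC$ has pullbacks and $\pi$ creates limits, $\cC_{/x}$ also has pullbacks and $\overline{F}$ is an $n$-uple category object \IFF{} $F$ is, so both sides are defined. The crux is a single equivalence
\[ (\cC_{/x})_{//F} \simeq \cC_{//\overline{F}} \]
of \icats{} over $\Dnop$, compatible with the evaluation maps to $(\cC_{/x})^{\Delta^1}$ and $\cC^{\Delta^1}$.

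To prove the crux I would use that $\pi$ is a right fibration. For any right fibration $p \colon \cE \to \cD$ the canonical map $\cE^{\Delta^1} \to \cD^{\Delta^1} \times_{\txt{ev}_1, \cD, p} \cE$ is an equivalence, since for a given arrow in $\cD$ and a prescribed target lift in $\cE$ the space of cartesian (hence all) lifts is contractible. Applied to $\pi$ this gives $(\cC_{/x})^{\Delta^1} \simeq \cC^{\Delta^1} \times_{\txt{ev}_1, \cC, \pi} \cC_{/x}$, compatibly with $\txt{ev}_1$ to $\cC_{/x}$. Now $\cC_{//\overline{F}}$ is by definition $\cC^{\Delta^1} \times_{\txt{ev}_1, \cC, \overline{F}} \Dnop$; factoring $\overline{F} = \pi \circ F$ and pasting pullbacks (first along $\pi$, then along $F$), then substituting the previous equivalence, yields
\[ \cC_{//\overline{F}} \simeq \left( \cC^{\Delta^1} \times_{\txt{ev}_1, \cC, \pi} \cC_{/x} \right) \times_{\cC_{/x}, F} \Dnop \simeq (\cC_{/x})^{\Delta^1} \times_{\txt{ev}_1, \cC_{/x}, F} \Dnop = (\cC_{/x})_{//F}, \]
all over $\Dnop$, as claimed.

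Given the crux, functoriality of $\OSPAN^{+}_{n}(-)$ and the pullback description of the preceding remark — both pullbacks being formed along the same section of $\OSPAN^{+}_{n}(\Dnop) \to \Dnop$ determined by $\Pi^{n}$ — produce an equivalence $\OSPAN^{+}_{n}(\cC_{/x}; F) \simeq \OSPAN^{+}_{n}(\cC; \overline{F})$ over $\Dnop$, lying over the functor $\pi_{*} \colon \OSPAN^{+}_{n}(\cC_{/x}) \to \OSPAN^{+}_{n}(\cC)$. To descend to $\SPAN^{+}_{n}$, recall that these are cut out as the full subcategories on the cartesian functors $\bbS^{I} \to \cC_{/x}$ (resp.\ $\to \cC$), \ie{} those right Kan extended from $\bbL^{I}$. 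Because $\pi$ creates limits, a functor $\bbS^{I} \to \cC_{/x}$ is such a pointwise right Kan extension \IFF{} its composite with $\pi$ is; hence the equivalence restricts to $\SPAN^{+}_{n}(\cC_{/x}; F) \simeq \SPAN^{+}_{n}(\cC; \overline{F})$. Passing to underlying left fibrations and then applying $U^{n}_{\Seg}$ gives the asserted $\Span_{n}(\cC_{/x}; F) \simeq \Span_{n}(\cC; \overline{F})$.

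The two points requiring genuine care — and the main obstacle — are naturality in $I \in \Dnop$ and the matching of the cartesian conditions. The comma-category reformulation is precisely what makes naturality automatic: it replaces an $I$-by-$I$ comparison of fibres $\Fun(\bbS^{I}, \cC_{/x})_{/F \Pi_{I}} \simeq \Fun(\bbS^{I}, \cC)_{/\overline{F} \Pi_{I}}$ (which one could alternatively extract from $\Fun(K, \cC_{/x}) \simeq \Fun(K, \cC)_{/\delta_{x}}$ followed by the slice-of-slice identification $(\cD_{/e})_{/(d \to e)} \simeq \cD_{/d}$) with one equivalence of total \icats{} over $\Dnop$. The cartesian matching then reduces to the single observation that the slice projection creates limits, so beyond keeping the evaluation maps and fibration structures aligned I anticipate no real difficulty.
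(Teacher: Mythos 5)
Your proof is correct and takes essentially the same route as the paper's: both hinge on the cartesian square comparing $(\mathcal{C}_{/x})^{\Delta^{1}}$ and $\mathcal{C}^{\Delta^{1}}$ via $\mathrm{ev}_{1}$, pulled back along $F$ to identify $(\mathcal{C}_{/x})_{//F}$ with $\mathcal{C}_{//F}$, after which the equivalence of the $\mathrm{OSPAN}$-level objects restricts to the span level. The only difference is that you spell out two points the paper leaves implicit, namely the right-fibration argument for why that square is cartesian and the fact that the slice projection creates the relevant (weakly contractible) limits so that the cartesianness conditions on both sides match.
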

\begin{proof}
  The commutative square
\[ 
  \begin{tikzcd}
    (\mathcal{C}_{/x})^{\Delta^{1}} \arrow{r} \arrow{d}{\txt{ev}_{1}} &
    \mathcal{C}^{\Delta^{1}} \arrow{d}{\txt{ev}_{1}}\\
    \mathcal{C}_{/x} \arrow{r} & \mathcal{C}
  \end{tikzcd}
\]
is cartesian; pulling back along $F
\colon \Dnop \to \mathcal{C}_{/x}$ we get a natural equivalence
$\mathcal{C}_{//F} \simeq (\mathcal{C}_{/x})_{//F}$, and hence a
natural equivalence $\OSPAN_{n}(\mathcal{C};F) \simeq
\OSPAN_{n}(\mathcal{C}_{/x};F)$, which restricts to an equivalence
$\Span_{n}(\mathcal{C};F) \simeq \Span_{n}(\mathcal{C}_{/x};F)$.
\end{proof}
As a consequence, we may identify $\Span_{n}(\mathcal{C};F)(\xi,\eta)$
with $\Span_{n-1}(\mathcal{C}_{/x \times y}; F_{\xi,\eta})$; it is
easy to see that this identification is compatible with the
identification $\Span_{n}(\mathcal{C})(x,y) \simeq
\Span_{n-1}(\mathcal{C}_{/x \times y})$.

It follows that if $F$ is a functor to symmetric monoidal $n$-fold
Segal objects in $\mathcal{C}$, then $\Span_{n}(\mathcal{C};F)$ is a
symmetric monoidal $n$-fold Segal space (see \cite{spans}*{Proposition 13.1}).

If $\mathcal{X}$ is an $\infty$-topos, we can make sense of
\emph{complete} $n$-fold Segal objects in $\mathcal{X}$, and of
(symmetric monoidal) $n$-fold Segal objects \emph{having adjoints}
(and \emph{having duals}). We then have
that:
\begin{itemize}
\item If $F \colon \Dnop \to \mathcal{X}$ is complete, then
$\Span_{n}(\mathcal{X};F)$ is a complete $n$-fold Segal space by
\cite{spans}*{Corollary 9.7}.
\item If $F$ has adjoints, then so does $\Span_{n}(\mathcal{X}; F)$,
  by \cite{spans}*{Theorem 3.3}.
\item If $F$ is a symmetric monoidal complete $n$-fold Segal object in
  $\mathcal{X}$ that has duals, then $\Span_{n}(\mathcal{X};F)$ has
  duals.
\end{itemize}
Here we only consider $\mathcal{X}$ of the form
$\mathcal{P}(\mathcal{C})$ for some \icat{} $\mathcal{C}$, in which
case all these notions are given objectwise in $\mathcal{C}$ by the
usual notions for $n$-fold Segal spaces.

\subsection{Spans with Coefficients}\label{subsec:spancoeff}
We now introduce higher categories of spans with coefficients as a
variant of the constructions above:
\begin{defn}
  Suppose $\mathcal{C}$ is a small \icat{}. Given a functor $F \colon
  \mathcal{C}^{\op} \to \Seg_{n}(\mathcal{S})$, we define the $n$-fold
  Segal space of \emph{spans in $\mathcal{C}$ with coefficients in
    $F$} as the pullback
\[
  \begin{tikzcd}
    \Span_{n}(\mathcal{C}; F) \arrow{r} \arrow{d}
     &
    \Span_{n}(\mathcal{P}(\mathcal{C}); F') \arrow{d} \\
    \Span_{n}(\mathcal{C}) \arrow{r} & \Span_{n}(\mathcal{P}(\mathcal{C})),
  \end{tikzcd}
\]
where $F'$ is $F$ regarded as a functor $\Dnop \to
\mathcal{P}(\mathcal{C})$, $\Span_{n}(\mathcal{P}(\mathcal{C}); F')$
is the \icat{} of spans in the $\infty$-topos
$\mathcal{P}(\mathcal{C})$ with coefficients in $F'$, and the bottom horizontal functor is induced by the
Yoneda embedding. We define the variants $\SPAN_{n}(\mathcal{C}; F)$,
etc., similarly.
\end{defn}

\begin{remark}\label{rmk:OSPANcoeffbifib}
  From the definition of $\OSPAN_{n}(\mathcal{P}(\mathcal{C});F')$
  we see that $\OSPAN_{n}(\mathcal{C}; F)$ has the following
  description: its fibre at $I$ is the space of commutative diagrams
\[
  \begin{tikzcd}
  \bbS^{I} \arrow{r} \arrow{d} \arrow[bend right=65]{dd}[left]{\Pi_{I}} & \mathcal{P}(\mathcal{C})^{\Delta^{1}}
  \arrow{d} \\
   \mathcal{C}\times \Dnop  \arrow{r}{Y \times F'} \arrow{d} & \mathcal{P}(\mathcal{C}) \times
  \mathcal{P}(\mathcal{C}) \\
  \Dnop,
  \end{tikzcd}
\]
where $Y$ denotes the Yoneda embedding.
If we define $\mathcal{F} \to \mathcal{C} \times \Dnop$ by the pullback square
\[
  \begin{tikzcd}
    \mathcal{F} \arrow{r} \arrow{d} &
    \mathcal{P}(\mathcal{C})^{\Delta^{1}} \arrow{d} \\
    \mathcal{C} \times \Dnop \arrow{r}{Y \times F'} &
    \mathcal{P}(\mathcal{C}) \times \mathcal{P}(\mathcal{C}),
  \end{tikzcd}
\]
then we can equivalently describe $\OSPAN_{n}(\mathcal{C};F)$ as the
space of commutative diagrams
\[
  \begin{tikzcd}
    \bbS^{I} \arrow{r} \arrow{dr}{\Pi_{I}} & \mathcal{F} \arrow{d}\\
     & \Dnop.
  \end{tikzcd}
\]
(Here $\mathcal{F} \to \mathcal{C} \times \Dnop$ is the bifibration
(see \S\ref{subsec:bifib}) corresponding to $F$ viewed as a functor
$\mathcal{C}^{\op} \times \Dnop \to \mathcal{S}$.)
From this we obtain an alternative definition of
$\SPAN_{n}(\mathcal{C};F)_{I}$ as the pullback
\[
  \begin{tikzcd}
    \SPAN_{n}(\mathcal{C}; F) \arrow{d} \arrow{r} &
    \OSPAN(\mathcal{F}) \arrow{d} \\
    \SPAN_{n}(\mathcal{C}) \arrow{r} &
    \OSPAN_{n}(\mathcal{C}) \times_{\Dnop} \OSPAN_{n}(\Dnop),
  \end{tikzcd}
\]
where the bottom horizontal map is the fibre product of the inclusion
$\SPAN_{n}(\mathcal{C}) \to \OSPAN_{n}(\mathcal{C})$ with the functor
$\Dnop \to \OSPAN_{n}(\Dnop)$ corresponding to $\Pi \colon \widehat{\bbS}
\to \Dnop$.
\end{remark}

\begin{remark}
  Given a functor $F \colon \mathcal{C} \to \Cat^{n}(\mathcal{S})$, it
  follows from Remark~\ref{rmk:spanlocsysfold} that we have
  \[ \Span_{n}(\mathcal{C}; F) \simeq
    U^{n}_{\Seg}\SPAN_{n}(\mathcal{C}; F) \simeq
    \Span_{n}(\mathcal{C}; U^{n}_{\Seg}F).\]
\end{remark}

\begin{lemma}\
  \begin{enumerate}[(i)]
  \item If $F \colon \mathcal{C}^{\op} \to \Seg_{n-1}(\mathcal{S})$ lands in the full
    subcategory $\Cat_{(\infty,n)}$ of complete $n$-fold Segal spaces,
    then $\Span_{n}(\mathcal{C};F)$ is a complete Segal space.
  \item If $F$ is a functor from $\mathcal{C}^{\op}$ to symmetric
    monoidal  $n$-fold Segal spaces, then
    $\Span_{n}(\mathcal{C};F)$ is symmetric monoidal.
  \item If $F$ is a functor from $\mathcal{C}^{\op}$ to
    $(\infty,n)$-categories with adjoints, then
    $\Span_{n}(\mathcal{C};F)$ has adjoints.
   \item If $F$ is a functor from $\mathcal{C}^{\op}$ to
    symmetric monoidal $(\infty,n)$-categories with duals, then
    $\Span_{n}(\mathcal{C};F)$ has duals.
  \end{enumerate}
\label{lm:spandualadjoints}
\end{lemma}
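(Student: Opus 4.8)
The plan is to read off all four statements from the defining pullback square
\[
  \begin{tikzcd}
    \Span_{n}(\mathcal{C}; F) \arrow{r} \arrow{d} & \Span_{n}(\mathcal{P}(\mathcal{C}); F') \arrow{d} \\
    \Span_{n}(\mathcal{C}) \arrow{r} & \Span_{n}(\mathcal{P}(\mathcal{C}))
  \end{tikzcd}
\]
by combining (a) the fact that each of the three outer corners already has the property in question with (b) the stability of that property under pullbacks. For the two span corners, $\Span_{n}(\mathcal{C})$ and $\Span_{n}(\mathcal{P}(\mathcal{C}))$ are complete, symmetric monoidal (once $\mathcal{C}$ has finite products, as is needed for (ii) and (iv) to make sense), and have duals, hence adjoints, by the results of \cite{spans} quoted above. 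For the top--right corner I would use the remark that, since $\mathcal{X}=\mathcal{P}(\mathcal{C})$ is a presheaf $\infty$-topos, completeness, having adjoints, and having duals of an $n$-fold Segal object $F' \colon \Dnop \to \mathcal{X}$ are all detected objectwise in $\mathcal{C}$; thus the hypothesis that $F$ lands in the appropriate full subcategory of $\Cat_{(\infty,n)}$ becomes the corresponding objectwise statement for $F'$, and the topos-level facts (\cite{spans}*{Corollary 9.7} for completeness, \cite{spans}*{Proposition 13.1} for the monoidal structure, \cite{spans}*{Theorem 3.3} for adjoints, and the duals bullet above) then show $\Span_{n}(\mathcal{P}(\mathcal{C}); F')$ is complete, symmetric monoidal, has adjoints, resp.\ has duals.

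With the corners understood, I would prove (i) first, since it guarantees that the square is a genuine pullback of $(\infty,n)$-categories — which is exactly what the preservation results for the remaining parts require. Part (i) itself is immediate: completeness is cut out by a reflective localization, so $\Cat_{(\infty,n)} = \CSS_{n}(\mathcal{S}) \subseteq \Seg_{n}(\mathcal{S})$ is closed under limits, and since all three corners are complete the pullback computed in $\Seg_{n}(\mathcal{S})$ is again complete. For (ii), symmetric monoidal $(\infty,n)$-categories are commutative monoids in $\Cat_{(\infty,n)}$ and the forgetful functor to $\Cat_{(\infty,n)}$ creates limits; because the Yoneda embedding preserves finite products the two horizontal comparison functors are symmetric monoidal, so the square is a pullback of commutative monoids and $\Span_{n}(\mathcal{C};F)$ inherits a symmetric monoidal structure. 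Part (iv) is then a direct application of Corollary~\ref{cor:hasdualspb}(ii): we have exhibited a pullback of symmetric monoidal $(\infty,n)$-categories whose three corners all have duals, whence $\Span_{n}(\mathcal{C};F)$ has duals.

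The part requiring the most care, and which I expect to be the main obstacle, is (iii): here $F$ is assumed only to land in $(\infty,n)$-categories \emph{with adjoints} and carries no monoidal structure, so $\Span_{n}(\mathcal{P}(\mathcal{C});F')$ is not symmetric monoidal and Corollary~\ref{cor:hasdualspb}(i) does not apply. What is available is the non-monoidal criterion proved above for $(\infty,2)$-categories, that in a pullback of $(\infty,2)$-categories a $1$-morphism admits a left (right) adjoint as soon as its two images do. To upgrade this to all $n$ I would induct on $n$, using that both operations entering the recursive Definition~\ref{defn:hasduals} of ``has adjoints'' commute with the pullback: passing to the underlying $2$-fold Segal space is a limit-preserving truncation, and forming the mapping $(\infty,n-1)$-category is governed by the identification $\Span_{n}(\mathcal{C};F)(\xi,\eta) \simeq \Span_{n-1}(\mathcal{C}_{/x\times y}; F_{\xi,\eta})$ recalled before the statement. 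At the bottom level the $(\infty,2)$-criterion gives adjoints for every $1$-morphism of $\Span_{n}(\mathcal{C};F)$, while the mapping objects are again spans with coefficients whose coefficient functor $F_{\xi,\eta}$ lands in $(\infty,n-1)$-categories with adjoints (being built from the mapping categories of the $F(c)$, which have adjoints), so the inductive hypothesis closes the argument. The one thing to verify carefully — and where the bookkeeping will concentrate — is precisely that these two operations are compatible with the pullback presentation of $\Span_{n}(\mathcal{C};F)$.
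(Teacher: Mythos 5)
Your proposal is correct and, for parts (i), (ii) and (iv), follows the paper's proof essentially verbatim: the paper likewise observes that $F'$ satisfies the relevant objectwise condition in $\mathcal{P}(\mathcal{C})$, invokes the results of \cite{spans} for the three corners of the defining square, deduces (i) from the fact that a limit of complete Segal objects computed in $n$-fold Segal spaces is complete, deduces (ii) from the symmetric monoidality of the functors in the square together with the fact that the forgetful functor from symmetric monoidal $n$-fold Segal spaces preserves limits, and deduces (iv) from Corollary~\ref{cor:hasdualspb}. The one place you diverge is part (iii): the paper disposes of it with the same citation of Corollary~\ref{cor:hasdualspb}, whereas you object that this corollary is stated for pullbacks of \emph{symmetric monoidal} $(\infty,n)$-categories (a structure absent in (iii)) and instead run an induction on $n$ directly from the non-monoidal $(\infty,2)$-categorical proposition, checking that passage to underlying $2$-fold Segal spaces and to mapping objects commutes with the pullback and that $F_{\xi,\eta}$ again lands in $(\infty,n-1)$-categories with adjoints. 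Your caution is warranted by the letter of the corollary, but note that your induction is exactly the argument by which the paper promotes the proposition to the corollary (the step it calls ``immediate''), and that argument never uses the monoidal structure for the adjoints clause; so the two treatments of (iii) have the same mathematical content, yours simply making explicit the bookkeeping the paper suppresses. (Using Proposition~\ref{propn:SpanCFmaps} for the inductive step is unproblematic even though it appears after the lemma in the paper, since its proof is independent of Lemma~\ref{lm:spandualadjoints}.)
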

\begin{proof}
  In case (i), $F' \colon \Dnop \to \mathcal{P}(\mathcal{C})$ is a
  complete $n$-fold Segal object of $\mathcal{P}(\mathcal{C})$, so
  $\Span_{n}(\mathcal{P}(\mathcal{C}); F')$ is a complete $n$-fold
  Segal space by \cite{spans}*{Proposition 9.2}. The $n$-fold Segal
  space $\Span_{n}(\mathcal{C};F)$ is therefore complete as the limit of
  a diagram of complete objects, computed in $n$-fold Segal spaces, is
  complete. Similarly, in case (ii) $F'$ is a symmetric monoidal
   $n$-fold Segal object in
  $\mathcal{P}(\mathcal{C})$, and so
  $\Span_{n}(\mathcal{P}(\mathcal{C}); F')$ is symmetric monoidal
   by \cite{spans}*{Proposition 13.1}. Moreover, the
  functors in the pullback square defining $\Span_{n}(\mathcal{C};F)$
  are naturally symmetric monoidal, and the forgetful functor from symmetric
  monoidal $n$-fold Segal spaces to $n$-fold Segal spaces preserves
  limits. Parts (iii) and (iv) follow similarly using
  Corollary~\ref{cor:hasdualspb} together with \cite{spans}*{Theorem
    13.3 and Corollary 13.4}.
\end{proof}

\begin{propn}\label{propn:SpanCFmaps}
  Suppose $\xi,\eta$ are objects of $\Span_{n}(\mathcal{C}; F)$
  corresponding to pairs $(x \in \mathcal{C}, \xi \in F(x))$, $(y \in
  \mathcal{C}, \eta \in F(y))$. If we define $F_{\xi,\eta}\colon \mathcal{C}_{/x,y} \to
  \Seg_{n-1}(\mathcal{S})$ to be the functor that takes $x
  \from z \to y$ to the pullback
  \[
    \begin{tikzcd}
    F_{\xi,\eta}(z) \arrow{d}\arrow{rr} & & F(z)_{1}\arrow{d}\\
    \{(\xi,\eta)\} \arrow{r} & F(x)_{0} \times F(y)_{0} \arrow{r} &
    F(z)_{0} \times F(z)_{0},
  \end{tikzcd}
\]
  then there is a natural equivalence
  of $(n-1)$-fold Segal spaces
  \[ \Span_{n}(\mathcal{C}; F)(\xi,\eta) \simeq
    \Span_{n-1}(\mathcal{C}_{/x,y}; F_{\xi,\eta}).\]
\end{propn}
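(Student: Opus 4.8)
The plan is to reduce the statement about spans-with-coefficients to the corresponding statement for ordinary spans in a presheaf topos, which is already available in the literature as cited in the excerpt (\cite{spans}*{Proposition 9.3}), and then to identify the relevant slice constructions. The key observation is that $\Span_{n}(\mathcal{C}; F)$ is \emph{defined} as a pullback over $\Span_{n}(\mathcal{C}) \to \Span_{n}(\mathcal{P}(\mathcal{C}))$ along $\Span_{n}(\mathcal{P}(\mathcal{C}); F')$, where $F'$ is $F$ viewed as a functor $\simp^{n,\op} \to \mathcal{P}(\mathcal{C})$. Since mapping spaces (here, $(n-1)$-fold Segal spaces of morphisms) are computed as pullbacks, and limits commute with limits, the mapping object $\Span_{n}(\mathcal{C};F)(\xi,\eta)$ should be expressible as a pullback of the three mapping objects of the corner categories in the defining square.

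First I would recall, from the discussion preceding Lemma~\ref{lem:SpanFslice}, the identification of mapping objects in the topos-level span category: for the $\infty$-topos $\mathcal{P}(\mathcal{C})$ and the complete $n$-fold Segal object $F'$, we have $\Span_{n}(\mathcal{P}(\mathcal{C}); F')(\xi,\eta) \simeq \Span_{n-1}(\mathcal{P}(\mathcal{C}); F'_{\xi,\eta})$, where $F'_{\xi,\eta}$ is the pullback of $F'_1 \to F'_0 \times F'_0$ along $\{(\xi,\eta)\}$. Combining this with Lemma~\ref{lem:SpanFslice} (the slice invariance $\Span_{n-1}(\mathcal{P}(\mathcal{C})_{/x \times y}; F'_{\xi,\eta}) \simeq \Span_{n-1}(\mathcal{P}(\mathcal{C}); F'_{\xi,\eta})$) rewrites this mapping object in terms of a span category over the slice $\mathcal{P}(\mathcal{C})_{/x\times y}$. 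In parallel, for the bottom row of the defining square, the excerpt already records $\Span_{n}(\mathcal{C})(x,y) \simeq \Span_{n-1}(\mathcal{C}_{/x,y})$ and its compatibility with the presheaf-level identification.

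Next, I would take mapping objects $(\xi,\eta)$ throughout the entire pullback square that defines $\Span_{n}(\mathcal{C};F)$. Because forming the $(n-1)$-fold Segal space of morphisms between two fixed objects is itself a limit (a pullback along $\{(\xi,\eta)\} \to \mathcal{D}_0 \times \mathcal{D}_0$) and limits commute, the resulting square
\[
  \begin{tikzcd}
    \Span_{n}(\mathcal{C}; F)(\xi,\eta) \arrow{r} \arrow{d}
     &
    \Span_{n-1}(\mathcal{P}(\mathcal{C})_{/x\times y}; F'_{\xi,\eta}) \arrow{d} \\
    \Span_{n-1}(\mathcal{C}_{/x,y}) \arrow{r} & \Span_{n-1}(\mathcal{P}(\mathcal{C})_{/x\times y})
  \end{tikzcd}
\]
is again a pullback. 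Unwinding the definition of $\Span_{n-1}(\mathcal{C}_{/x,y}; F_{\xi,\eta})$ as a pullback over exactly the same corner categories — here using that $\mathcal{P}(\mathcal{C}_{/x,y}) \simeq \mathcal{P}(\mathcal{C})_{/x\times y}$ and that the presheaf extension of $F_{\xi,\eta}$ agrees with $F'_{\xi,\eta}$ — identifies the upper-left corner with $\Span_{n-1}(\mathcal{C}_{/x,y}; F_{\xi,\eta})$, as desired.

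\textbf{The main obstacle} I expect is the careful bookkeeping needed to verify that the pullback $F_{\xi,\eta}$ of the functor $F$ (a functor $\mathcal{C}_{/x,y} \to \Seg_{n-1}(\mathcal{S})$, defined via the fibrewise pullback of $F(z)_1 \to F(z)_0 \times F(z)_0$) has presheaf extension agreeing with the fibrewise pullback $F'_{\xi,\eta}$ computed at the topos level — i.e.\ that the slicing operations on $F$ and the passage to $\mathcal{P}(\mathcal{C})$ commute. This is essentially a naturality/compatibility check: one must confirm that the Yoneda embedding $\mathcal{C}_{/x,y} \to \mathcal{P}(\mathcal{C})_{/x\times y}$ intertwines the two constructions, which follows because colimit-preserving left Kan extension commutes with the finite limits used to form $F_{\xi,\eta}$ when these are computed objectwise. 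Once this identification is in place, the equivalence of the two descriptions of the upper-left corner is formal, and everything else follows from the already-cited results of \cite{spans}.
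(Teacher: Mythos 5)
Your proposal is correct and is essentially the paper's own proof: pass to mapping objects in the defining pullback square (limits commute with limits), apply \cite{spans}*{Proposition 9.3} together with Lemma~\ref{lem:SpanFslice} at the presheaf level, and then identify the resulting square with the one defining $\Span_{n-1}(\mathcal{C}_{/x,y};F_{\xi,\eta})$ via the equivalence $\mathcal{P}(\mathcal{C}_{/x,y}) \simeq \mathcal{P}(\mathcal{C})_{/Y(x)\times Y(y)}$. The one wrinkle is your justification of the final compatibility: no left Kan extension is involved ($F'$ is just $F$ transposed, and Yoneda extension does not in general commute with finite limits); the correct reason, as in the paper, is that $\mathcal{C}_{/x,y} \to \mathcal{C}$ is the right fibration classified by $Y(x)\times Y(y)$, so the equivalence of presheaf categories is computed by taking objectwise fibers, under which the objectwise pullbacks defining $F_{\xi,\eta}$ agree with the global pullback defining $F'_{\xi',\eta'}$.
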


\begin{proof}
  From the definition of $\Span_{n}(\mathcal{C};F)$ as a pullback it
  follows that we have a pullback square
  \[
    \begin{tikzcd}
      \Span_{n}(\mathcal{C}; F)(\xi, \eta) \arrow{r} \arrow{d}
      & \Span_{n}(\mathcal{P}(\mathcal{C}); F')(\xi',\eta') \arrow{d}
      \\
      \Span_{n}(\mathcal{C})(x,y) \arrow{r} & \Span_{n}(\mathcal{P}(\mathcal{C}))(Y(x),Y(y)),
    \end{tikzcd}
  \]
  where $\xi'$ is the morphism $Y(x) \to F_{0,\ldots,0}$ corresponding
  to $\xi \in F_{0,\ldots,0}(x)$, and similarly for $\eta'$.
  By \cite{spans}*{Proposition 9.3} we can identify
  $\Span_{n}(\mathcal{P}(\mathcal{C}); F')(\xi',\eta')$ with
  $\Span_{n-1}(\mathcal{P}(\mathcal{C}); F'_{\xi',\eta'})$, where
  $F'_{\xi',\eta'}$ is the $(n-1)$-fold Segal object in
  $\mathcal{P}(\mathcal{C})$ defined as the pullback
  \[
    \begin{tikzcd}
      F'_{\xi',\eta'} \arrow{r} \arrow{d} & F_{1}\arrow{d} \\
      Y(x) \times Y(y) \arrow{r}{\xi' \times \eta'} & F_{0} \times F_{0}.
    \end{tikzcd}
  \]
  Here $F'_{\xi',\eta'}$ is naturally a functor $\simp^{n-1,\op} \to
  \mathcal{P}(\mathcal{C})_{/Y(x)\times Y(y)}$, and so by
  Lemma~\ref{lem:SpanFslice} we can equivalently identify this with
  $\Span_{n-1}(\mathcal{P}(\mathcal{C})_{/Y(x)\times Y(y)};
  F'_{\xi',\eta'})$, compatibly with the identification of
  $\Span_{n}(\mathcal{P}(\mathcal{C}))(Y(x),Y(y))$ with
  $\Span_{n-1}(\mathcal{P}(\mathcal{C})_{/Y(x) \times Y(y)})$ from
  \cite{spans}*{Proposition 8.3}. We thus have a pullback square
  \[
    \begin{tikzcd}
      \Span_{n}(\mathcal{C}; F)(\xi, \eta) \arrow{r} \arrow{d}
      & \Span_{n-1}(\mathcal{P}(\mathcal{C})_{/Y(x)\times Y(y)}; F'_{\xi',\eta'}) \arrow{d}
      \\
      \Span_{n-1}(\mathcal{C}_{/x,y}) \arrow{r} &
      \Span_{n-1}(\mathcal{P}(\mathcal{C})_{/Y(x) \times Y(y)}).
    \end{tikzcd}
  \]
  The canonical functor
  $\mathcal{P}(\mathcal{C}_{/x,y}) \to
  \mathcal{P}(\mathcal{C})_{/Y(x)\times Y(y)}$ is an equivalence
  (since $\mathcal{C}_{/x,y} \to \mathcal{C}$ is the right fibration
  for $Y(x) \times Y(y)$), and under this equivalence the functor
  $F'_{\xi',\eta'}$ corresponds to $(F_{\xi,\eta})'$. Our pullback
  square is therefore equivalent to that defining
  $\Span_{n-1}(\mathcal{C}_{/x,y}; F_{\xi,\eta})$, as required.
\end{proof}

\begin{remark}
  In the special case where $\mathcal{C}$ has a terminal object $*$
  and $x \simeq y \simeq *$, so that $\xi$ and $\eta$ are objects of
  $F(*)$, we can identify $F_{\xi,\eta}$ with the functor
  $\mathcal{C}^{\op} \to \Seg_{n-1}(\mathcal{S})$ taking
  $c \in \mathcal{C}$ to the mapping $(n-1)$-fold Segal space
  $F(c)(f^{*}\xi, f^{*}\eta)$ , where $f$ denotes the unique map
  $c \to *$.
\end{remark}

\subsection{Spans with Coefficients in Cospans}\label{subsec:spancoeffcospan}
Suppose $\mathcal{C}$ is an \icat{} with pullbacks, and consider a
functor $F \colon \mathcal{C}^{\op} \to \Catpo$ to the
\icat{} of small \icats{} with pushouts. Then we have a functor
$\COSPAN_{n}(F) \colon \mathcal{C}^{\op} \to \Cat^{n}(\mathcal{S})$,
and we can consider the $n$-uple Segal space
$\SPAN_{n}(\mathcal{C}; \COSPAN_{n}(F))$. Our goal in this subsection
is to give a simpler description of this $n$-uple Segal space:
\begin{propn}\label{propn:spancospandesc}
  Let $\mathcal{F} \to \mathcal{C}^{\op}$ be the cocartesian fibration
  corresponding to a functor $F \colon \mathcal{C}^{\op} \to
  \Catpo$. Then there is a natural equivalence
  \[ \SPAN_{n}(\mathcal{C}; \COSPAN_{n}(F)) \simeq \COSPAN_{n}(\mathcal{F}).\]
\end{propn}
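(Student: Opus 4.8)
The plan is to prove the equivalence levelwise over $\Dnop$. First note that $\COSPAN_{n}(\mathcal{F}) = \SPAN_{n}(\mathcal{F}^{\op})$ is well-defined: the fibres $F(c)$ have pushouts, the transition functors preserve them (this is exactly the condition that $F$ lands in $\Catpo$), and the base $\mathcal{C}^{\op}$ has pushouts since $\mathcal{C}$ has pullbacks, so the cocartesian fibration $\mathcal{F} \to \mathcal{C}^{\op}$ has pushouts and $\mathcal{F}^{\op}$ has pullbacks. Since both sides of the asserted equivalence are $n$-uple Segal spaces, \ie{} functors $\Dnop \to \mathcal{S}$, it suffices to produce an equivalence of these underlying functors that is natural in $I \in \Dnop$; because every comparison map I construct will be a map over $\Dnop$, its compatibility with the Segal maps, and hence the fact that it is an equivalence of $n$-uple category objects, will be automatic.

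For the right-hand side, the fibre of $\COSPAN_{n}(\mathcal{F})$ at $I$ is the space of cartesian functors $\bbS^{I} \to \mathcal{F}^{\op}$, sitting inside the space $\Map(\bbS^{I}, \mathcal{F}^{\op})$ of all functors. For the left-hand side I would use the bifibration description of Remark~\ref{rmk:OSPANcoeffbifib}: writing $\mathcal{G} \to \mathcal{C} \times \Dnop$ for the bifibration attached to $\COSPAN_{n}(F)$ viewed as a functor $\mathcal{C}^{\op} \times \Dnop \to \mathcal{S}$, the fibre of $\OSPAN_{n}(\mathcal{C}; \COSPAN_{n}(F))$ at $I$ is the space of diagrams $\bbS^{I} \to \mathcal{G}$ lying over $\Pi_{I}$. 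The core of the proof is to identify this with functors $\bbS^{I} \to \mathcal{F}^{\op}$. The fibre of $\mathcal{G}$ over $(c,J)$ is $\COSPAN_{n}(F(c))_{J} = \Map(\bbS^{J}, F(c)^{\op})$, \ie{} $\bbS^{J}$-diagrams in the fibre $F(c)^{\op}$ of the cartesian fibration $\mathcal{F}^{\op} \to \mathcal{C}$. Straightening $\mathcal{F}^{\op} \to \mathcal{C}$, a functor $\bbS^{I} \to \mathcal{F}^{\op}$ is the same as its projection $p \colon \bbS^{I} \to \mathcal{C}$ together with a lift; the essential combinatorial point is that the $\Pi_{I}$-reindexing, which assigns to a vertex $(i,j)$ the object $[j-i]$, equivalently the sub-poset $\bbS^{j-i} \subseteq \bbS^{I}$ of pairs above $(i,j)$ (a product of such in the multi-index case), organizes such a lift precisely into a $\Pi_{I}$-diagram into $\mathcal{G}$ covering $p$. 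This yields a natural equivalence between $\OSPAN_{n}(\mathcal{C}; \COSPAN_{n}(F))$ and the subspace of $\Map(\bbS^{I}, \mathcal{F}^{\op})$ consisting of those functors that are cartesian in the fibre directions, the fibrewise-cartesian condition being exactly what is built into the coefficient $\COSPAN_{n}(F(c)) = \SPAN_{n}(F(c)^{\op})$.

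It then remains to match the cartesian loci, and this is where I expect the main work. Passing from $\OSPAN_{n}(\mathcal{C}; \COSPAN_{n}(F))$ to $\SPAN_{n}(\mathcal{C}; \COSPAN_{n}(F))$ imposes that the base functor $p \colon \bbS^{I} \to \mathcal{C}$ be cartesian, \ie{} a right Kan extension from $\bbL^{I}$, while on the right $\COSPAN_{n}(\mathcal{F})$ imposes that $\bbS^{I} \to \mathcal{F}^{\op}$ be a right Kan extension from $\bbL^{I}$. So the crux is the claim that a functor $\bbS^{I} \to \mathcal{F}^{\op}$ is a right Kan extension from $\bbL^{I}$ if and only if its projection to $\mathcal{C}$ is one and it is fibrewise cartesian. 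This is a pointwise Kan extension computation in the total space of a cartesian fibration: the pointwise limits defining the extension decompose into a limit over the base in $\mathcal{C}$, which exists because $\mathcal{C}$ has pullbacks and encodes the span condition, followed by a limit in the fibre transported along cartesian edges. The decomposition is legitimate precisely because the cartesian transport functors of $\mathcal{F}^{\op} \to \mathcal{C}$ are the opposites $F(\alpha)^{\op}$ of the transition functors, and these preserve limits exactly because $F(\alpha)$ preserves pushouts, again the $\Catpo$ hypothesis.

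Combining the two steps, the functors $\bbS^{I} \to \mathcal{F}^{\op}$ that are both fibrewise cartesian and base cartesian are exactly the cartesian ones, so $\SPAN_{n}(\mathcal{C}; \COSPAN_{n}(F))_{I} \simeq \COSPAN_{n}(\mathcal{F})_{I}$, naturally in $I$. Since the construction is carried out entirely over $\Dnop$, the resulting equivalence respects the $n$-uple Segal structure (and, when present, the symmetric monoidal structure), giving the asserted equivalence of $n$-uple Segal spaces. The single step I expect to require genuine care is the Kan extension decomposition of the cartesian condition across the fibration, since it is there that the span condition in $\mathcal{C}$, the cospan condition in the fibres, and the pushout-preservation of $F$ must be shown to interact coherently.
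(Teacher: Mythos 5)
There is a genuine gap, and it sits exactly where you wave your hands: the sentence beginning ``the essential combinatorial point is that the $\Pi_{I}$-reindexing \ldots{} organizes such a lift precisely into a $\Pi_{I}$-diagram into $\mathcal{G}$ covering $p$.'' This identification is the entire content of the proposition, and it is not a reindexing. An object of the bifibration $\mathcal{G}$ over $(c,J)$ is a whole $\bbS^{J}$-shaped cospan diagram in $F(c)$, so a diagram $\bbS^{I} \to \mathcal{G}$ over $\Pi_{I}$ is a homotopy-coherent ``diagram of diagrams'', with morphisms mixing restriction along maps of shapes and pushforward along $F$; collapsing this into a single functor $\bbS^{I,\op} \to \mathcal{F}$ requires a real argument. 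In the paper this is done in several steps: first the appendix results on bifibrations and twisted arrow categories (Corollary~\ref{cor:bifibseclfib} and Corollary~\ref{cor:Funfib}) identify the fibre of $\OSPAN_{n}(\mathcal{C};\OCOSPAN_{n}(F))$ with maps into $\mathcal{F}$ out of the poset $\CbbS^{I} = \Twl(\bbS^{I}) \times_{\Dnop} \hbbS_{n,\op}$ --- note this shape is a poset of sextuples, not $\bbS^{I,\op}$ (Proposition~\ref{propn:Ospancospanbifib}); then an intermediate poset $\mathbb{T}^{I}$ is introduced, receiving natural functors $\alpha_{I}$ from $\CbbS^{I}$ and $\beta_{I}$ from $\bbS^{I,\op}$, and a marked-anodyne filtration argument (Proposition~\ref{propn:markedanod}) shows that restriction along $\beta_{I}$ is an equivalence $\Map_{C/\bbS^{I,\op}}(\mathbb{T}^{I},\mathcal{F}) \isoto \Map(\bbS^{I,\op},\mathcal{F})$. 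Even then one does not get a direct fibrewise equivalence at general $I$: the zigzag only produces a natural map $\COSPAN_{n}(\mathcal{F}) \to \SPAN_{n}(\mathcal{C};\COSPAN_{n}(F))$ of $n$-uple Segal spaces, which is shown to be an equivalence by checking it on fibres over $I$ with all entries $0$ or $1$ (via a localization lemma for $\CbbS^{1} \to \mathbb{T}^{1}$) and invoking the Segal conditions. Your plan to prove the equivalence ``levelwise at every $I$'' and your claim that naturality in $I$ is automatic both underestimate this: the comparison maps must be constructed functorially in $I$ (the paper uses the explicit functors $\alpha_{I}$, $\beta_{I}$, which are products and hence natural), and at a general $I$ nobody proves the comparison directly.

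Your second step --- that a functor $\bbS^{I} \to \mathcal{F}^{\op}$ is a right Kan extension from $\bbL^{I}$ if and only if its projection to $\mathcal{C}$ is one and the fibrewise (pushed-forward) diagrams are limit diagrams, with the $\Catpo$ hypothesis guaranteeing the transport functors preserve the relevant limits --- is correct in outline and corresponds to Lemma~\ref{lem:cocartcolim} and Proposition~\ref{propn:Tcocartftr} in the paper. So you have correctly located where the hypothesis on $F$ enters and what the cartesian loci should match up to. But since the first step is asserted rather than proved, the proposal as written does not constitute a proof: the hard part is precisely the coherent identification of sections of the coefficient bifibration with diagrams in the total space $\mathcal{F}$, and that is where the twisted-arrow and marked-anodyne technology is unavoidable (or must be replaced by an equally substantial argument).
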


Our starting point is the following description of spans with
coefficients in cospans:
\begin{propn}\label{propn:Ospancospanbifib}
  Given $\phi \colon \mathcal{C}^{\op} \to \CatI^{\txt{po}}$ with
  corresponding cocartesian fibration $\mathcal{F} \to
  \mathcal{C}^{\op}$, then $\OSPAN_{n}(\mathcal{C};
  \OCOSPAN_{n}(\phi))_{I}$ is equivalent to the space of diagrams of
  the form
  \csquare{\Twl(\bbS^I) \times_{\Dnop}
    \hbbS_{n,\op}}{\mathcal{F}}{\bbS^{I,\op}}{\mathcal{C}^{\op}}{\alpha}{}{}{\gamma^{\op}}
 such that $\alpha$ takes every morphism of $\Twl(\bbS^{I})
\times_{\Dnop} \hbbS_{n,\op}$ that lies over a cartesian morphism in
$\hbbS_{n,\op}$ to a cocartesian morphism in $\mathcal{F}$, 
  where $\hbbS_{n,\op} \to \Dnop$ is the cartesian fibration for $I
  \mapsto \bbS^{I,\op}$. Here $\Twl(\bbS^{I})$ denotes the left
  fibration version of the twisted arrow category of $\bbS^{I}$; see
  Definition~\ref{def:Tw}.
\end{propn}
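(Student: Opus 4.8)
The plan is to unfold both sides into spaces of diagrams and then match them by a twisted-arrow (end) manipulation together with the universal property of $\OSPAN^{+}$. First I would unwind the left-hand side. Unpacking Remark~\ref{rmk:OSPANcoeffbifib} (the bifibration description), an object of $\OSPAN_{n}(\cC; \OCOSPAN_{n}(\phi))_{I}$ is equivalently a functor $\gamma\colon \bbS^{I}\to\cC$ (equivalently $\gamma^{\op}\colon \bbS^{I,\op}\to\cC^{\op}$) together with a natural transformation $Y\circ\gamma \to (\OCOSPAN_{n}(\phi))'\circ\Pi_{I}$ of functors $\bbS^{I}\to\cP(\cC)$, where $(\OCOSPAN_{n}(\phi))'$ is the coefficient system regarded as a functor $\Dnop\to\cP(\cC)$. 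Since $\OCOSPAN_{n}(\phi)$ is valued in $\mathcal{S}$-valued $n$-uple Segal objects, for each vertex $s$ of $\bbS^{I}$ this transformation records a point of $\OCOSPAN_{n}(\phi(\gamma(s)))_{\Pi_{I}(s)} = \Map(\bbS^{\Pi_{I}(s)}, \phi(\gamma(s))^{\op}) = \Map(\bbS^{\Pi_{I}(s),\op}, \phi(\gamma(s)))$, i.e.\ a cospan diagram of shape $\bbS^{\Pi_{I}(s),\op}$ in the fibre $\phi(\gamma(s))$, subject to compatibilities along the arrows of $\bbS^{I}$.

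Next I would repackage this bundle of compatible data. The natural transformation over $\bbS^{I}$ is an end, so by the standard twisted-arrow description of natural transformations (via the twisted arrow category of Definition~\ref{def:Tw}) it is the same as a compatible family indexed by $\Twl(\bbS^{I})$, where the source projection $\Twl(\bbS^{I})\to\bbS^{I,\op}$ governs the $\cC$-variable through $\gamma^{\op}$, and the target projection $\Twl(\bbS^{I})\to\bbS^{I}$ followed by $\Pi_{I}$ governs the $\Dnop$-variable. For the inner cospan datum I would invoke the universal property of $\OSPAN^{+}$ (Remark~\ref{rmk:OSPAN+univprop}), in its underlying left-fibration, mapping-space form: a point of $\OCOSPAN_{n}(\phi(c))_{K}=\OSPAN_{n}(\phi(c)^{\op})_{K}$ is a functor out of the fibre $\bbS^{K,\op}$ of $\hbbS_{n,\op}$ into $\phi(c)$. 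Assembling these fibrewise cospan shapes as $K$ ranges over $\Dnop$ produces exactly the cartesian fibration $\hbbS_{n,\op}\to\Dnop$, and forming the fibre product $\Twl(\bbS^{I})\times_{\Dnop}\hbbS_{n,\op}$ ties the twisted-arrow index of the outer span to the cospan shape of the inner coefficient. Passing from the fibrewise maps $\Map(-,\phi(c))$ to a single functor $\alpha$ into the total space $\cF$ of the cocartesian fibration for $\phi$, lying over $\gamma^{\op}$, is then the straightening/unstraightening correspondence for $\cF\to\cC^{\op}$.

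The \emph{crux} is to match the remaining conditions: I must check that requiring $\alpha$ to send morphisms lying over cartesian morphisms of $\hbbS_{n,\op}$ to cocartesian morphisms of $\cF$ reproduces exactly the content of the natural transformation, namely (i) the Segal/face compatibility identifying the feet of each cospan with the transported coefficients of the endpoints of the span, and (ii) the functoriality of the coefficient in the $\cC$-variable, i.e.\ transport by $\phi$. The case $n=1$, $I=[1]$ already exhibits both: the morphism in $\Twl(\bbS^{1})$ from $\mathrm{id}_{(0,0)}$ to the twisted arrow $(0,1)\to(0,0)$ covers the $\cC^{\op}$-arrow $\gamma^{\op}$ of the leg $d\to c$ while its $\hbbS_{1,\op}$-component is an identity (hence cartesian), so cocartesianness of its image forces $\alpha$ on that arrow to equal the $\phi$-transport of the endpoint coefficient; dually the morphism from $\mathrm{id}_{(0,1)}$ to the same twisted arrow lies over the cartesian morphism of $\hbbS_{1,\op}$ restricting a cospan to its foot, and cocartesianness identifies that foot with the transported coefficient. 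The remaining, vertical (non-cartesian) morphisms of $\hbbS_{n,\op}$ are left free and encode the genuine cospan legs.

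I expect the main obstacle to be carrying out this identification coherently rather than objectwise: one must produce the equivalence of spaces naturally in $I$ and compatibly with the cocartesian fibration structure over $\Dnop$, which forces one to phrase the twisted-arrow rewriting and both uses of the universal property as equivalences of mapping spaces over $\Dnop$ and to verify that the cartesian/cocartesian bookkeeping is stable under the structure maps of $\Dnop$. Keeping the variances straight — the various $\op$'s relating $\Twl(\bbS^{I})$, $\bbS^{I,\op}$, $\hbbS_{n,\op}$, and the contravariance of $\phi$ — is the delicate point; once the conditions are aligned as above, the equivalence follows by combining Remarks~\ref{rmk:OSPAN+univprop} and~\ref{rmk:OSPANcoeffbifib} with the end description of natural transformations.
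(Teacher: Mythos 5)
Your proposal is correct and follows essentially the same route as the paper: the paper unwinds $\OSPAN_{n}(\mathcal{C};\OCOSPAN_{n}(\phi))_{I}$ via the bifibration description of Remark~\ref{rmk:OSPANcoeffbifib}, rewrites sections of that bifibration as diagrams on $\Twl(\bbS^{I})$ over $\bbS^{I,\op}\times\bbS^{I}$ via Corollary~\ref{cor:bifibseclfib} (your end/twisted-arrow step), and then assembles the fibrewise cospan data into a single functor $\Twl(\bbS^{I})\times_{\Dnop}\hbbS_{n,\op}\to\mathcal{F}$ with the stated cocartesian condition via Corollary~\ref{cor:Funfib} (your straightening/unstraightening step). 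The only caveat is that the two facts you invoke as standard are exactly the technical results the paper devotes its appendix to proving, so the coherence issues you rightly flag at the end are real, but they are precisely what those lemmas are designed to resolve.
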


\begin{proof}
  Let $\mathcal{X} \to \mathcal{C} \times \Dnop$ be the bifibration
  corresponding to $(c,I)\mapsto \OCOSPAN_{n}^{+}(\phi(c))_{I}$. Then
  by Remark~\ref{rmk:OSPANcoeffbifib} we can identify $\OSPAN_{n}(\mathcal{C};
  \OCOSPAN_{n}^{+}(\phi))_{I}$ with the space of commutative diagrams
\[
\left\{  \begin{tikzcd}
    \bbS^{I} \arrow{r} \arrow{dr}{\Pi_{I}} & \mathcal{X} \arrow{d}\\
     & \Dnop.
  \end{tikzcd} \right \}
\simeq
\left \{
\begin{tikzcd}
    \bbS^{I} \arrow{r} \arrow{dr} \arrow[bend right=20]{ddr}[below left]{\Pi_{I}} & \mathcal{X} \arrow{d}\\
     & \mathcal{C} \times \Dnop \arrow{d} \\
     & \Dnop
   \end{tikzcd}
 \right\}
\]
Now Corollary~\ref{cor:bifibseclfib} identifies this with the space of
commutative diagrams
\[ \left \{
    \begin{tikzcd}
      \Twl(\bbS^{I}) \arrow{r} \arrow{d} & \mathcal{X}^{\ell}
      \arrow{d} \\
      \bbS^{I,\op} \times \bbS^{I} \arrow{r} \arrow{d} &
      \mathcal{C}^{\op} \times \Dnop  \arrow{d} \\
      \bbS^{I} \arrow{r}{\Pi_{I}} & \Dnop.
    \end{tikzcd}
  \right\}
\]
Here $\mathcal{X}^{\ell} \to \mathcal{C}^{\op} \times \Dnop$ is the
underlying left fibration of the cocartesian fibration for the functor
$(c,I)\mapsto \Fun(\bbS^{I}, \phi(c))$, and so
Corollary~\ref{cor:Funfib} identifies this space with that of
commutative squares
\[ \left \{
    \begin{tikzcd}
      \Twl(\bbS^{I}) \times_{\Dnop} \hbbS_{n,\op} \arrow{r}{\alpha} \arrow{d}
      & \mathcal{F} \arrow{d} \\
      \bbS^{I,\op} \arrow{r} & \mathcal{C}^{\op},
    \end{tikzcd} \right \}
\]
such that $\alpha$ takes every morphism of $\Twl(\bbS^{I})
\times_{\Dnop} \hbbS_{n,\op}$ that lies over a cartesian morphism in
$\hbbS_{n,\op}$ to a cocartesian morphism in $\mathcal{F}$.
\end{proof}

\begin{notation}
  We use the abbreviation
  \[ \CbbS^{I} := \Twl(\bbS^{I}) \times_{\Dnop} \hbbS_{n,\op}.\]
\end{notation}

\begin{cor}
  $\SPAN_{n}(\mathcal{C}; \COSPAN_{n}(\phi))_{I}$ is the space of commutative
  diagrams
  \csquare{\Twl(\bbS^I) \times_{\Dnop}
    \hbbS_{n,\op}}{\mathcal{F}}{\bbS^{I,\op}}{\mathcal{C}^{\op}}{\alpha}{}{}{\gamma^{\op}}
  where
  \begin{enumerate}[(1)]
  \item $\gamma \colon \bbS^{I} \to \mathcal{C}$ is cartesian, \ie{}
    is a right Kan extension of its restriction to $\bbL^{I}$,
  \item $\alpha$ takes every morphism of $\Tw^{\ell}(\bbS^I) \times_{\Dnop}
    \hbbS_{n,\op}$ that lies over a cartesian morphism in $\hbbS_{n,\op}$ to a
    cocartesian morphism in $\mathcal{F}$,
  \item for every morphism $i \colon A \to B$ in $\bbS^{I}$, the
    diagram
    \[ \bbS^{\pi_{I}(B),\op} \simeq \{i\} \times_{\Dnop} \hbbS_{n,\op} \to
      \{\gamma(A)\} \times_{\mathcal{C}^{op}} \mathcal{F} \simeq
      \phi(\gamma(A))\]
    is cocartesian, \ie{} is a left Kan extension of its restriction
    to $\bbL^{\pi_{I}(B),\op}$.\qed
  \end{enumerate}
\end{cor}

In order to use this description to prove
Proposition~\ref{propn:spancospandesc} we need to relate diagrams
of shape $\CbbS^{I}$ to diagrams of shape $\bbS^{I,\op}$ in
$\mathcal{F}$. This we will do in two steps, using the following
explicit description of the category $\CbbS^{n}$:
\begin{lemma}\ 
  \begin{enumerate}[(i)]
  \item The category $\bbS^{n} \times_{\simp^{\op}} \hbbS_{1,\op}$ is
    equivalent to the partially ordered set of quadruples of integers $(a,b,c,d)$,
    $0 \leq a \leq b \leq c \leq d \leq n$, where
    $(a,b,c,d) \leq (a',b',c',d')$ \IFF{}
    \[ a \leq a' \leq b' \leq b \leq c \leq c' \leq d' \leq d.\]
    This corresponds to a cartesian morphism in $\hbbS_{1,\op}$ \IFF{}
    $b'=b, c'=c$, and the projection to $\bbS^{n}$ is given by
    $(a,b,c,d) \mapsto (a,d)$.
  \item The category $\Twl(\bbS^{n})$ is equivalent to the
    partially ordered set of quadruples of integers $(a,b,c,d)$, $0
    \leq a \leq b \leq c \leq d \leq n$, where
    $(a,b,c,d) \leq (a',b',c',d')$ \IFF{}
    \[ a' \leq a \leq b \leq b' \leq c' \leq c \leq d \leq d',\]
    \ie{} the opposite of the partially ordered set in (i). The
    projections $\Twl(\bbS^{n}) \to \bbS^{n,\op},\bbS^{n}$ are given
    by $(a,b,c,d) \mapsto (a,d), (b,c)$, respectively.
  \item The category $\CbbS^{n} \simeq \Tw^{\ell}(\bbS^{n}) \times_{\simp^{\op}}
    \hbbS_{1,\op}$ is equivalent to the partially ordered set of
    sextuples of integers $(a,b,c,d,e,f)$ where $0 \leq a \leq b \leq c \leq d
    \leq e \leq f \leq n$ where $(a,b,c,d,e,f) \leq
    (a',b',c',d',e',f')$ \IFF{}
    \[ a' \leq a \leq b \leq b' \leq c' \leq c \leq d \leq d' \leq e'
      \leq e \leq f \leq f'.\]
    This corresponds to a cartesian morphism in $\hbbS_{1,\op}$ \IFF{}
    $c'=c, d'=d$. The projections to $\Tw^{\ell}(\bbS^{n})$ and
    $\bbS^{n} \times_{\simp^{\op}} \hbbS_{1,\op}$ are given by
    \[ (a,b,c,d,e,f) \mapsto (b,c,d,e),\quad (a,b,d,e),\]
    respectively.
  \end{enumerate}
\end{lemma}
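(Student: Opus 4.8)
The plan is to exploit the fact that every category appearing in the statement is actually a \emph{partially ordered set}. Indeed $\bbS^{n}$ is a poset, hence so is its twisted arrow category $\Twl(\bbS^{n})$; and although neither $\simp^{\op}$ nor $\hbbS_{1,\op}$ is a poset, in each fibre product the first factor ($\bbS^{n}$, respectively $\Twl(\bbS^{n})$) rigidifies the underlying simplicial morphism, after which the remaining datum lives in a poset fibre of the form $\bbS^{m,\op}$. Thus between any two objects there is at most one morphism, and proving each equivalence reduces to two mechanical tasks: exhibit a bijection on objects, and check that the two partial orders coincide (identifying the cartesian morphisms along the way).

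For (ii) I would unwind the definition of $\Twl$ directly. An object of $\Twl(\bbS^{n})$ is an arrow of $\bbS^{n}$, i.e.\ a relation $(i,j) \leq (i',j')$, which by the order on $\bbS^{n}$ means $i \leq i' \leq j' \leq j$; setting $(a,b,c,d) = (i,i',j',j)$ gives the quadruple, with the source $(i,j) = (a,d)$ contributing the contravariant leg to $\bbS^{n,\op}$ and the target $(i',j') = (b,c)$ the covariant leg to $\bbS^{n}$. A morphism of twisted arrows is a backward map on sources together with a forward map on targets, so translating these two $\bbS^{n}$-relations into inequalities and concatenating them yields exactly $a' \leq a \leq b \leq b' \leq c' \leq c \leq d \leq d'$.

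For (i) I would unwind $\Pi_{n}$ and the cartesian fibration structure. Since $\Pi_{n}(i,j) = [j-i]$, an object of $\bbS^{n} \times_{\simp^{\op}} \hbbS_{1,\op}$ is a pair $(i,j)$ together with $(p,q)$ satisfying $0 \leq p \leq q \leq j-i$, and the assignment $(a,b,c,d) = (i, i+p, i+q, j)$ is the desired bijection. A morphism factors as a relation in $\bbS^{n}$ (giving $a \leq a'$ and $d' \leq d$) together with a morphism in $\hbbS_{1,\op}$ lying over the inert map that $\Pi_{n}$ produces; writing the latter as a cartesian transport along that inert map followed by a fibrewise relation in $\bbS^{\bullet,\op}$ yields the remaining inequalities $b' \leq b$ and $c \leq c'$, and concatenation gives the stated order. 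The cartesian morphisms are precisely those whose fibrewise part is an identity, which is exactly the condition $b = b'$, $c = c'$.

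Finally, for (iii) I would assemble the previous two computations. The structural map $\Twl(\bbS^{n}) \to \simp^{\op}$ factors through the target projection to $\bbS^{n}$ followed by $\Pi_{n}$, so a twisted arrow with target $(B,C)$ lies over $[C-B]$; gluing in the fibre pair $(p,q)$ of $\hbbS_{1,\op}$ inserts two further integers into the interval $[B,C]$ and produces the sextuple $0 \leq a \leq \cdots \leq f \leq n$. Matching orders and isolating the cartesian morphisms then proceeds exactly as in (i)--(ii), and the two projections are read off the six coordinates. \emph{The main obstacle is purely bookkeeping:} the argument threads through several opposite-category conventions --- the contravariance of the source leg of the twisted arrow, the direction of the inert map output by $\Pi_{n}$, and the $\op$ on the fibres $\bbS^{m,\op}$ --- each of which reverses one chain of inequalities. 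Keeping all of these oriented consistently, and correctly singling out the cartesian morphisms (the coordinates that stay fixed, which are what make spans composable), is where essentially all the care is required.
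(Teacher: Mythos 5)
Your proposal is correct and takes essentially the same approach as the paper: parts (i) and (ii) are unwound directly as partially ordered sets (the paper's ``a morphism is unique if it exists'' playing the role of your poset reduction), and part (iii) is deduced by exactly your factorization observation, which the paper phrases as the identification $\CbbS^{n} \simeq \Twl(\bbS^{n}) \times_{\bbS^{n}} \bigl(\bbS^{n} \times_{\simp^{\op}} \hbbS_{1,\op}\bigr)$ of the fibre product with the two categories computed in (i) and (ii). The only differences are cosmetic: the order in which (i) and (ii) are treated, and that the paper carries out in full the convention-sensitive coordinate bookkeeping (including the characterization of cartesian morphisms as those with identity fibrewise component) that you correctly identify as the sole remaining work.
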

\begin{proof}
  Since $\hbbS_{1,\op} \to \Dnop$ is the cartesian fibration for the
  functor $[n] \in \simp \mapsto \bbS^{n,\op} \in \Cat$, the category
  $\hbbS_{1,\op}$ has objects pairs $([n], (i,j))$ with
  $0 \leq i \leq j \leq n$, with a morphism
  $([n], (i,j)) \to ([m], (i',j'))$ given by a morphism
  $\phi \colon [m] \to [n]$ in $\simp$ such that
  $(i,j) \leq (\phi(i'), \phi(j'))$ in $\bbS^{n,\op}$, \ie{}
  $(\phi(i'), \phi(j')) \leq (i,j)$ in $\bbS^{n}$, or
  $i \leq \phi(i') \leq \phi(j') \leq j$.

  On the other hand, the functor $\Pi_{n} \colon \bbS^{n}\to \Dop$ takes $(i,j)$ to
  $[j-i]$, so an object of the fibre product
  $\bbS^{n} \times_{\Dop} \hbbS_{1,\op}$ is a pair $((i,j), (i',j'))$
  with $0 \leq i \leq j \leq n$ and $0 \leq i' \leq j' \leq
  j-i$. Identifing this with the quadruple $(i,i'+i,j'+i,j)$ we get a
  bijection between the objects of
  $\bbS^{n} \times_{\Dop} \hbbS_{1,\op}$ and the set of quadruples
  $(a,b,c,d)$ with $0 \leq a \leq b \leq c \leq d \leq n$.

  A morphism $((i,j),(i',j')) \to ((k,l),(k',l'))$ is unique if it
  exists, and corresponds to the inequalities $(i,j) \leq (k,l)$ and
  $(k'+k-i,l'+k-i) \leq (i',j')$ (since the corresponding inclusion
  $[l-k] \hookrightarrow [j-i]$ in $\simp$ is given by
  $t \mapsto t+k-i$), \ie{}
  \[ i \leq k \leq l \leq j, \qquad k'+k-i \leq i' \leq j' \leq l'+k-i,\] 
  which we can rewrite as
  \[ i \leq k \leq k'+k \leq i'+i \leq j'+j \leq l'+k \leq l \leq j.\]
  Equivalently, there is a unique morphism $(a,b,c,d) \to
  (a',b',c',d')$ \IFF{} $a \leq a' \leq b' \leq b \leq c \leq c' \leq
  d' \leq d$, as required to prove (i).

  To prove (ii), observe that an object of $\Twl(\bbS^{n})$ is a
  morphism $(i,j) \to (i',j')$ in $\bbS^{n}$, which we can identify
  with a quadruple $(i,i',j',j)$ with
  $0 \leq i \leq i' \leq j' \leq j \leq n$. Now a morphism from
  $(i,j) \to (i',j')$ to $(k,l) \to (k',l')$ in $\Twl(\bbS^{n})$ is a
  commutative diagram
  \[
    \begin{tikzcd}
      (i,j) \arrow{d} & (k,l) \arrow{d} \arrow{l} \\
      (i',j') \arrow{r} & (k',l'),
    \end{tikzcd}
  \]
  which corresponds to the inequalities
  \[ i \leq i' \leq j' \leq j,\qquad i' \leq k' \leq l' \leq j'\]
  \[ k \leq i \leq j \leq l,\qquad k \leq k' \leq l' \leq l,\]
  which we can combine into the single chain of inequalities
  \[ k \leq i \leq i' \leq k' \leq l' \leq j' \leq j \leq l,\]
  which proves (ii).

  To prove (iii), observe that the fibre product $\Twl(\bbS^{n})
  \times_{\Dop} \hbbS_{1,\op}$ is equivalently the fibre product
  $\Twl(\bbS^{n}) \times_{\bbS^{n}} (\bbS^{n} \times_{\Dop}
  \hbbS_{1,\op})$ of the categories considered in (i) and (ii). We can
  therefore identify an object of this category with a pair
  $((a,b,c,d), (i,j,k,l))$ where $(b,c) = (i,l)$, or equivalently a
  sextuple $(a,b,j,k,c,d)$ with $0 \leq a \leq b \leq j \leq k \leq c
  \leq d \leq n$. The inequalities in (i) and (ii) also combine to
  give the inequalities in (iii) as the criterion for a morphism to
  exist in this partially ordered set.
\end{proof}

\begin{defn}
  Let $\mathbb{T}^{n}$ denote the partially ordered set of quadruples
  $(a,c,d,f)$ with $0 \leq a \leq c \leq d \leq f \leq n$, where
  $(a,c,d,f) \leq (a',c',d',f')$ \IFF{} $a' \leq a \leq f \leq f'$ and
  $c' \leq c \leq d \leq d'$.  We then define functors
  $\alpha_{n} \colon \CbbS^{n}
  \to \mathbb{T}^{n}$ and
  $\beta_{n} \colon \bbS^{n,\op} \to \mathbb{T}^{n}$ by
  \[ \alpha_{n}(a,b,c,d,e,f) = (a,c,d,f),\]
  \[ \beta_{n}(i,j) = (i,i,j,j). \] We also define
  $\gamma_{n} \colon \mathbb{T}^{n} \to \bbS^{n,\op}$ by
  $(a,c,d,f) \mapsto (a,f)$; note that
  $\gamma_{n} \circ \beta_{n} = \id$.  For
  $I = ([i_{1}],\ldots,[i_{n}]) \in \Dnop$ we set
  $\mathbb{T}^{I} := \prod_{j} \mathbb{T}^{i_{j}}$, and we similarly define
  $\alpha_{I}$, $\beta_{I}$, and $\gamma_{I}$ as products.
\end{defn}

\begin{propn}\label{propn:markedanod}
  Let $C_{n}$ denote the set of morphisms $(a,c,d,f) \to (a',c',d',f')$ in
  $\mathbb{T}^{n}$ such that $c=c',d=d'$, and similarly let $C_{I}$
  denote the product of these morphisms viewed as morphisms in
  $\mathbb{T}^{I}$. Then composition with the functor $\beta_{I}$ induces an equivalence
  \[ \Map_{C/\bbS^{I,\op}}(\mathbb{T}^{I}, \mathcal{F}) \to \Map(\bbS^{I,\op},
    \mathcal{F}),\]
  where $\Map_{C/\bbS^{I,\op}}(\mathbb{T}^{I}, \mathcal{F})$ denotes the space of
  commutative squares
  \csquare{\mathbb{T}^I}{\mathcal{F}}{\bbS^{I,\op}}{\mathcal{C}^{\op}}{f}{\gamma_I}{}{}
  where $f$ takes the morphisms in $C_{I}$ to cocartesian morphisms
  in $\mathcal{F}$.
\end{propn}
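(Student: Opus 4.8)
The plan is to exhibit $\gamma_{I}\colon \mathbb{T}^{I}\to\bbS^{I,\op}$ as a cocartesian fibration whose cocartesian morphisms are exactly those in $C_{I}$, with $\beta_{I}$ a fully faithful section, and then to identify the $C_{I}$-cocartesian functors into $\mathcal{F}$ with the relative left Kan extensions along $\beta_{I}$ of their restrictions. Since $\gamma_{I}\circ\beta_{I}=\id$, restriction along $\beta_{I}$ carries such an $f$ to a section, so the statement becomes an instance of the uniqueness theorem for Kan extensions.

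First I would fix the bottom functor $g\colon\bbS^{I,\op}\to\mathcal{C}^{\op}$ and pull $\mathcal{F}\to\mathcal{C}^{\op}$ back along it to a cocartesian fibration $\mathcal{G}\to\bbS^{I,\op}$; then functors $\mathbb{T}^{I}\to\mathcal{F}$ over $g\gamma_{I}$ sending $C_{I}$ to cocartesian morphisms correspond to functors $\mathbb{T}^{I}\to\mathcal{G}$ over $\gamma_{I}$ with the same property, while the target $\Map(\bbS^{I,\op},\mathcal{F})$ becomes the space of sections of $\mathcal{G}$. Working first with a single factor $\mathbb{T}^{n}$, the explicit poset description shows that $\gamma_{n}(a,c,d,f)=(a,f)$ and that the $\gamma_{n}$-cocartesian lift of $(a,f)\to(a',f')$ starting at $(a,c,d,f)$ is precisely the $C_{n}$-morphism $(a,c,d,f)\to(a',c,d,f')$ fixing $(c,d)$; thus $C_{n}$ is exactly the class of $\gamma_{n}$-cocartesian morphisms. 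Taking products gives the same statement for $\gamma_{I}$ and $C_{I}$, and $\beta_{I}$ is fully faithful because $\beta_{n}(i,j)=(i,i,j,j)$ and $(i,i,j,j)\le(i',i',j',j')$ in $\mathbb{T}^{n}$ iff $i'\le i$ and $j\le j'$, i.e.\ iff $(i,j)\to(i',j')$ in $\bbS^{n,\op}$.

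The heart of the argument is the relative Kan extension computation. For $x=(a,c,d,f)\in\mathbb{T}^{n}$ the comma category $\beta_{n}\downarrow x$ is the subposet $\{(i,j):c\le i\le j\le d\}$ of $\bbS^{n,\op}$, which has $(c,d)$ as a terminal object, the structure map $\beta_{n}(c,d)=(c,c,d,d)\to x$ being exactly the canonical $C_{n}$-morphism out of the image of $\beta_{n}$. For general $I$ the comma category is the product of these and hence again has a terminal object, namely the product of the points $(c_{j},d_{j})$, with structure map the corresponding $C_{I}$-morphism. Since $\mathcal{G}\to\bbS^{I,\op}$ is cocartesian, the relative left Kan extension of any section $h$ along $\beta_{I}$ therefore exists, computed at $x$ by cocartesian transport of the value of $h$ at that terminal object: an extension $f$ over $\gamma_{I}$ of $h$ is a $p$-left Kan extension iff for every $x$ the comparison from $h$ of the terminal object to $f(x)$ is cocartesian. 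By the cancellation (two-out-of-three) property for cocartesian edges, this holds for all $x$ iff $f$ sends every morphism in $C_{I}$ to a cocartesian morphism, since each $C_{I}$-morphism factors through the canonical one emanating from $\beta_{I}$ of its fixed $(c,d)$-part. Thus the $C_{I}$-cocartesian functors are precisely the relative left Kan extensions along $\beta_{I}$.

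With this identification the result follows from the relative form of the uniqueness theorem for Kan extensions (\cite{HTT}*{\S 4.3}): restriction along the fully faithful section $\beta_{I}$ is a trivial fibration from the full subcategory of $p$-left Kan extensions onto $\Fun_{/\bbS^{I,\op}}(\bbS^{I,\op},\mathcal{G})$, the space of sections, and the previous paragraph identifies that subcategory with $\Map_{C/\bbS^{I,\op}}(\mathbb{T}^{I},\mathcal{F})$. I expect the main obstacle to be precisely the relative Kan extension bookkeeping, namely verifying that ``$p$-left Kan extension'' coincides with ``$C_{I}$-cocartesian''; this rests both on the terminal-object computation in the comma categories and on the reduction, via cancellation for cocartesian edges, from arbitrary $C_{I}$-morphisms to the canonical ones out of the image of $\beta_{I}$. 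The passage from a single factor $\mathbb{T}^{n}$ to a product $\mathbb{T}^{I}$ also requires some care, but is handled by the fact that comma categories, terminal objects, and the cocartesian structure are all compatible with finite products.
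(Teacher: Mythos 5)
Your proof is correct, and it takes a genuinely different route from the one in the paper. The paper proves a stronger, model-level statement: the inclusion of marked simplicial sets $(\mathrm{N}\bbS^{I,\op})^{\flat} \to (\mathrm{N}\mathbb{T}^{I}, C_{I})$ is marked anodyne in the cocartesian sense, reducing to $n=1$ by closure of marked anodyne maps under products and then running an explicit filtration of $\mathrm{N}\mathbb{T}^{I}$ into ``old''/``new'' and ``short''/``long'' simplices, so that each stage of the filtration is a pushout along inner horns or marked horns $\Lambda^{N+1}_{0}$. That argument needs no reduction to a fixed bottom functor, treats all cocartesian-fibration targets at once, and avoids Kan-extension machinery; its cost is delicate simplex bookkeeping. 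You instead fix the bottom functor $g$, pull $\mathcal{F}$ back to a cocartesian fibration $\mathcal{G}\to\bbS^{I,\op}$, and identify the lifts of $\gamma_{I}$ sending $C_{I}$ to cocartesian edges with the relative left Kan extensions along the fully faithful section $\beta_{I}$, after which \cite{HTT}*{Proposition 4.3.2.15} (together with existence of the relevant relative colimits, supplied by cocartesian lifts in $\mathcal{G}$) finishes the argument. Your supporting computations are all correct: $C_{I}$ is precisely the class of $\gamma_{I}$-cocartesian edges, the comma categories $\beta_{I}\downarrow x$ are (componentwise in $I$) the posets $\{(i,j): c \le i \le j \le d\}$ with terminal object whose structure map is the canonical $C_{I}$-morphism, and arbitrary $C_{I}$-morphisms reduce to these canonical ones by the cancellation property of cocartesian edges, since a $C_{I}$-morphism does not change the middle coordinates and hence shares its canonical morphism's source. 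What your approach buys is model-independence and a conceptual explanation of why $C_{I}$ is the right marking (it is exactly the cocartesian marking of $\gamma_{I}$). Two points you leave implicit are routine but worth stating explicitly: cocartesian edges of the pullback $\mathcal{G}$ are detected in $\mathcal{F}$, and the passage from the fibrewise equivalences (one for each $g$) to the equivalence of total spaces uses that both sides map to the space of functors $\bbS^{I,\op}\to\mathcal{C}^{\op}$ by Kan fibrations compatible with composition by $\beta_{I}$, so that strict fibres compute homotopy fibres and a fibrewise equivalence is a global one.
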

\begin{proof}
  To prove this we will show that the morphism of marked simplicial sets
  \[ (\mathrm{N}\bbS^{I,\op})^{\flat} \to (\mathrm{N}\mathbb{T}^{I},
    C_{I})\] is marked anodyne in the cocartesian sense, \ie{} dual to
  that of \cite{HTT}*{Definition 3.1.1.1}. Marked anodyne morphisms
  are closed under the cartesian product of marked simplicial sets by
  \cite{HTT}*{Proposition 3.1.2.3}, so it suffices to prove the case
  $n = 1$. We will do this using a filtration of
  $\mathrm{N}\mathbb{T}^{I}$; to define this it is convenient to first
  make up some terminology and notation:
  \begin{itemize}
  \item We say a simplex of $\mathrm{N}\mathbb{T}^{I}$ is \emph{old}
    if it is contained in the simplicial subset
    $\mathrm{N}\bbS^{I,\op}$, and \emph{new} otherwise.
  \item If $\sigma \colon \Delta^{n} \to \mathrm{N}\mathbb{T}^{I}$ is
    a non-degenerate new simplex, corresponding to a sequence of
    morphisms $A_{0}\xto{f_{1}} A_{1} \to \cdots \to A_{n}$, we define
    $\nu(\sigma)$ to be the integer such that $A_{i} \in
    \beta_{n}(\mathrm{N}\bbS^{I,\op})$ for $i < \nu(\sigma)$ and
    $A_{\nu(\sigma)} \notin \beta_{n}(\mathrm{N}\bbS^{I,\op})$.
  \item If $\sigma$ is a non-degenerate new $n$-simplex as above, we
    say that $\sigma$ is \emph{long} if $\nu(\sigma) > 0$ and the
    morphism $A_{\nu(\sigma)-1} \to A_{\nu(\sigma)}$ is in $C_{n}$,
    and \emph{short} otherwise.
  \item If $\sigma$ is a long new non-degenerate $(n+1)$-simplex then
    we say that $\sigma$ is \emph{associated} to the short new
    non-degenerate $n$-simplex $d_{\nu(\sigma)-1}\sigma$. Observe that
    for every short new non-degenerate $n$-simplex there is a
    \emph{unique} long new non-degenerate $(n+1)$-simplex associated
    to it.
  \end{itemize}
  We let $\mathfrak{F}_{n}$ be the smallest simplicial subset of
  $\mathrm{N}\mathbb{T}^{I}$ containing $\mathfrak{F}_{n-1}$ (where we
  start with $\mathfrak{F}_{-1}$ containing only the old simplices) together
  with the short new non-degenerate $n$-simplices and the long new
  non-degenerate $(n+1)$-simplices. We then have a filtration of
  marked simplicial sets
  \[ \beta_{n}(\mathrm{N}\bbS^{I,\op}) = \mathfrak{F}_{-1} \subseteq
    \mathfrak{F}_{0} \subseteq \cdots \subseteq
    \mathrm{N}\mathbb{T}^{I},\] where we implicitly regard all these
  simplicial sets as marked by those of their edges that lie in
  $C_{n}$. Since $\mathrm{N}\mathbb{T}^{I}$ is the union of the
  simplicial subsets $\mathfrak{F}_{i}$, it suffices to show that
  the morphisms $\mathfrak{F}_{i-1} \hookrightarrow
  \mathfrak{F}_{i}$ are all marked anodyne.
  
  Next, we define a subsidiary filtration
  \[ \mathfrak{F}_{N-1} = \mathfrak{G}_{N,N+1} \subseteq
    \mathfrak{G}_{N,N} \subseteq \cdots \subseteq \mathfrak{G}_{N,0}
    = \mathfrak{F}_{N},\]
  where $\mathfrak{G}_{N,m}$ contains
  $\mathfrak{F}_{N-1}$ together with those short new non-degenerate
  $N$-simplices $\sigma$ such that $\nu(\sigma) \geq m$, as well as their
  associated $(N+1)$-simplices. Then it
  suffices to show that the inclusions $\mathfrak{G}_{N,m}
  \hookrightarrow \mathfrak{G}_{N,m-1}$ are all marked anodyne.
  
  Consider now a short new non-degenerate $N$-simplex $\sigma$ with
  associated $(N+1)$-simplex $\sigma'$. Then we observe that
  \begin{itemize}
  \item $d_{\nu(\sigma)}\sigma' = \sigma$,
  \item $d_{i}\sigma'$ is a long $N$-simplex if $i \neq \nu(\sigma),
    \nu(\sigma)+1$, and so is in $\mathfrak{F}_{N-1}$,
  \item $\nu(d_{\nu(\sigma)+1}\sigma') = \nu(\sigma)+1$, so
    $d_{\nu(\sigma)+1}\sigma'$ lies in $\mathfrak{G}_{N,\nu(\sigma)+1}$.
  \end{itemize}
  Thus we have pushouts \nolabelcsquare{\coprod_{\sigma}
    \Lambda^{N+1}_{m}}{\coprod_{\sigma}
    \Delta^{N+1}}{\mathfrak{G}_{N,m+1}}{\mathfrak{G}_{N,m},} where the
  coproducts are over all short new non-degenerate $N$-simplices
  $\sigma$ such that $\nu(\sigma)=m$. If $m > 0$ then the top
  horizontal morphism is inner anodyne, and if $m = 0$ then for every
  $\sigma$ the edge $0 \to 1$ in $\Lambda^{N+1}_{0}$ is sent to an
  edge of $\mathrm{N}\mathbb{T}^{n}$ that lies in $C_{n}$, hence the
  top horizontal morphism is still marked anodyne.
\end{proof}

Let us also write $\Map_{C/\bbS^{I,\op}}(\CbbS^{I}, \mathcal{F})$ for the space of
commutative squares
\csquare{\CbbS^I}{\mathcal{F}}{\bbS^{I,\op}}{\mathcal{C}^{\op}}{f}{}{}{g^\op}
where $f$ takes the morphisms that lie over cartesian morphisms in
$\hbbS_{n,\op}$ to cocartesian morphisms in $\mathcal{F}$. Then
composition with $\alpha_{I}$ and $\beta_{I}$ give natural maps
\[ \Map_{C/\bbS^{I,\op}}(\CbbS^{I}, \mathcal{F}) \from
\Map_{C/\bbS^{I,\op}}(\mathbb{T}^{I}, \mathcal{F}) \isoto
\Map(\bbS^{I,\op}, \mathcal{F}).\]
Now we define $\Map_{C/\bbS^{I,\op}}^{\txt{cocart}}(\CbbS^{I}, \mathcal{F})$ to be the subspace of such
squares where
\begin{itemize}
\item $g \colon \bbS^{I} \to \mathcal{C}$ is cartesian,
\item for every morphism $i \colon A \to B$ in $\bbS^{I}$, the
  diagram
  \[ \bbS^{\pi_{I}(B),\op} \simeq \{i\} \times_{\Dnop} \hbbS_{n,\op}
    \to \{g(A)\} \times_{\mathcal{C}^{op}} \mathcal{F} \simeq
    \phi(g(A))\] is cocartesian,
\end{itemize}
and we also define
$\Map_{C/\bbS^{I,\op}}^{\txt{cocart}}(\mathbb{T}^{I}, \mathcal{F})$ to
be the subspace of functors that restrict under $\beta_{I}$ to
cocartesian functors $\bbS^{I,\op} \to \mathcal{F}$.
\begin{propn}\label{propn:Tcocartftr}
  The maps given by composition with $\alpha_{I}$ and $\beta_{I}$
  restrict to maps
  \[ \Map_{C/\bbS^{I,\op}}^{\txt{cocart}}(\CbbS^{I}, \mathcal{F}) \from
\Map_{C/\bbS^{I,\op}}^{\txt{cocart}}(\mathbb{T}^{I}, \mathcal{F})
\isoto \Map^{\txt{cocart}}(\bbS^{I,\op}, \mathcal{F}).\]
\end{propn}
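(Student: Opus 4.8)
The plan is to treat the two maps separately. The equivalence induced by $\beta_{I}$ is essentially formal: by construction $\Map_{C/\bbS^{I,\op}}^{\txt{cocart}}(\mathbb{T}^{I}, \mathcal{F})$ is the preimage under $\beta_{I}^{*}$ of $\Map^{\txt{cocart}}(\bbS^{I,\op}, \mathcal{F})$ inside $\Map_{C/\bbS^{I,\op}}(\mathbb{T}^{I}, \mathcal{F})$, and $\beta_{I}^{*}$ is already an equivalence on the full mapping spaces by Proposition~\ref{propn:markedanod}; its restriction to this preimage is therefore an equivalence onto $\Map^{\txt{cocart}}(\bbS^{I,\op}, \mathcal{F})$. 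Thus everything reduces to showing that composition with $\alpha_{I}$ carries $\Map_{C/\bbS^{I,\op}}^{\txt{cocart}}(\mathbb{T}^{I}, \mathcal{F})$ into $\Map_{C/\bbS^{I,\op}}^{\txt{cocart}}(\CbbS^{I}, \mathcal{F})$; that is, given $f$ sending $C_{I}$ to cocartesian morphisms with $h := f\beta_{I}$ a cospan, I must verify the base $C$-condition together with conditions (1) and (2) for $f\alpha_{I}$.

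First I would dispatch the base condition and condition (1). Using the explicit poset descriptions from the Lemma, $\alpha_{I}$ carries every morphism lying over a cartesian morphism in $\hbbS_{n,\op}$ (those fixing the two middle coordinates) into $C_{I}$; since $f$ sends $C_{I}$ to cocartesian morphisms, so does $f\alpha_{I}$, giving the base $C$-condition. For condition (1), note that $\gamma_{I}\alpha_{I}$ is the projection $\CbbS^{I} \to \bbS^{I,\op}$ while $\gamma_{I}\beta_{I} = \id$, so the underlying $\mathcal{C}^{\op}$-valued diagram $g^{\op}$ of $f\alpha_{I}$ coincides with that of $h$. As $h$ is a cospan in $\mathcal{F}$ and the projection $\mathcal{F}\to\mathcal{C}^{\op}$ of a cocartesian fibration carries the relative pushouts witnessing a cospan to pushouts, $g^{\op}$ is a cospan in $\mathcal{C}^{\op}$, i.e. $g$ is cartesian.

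The substantive point is condition (2). Fixing a morphism $A \to B$ of $\bbS^{I}$, the corresponding fibre of $\CbbS^{I}$ maps under $\alpha_{I}$ into the fibre $\gamma_{I}^{-1}(A) \subseteq \mathbb{T}^{I}$, which $f$ sends entirely into the single fibre $\phi(g(A))$. By Proposition~\ref{propn:markedanod}, $f$ is the essentially unique $C_{I}$-cocartesian extension of $h$: each object $T$ of $\gamma_{I}^{-1}(A)$ receives a canonical $C_{I}$-morphism from a $\beta_{I}$-object, so cocartesian transport gives $f(T) \simeq \phi(u_{T})\,h(T')$, where $h(T')$ is the corresponding cospan value and $u_{T}$ is the structure map to $g(A)$. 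Since $h$ is a cospan, $h(T')$ is the relative pushout of its values on elementary sub-intervals; applying the pushout-preserving transition functor $\phi(u_{T})$ and using that the poset composites $u_{T}\circ w$ agree with the structure maps $u_{\mathrm{elem}}$ of the elementary fibre objects, one identifies $f(T)$ with the pushout in $\phi(g(A))$ of the $f\alpha_{I}$-values at the elementary objects. This is precisely the assertion that $f\alpha_{I}$ restricted to $\bbS^{\pi_{I}(B),\op}$ is cocartesian, i.e. left Kan extended from $\bbL^{\pi_{I}(B),\op}$; the multi-index case follows coordinatewise, since $\phi(u_{T})$ preserves the iterated pushouts computing a product cospan.

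I expect condition (2) to be the main obstacle. Its conceptual content --- that cocartesian transport interchanges the pushouts defining the cospan $h$ with those defining the fibrewise cospan --- rests on two facts that must be matched carefully against the combinatorics of the Lemma: the existence and naturality of the canonical $C_{I}$-morphisms $\beta_{I}(T') \to T$ exhibiting $f$ as a relative left Kan extension, and the identity $u_{T}\circ w = u_{\mathrm{elem}}$, which holds because $\mathbb{T}^{I}$ and $\bbS^{I,\op}$ are posets and all such composites are unique. Keeping the six coordinates of $\CbbS^{I}$ and the four of $\mathbb{T}^{I}$ aligned throughout is the principal bookkeeping burden; everything else is formal once the pushout-preservation of the $\Catpo$-valued transition functors is invoked.
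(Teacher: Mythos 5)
Your proposal is correct and follows essentially the same route as the paper's own proof: the $\beta_{I}$-direction is formal from Proposition~\ref{propn:markedanod}, and the substantive $\alpha_{I}$-direction is handled, as in the paper, by identifying the fibrewise diagrams with cocartesian transports of $h = f\beta_{I}$ and combining the fibrewise description of pushouts in $\mathcal{F}$ with pushout-preservation of the transition functors (the paper packages this as Lemma~\ref{lem:cocartcolim} together with \cite{spans}*{Proposition 5.9}, where you unpack it object-by-object). The one imprecision is your justification of condition (1): the projection of a general cocartesian fibration does \emph{not} preserve pushouts --- this step is valid for $\mathcal{F} \to \mathcal{C}^{\op}$ precisely because the fibres have pushouts and the transition functors preserve them, i.e.\ by Lemma~\ref{lem:cocartcolim}, which is how the paper argues.
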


We need the well-known description of colimits in a cocartesian
fibration, which we spell out as follows:
\begin{lemma}\label{lem:cocartcolim}
  Suppose $\pi \colon \mathcal{E} \to \mathcal{B}$ is a cocartesian fibration
  and $\mathcal{I}$ is a small \icat{} such that
  \begin{enumerate}[(i)]
  \item $\mathcal{B}$ has colimits of shape $\mathcal{I}$,
  \item each fibre $\mathcal{E}_{b}$ has colimits of shape
    $\mathcal{I}$,
  \item the cocartesian pushforward functor $f_{!} \colon
    \mathcal{E}_{b} \to \mathcal{E}_{b'}$ preserves colimits of shape
    $\mathcal{I}$ for all morphisms $f \colon b \to b'$ in $\mathcal{B}$.
  \end{enumerate}
  Then $\mathcal{E}$ has colimits of shape $\mathcal{I}$. The colimit
  of a diagram $p \colon \mathcal{I} \to \mathcal{E}$ is computed by
  \begin{enumerate}[(1)]
  \item extending $\pi p \colon \mathcal{I} \to \mathcal{B}$ to a
    colimit diagram $q \colon \mathcal{I}^{\triangleright} \to
    \mathcal{B}$,
  \item taking the cocartesian pushforward $p' \colon \mathcal{I} \to
    \mathcal{E}_{q(\infty)}$ of the diagram $p$ along the morphisms
    $q(i) \to q(\infty)$,
  \item computing the colimit of $p'$ in the fibre $\mathcal{E}_{q(\infty)}$.
  \end{enumerate}
\end{lemma}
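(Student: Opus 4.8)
The plan is to verify directly that the cocone $\bar p \colon \mathcal{I}^{\triangleright} \to \mathcal{E}$ described in (1)--(3) exhibits its cone point as a colimit, by checking the universal property on mapping spaces; existence then follows since $p$ is arbitrary. Here I set $\bar p(\infty) := \operatorname{colim}_{\mathcal{E}_{q(\infty)}} p'$, which exists by (ii), and the cone edge $p(i) \to \bar p(\infty)$ is the composite of the cocartesian lift $p(i) \to p'(i) = (q_i)_! p(i)$ of the base cone map $q_i \colon q(i) \to q(\infty)$ with the colimit cocone $p'(i) \to \bar p(\infty)$ in $\mathcal{E}_{q(\infty)}$; by construction $\pi \bar p = q$. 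It then suffices to show that for every $e \in \mathcal{E}$ the induced map
\[ \Map_{\mathcal{E}}(\bar p(\infty), e) \lra \lim_{i \in \mathcal{I}^{\op}} \Map_{\mathcal{E}}(p(i), e) \]
is an equivalence.

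The key structural input I would use is that both sides fibre over the corresponding mapping spaces in $\mathcal{B}$: for a cocartesian fibration one has, for $x \in \mathcal{E}_{b}$ and $e \in \mathcal{E}_{b'}$, a natural fibration $\Map_{\mathcal{E}}(x, e) \to \Map_{\mathcal{B}}(b, b')$ whose fibre over $\psi$ is $\Map_{\mathcal{E}_{b'}}(\psi_! x, e)$ (\cite{HTT}*{\S 2.4.4}). The map induced on bases is
\[ \Map_{\mathcal{B}}(q(\infty), \pi e) \lra \lim_{i \in \mathcal{I}^{\op}} \Map_{\mathcal{B}}(\pi p(i), \pi e), \]
which is an equivalence because $q$ is a colimit diagram, by (i). Since a map lying over an equivalence of bases is an equivalence as soon as it is a fibrewise equivalence, and since fibres of a limit over a compatible family compute as the limit of the fibres, it remains to fix $\psi \colon q(\infty) \to b' := \pi e$ and prove that
\[ \Map_{\mathcal{E}_{b'}}(\psi_! \bar p(\infty), e) \lra \lim_{i \in \mathcal{I}^{\op}} \Map_{\mathcal{E}_{b'}}\bigl(\psi_! p'(i), e\bigr) \]
is an equivalence, where I have used the compatibility $(\psi q_i)_! p(i) \simeq \psi_!(q_i)_! p(i) = \psi_! p'(i)$ of pushforwards with composition.

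At this point the two remaining hypotheses close the argument. By (iii) the functor $\psi_! \colon \mathcal{E}_{q(\infty)} \to \mathcal{E}_{b'}$ preserves $\mathcal{I}$-colimits, so $\psi_! \bar p(\infty) = \psi_!\operatorname{colim} p' \simeq \operatorname{colim}(\psi_! \circ p')$ in $\mathcal{E}_{b'}$, which has $\mathcal{I}$-colimits by (ii). The displayed map is then exactly the canonical comparison expressing this fibre colimit, hence an equivalence by the universal property of colimits in $\mathcal{E}_{b'}$. This simultaneously proves the universal property and yields the asserted formula.

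I expect the main obstacle to be the bookkeeping of the second paragraph: establishing that the comparison map of mapping spaces in $\mathcal{E}$ genuinely lies over the base comparison map, and identifying its fibres with the $\psi_!$-pushforwards. This rests on the cocartesian mapping-space description together with the functoriality $(\psi q_i)_! \simeq \psi_!(q_i)_!$; once these are in place the rest is formal, and in particular each of (i), (ii), (iii) is used exactly once, in the base, in the fibre, and in commuting $\psi_!$ past the fibre colimit. Alternatively, the entire statement can be packaged through Lurie's theory of relative colimits: the cocone $\bar p$ is a $\pi$-colimit diagram lying over a colimit diagram in $\mathcal{B}$, and is therefore a colimit in $\mathcal{E}$ (\cite{HTT}*{\S 4.3.1}); I would regard the direct mapping-space argument above as the more transparent route since it displays where each hypothesis enters.
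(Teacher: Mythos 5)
Your proposal is correct, but it takes a genuinely different route from the paper. The paper's proof is a three-line appeal to Lurie's theory of relative colimits: \cite{HTT}*{Corollary 4.3.1.11} produces a lift $\bar{p} \colon \mathcal{I}^{\triangleright} \to \mathcal{E}$ of $q$ which is a $\pi$-colimit diagram, \cite{HTT}*{Propositions 4.3.1.9 and 4.3.1.10} identify this $\pi$-colimit with the colimit of the pushed-forward diagram $p'$ in the fibre $\mathcal{E}_{q(\infty)}$, and \cite{HTT}*{Proposition 4.3.1.5(2)} converts ``$\pi$-colimit over a colimit diagram in $\mathcal{B}$'' into ``colimit in $\mathcal{E}$'' --- i.e.\ exactly the packaging you mention in your final sentence as an alternative. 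Your direct verification on mapping spaces is sound: the square comparing $\Map_{\mathcal{E}}(\bar p(\infty), e) \to \lim_{i} \Map_{\mathcal{E}}(p(i),e)$ with the corresponding map of mapping spaces in $\mathcal{B}$ has an equivalence on the base by hypothesis (i), and on fibres over $\psi$ it becomes the canonical map $\Map_{\mathcal{E}_{b'}}(\psi_!\operatorname{colim} p', e) \to \lim_i \Map_{\mathcal{E}_{b'}}(\psi_! p'(i), e)$, an equivalence by (iii) and (ii); since limits commute with taking fibres, this suffices. What your approach buys is transparency --- each hypothesis visibly enters exactly once --- and independence from the relative-colimit formalism. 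What the paper's approach buys is precisely the bookkeeping you flag as the main obstacle: the coherent construction of the cocone $\bar p$ (not just its edges $p(i) \to \bar p(\infty)$ but the full functor $\mathcal{I}^{\triangleright} \to \mathcal{E}$), and the naturality in $i$ of the identifications $\Map_{\mathcal{E}}^{\psi q_i}(p(i),e) \simeq \Map_{\mathcal{E}_{b'}}(\psi_!(q_i)_! p(i), e)$, both of which are handled once and for all inside Lurie's machinery rather than by hand. A fully rigorous version of your argument would need to spell out that coherence (e.g.\ via straightening), but there is no gap in the idea.
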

\begin{proof}
  By \cite{HTT}*{Corollary 4.3.1.11} the assumptions imply that there
  exists a lift
  $\bar{p} \colon \mathcal{I}^{\triangleright} \to \mathcal{E}$ over
  $q$, which is a $\pi$-colimit diagram. Combining
  \cite{HTT}*{Propositions 4.3.1.9 and 4.3.1.10}, we see that this
  $\pi$-colimit is equivalent to the colimit of the pushed-forward
  diagram $p'$ in the fibre $\mathcal{E}_{q(\infty)}$. On the other
  hand, since $q$ is a colimit diagram in $\mathcal{B}$,
  \cite{HTT}*{Proposition 4.3.1.5(2)} shows that $\bar{p}$ is a
  $\pi$-colimit diagram \IFF{} it is a colimit diagram in $\mathcal{E}$.
\end{proof}

\begin{proof}[Proof of Proposition~\ref{propn:Tcocartftr}]
  We must check that composition with $\alpha_{I}$ takes a commutative
  square in
  $\Map_{C/\bbS^{I,\op}}^{\txt{cocart}}(\mathbb{T}^{I}, \mathcal{F})$
  to one in
  $\Map_{C/\bbS^{I,\op}}^{\txt{cocart}}(\CbbS^{I}, \mathcal{F})$.  Since
  $\mathcal{F} \to \mathcal{C}^{\op}$ is the cocartesian fibration
  corresponding to a functor $\mathcal{C}^{\op} \to \CatI^{\txt{po}}$,
  Lemma~\ref{lem:cocartcolim} implies that a commutative square in $\mathcal{F}$
  is a pushout \IFF{} it projects to a pushout square in
  $\mathcal{C}^{\op}$ and its cocartesian pushforward to the fibre
  over the terminal object is a pushout square in that fibre.  This
  implies in particular that composition with
  $\mathcal{F} \to \mathcal{C}^{\op}$ takes cocartesian diagrams in
  $\mathcal{F}$ to cocartesian diagrams in $\mathcal{C}^{\op}$. Thus
  it remains only to show that for every morphism $i \colon A \to B$
  in $\bbS^{I}$, the diagram
  \[ \bbS^{\pi_{I}(B),\op} \simeq \{i\} \times_{\Dnop} \hbbS_{n,\op}
    \to \mathbb{T}^{I} \times_{\bbS^{I,\op}} \{A\}
    \to \{g(A)\} \times_{\mathcal{C}^{op}} \mathcal{F} \simeq
    \phi(g(A))\] is cocartesian. But this diagram is a cocartesian
  pushforward to the fibre $g(A)$ of the diagram
  \[ \bbS^{\pi_{I}(B),\op} \to \bbS^{I,\op} \xto{\beta_{I}}
    \mathbb{T}^{I} \to \mathcal{F},\] which is cocartesian by
  \cite{spans}*{Proposition 5.9}, and is therefore cocartesian, using
  again the description of pushouts in $\mathcal{F}$.
\end{proof}

Consequently we see that the functors $\alpha_{I}$ and $\beta_{I}$
induce a morphism of $n$-uple Segal spaces
\[ \COSPAN_{n}(\mathcal{F}) \to \SPAN_{n}(\mathcal{C};
  \COSPAN_{n}(F)).\]
To see that this is an equivalence, we need the following observation:
\begin{lemma}
  The functor $\alpha_{1}\colon \CbbS^{1} \to \mathbb{T}^{1}$ exhibits
  $\mathbb{T}^{1}$ as the localization of $\CbbS^{1}$ at the morphisms $(0,0,0,0,1,1) \to
  (0,\ldots,0,1)$ and $(0,0,1,1,1,1) \to (0,1,\ldots,1)$.
\end{lemma}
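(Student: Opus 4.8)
The plan is to make everything explicit in the $n=1$ case and then recognize the localization as the contraction of two edges of a graph. First I would unwind the combinatorial descriptions. By part (iii) of the preceding lemma, $\CbbS^{1}$ is the poset of sextuples $(a,b,c,d,e,f)$ with $0\le a\le \cdots \le f\le 1$, so it has the seven objects $P_{0},\dots,P_{6}$ indexed by the number of trailing $1$'s; a short computation of the order relation (the six nontrivial constraints $a'\le a$, $b\le b'$, $c'\le c$, $d\le d'$, $e'\le e$, $f\le f'$) shows that the only non-identity morphisms are $P_{0}\to P_{1}$, $P_{2}\to P_{1}$, $P_{2}\to P_{3}$, $P_{4}\to P_{3}$, $P_{4}\to P_{5}$, $P_{6}\to P_{5}$. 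That is, $\CbbS^{1}$ is the ``fence''
\[ P_{0}\to P_{1}\from P_{2}\to P_{3}\from P_{4}\to P_{5}\from P_{6},\]
and in particular no two non-identity morphisms are composable, so the nerve $\mathrm{N}\,\CbbS^{1}$ is the one-dimensional simplicial set given by this directed graph. Likewise $\mathbb{T}^{1}$ is the poset of quadruples $(a,c,d,f)$, with five objects $Q_{0},\dots,Q_{4}$, isomorphic to the zigzag $Q_{0}\to Q_{1}\to Q_{2}\from Q_{3}\from Q_{4}$ with $Q_{2}$ terminal. Evaluating $\alpha_{1}(a,b,c,d,e,f)=(a,c,d,f)$ gives $P_{0}\mapsto Q_{0}$, $P_{1},P_{2}\mapsto Q_{1}$, $P_{3}\mapsto Q_{2}$, $P_{4},P_{5}\mapsto Q_{3}$, $P_{6}\mapsto Q_{4}$, so $\alpha_{1}$ sends each of the two morphisms $P_{2}\to P_{1}$ and $P_{4}\to P_{5}$ to an identity, hence inverts them and factors through the localization.

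Next I would compute the localization as a pushout. Writing $W=\{P_{2}\to P_{1},\,P_{4}\to P_{5}\}$ and $E$ for the walking equivalence, the localization is the pushout
\[ \CbbS^{1}[W^{-1}]\simeq \mathrm{N}\,\CbbS^{1}\amalg_{\,\Delta^{1}\amalg\Delta^{1}}(E\amalg E)\]
in $\CatI$, where the two copies of $\Delta^{1}$ include as the two marked edges. Since $E\simeq \Delta^{0}$, this agrees with the pushout along the collapse $\Delta^{1}\amalg\Delta^{1}\to \Delta^{0}\amalg\Delta^{0}$. The inclusion $\Delta^{1}\amalg\Delta^{1}\hookrightarrow \mathrm{N}\,\CbbS^{1}$ is a monomorphism and the Joyal model structure is left proper, so this pushout may be computed in simplicial sets; because $\mathrm{N}\,\CbbS^{1}$ is one-dimensional, the result is just the graph $G$ obtained by contracting the two marked edges, namely
\[ G=\bigl(P_{0}\to R_{1}\to P_{3}\from R_{3}\from P_{6}\bigr),\]
where $R_{1}$ is the common image of $P_{1}=P_{2}$ and $R_{3}$ that of $P_{4}=P_{5}$.

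Finally I would identify $G$ with $\mathbb{T}^{1}$. The nerve $\mathrm{N}\,\mathbb{T}^{1}$ is the union of the two non-degenerate $2$-simplices $Q_{0}<Q_{1}<Q_{2}$ and $Q_{4}<Q_{3}<Q_{2}$ glued at the terminal vertex $Q_{2}$, that is $\Delta^{2}\amalg_{\Delta^{0}}\Delta^{2}$, and $G$ is precisely its spine $\Lambda^{2}_{1}\amalg_{\Delta^{0}}\Lambda^{2}_{1}$. The spine inclusion $G\hookrightarrow \mathrm{N}\,\mathbb{T}^{1}$ is obtained by pushing out the inner anodyne maps $\Lambda^{2}_{1}\hookrightarrow \Delta^{2}$ along cofibrations and composing, hence is itself inner anodyne and so a Joyal equivalence; by construction the composite $\mathrm{N}\,\CbbS^{1}\to G\xrightarrow{\sim}\mathrm{N}\,\mathbb{T}^{1}$ is $\alpha_{1}$. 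Therefore $\alpha_{1}$ exhibits $\mathbb{T}^{1}$ as $\CbbS^{1}[W^{-1}]$, as claimed.

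I expect the only genuine work to be the bookkeeping in the first and third steps: verifying by hand that the order relations on $\CbbS^{1}$ and $\mathbb{T}^{1}$ are exactly the two zigzags (so that the nerves have the stated low-dimensional shapes), and checking that the contracted graph $G$ is precisely the spine of $\mathrm{N}\,\mathbb{T}^{1}$ with the comparison map equal to $\alpha_{1}$. The model-categorical inputs (left properness of the Joyal structure, inner anodyneness of spine inclusions) are standard; the one point worth flagging is the justification that the ordinary simplicial-set pushout computes the $\CatI$-localization, which is exactly why I emphasise that $\Delta^{1}\amalg\Delta^{1}\hookrightarrow \mathrm{N}\,\CbbS^{1}$ is a cofibration.
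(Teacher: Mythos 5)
Your proposal is correct and takes essentially the same approach as the paper: the paper's proof likewise depicts $\CbbS^{1}$ and $\mathbb{T}^{1}$ as exactly these two zigzags and then declares the result ``clear from this description, since both decompose as pushouts of free categories,'' which is precisely the pushout-of-$\Delta^{1}$'s argument you carry out. Your extra steps (computing the localization as a Joyal homotopy pushout via left properness, contracting the two edges, and recognizing the result as the inner-anodyne spine inclusion into $\mathrm{N}\mathbb{T}^{1}$) are exactly the details the paper leaves implicit.
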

\begin{proof}
  We can depict the partially ordered set $\CbbS^{1}$
  as
  \[\mbox{\small $(0,\ldots,0) \to (0,\ldots,0,1) \from (0,\ldots,0,1,1) \to
    (0,0,0,1,1,1) \from (0,0,1,\ldots,1) \to (0,1,\ldots,1) \from (1,\ldots,1)$}\]
  and $\mathbb{T}^{1}$ as
  \[ (0,0,0,0) \to (0,0,0,1) \to (0,0,1,1) \from (0,1,1,1) \from
    (1,1,1,1).\] The result is clear from this description, since both
  decompose as pushouts of free categories.
\end{proof}

\begin{proof}[Proof of Proposition~\ref{propn:spancospandesc}]
  We have a morphism of $n$-uple Segal spaces \[\COSPAN_{n}(\mathcal{F}) \to \SPAN_{n}(\mathcal{C};
  \COSPAN_{n}(F)).\] To see that this is an equivalence, it suffices to
  show that it is an equivalence on fibres $\COSPAN_{n}(\mathcal{F})_{I} \to \SPAN_{n}(\mathcal{C};
  \COSPAN_{n}(F))_{I}$ where $I = ([i_{1}],\ldots,[i_{n}])$ with
  $i_{j} = 0$ or $1$ for all $j$, which follows from the previous lemma.
\end{proof}

\begin{cor}\label{cor:SPANCOSPAN}
  Let $F \colon \mathcal{C}^{\op} \to \CatI$ be a functor such that
  $F(x)$ has finite colimits for $x\in \mathcal{C}$ and $F(f) \colon
  F(x) \to F(y)$ preserves finite colimits for every morphism $f
  \colon x \to y$ in $\mathcal{C}^{\op}$. Then there is a symmetric
  monoidal equivalence of symmetric monoidal $(\infty,n)$-categories
  \[ \Cospan_{n}(\mathcal{F}) \isoto \Span_{n}(\mathcal{C}; \Cospan_{n}(F)).\]
\end{cor}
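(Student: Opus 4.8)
The plan is to deduce the corollary from the equivalence of $n$-uple Segal spaces in Proposition~\ref{propn:spancospandesc}, upgrading it first to the underlying $n$-fold Segal spaces and then to a symmetric monoidal equivalence. The hypotheses here are stronger than in that proposition, which only needs $F$ to land in $\Catpo$: requiring finite colimits in each $F(x)$ and their preservation by each $F(f)$ is exactly what will furnish the symmetric monoidal structures below (empty coproducts in the fibres supplying the monoidal units and duals), so the proposition still applies directly. To pass to underlying objects I would apply the right adjoint $U^{n}_{\Seg}$ of Proposition~\ref{propn:Useg} to $\SPAN_{n}(\mathcal{C}; \COSPAN_{n}(F)) \simeq \COSPAN_{n}(\mathcal{F})$. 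On the right this yields $\Cospan_{n}(\mathcal{F})$ by the definition of $\Cospan_{n}$, while on the left Remark~\ref{rmk:spanlocsysfold}, in its form for spans with coefficients applied to the coefficient functor $\COSPAN_{n}(F)$, identifies $U^{n}_{\Seg}\SPAN_{n}(\mathcal{C}; \COSPAN_{n}(F))$ with $\Span_{n}(\mathcal{C}; U^{n}_{\Seg}\COSPAN_{n}(F)) = \Span_{n}(\mathcal{C}; \Cospan_{n}(F))$. Both sides are complete, using that $\Span_{n}$ of any \icat{} is complete, together with Lemma~\ref{lm:spandualadjoints}(i) on the left.

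Next I would exhibit the symmetric monoidal structures. On the left, since $F(x)$ has finite colimits its opposite has finite limits, so each $\Cospan_{n}(F(x)) = \Span_{n}(F(x)^{\op})$ is symmetric monoidal with duals; preservation of finite colimits by each $F(f)$ makes the induced cospan functors symmetric monoidal, so $\Cospan_{n}(F)$ is a functor from $\mathcal{C}^{\op}$ to symmetric monoidal $(\infty,n)$-categories, and Lemma~\ref{lm:spandualadjoints}(ii),(iv) then endow $\Span_{n}(\mathcal{C}; \Cospan_{n}(F))$ with a symmetric monoidal structure (with duals). On the right, Lemma~\ref{lem:cocartcolim} shows $\mathcal{F}$ has finite colimits: the base $\mathcal{C}^{\op}$ has finite colimits (using that $\mathcal{C}$ has finite limits), each fibre $F(x)$ has them, and the cocartesian pushforwards $F(f)$ preserve them. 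Hence $\mathcal{F}^{\op}$ has finite limits and $\Cospan_{n}(\mathcal{F}) = \Span_{n}(\mathcal{F}^{\op})$ is symmetric monoidal with duals.

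The hard part is to check that the underlying equivalence is symmetric monoidal for these two structures. Conceptually it must be, because both tensor products follow the same recipe: take the product in $\mathcal{C}$ (the coproduct in $\mathcal{C}^{\op}$), push the coefficients forward, and form the coproduct in the resulting fibre---this being precisely the description of coproducts in the cocartesian fibration $\mathcal{F}$ furnished by Lemma~\ref{lem:cocartcolim}. To make this rigorous I would encode symmetric monoidal $(\infty,n)$-categories as commutative monoids, \ie{} as suitable $\Gamma^{\op}$-diagrams (\cf{} the discussion of $\Mon_{\infty}$ and Remark~\ref{rmk:EkvsDn}), and observe that the construction of Proposition~\ref{propn:spancospandesc}---the pullback defining spans with coefficients and the fibration $\mathcal{F}$---is natural in the pair $(\mathcal{C}, F)$; applying it levelwise to the commutative-monoid data (the cartesian structure on $\mathcal{C}$ and the coefficientwise structure on $\Cospan_{n}(F)$) then yields an equivalence of $\Gamma^{\op}$-diagrams, \ie{} a symmetric monoidal equivalence. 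Equivalently, one may transport the left-hand structure along the underlying equivalence and identify the outcome with the coproduct structure on $\Cospan_{n}(\mathcal{F})$, the identification again reducing to the fibrewise computation of coproducts in $\mathcal{F}$. This naturality in the monoidal direction---in effect rerunning the combinatorial comparison through $\mathbb{T}^{I}$ of Propositions~\ref{propn:markedanod} and~\ref{propn:Tcocartftr} in the presence of the extra $\Gamma^{\op}$ variable---is where the genuine work lies, the underlying equivalence itself being immediate from Proposition~\ref{propn:spancospandesc}.
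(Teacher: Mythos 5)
Your first two steps are fine and are (implicitly) how the paper also begins: Proposition~\ref{propn:spancospandesc} together with $U^{n}_{\Seg}$ and Remark~\ref{rmk:spanlocsysfold} gives the underlying equivalence $\Cospan_{n}(\mathcal{F})\simeq \Span_{n}(\mathcal{C};\Cospan_{n}(F))$, and your observation that both tensor products follow the same recipe (via Lemma~\ref{lem:cocartcolim}) is correct. The gap is in your final step, and it is a gap about definitions. In this paper, following \cite{spans}*{Proposition 12.1} and \cite{spans}*{Proposition 13.1}, the symmetric monoidal structures on both sides are \emph{defined by delooping}: $\Cospan_{n}(\mathcal{F})$ is $\bE_\infty$-monoidal because $\Cospan_{n+k}(\mathcal{F})(\emptyset,\emptyset)\simeq \Cospan_{n+k-1}(\mathcal{F})$ for all $k$, and similarly for $\Span_{n}(\mathcal{C};\Cospan_{n}(F))$ via Proposition~\ref{propn:SpanCFmaps} (see the remark on iterated deloopings in \S\ref{subsec:revseg}, and the explicit phrase ``since the symmetric monoidal structures are defined by delooping'' in the proof of Corollary~\ref{cor:cospanmorita}). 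Your plan instead encodes symmetric monoidal structures as $\Gamma^{\op}$-monoids obtained from naturality of the whole construction in the pair $(\mathcal{C},F)$ and product-preservation. Even granting that naturality and product-preservation (neither is established in the paper; Proposition~\ref{propn:spancospandesc} is proved for a fixed pair), what you would obtain is a symmetric monoidal equivalence for these a priori \emph{different} structures, and you would still owe an identification of the naturality-induced $\Gamma^{\op}$-structures with the delooping structures on each side. That comparison (already for $\Span_{n}$ of an $\infty$-category with finite limits: the cartesian-product-induced versus the delooping-induced $\bE_\infty$-structure) is not in the cited literature and is of essentially the same order of difficulty as the corollary itself; it also matters downstream, since the duals statements (Lemma~\ref{lm:spandualadjoints}(iv), \cite{spans}*{Corollary 12.5}) and the delooping description of $\CoisCorrns$ are all formulated for the delooping structures.

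The paper's actual proof is different and, contrary to your expectation that one must rerun Propositions~\ref{propn:markedanod} and~\ref{propn:Tcocartftr} with an extra $\Gamma^{\op}$ variable, requires no new combinatorics: the variable needed for delooping is just another simplicial coordinate, which the multisimplicial setup already contains. Since $\alpha_{I}$ and $\beta_{I}$ are by definition cartesian products of their one-dimensional constituents, the comparison equivalences for a multi-index $(k,I)$ factor compatibly through those for $k$ and for $I$; setting $k=1$ and passing to fibres over the initial objects (the monoidal units) shows that the level-$(n+1)$ equivalence restricts, on endomorphisms of the unit, to the level-$n$ equivalence. For delooping-defined symmetric monoidal structures this compatibility \emph{is} the assertion that the equivalence is symmetric monoidal, so the proof ends there. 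To salvage your route you would need to prove precisely such a compatibility between your $\Gamma^{\op}$-structures and the paper's delooping towers, at which point the paper's argument is already doing the work more directly.
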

\begin{proof}
  Since the functors $\alpha_{I}$ and $\beta_{I}$ are defined as
  cartesian products, we have a commutative diagram of equivalences
  \[
    \begin{tikzcd}
      \Map^{\txt{cocart}}_{C/\bbS^{k,I,\op}}(\CbbS^{k,I}, \mathcal{F})
      \arrow{r}{\sim}& \Map^{\txt{cocart}}_{C/\bbS^{k,\op}}(\CbbS^{k},
    \Map^{\txt{cocart}}_{C/\bbS^{I,\op}}(\CbbS^{I},    \mathcal{F})) \\
    \Map^{\txt{cocart}}_{C/\bbS^{k,I,\op}}(\mathbb{T}^{k,I},\mathcal{F}) \arrow{u}{\sim}
    \arrow{d}[left]{\sim} \arrow{r}{\sim} & \Map^{\txt{cocart}}_{C/\bbS^{k,\op}}(\mathbb{T}^{k},
    \Map^{\txt{cocart}}_{C/\bbS^{I,\op}}(\mathbb{T}^{I}, \mathcal{F})) \arrow{u}[right]{\sim}
    \arrow{d}[right]{\sim} \\
        \Map(\bbS^{k,I,\op}, \mathcal{F}) 
        \arrow{r}{\sim} & 
        \Map(\bbS^{k,\op}, \Map(\bbS^{I,\op}, \mathcal{F})),
    \end{tikzcd}
  \]
  with the notation in the right-hand column interpreted so that it
  makes sense. Setting $k = 1$ and taking the fibres (via the maps
  $\bbS^{0} \amalg \bbS^{0} \to \bbS^{1}$, etc.) at the constant maps
  to the initial object (which we denote with the subscript $(\emptyset,\emptyset)$), we get a commutative diagram of equivalences
  \[
    \begin{tikzcd}
      \Map^{\txt{cocart}}_{C/\bbS^{1,I,\op}}(\CbbS^{1,I}, \mathcal{F})_{(\emptyset,\emptyset)}
      \arrow{r}{\sim}& \Map^{\txt{cocart}}_{C/\bbS^{I,\op}}(\CbbS^{I},    \mathcal{F}) \\
    \Map^{\txt{cocart}}_{C/\bbS^{1,I,\op}}(\mathbb{T}^{1,I},\mathcal{F})_{(\emptyset,\emptyset)} \arrow{u}{\sim}
    \arrow{d}[left]{\sim} \arrow{r}{\sim} &
    \Map^{\txt{cocart}}_{C/\bbS^{I,\op}}(\mathbb{T}^{I}, \mathcal{F})
    \arrow{u}[right]{\sim}
    \arrow{d}[right]{\sim} \\
        \Map(\bbS^{1,I,\op}, \mathcal{F})_{(\emptyset,\emptyset)} 
        \arrow{r}{\sim} & 
        \Map(\bbS^{I,\op}, \mathcal{F}).
    \end{tikzcd}
  \]
  From this we see that on the underlying $n$-fold Segal objects the
  equivalences of Proposition~\ref{propn:spancospandesc} are compatible under
  delooping, \ie{} we have commutative squares of equivalences
  \[
    \begin{tikzcd}
    \Span_{n+1}(\mathcal{C}; \Cospan_{n+1}(F))(\emptyset,\emptyset)
    \arrow{r}{\sim} & \Span_{n}(\mathcal{C}; \Cospan_{n}(F)) \\
    \Cospan_{n+1}(\mathcal{F})(\emptyset,\emptyset) \arrow{r}{\sim}
    \arrow{u}{\sim}& \Cospan_{n}(\mathcal{F}). \arrow{u}{\sim}
  \end{tikzcd}
\]
  It follows that the equivalence $\Span_{n}(\mathcal{C};
  \Cospan_{n}(F)) \isoto \Cospan_{n}(\mathcal{F})$ is symmetric
  monoidal, as required.
\end{proof}

\begin{remark}
  Let $F$ be as above, and suppose
  $\sigma \colon \mathcal{C}^{\op} \to \mathcal{F}$ is a section that
  takes finite limits in $\mathcal{C}$ to colimits in $\mathcal{F}$. Then
  it follows from the equivalence of Corollary~\ref{cor:SPANCOSPAN}
  that $\sigma$ induces a symmetric monoidal functor of
  $(\infty,n)$-categories
  \[ \Span_{n}(\mathcal{C}) \to \Span_{n}(\mathcal{C}; \Cospan_{n}(F)).\]
  \label{rmk:SPANCOSPANsection}
\end{remark}

\subsection{Review of Higher Morita  Categories}\label{subsec:revmorita} 
In this subsection we will briefly recall the definition of the higher
Morita category of $\mathbb{E}_{n}$-algebras in an
$\mathbb{E}_{n}$-monoidal \icat{}, as constructed in \cite{nmorita}.

\begin{defn}
  A \emph{$\Dn$-monoidal \icat{}} is a cocartesian fibration
  $\mathcal{V}^{\otimes} \to \Dnop$ such that the corresponding
  functor $\Dnop \to \CatI$ is an $n$-uple monoid in $\CatI$, in the
  sense of Definition~\ref{defn:uplemonoid}. We will abuse notation by
  writing $\mathcal{V}$ for $\mathcal{V}^{\otimes}_{(1,\ldots,1)}$ and
  just saying that ``$\mathcal{V}$ is a $\Dn$-monoidal \icat{}''.
\end{defn}

\begin{remark}
  As a special case of Remark~\ref{rmk:EkvsDn}, the notion of $\Dn$-monoidal
  \icat{} is equivalent to that of $\mathbb{E}_{n}$-monoidal \icat{}
  considered in \cite{HA}.
\end{remark}

\begin{notation}
  We say a morphism in $\simp^{n} := \simp^{\times n}$ is inert or
  active if each of its components in $\simp$ is inert or active,
  respectively, in the sense of Definition~\ref{def:inert}.
\end{notation}

\begin{defn}
  Suppose $\mathcal{V}$ is a $\Dn$-monoidal \icat{}. Then a
  \emph{$\simp^{n,\op}$-algebra} in $\mathcal{V}$ is a section
  \[
    \begin{tikzcd}
      \mathcal{V}^{\otimes} \arrow{d} \\
      \Dnop \arrow[bend left=40]{u}{A}
    \end{tikzcd}
  \]
  such that $A$ takes inert morphisms in $\Dnop$ to cocartesian
  morphisms in $\mathcal{V}^{\otimes}$.
\end{defn}

\begin{remark}
  It follows from the Dunn--Lurie additivity theorem that
  $\Dnop$-algebras in $\mathcal{V}$ are the same thing as
  $\mathbb{E}_{n}$-algebras; see \cite{nmorita}*{Corollary A.27}.
\end{remark}

\begin{defn}
  More generally, if $\mathcal{O}$ is an \icat{} over $\Dnop$ with a
  suitable notion of inert morphisms living over the inert morphsims
  in $\Dnop$, we can define \emph{$\mathcal{O}$-algebras} in a
  $\Dn$-monoidal \icat{} $\mathcal{V}$ as commutative triangles
  \[
    \begin{tikzcd}
      \mathcal{O} \arrow{rr}{A} \arrow{dr} & & \mathcal{V}^{\otimes}
      \arrow{dl}\\
      & \Dnop,
    \end{tikzcd}
  \]
  where $A$ takes inert morphisms in $\mathcal{O}$ to cocartesian
  morphisms in $\mathcal{V}^{\otimes}$. In particular, this definition
  makes sense if $\mathcal{O}$ is a \emph{(generalized)
    $\Dn$-$\infty$-operad}, in the sense of \cite{nmorita}*{Definition
    5.8}. We write $\Alg^{n}_{\mathcal{O}}(\mathcal{V})$ for the full
  subcategory of $\Fun_{/\Dnop}(\mathcal{O}, \mathcal{V}^{\otimes})$
  spanned by the $\mathcal{O}$-algebras.
\end{defn}

\begin{ex}\label{ex:DnIop}
  For any object $I$ in $\Dnop$, the slice category
  $\DnIop := ((\simp^{ \times n})_{/I})^{\op}$ is a generalized
  $\Dn$-$\infty$-operad via the forgetful functor. Algebras for
  $\Dop_{/[1]}$ in $\mathcal{V}$ correspond to a pair of associative
  algebras and a bimodule between them, while $\DnIop$-algebras where
  $I = (1,\ldots,1,0,\ldots,0)$ with $k$ 1's correspond to $k$-fold
  iterated bimodules in $\mathbb{E}_{n-k}$-algebras in $\mathcal{V}$;
  these are the $k$-morphisms in the higher Morita category. On the
  other hand, $\Dop_{/[2]}$-algebras correspond to a triple of
  associative algebras $A_{0},A_{1},A_{2}$, together with
  $A_{i}$-$A_{j}$-bimodules $M_{ij}$ for all $0 \leq i < j \leq 2$,
  as well as an $A_{1}$-bilinear map $M_{01} \otimes M_{12} \to M_{02}$, or
  equivalently a map $M_{01}\otimes_{A_{1}} M_{12}\to M_{02}$ of
  $A_{0}$-$A_{2}$-bimodules.
\end{ex}

\begin{ex}
  Let $\bbLambda_{/[n]}$ denote the full subcategory of $\simp_{/[n]}$
  spanned by the morphisms $\phi \colon [m] \to [n]$ such that
  $\phi(i+1)-\phi(i) \leq 1$ for all $i$. For
  $I = (i_{1},\ldots,i_{n})$ in $\simp^{n}$, we set
  $\LnIop := \prod_{t = 1}^{n}\bbLambda_{/[i_{t}]}^{\op}$; then
  $\LnIop$ is a generalized $\Dn$-$\infty$-operad via the forgetful
  functor to $\Dnop$. A $\bbLambda^{\op}_{/[2]}$-algebra in
  $\mathcal{V}$ corresponds to a triple of assocative algebras,
  $A_{0}$, $A_{1}$, $A_{2}$, together with an $A_{0}$-$A_{1}$-bimodule
  $M_{01}$ and an $A_{1}$-$A_{2}$-bimodule $M_{12}$.
\end{ex}

\begin{defn}
  If $S$ is some class of \icats{}, we say that a $\Dn$-monoidal
  \icat{} $\mathcal{V}^{\otimes}$ is \emph{compatible with $S$-shaped
    colimits} if $\mathcal{V}$ has $S$-shaped colimits and the tensor
  product functor
  \[ \mathcal{V}^{\times 2} \simeq
    \mathcal{V}^{\otimes}_{(2,1,\ldots,1)} \to \mathcal{V}\] coming
  from the map $(2,1\ldots,1) \to (1,\ldots,1)$ preserves $S$-shaped
  colimits in each variable. (The $n$ tensor products obtained in this
  way by permuting $(2,1,\ldots,1)$ can all be shown to be equivalent,
  so the definition does not depend on the choice of this map.)
\end{defn}

\begin{defn}
  Let $\tau_{I} \colon \LnIop \to \DnIop$ be the
  inclusion. Composition with $\tau_{I}$ induces a functor
  $\tau_{I}^{*} \colon \Alg^{n}_{\DnIop}(\mathcal{V}) \to
  \Alg^{n}_{\LnIop}(\mathcal{V})$. If $\mathcal{V}$ is compatible with
  $\Dop$-colimits, then this functor has a fully faithful left adjoint
  $\tau_{I,!}$. We say a $\DnIop$-algebra is \emph{composite} if it is
  in the essential image of this functor, or equivalently if the
  counit map $\tau_{I,!}\tau_{I}^{*}A \to A$ is an equivalence.
\end{defn}

\begin{ex}
  A $\simp^{\op}_{/[2]}$-algebra as in Example~\ref{ex:DnIop} is
  composite \IFF{} the morphism
  $M_{01} \otimes_{A_{1}} M_{12} \to M_{02}$ is an equivalence, \ie{}
  \IFF{} the $\simp^{\op}_{/[2]}$-algebra presents $M_{02}$ as the
  \emph{composite} of $M_{01}$ and $M_{12}$ in the higher Morita
  category.
\end{ex}

\begin{defn}\label{defn:ALGn}
  Suppose $\mathcal{V}$ is a $\Dn$-monoidal \icat{} compatible with
  $\Dnop$-colimits. There is a functor $\Dnop \to \CatI$ taking $I$ to
  $\Alg_{\DnIop}^{n}(\mathcal{V})$ and a morphism $\phi \colon I \to J$ to the
  functor given by composition with the functor $\DnIop \to
  \Dnop_{/J}$ defined by composing with $\phi$. We let 
  \[\overline{\mathfrak{ALG}}_{n}(\mathcal{V}) \to \Dnop\] be the corresponding cocartesian
  fibration, and write
  $\mathfrak{ALG}_{n}(\mathcal{V})$ for the full subcategory of
  $\overline{\mathfrak{ALG}}_{n}(\mathcal{V})$ spanned by the
  composite $\DnIop$-algebras for all $I$.
\end{defn}

We can now state the main result of \cite{nmorita}:
\begin{thm}[\cite{nmorita}*{Theorem 5.31}]
  For $\mathcal{V}$ as above, the restricted functor $\mathfrak{ALG}_{n}(\mathcal{V}) \to \Dnop$
  is a cocartesian fibration, and the corresponding functor is an
  $n$-uple category object in $\CatI$. \qed
\end{thm}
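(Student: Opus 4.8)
The plan is to prove the two assertions --- that $\mathfrak{ALG}_{n}(\mathcal{V}) \to \Dnop$ is a cocartesian fibration and that the classifying functor is an $n$-uple category object --- by reducing both to a single Beck--Chevalley compatibility between the operadic left Kan extensions $\tau_{I,!}$ and the pushforward functors of the ambient cocartesian fibration $\overline{\mathfrak{ALG}}_{n}(\mathcal{V}) \to \Dnop$.

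For the cocartesian fibration, recall that $\overline{\mathfrak{ALG}}_{n}(\mathcal{V}) \to \Dnop$ is already cocartesian and that $\mathfrak{ALG}_{n}(\mathcal{V})$ is the full subcategory spanned fibrewise by the composite algebras. By the standard criterion for when a fibrewise full subcategory of the total space of a cocartesian fibration is again cocartesian over the base, it suffices to check that for every $\phi \colon I \to J$ in $\Dnop$ the pushforward $\phi_{!} \colon \Alg^{n}_{\DnIop}(\mathcal{V}) \to \Alg^{n}_{\simp^{n,\op}_{/J}}(\mathcal{V})$ carries composite algebras to composite algebras. Since the composite algebras are precisely the essential image of the fully faithful $\tau_{I,!}$, I would deduce this from the identity $\phi_{!}\,\tau_{I,!} \simeq \tau_{J,!}\,\psi_{!}$, where $\psi_{!}$ is the pushforward induced by $\phi$ on $\bbLambda$-algebras. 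Both composites are operadic left Kan extensions, so the identity follows from the commuting square of operad maps relating the inclusions $\tau_{I},\tau_{J}$ to the maps induced by $\phi$, together with the fact --- guaranteed by the compatibility of $\mathcal{V}$ with $\Dnop$-colimits --- that the relevant Kan extensions exist and are computed by the expected relative tensor products. Granting the identity, for composite $A$ we have $\phi_{!}A \simeq \phi_{!}\tau_{I,!}\tau_{I}^{*}A \simeq \tau_{J,!}\psi_{!}\tau_{I}^{*}A$, which lies in the image of $\tau_{J,!}$ and is therefore composite.

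For the $n$-uple category object claim, the key simplification is that restriction along $\tau_{I}$ gives an equivalence $\tau_{I}^{*}\colon \Alg^{n,\mathrm{comp}}_{\DnIop}(\mathcal{V}) \isoto \Alg^{n}_{\LnIop}(\mathcal{V})$ between composite $\DnIop$-algebras and all $\LnIop$-algebras, with inverse $\tau_{I,!}$. The same Beck--Chevalley compatibility makes these equivalences natural in $I$, so the functor classified by $\mathfrak{ALG}_{n}(\mathcal{V})$ is identified with $I \mapsto \Alg^{n}_{\LnIop}(\mathcal{V})$, and it remains to verify that the latter satisfies the Segal condition in each of the $n$ simplicial directions. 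Fixing all but the $t$-th coordinate and factoring the product operad $\LnIop = \prod_{t}\bbLambda^{\op}_{/[i_{t}]}$ one coordinate at a time reduces this to showing that, for a suitable monoidal $\infty$-category $\mathcal{W}$ assembled from the remaining coordinates,
\[ \Alg^{n}_{\bbLambda^{\op}_{/[m]}}(\mathcal{W}) \simeq \Alg^{n}_{\bbLambda^{\op}_{/[1]}}(\mathcal{W}) \times_{\Alg^{n}_{\bbLambda^{\op}_{/[0]}}(\mathcal{W})} \cdots \times_{\Alg^{n}_{\bbLambda^{\op}_{/[0]}}(\mathcal{W})} \Alg^{n}_{\bbLambda^{\op}_{/[1]}}(\mathcal{W}). \]
This is immediate from the structure of $\bbLambda_{/[m]}$: a $\bbLambda^{\op}_{/[m]}$-algebra is exactly a chain of algebras $A_{0},\ldots,A_{m}$ together with $A_{i}$-$A_{i+1}$-bimodules $M_{i,i+1}$ and no further coherence data, so gluing such chains along their common algebras recovers precisely the spine fibre product.

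The main obstacle is the Beck--Chevalley compatibility $\phi_{!}\,\tau_{I,!} \simeq \tau_{J,!}\,\psi_{!}$ itself. Everything else is either given (the cocartesian fibration $\overline{\mathfrak{ALG}}_{n}(\mathcal{V})$) or formal once this is known (the naturality of the $\bbLambda$-equivalence and hence the Segal condition). Establishing it requires careful bookkeeping of how the active morphisms of $\Dnop$ act on the slice operads $\DnIop$ and of how the resulting relative tensor products compose; it is precisely here, rather than in the more categorical aspects of the construction, that the hypothesis that $\mathcal{V}$ is compatible with $\Dnop$-colimits does the essential work.
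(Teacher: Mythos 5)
First, a point of order: the paper you are working from does not prove this statement at all --- it is imported verbatim from \cite{nmorita}*{Theorem 5.31} and stated without proof --- so the only meaningful comparison is with the proof given in that reference. Your high-level architecture does agree with it: one shows that the composite algebras are closed under the pushforward functors of $\overline{\mathfrak{ALG}}_{n}(\mathcal{V}) \to \Dnop$, so that the fibrewise full subcategory $\mathfrak{ALG}_{n}(\mathcal{V})$ inherits a cocartesian fibration structure, and one verifies the Segal conditions by transporting them to $\LnIop$-algebras along the equivalence $\tau_{I}^{*}$. The problem is the device you use to prove closure under pushforward.

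Your Beck--Chevalley identity $\phi_{!}\,\tau_{I,!} \simeq \tau_{J,!}\,\psi_{!}$ presupposes a functor $\psi_{!}$ on $\bbLambda$-algebras ``induced by $\phi$'', and no such functor exists precisely for the morphisms where the statement has content. The pushforward along $\phi\colon I \to J$ in $\Dnop$ is restriction of algebras along the map of generalized operads $\simp^{n,\op}_{/J} \to \DnIop$ obtained by postcomposing with the underlying map $u\colon J \to I$ of $\simp^{n}$. Postcomposition with $u$ carries $\bbLambda^{n}_{/J}$ into $\bbLambda^{n}_{/I}$ only when $u$ itself has successive differences at most $1$ in each coordinate: indeed, the image of the object $\id_{J} \in \bbLambda^{n}_{/J}$ is $u$, so $u$ must lie in $\bbLambda^{n}_{/I}$. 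Inert maps satisfy this, but the active maps that implement composition of bimodules --- already $[1] \to [2]$, $0 \mapsto 0$, $1 \mapsto 2$ --- do not. Hence there is no commuting square of operad maps relating $\tau_{I}$, $\tau_{J}$ and $\phi$, and no formal Beck--Chevalley transformation to invoke. The only available substitute is to \emph{define} $\psi_{!} := \tau_{J}^{*}\phi_{!}\tau_{I,!}$, but then your identity reads $\phi_{!}\tau_{I,!} \simeq \tau_{J,!}\tau_{J}^{*}\phi_{!}\tau_{I,!}$, which is literally the assertion that $\phi_{!}$ of a composite algebra is composite --- the thing to be proved --- so the argument is circular. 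This is exactly why the proof in \cite{nmorita} establishes closure directly: for composite $A$ one shows the counit $\tau_{J,!}\tau_{J}^{*}(\phi_{!}A) \to \phi_{!}A$ is an equivalence, which boils down to associativity of iterated relative tensor products (for instance, pushing a composite $\simp^{\op}_{/[3]}$-algebra forward along $0\mapsto 0, 1\mapsto 2, 2\mapsto 3$ requires $(M_{01}\otimes_{A_{1}}M_{12})\otimes_{A_{2}}M_{23} \simeq M_{01}\otimes_{A_{1}}M_{12}\otimes_{A_{2}}M_{23}$), proved via bar-construction and cofinality arguments; this is where compatibility with $\Dnop$-colimits does its work, and it is the entire content rather than ``bookkeeping'' around a formal square. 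Two secondary remarks: (i) the Segal conditions only involve pushforward along \emph{inert} morphisms, which do preserve the $\bbLambda$-subcategories, so that part of your argument can be repaired without the general Beck--Chevalley claim; (ii) however, the spine decomposition of $\Alg^{n}_{\bbLambda^{\op}_{/[m]}}$ that you call ``immediate'' is itself a nontrivial lemma (an operadic analogue of the statement that spine inclusions are inner anodyne), not a tautology about chains of algebras and bimodules.
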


\begin{remark}
  The assumption that $\mathcal{V}$ is compatible with $\Dop$-colimits
  can be weakened to the assumption that $\mathcal{V}$ ``has good
  relative tensor products'' in the sense of
  \cite{nmorita}*{Definition 5.18}. In particular, it is not necessary
  that $\mathcal{V}$ has all simplicial colimits, only those that
  occur when forming relative tensor products. For example, if
  $\mathcal{V}$ is equipped with the cocartesian symmetric monoidal
  structure, then the relative tensor products are given by pushouts,
  and it is enough to assume that $\mathcal{V}$ has finite colimits.
\end{remark}

\begin{remark}
  An $n$-uple category object in $\CatI$ gives, by viewing \icats{} as
  complete Segal spaces, an $(n+1)$-uple Segal space. From this we can
  obtain an $(n+1)$-fold Segal space via
  Proposition~\ref{propn:Useg}.
\end{remark}

\begin{notation}
  We write $\mathfrak{Alg}_{n}(\mathcal{V})$ for the completion of the
  underlying $(n+1)$-fold Segal space
  $U_{\Seg}^{n+1}\mathfrak{ALG}_{n}(\mathcal{V})$ of
  $\mathfrak{ALG}_{n}(\mathcal{V})$. Thus
  $\mathfrak{Alg}_{n}(\mathcal{V})$ is an $(\infty,n+1)$-category; we
  write $\mathfrak{alg}_{n}(\mathcal{V})$ for its underlying
  $(\infty,n)$-category. Equivalently,
  $\mathfrak{alg}_{n}(\mathcal{V})$ is the completion of the
  underlying $n$-fold Segal space of the $n$-uple Segal space
  corresponding to the left fibration obtained by forgetting the
  non-cocartesian morphisms in $\mathfrak{ALG}_{n}(\mathcal{V})$.
\end{notation}

We then have the following results from \cite{nmorita}, which we state
for $\mathfrak{alg}_{n}(\mathcal{V})$, this being the version of the
higher Morita category relevant in this paper:
\begin{thm}[\cite{nmorita}*{Theorem 5.49}]
  $\mathfrak{alg}_{n}(\mathcal{V})(A,B) \simeq
  \mathfrak{alg}_{n-1}(\txt{Mod}_{A,B}(\mathcal{V}))$. \qed
\end{thm}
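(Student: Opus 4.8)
The plan is to compute the mapping $(\infty,n-1)$-category $\mathfrak{alg}_{n}(\cV)(A,B)$ directly from the $n$-uple category object $I \mapsto \mathfrak{ALG}_{n}(\cV)_{I}$ of composite $\DnIop$-algebras, and to identify it with the higher Morita category of the bimodule category $\txt{Mod}_{A,B}(\cV)$ equipped with the $\simp^{n-1,\op}$-monoidal structure it inherits from $\cV$. Since completion is functorial, it suffices to produce a natural equivalence of the underlying \emph{uncompleted} $(n-1)$-fold Segal spaces.

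First I would unwind the mapping object. By the pullback formula $\mathcal{D}(A,B)$ for mapping objects of Segal spaces, and using that $U^{n}_{\Seg}$ is computed by iterated pullbacks (as in the proof of \cite[Proposition 4.12]{spans}, cf.\ Remark~\ref{rmk:spanlocsysfold}), the mapping object is the underlying $(n-1)$-fold Segal space of the $(n-1)$-uple object
\[ J \longmapsto \mathfrak{ALG}_{n}(\cV)_{([1],J)} \times_{\left(\mathfrak{ALG}_{n}(\cV)_{([0],J)}\right)^{\times 2}} \{(A,B)\}, \]
where $J$ ranges over $\simp^{n-1,\op}$ and the two legs are the restrictions along the inert maps $[0]\to[1]$ in the first coordinate. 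Thus everything reduces to identifying these pinned fibres.

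The crux is an operadic comparison. Via the Dunn--Lurie splitting of the first simplicial factor off the remaining $n-1$, a $\simp^{n,\op}_{/([1],J)}$-algebra unwinds to a $\simp^{n-1,\op}_{/J}$-shaped family of bimodules between the two $\mathbb{E}_{n-1}$-algebras encoded in the $[1]$-direction, and pinning those algebras to $A$ and $B$ yields a $\simp^{n-1,\op}_{/J}$-algebra valued in $(A,B)$-bimodules. To make this precise I would first construct the $\simp^{n-1,\op}$-monoidal structure $\txt{Mod}_{A,B}(\cV)^{\otimes} \to \simp^{n-1,\op}$, using that the transverse relative tensor product descends to bimodules because $A$ and $B$ are $\mathbb{E}_{n}$- (hence $\mathbb{E}_{n-1}$-) algebras, and then exhibit a natural equivalence
\[ \Alg^{n}_{\simp^{n,\op}_{/([1],J)}}(\cV)_{A,B} \;\simeq\; \Alg^{n-1}_{\simp^{n-1,\op}_{/J}}\!\left(\txt{Mod}_{A,B}(\cV)\right), \]
where $(-)_{A,B}$ denotes pinning the two boundary algebras. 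This is a fibred version of the elementary identification of $\simp^{\op}_{/[1]}$-algebras with bimodules (Example~\ref{ex:DnIop}), tensored against the remaining $n-1$ monoidal directions.

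Finally I would match the composite condition on the two sides: a $\simp^{n,\op}_{/([1],J)}$-algebra is composite exactly when it is left Kan extended from $\bbLambda^{n,\op}_{/([1],J)}$, and under the identification above this should correspond to compositeness of the associated $\simp^{n-1,\op}_{/J}$-algebra in $\txt{Mod}_{A,B}(\cV)$, since the relative tensor products detecting compositeness there are computed by the same bar constructions in $\cV$, now carried out transverse to the pinned $[1]$-coordinate. Granting this, the equivalence is one of $(n-1)$-uple category objects, so it passes to underlying $(n-1)$-fold Segal spaces and to completions, yielding
\[ \mathfrak{alg}_{n}(\cV)(A,B) \simeq \mathfrak{alg}_{n-1}(\txt{Mod}_{A,B}(\cV)). \]
The main obstacle is the fibred operadic comparison of the third step: one must carefully match the generalized operad $\simp^{n,\op}_{/([1],J)}$ with its $[1]$-endpoints pinned against $\simp^{n-1,\op}_{/J}$ for the bimodule category, verify that the induced $\simp^{n-1,\op}$-monoidal structure on $\txt{Mod}_{A,B}(\cV)$ is the correct one, and confirm its compatibility with the composite/left-Kan-extension conditions. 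This is where the genuine content of the theorem lies.
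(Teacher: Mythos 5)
You should first be aware that the present paper does not prove this statement at all: it is imported verbatim from \cite{nmorita}*{Theorem 5.49}, with the proof omitted, so your proposal can only be measured against the argument in that reference. Measured that way, your outline reproduces its architecture correctly: work with the uncompleted $n$-uple category object $\mathfrak{ALG}_{n}(\cV)$, compute the mapping object as the fibre of the $([1],J)$-levels over the pinned pair $(A,B)$, identify pinned $\simp^{n,\op}_{/([1],J)}$-algebras with $\simp^{n-1,\op}_{/J}$-algebras in $\txt{Mod}_{A,B}(\cV)$ by splitting off the first simplicial coordinate, match compositeness, and then complete. (One small point you elide: passing between mapping objects of the completion and completions of mapping objects requires that the completion map is fully faithful, i.e.\ a Dwyer--Kan equivalence; this is true, but ``completion is functorial'' is not by itself the reason.)

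The genuine gap is exactly the one you flag and then ask to be granted: your third step, together with the construction of the $\simp^{n-1,\op}$-monoidal structure on $\txt{Mod}_{A,B}(\cV)$, is not a technical verification orthogonal to the theorem --- it essentially \emph{is} the theorem, and in \cite{nmorita} it is where the bulk of the work lies. Concretely, one needs (i) the iterated-algebra equivalence
\[ \Alg^{n}_{\simp^{n,\op}_{/([1],J)}}(\cV) \simeq \Alg^{n-1}_{\simp^{n-1,\op}_{/J}}\bigl(\Alg^{1}_{\Dop_{/[1]}}(\cV)\bigr), \]
which is the same \cite{nmorita}*{Corollary A.77} that the present paper invokes for its cospan comparison; (ii) the bimodule fibration $\Alg^{1}_{\Dop_{/[1]}}(\cV)\to \Alg(\cV)^{\times 2}$, whose fibre at $(A,B)$ is $\txt{Mod}_{A,B}(\cV)$, together with the verification that forming $\simp^{n-1,\op}_{/J}$-algebras commutes with passing to this fibre and that the residual $n-1$ tensor directions restrict to the fibre (this is where the hypothesis that $A$ and $B$ are $\mathbb{E}_{n}$-algebras, not merely associative, actually enters); and (iii) the fact that relative tensor products in $\txt{Mod}_{A,B}(\cV)$ are computed by the same bar constructions as in $\cV$, i.e.\ that the forgetful functor creates the relevant geometric realizations --- this is precisely what makes your fourth step, the matching of the composite/left-Kan-extension conditions, come out, and it is where the ``good relative tensor products'' hypotheses on $\cV$ are used. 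Until (i)--(iii) are carried out, what you have written is a correct and well-organized plan for the proof in \cite{nmorita}, but not a proof.
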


\begin{cor}
  If $\mathcal{V}$ is an $\mathbb{E}_{n+m}$-monoidal \icat{}, then
  $\mathfrak{alg}_{n}(\mathcal{V})$ is
  $\mathbb{E}_{m}$-monoidal. In particular, if $\mathcal{V}$ is
  symmetric monoidal, so is
  $\mathfrak{alg}_{n}(\mathcal{V})$.
\end{cor}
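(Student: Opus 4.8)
The plan is to realize $\mathfrak{alg}_{n}(\mathcal{V})$ as the $m$-fold delooping of a tower of higher Morita categories and then invoke the iterated delooping criterion recalled above: to equip an $(\infty,n)$-category $\mathcal{D}_{0}$ with an $\bE_{m}$-monoidal structure it suffices to produce a sequence of $(\infty,n+i)$-categories $\mathcal{D}_{i}$ ($0\le i\le m$), objects $x_{i}\in\mathcal{D}_{i}$, and equivalences $\mathcal{D}_{i}(x_{i},x_{i})\simeq\mathcal{D}_{i-1}$ of $(n+i-1)$-fold Segal spaces.

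First I would note that since $\mathcal{V}$ is $\bE_{n+m}$-monoidal and (as in the standing hypotheses of the preceding theorem) compatible with $\Dop$-colimits, it is a fortiori $\bE_{n+i}$-monoidal and compatible with the same colimits for every $0\le i\le m$; the tensor product witnessing this compatibility is the underlying $\bE_{1}$-structure, which is unchanged under restriction. Hence $\mathfrak{alg}_{n+i}(\mathcal{V})$ is defined and is an $(\infty,n+i)$-category for each such $i$. I would set $\mathcal{D}_{i} := \mathfrak{alg}_{n+i}(\mathcal{V})$ and take $x_{i}\in\mathcal{D}_{i}$ to be the monoidal unit $\mathbf{1}_{\mathcal{V}}$, regarded as the trivial $\bE_{n+i}$-algebra.

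The key step is to identify the endomorphism categories. By \cite{nmorita}*{Theorem 5.49} there is an equivalence $\mathfrak{alg}_{n+i}(\mathcal{V})(\mathbf{1}_{\mathcal{V}},\mathbf{1}_{\mathcal{V}})\simeq\mathfrak{alg}_{n+i-1}(\txt{Mod}_{\mathbf{1}_{\mathcal{V}},\mathbf{1}_{\mathcal{V}}}(\mathcal{V}))$. Since bimodules over the unit algebra recover the ambient category, there is an equivalence $\txt{Mod}_{\mathbf{1}_{\mathcal{V}},\mathbf{1}_{\mathcal{V}}}(\mathcal{V})\simeq\mathcal{V}$ of $\bE_{n+i-1}$-monoidal \icats{}, the $\bE_{n+i-1}$-structure on the right being the restriction of that on $\mathcal{V}$. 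Applying $\mathfrak{alg}_{n+i-1}$ and using its functoriality yields $\mathcal{D}_{i}(x_{i},x_{i})\simeq\mathfrak{alg}_{n+i-1}(\mathcal{V})=\mathcal{D}_{i-1}$. Feeding this tower into the delooping criterion endows $\mathcal{D}_{0}=\mathfrak{alg}_{n}(\mathcal{V})$ with an $\bE_{m}$-monoidal structure. For the final assertion, if $\mathcal{V}$ is symmetric monoidal then it is $\bE_{n+m}$-monoidal for all $m$, so the construction goes through with $m=\infty$: the infinite tower $(\mathfrak{alg}_{n+i}(\mathcal{V}))_{i\ge 0}$ is defined and exhibits $\mathfrak{alg}_{n}(\mathcal{V})$ as $\bE_{\infty}$-monoidal, i.e. symmetric monoidal.

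The main obstacle I anticipate is making the identification $\txt{Mod}_{\mathbf{1}_{\mathcal{V}},\mathbf{1}_{\mathcal{V}}}(\mathcal{V})\simeq\mathcal{V}$ precise \emph{as} an equivalence of $\bE_{n+i-1}$-monoidal categories, since it is only at this level that functoriality of $\mathfrak{alg}_{n+i-1}$ produces the equivalence needed for the tower. One must also check that the chosen units $x_{i}$ assemble coherently, so that the equivalences $\mathcal{D}_{i}(x_{i},x_{i})\simeq\mathcal{D}_{i-1}$ form a genuine tower to which the delooping criterion applies verbatim; for the statement as given it is enough to produce one such $\bE_{m}$-structure, not to compare it with a monoidal structure built by hand on the Morita category.
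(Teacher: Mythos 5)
Your proposal is correct and is essentially the paper's intended argument: the corollary is stated without proof precisely because it follows from the immediately preceding theorem $\mathfrak{alg}_{n}(\mathcal{V})(A,B)\simeq\mathfrak{alg}_{n-1}(\mathrm{Mod}_{A,B}(\mathcal{V}))$ applied at the unit algebra (using $\mathrm{Mod}_{\mathbf{1},\mathbf{1}}(\mathcal{V})\simeq\mathcal{V}$), combined with the earlier remark that a tower of $(n+i)$-fold Segal spaces $\mathcal{D}_{i}$ with $\mathcal{D}_{i}(x_{i},x_{i})\simeq\mathcal{D}_{i-1}$ exhibits $\mathcal{D}_{0}$ as $\mathbb{E}_{m}$-monoidal (allowing $m=\infty$). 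Your tower $\mathcal{D}_{i}=\mathfrak{alg}_{n+i}(\mathcal{V})$ pointed at the unit is exactly this delooping construction, and your flagged caveats (monoidality of the identification $\mathrm{Mod}_{\mathbf{1},\mathbf{1}}(\mathcal{V})\simeq\mathcal{V}$, coherence of the tower) are the right technical points but do not affect the argument as the paper uses it.
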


We now discuss two conjectures that will be relevant to our understanding
of the higher category of derived Poisson stacks.

\begin{conjecture}\label{conj:moritaadjoints}
  Suppose $\mathcal{V}$ is a symmetric monoidal \icat{} compatible
  with $\Dnop$-colimits. Then the symmetric monoidal
  $(\infty,n)$-category $\mathfrak{alg}_{n}(\mathcal{V})$ has duals
  (in the sense of Definition~\ref{defn:hasduals}, \ie{} its objects
  are dualizable and all $i$-morphisms have adjoints for
  $1 \leq i < n$). In particular, all objects of
  $\mathfrak{alg}_{n}(\mathcal{V})$ are fully dualizable.
\end{conjecture}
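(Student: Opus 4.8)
The plan is to prove the two halves of Definition~\ref{defn:hasduals} separately: that every object of $\mathfrak{alg}_{n}(\mathcal{V})$ is dualizable, and that $\mathfrak{alg}_{n}(\mathcal{V})$ has adjoints for all $i$-morphisms with $1 \le i < n$. Granting both, $\mathfrak{alg}_{n}(\mathcal{V})$ has duals; and the final ``in particular'' is then formal, since once every $i$-morphism has adjoints the iterated evaluation and coevaluation $1$-morphisms witnessing dualizability of an object automatically carry the adjoint data required by the inductive definition of full dualizability. For the adjoints I would in fact prove the stronger inductive claim: \emph{for every $\mathbb{E}_{m}$-monoidal \icat{} $\mathcal{W}$ compatible with $\Dop$-colimits, the $(\infty,m)$-category $\mathfrak{alg}_{m}(\mathcal{W})$ has adjoints}, so that the symmetric monoidal hypothesis is only used at the very end to produce duals for objects.

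The inductive step uses the mapping-category formula $\mathfrak{alg}_{m}(\mathcal{W})(A,B) \simeq \mathfrak{alg}_{m-1}(\txt{Mod}_{A,B}(\mathcal{W}))$ of \cite{nmorita}*{Theorem 5.49}. First one checks that $\txt{Mod}_{A,B}(\mathcal{W})$ is $\mathbb{E}_{m-1}$-monoidal and again compatible with $\Dop$-colimits --- the relative tensor product $\otimes_{A}$ is computed by a bar construction and inherits colimit-compatibility from $\mathcal{W}$. The inductive hypothesis then says $\mathfrak{alg}_{m-1}(\txt{Mod}_{A,B}(\mathcal{W}))$ has adjoints, which by Definition~\ref{defn:hasduals} is exactly the assertion that every $i$-morphism of $\mathfrak{alg}_{m}(\mathcal{W})$ with $i > 1$ admits left and right adjoints. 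The base case $m = 1$ is vacuous, so the entire inductive content is the remaining statement that every $1$-morphism of $\mathfrak{alg}_{m}(\mathcal{W})$ has adjoints.

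Dualizability of objects for symmetric monoidal $\mathcal{V}$ is the classical Morita self-duality. Here $\mathfrak{alg}_{n}(\mathcal{V})$ is symmetric monoidal, the dual of an $\mathbb{E}_{n}$-algebra $A$ is $A$ itself (with the $\mathbb{E}_{n}$-structure reversed along one $\mathbb{E}_{1}$-direction), and I would take the evaluation and coevaluation to be the $1$-morphisms given by $A$ regarded as its regular bimodule, \ie{} as an $\mathbb{E}_{n-1}$-algebra in bimodules as in Example~\ref{ex:DnIop}. The two triangle identities reduce to the unitality and associativity of the two-sided bar resolution, which hold because $\mathcal{V}$ is compatible with $\Dop$-colimits.

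The main obstacle is the base case of the adjoint induction: that an arbitrary $1$-morphism $M \colon A \to B$ --- a bimodule --- has left and right adjoints. The natural candidate is $M$ reflected into a $(B,A)$-bimodule, with unit $A \to M \otimes_{B} M^{\mathrm{adj}}$ and counit $M^{\mathrm{adj}} \otimes_{A} M \to B$ supplied by the bimodule actions and the bar construction; the triangle identities should again follow from the simplicial identities of the two-sided bar resolution. The difficulty is that in the algebraic model of \cite{nmorita} these composites and structure $2$-morphisms are encoded through composite $\DnIop$-algebras and the left Kan extensions $\tau_{I,!}$, so verifying the triangle identities becomes a delicate manipulation of the defining Segal and bar diagrams rather than the transparent geometric ``bending'' of a bordism picture used by Gwilliam and Scheimbauer \cite{GwilliamScheimbauer} in their model. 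Carrying this out seems to require either completing the comparison between the two models --- which is the caveat under which the statement is recorded --- or an explicit hands-on analysis of the reflected bar resolution in the model of \cite{nmorita}.
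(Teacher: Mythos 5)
The first thing to say is that the statement you were asked to prove is recorded in the paper as a \emph{conjecture}, not a theorem: the paper offers no proof of it, and Remark~\ref{rmk:moritaadjoints} explains exactly why. The only known argument is that of Gwilliam and Scheimbauer \cite{GwilliamScheimbauer}, which is carried out in a factorization-algebra model $\mathfrak{alg}^{FA}_{n}(\mathcal{V})$ of the higher Morita category, and the equivalence of that model with the algebraic model $\mathfrak{alg}_{n}(\mathcal{V})$ of \cite{nmorita} is itself only expected, not proven. Your proposal ends by deferring to precisely this same missing comparison (or to an unperformed ``hands-on analysis'' of bar resolutions in the algebraic model), so by your own admission it is not a proof; it reproduces the state of knowledge that led the authors to state the result as Conjecture~\ref{conj:moritaadjoints} and to make Theorem~\ref{thm:coiscorradjoints} conditional on it.

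Beyond the admitted incompleteness, there is a concrete mathematical error in how you set up the base case, and it matters because it hides where the real content lies. In your induction, the non-vacuous cases have $m \geq 2$, and a $1$-morphism $M \colon A \to B$ in $\mathfrak{alg}_{m}(\mathcal{W})$ is then \emph{not} a bare bimodule: by the mapping-category formula it is an object of $\mathfrak{alg}_{m-1}(\mathrm{Mod}_{A,B}(\mathcal{W}))$, i.e.\ an $\mathbb{E}_{m-1}$-algebra in $(A,B)$-bimodules. For bare bimodules your claim is simply false: in the ordinary Morita $2$-category of a ring, an $(A,B)$-bimodule has a right adjoint if and only if it is perfect (finitely generated projective) over one side, and ``$M$ reflected into a $(B,A)$-bimodule'' is not even well defined without extra structure. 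In particular, the unit $A \to M \otimes_{B} M^{\mathrm{adj}}$ of the putative adjunction cannot be ``supplied by the bimodule actions and the bar construction''; there is no map out of $A$ available at all. What makes adjoints plausible for $i$-morphisms with $i < n$ (and not for the top-dimensional ones, which are bare iterated bimodules and are correctly excluded from the conjecture) is exactly the residual algebra structure these morphisms carry: the unit and multiplication of the $\mathbb{E}_{m-1}$-algebra structure on $M$ are what furnish the candidate unit and counit, and this is the mechanism underlying the Gwilliam--Scheimbauer argument. Any honest attack on the conjecture in the algebraic model would have to build the adjunction data from that algebra structure through the composite-algebra/Kan-extension machinery of \cite{nmorita}, which is the step neither you nor the paper carries out.
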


\begin{remark}\label{rmk:moritaadjoints}
  This conjecture has been proved by Gwilliam and Scheimbauer in
  \cite{GwilliamScheimbauer} for a closely related model
  $\mathfrak{alg}^{FA}_{n}(\mathcal{V})$ of the higher Morita
  $(\infty, n)$-category, defined using factorization algebras. It is expected that there is an equivalence
  $\mathfrak{alg}^{FA}_{n}(\mathcal{V}) \simeq
  \mathfrak{alg}_{n}(\mathcal{V})$ when $\mathcal{V}$ is
  \emph{pointed}, \ie{} the unit of the monoidal structure is the
  initial object, and more generally that there is an equivalence
  \[\mathfrak{alg}^{FA}_{n}(\mathcal{V}) \simeq
    \mathfrak{alg}_{n}(\mathcal{V}_{I/}).\] Since the forgetful
  functor $\mathcal{V}_{I/} \to \mathcal{V}$ induces a symmetric
  monoidal functor
  $\mathfrak{alg}_{n}(\mathcal{V}_{I/}) \to
  \mathfrak{alg}_{n}(\mathcal{V})$, the result of
  Gwilliam--Scheimbauer together with such a hypothetical equivalence
  would imply that $\mathfrak{alg}_{n}(\mathcal{V})$ has duals (since
  duals and adjoints are preserved by any functor, and $i$-morphisms
  in $\mathfrak{alg}_{n}(\mathcal{V})$ for $i < n$ are naturally
  pointed, and so lift uniquely to
  $\mathfrak{alg}_{n}(\mathcal{V}_{I/})$).
\end{remark}

\begin{conjecture}\label{conj:pointedcomplete}
  If $\mathcal{V}$ is a pointed $\mathbb{E}_{n}$-monoidal \icat{}
  (\ie{} the unit is the initial object) then the $(n+1)$-fold Segal
  space $U^{n+1}_{\Seg}\mathfrak{ALG}_{n}(\mathcal{V})$ is complete.
\end{conjecture}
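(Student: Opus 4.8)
The plan is to induct on $n$, combining the inductive definition of completeness for $(n+1)$-fold Segal spaces with the identification of mapping objects in the higher Morita category. Write $X := U^{n+1}_{\Seg}\mathfrak{ALG}_{n}(\mathcal{V})$. By definition, completeness of $X$ reduces to two conditions: that the Segal space $X_{\bullet,0,\ldots,0}$ is complete, and that the $n$-fold Segal space $X_{1,\bullet,\ldots,\bullet}$ is complete. Because the last of the $n+1$ Segal directions arises from regarding each $\Alg^{n}_{\DnIop}(\mathcal{V})$ as a complete Segal space, the Rezk-completeness conditions at the very bottom of the recursion are automatic, so all the real content lies in the first $n$ directions. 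The base case $n=0$ is immediate, since $U^{1}_{\Seg}\mathfrak{ALG}_{0}(\mathcal{V})$ is the complete Segal space underlying a genuine \icat{}.

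For the second condition I would use the identification $\mathfrak{alg}_{n}(\mathcal{V})(A,B)\simeq \mathfrak{alg}_{n-1}(\txt{Mod}_{A,B}(\mathcal{V}))$ of \cite{nmorita}*{Theorem 5.49}, so that $X_{1,\bullet,\ldots,\bullet}$ is, fibrewise over the space $X_{0}\times X_{0}$ of pairs of algebras, the underlying $n$-fold Segal space $U^{n}_{\Seg}\mathfrak{ALG}_{n-1}(\txt{Mod}_{A,B}(\mathcal{V}))$. The key observation --- and the point at which the pointedness hypothesis propagates through the induction --- is that $\txt{Mod}_{A,B}(\mathcal{V})$ is again pointed: its $\mathbb{E}_{n-1}$-monoidal unit is the free bimodule $A\otimes\mathbf{1}\otimes B\simeq A\otimes B$ on the unit of $\mathcal{V}$, and since $\mathbf{1}$ is initial this free bimodule is precisely the initial object of $\txt{Mod}_{A,B}(\mathcal{V})$ (the free-bimodule functor, being a left adjoint, preserves the initial object). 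The inductive hypothesis then gives completeness of each fibre, and since the base $X_{0}\times X_{0}$ is a space --- hence constant in the remaining Segal directions, contributing nothing to the detection of equivalences --- completeness of the total family follows from fibrewise completeness.

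This leaves the first condition, completeness of $X_{\bullet,0,\ldots,0}$, which I expect to be the main obstacle. This is the Morita Segal space whose objects are $\mathbb{E}_{n}$-algebras and whose $1$-morphisms are bimodules; by Rezk's criterion its completeness is equivalent to the assertion that the degeneracy $X_{0}\to X_{1}$ identifies the space of algebras with the subspace of \emph{invertible} bimodules --- in other words, that every invertible bimodule between $A$ and $B$ forces an equivalence $A\simeq B$, the identity bimodule carrying exactly the algebra automorphisms. Unlike the higher conditions this does not reduce to a smaller instance of the induction: it is a standalone rigidity statement about invertible bimodules that must be extracted directly from the hypothesis that $\mathbf{1}$ is initial. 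Concretely I would analyze an invertible bimodule $M$ with inverse $N$ (so $M\otimes_{B}N\simeq A$ and $N\otimes_{A}M\simeq B$) through the bar resolutions computing these relative tensor products, and use initiality of $\mathbf{1}$ --- equivalently, that the algebra unit $\mathbf{1}\to A$ is the \emph{unique} map from $\mathbf{1}$ --- to force the structure maps of $M$ to be equivalences, exhibiting $M$ as induced by an equivalence $A\simeq B$.

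As decisive evidence that this final step holds, observe that in the universal pointed example, the cocartesian structure $\mathcal{V}=\mathcal{C}^{\amalg}$, it follows from results already in hand: by Corollary~\ref{cor:cospanmorita} the Morita Segal space is identified with $\COSPAN_{n}(\mathcal{C})=\SPAN_{n}(\mathcal{C}^{\op})$, whose underlying $n$-fold Segal space is complete because $\Span_{n}$ is complete, and there an invertible cospan is precisely one both of whose legs are equivalences. The hard part will be promoting this from $\mathcal{C}^{\amalg}$ to an arbitrary pointed $\mathbb{E}_{n}$-monoidal $\mathcal{V}$, where one no longer has the interpretation of bimodules as cospans and must instead derive the rigidity of invertible bimodules from initiality of the unit alone; it is exactly this step that I am unable to settle, and that the statement is recorded as a conjecture.
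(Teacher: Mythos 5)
This statement is recorded in the paper as a \emph{conjecture}: no proof is given, only two remarks, and your proposal reproduces their content almost exactly. Your induction on $n$ --- using $\mathfrak{alg}_{n}(\mathcal{V})(A,B)\simeq \mathfrak{alg}_{n-1}(\txt{Mod}_{A,B}(\mathcal{V}))$ together with the observation that $\txt{Mod}_{A,B}(\mathcal{V})$ is again pointed (its unit being the free bimodule $A\otimes \mathbf{1}\otimes B\simeq A\otimes B$ on an initial object, hence initial), so that everything reduces to the case $n=1$ --- is precisely the paper's first remark, and the rigidity statement you isolate (that the degeneracy from the space of algebras to the space of invertible bimodules be an equivalence) is precisely its second. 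The one ingredient the paper has that you do not is the citation of \cite{ScheimbauerThesis}*{\S 3.2.9}, which shows that in the pointed case this map is at least surjective on $\pi_{0}$ --- every Morita equivalence comes from an equivalence of algebras --- so that the conjecture amounts to promoting this essentially surjective map to an equivalence of spaces. Your cocartesian-case evidence is also sound, since Corollary~\ref{cor:cospanmorita} rests on an equivalence of $n$-uple category objects established before any completion, and the cospan Segal spaces are already complete. In short, the step you declare yourself unable to settle is not a defect of your argument but exactly the reason the statement is a conjecture rather than a theorem.
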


\begin{remark}
  Completeness of an $n$-fold Segal space $\mathcal{X}$ is equivalent
  to completeness of the underlying Segal space
  $\mathcal{X}_{\bullet,0,\ldots,0}$ and of the $(n-1)$-fold Segal
  spaces of maps $\mathcal{X}(x,y)$. In the case of
  $U^{n+1}_{\Seg}\mathfrak{ALG}_{n}(\mathcal{V})$ both the underlying
  Segal space and the $(n-1)$-fold Segal spaces of maps can themselves
  be described as higher Morita categories (pointed if $\mathcal{V}$
  is pointed). By induction, this means that it suffices
  to prove the conjecture in the case $n = 1$.
\end{remark}

\begin{remark}
  It is shown in \cite{ScheimbauerThesis}*{\S 3.2.9} that for a
  pointed monoidal \icat{} the degeneracy map from the space of
  objects of $U_{\Seg}^{2}\mathfrak{ALG}_{1}(\mathcal{V})$ to the
  space of equivalences is surjective on $\pi_{0}$, \ie{} in the
  pointed case every Morita equivalence comes from an equivalence of
  algebras in $\mathcal{V}$. (More precisely, Scheimbauer proves the
  analogue of this statement for the factorization algebra model, but
  the proof also works for the algebraic model.)
  Conjecture~\ref{conj:pointedcomplete} then amounts to the assertion
  that this essentially surjective map is in fact an equivalence.
\end{remark}

\subsection{Iterated Cospans as a Higher Morita
  Category}\label{subsec:cospanmor}
Suppose $\mathcal{C}$ is an \icat{} with finite colimits. Then we can
define an $(\infty,n)$-category $\Cospan_{n}(\mathcal{C})$ of iterated
cospans in $\mathcal{C}$ as in \S\ref{subsec:revspans}. We can also view
$\mathcal{C}$ as a symmetric monoidal \icat{} via coproducts, and
hence define an $(\infty,n)$-category
$\mathfrak{alg}_{n}(\mathcal{C}^{\amalg})$ of
$\mathbb{E}_{n}$-algebras in $\mathcal{C}$. In this subsection we will
show that there is an equivalence
\[ \Cospan_{n}(\mathcal{C}) \simeq
  \mathfrak{alg}_{n}(\mathcal{C}^{\amalg}).\] For $I$ in $\simp^{n}$,
the space $\Cospan_{n}(\mathcal{C})_{I}$ is defined as a subspace of
the underlying space of the \icat{} $\Fun(\bbS^{I}, \mathcal{C})$,
while $\mathfrak{alg}_{n}(\mathcal{C}^{\amalg})_{I}$ is similarly
obtained (before completion) from
$\Alg^{n}_{\DnIop}(\mathcal{C}^{\amalg})$. We will prove the
equivalence of $(\infty,n)$-categories by finding a natural
equivalence of \icats{}
\[ \Alg^{n}_{\DnIop}(\mathcal{C}^{\amalg}) \isoto \Fun(\bbS^{I}, \mathcal{C})\]
and a compatible equivalence between $\LnIop$-algebras and functors
from $\bbL^{I}$.

We first consider the case $n = 1$, which breaks down into three steps:
\begin{enumerate}[(1)]
\item We define a $\simp$-\iopd{} $\BM_{i}$ and a natural map $\Dop_{/[i]} \to
  \BM_{i}$ of generalized $\simp$-\iopds{} such that for any monoidal
  \icat{} $\mathcal{V}$ the induced functor
  \[ \Alg^{1}_{\BM_{i}}(\mathcal{V}) \to
    \Alg^{1}_{\Dop_{/[i]}}(\mathcal{V})\]
  is an equivalence.
\item We define a unital $\simp$-\iopd{} $\BM_{i}^{*}$ and a natural map
  $\BM_{i} \to \BM_{i}^{*}$ such that for any monoidal \icat{}
  $\mathcal{V}$ the induced functor
  \[ \Alg^{1}_{\BM_{i}^{*}}(\mathcal{V}) \to
    \Alg^{1}_{\BM_{i}}(\mathcal{V})_{I/}\]
  is an equivalence.
\item We have a natural equivalence $(\BM_{i}^{*})_{[1]} \simeq \bbS^{i,\op}$, so
  using the non-symmetric version of \cite{HA}*{Proposition 2.4.3.16}
  for any \icat{} $\mathcal{C}$ with coproducts we have an equivalence
  \[ \Alg^{1}_{\BM_{i}^{*}}(\mathcal{C}^{\amalg}) \simeq
    \Fun(\bbS^{i,\op}, \mathcal{C}).\]
\end{enumerate}
The case of $n > 1$ will then be obtained from this by induction.

\begin{defn}
  Let $\txt{BM}_{n}$ be the non-symmetric operad with 
objects $x_{ij}$ where $0 \leq i \leq j \leq n$ and multimorphisms
given by 
\[ \Hom(x_{i_{1}j_{1}},\ldots, x_{i_{k}j_{k}}; x_{st}) =
\begin{cases}
  *, & s = i_{1}, j_{1}=i_{2},\ldots, j_{k}=t, k > 0,\\
  *, & s = t, k = 0,\\
  \emptyset, & \txt{otherwise}.
\end{cases}
\]
If $\txt{BM}^{\otimes}_{n}$ denotes its (non-symmetric) category
of operators, there is a natural map
\[ \Dop_{/[n]} \to \txt{BM}^{\otimes}_{n}\]
over $\Dop$, taking $(i_{0},\ldots,i_{k})$ to
$(x_{i_{0}i_{1}}, \ldots x_{i_{k-1}i_{k}})$.
\end{defn}

\begin{lemma}\label{lem:DoptoBM}
  Composition with the functor $\Dop_{/[n]} \to
  \txt{BM}^{\otimes}_{n}$ induces an equivalence
  \[ \Alg^{1}_{\txt{BM}_{n}}(\mathcal{V}) \to
  \Alg^{1}_{\Dop_{/[n]}}(\mathcal{V})\]
  for all monoidal \icats{} $\mathcal{V}$.
\end{lemma}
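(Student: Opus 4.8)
The plan is to exhibit the functor $\Dop_{/[n]} \to \txt{BM}^{\otimes}_{n}$ as an \emph{approximation} of generalized non-symmetric \iopds{} over $\Dop$, and then invoke the general principle that an approximation induces an equivalence on algebra \icats{} with values in \emph{any} monoidal target. Precisely, the non-symmetric, generalized analogue of \cite{HA}*{Theorem 2.3.3.23} (in the framework set up in \cite{nmorita}) asserts that if $f \colon \mathcal{O}' \to \mathcal{O}$ is an approximation, then composition with $f$ induces an equivalence $\Alg^{1}_{\mathcal{O}}(\mathcal{V}) \isoto \Alg^{1}_{\mathcal{O}'}(\mathcal{V})$ for every monoidal \icat{} $\mathcal{V}$, with \emph{no} hypotheses on $\mathcal{V}$ (such as the existence of colimits). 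This matches exactly the form of the statement we want, where $\mathcal{O} = \txt{BM}^{\otimes}_{n}$, $\mathcal{O}' = \Dop_{/[n]}$, and the induced map $\Alg^{1}_{\txt{BM}_{n}}(\mathcal{V}) \to \Alg^{1}_{\Dop_{/[n]}}(\mathcal{V})$ is restriction along $f$. Thus the whole content is to verify the two conditions making $f$ an approximation: essential surjectivity on colours, and the weak approximation condition of \cite{HA}*{Definition 2.3.3.6}.

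Essential surjectivity is immediate: over $[1] \in \Dop$ the functor $f$ sends the $1$-simplex $(i \leq j)$ of $\simp_{/[n]}$ to the colour $x_{ij}$, and this is a bijection onto the set of colours of $\txt{BM}_{n}$. Here one must be careful that $\Dop_{/[n]}$ is only a \emph{generalized} \iopd{}: its fibre over $[0]$ is the set of vertices $\{0,\ldots,n\}$, all of which $f$ collapses onto the single empty sequence of the genuine operad $\txt{BM}^{\otimes}_{n}$. Keeping track of this discrepancy between the fibres over $[0]$ is exactly why one must work in the generalized-operad setting of \cite{nmorita}*{Definition 5.8}, and is a point to be handled with care rather than swept aside.

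For the weak approximation condition one must unwind the relevant comma \icats{}. Fix $X' = (i_{0} \leq \cdots \leq i_{k}) \in \Dop_{/[n]}$ and an active morphism $\alpha \colon X \to f(X')$ in $\txt{BM}^{\otimes}_{n}$. Since every multimorphism space of $\txt{BM}_{n}$ is either a point or empty, such an $\alpha$ amounts to choosing, for each edge $x_{i_{l-1}i_{l}}$ of $f(X')$, a composable chain $i_{l-1} = p_{0} \leq \cdots \leq p_{r} = i_{l}$ witnessing $x_{i_{l-1}i_{l}}$ as the multicomposition of the $x_{p_{a-1}p_{a}}$; assembling these, $X$ is precisely a refinement of the chain $X'$ in $[n]$ carrying the $i_{l}$ as marked points. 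One then shows that the fibre of
\[ (\Dop_{/[n]})_{\txt{act}/X'} \longrightarrow (\txt{BM}^{\otimes}_{n})_{\txt{act}/f(X')} \]
over $\alpha$ is weakly contractible: a lift is an active $\beta \colon Y' \to X'$ in $\Dop_{/[n]}$ with $f(\beta) \simeq \alpha$, and because the colours $x_{p_{a-1}p_{a}}$ reassemble uniquely into the refining chain, $Y'$ is forced to be that chain and $\beta$ the unique active map of $\simp_{/[n]}$ reproducing $\alpha$. As all of this is $1$-categorical poset data, the fibre has an essentially unique object with no nontrivial automorphisms and so is contractible; ideally one exhibits an initial object of this comma category directly rather than arguing uniqueness by hand.

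The main obstacle is precisely this last combinatorial verification: one must identify active morphisms in both $\txt{BM}^{\otimes}_{n}$ and $\Dop_{/[n]}$ with (marked) subdivisions of chains in $[n]$, correctly track the interaction of the active/inert factorization over $\Dop$ with these subdivisions, and reconcile the mismatch of the $[0]$-fibres noted above. Once the comma \icats{} of active lifts are shown to be weakly contractible, the equivalence on algebras follows formally from the approximation theorem, uniformly in $\mathcal{V}$.
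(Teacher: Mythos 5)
Your route is genuinely different from the paper's, but as written it has a gap that is not merely bibliographic: the result carrying all the weight --- a non-symmetric, generalized-source analogue of the approximation theorem \cite{HA}*{Theorem 2.3.3.23} --- does not exist in the sources you invoke. Lurie proves the approximation theorem only for symmetric \iopds{} over pointed finite sets, and \cite{nmorita}, while it does set up generalized $\Dn$-$\infty$-operads and their algebras, develops no approximation theory at all. Nor can you fall back on the symmetric statement directly: the lemma is asserted for an \emph{arbitrary} monoidal \icat{} $\mathcal{V}$, which need not underlie a symmetric monoidal one, so to route through \cite{HA} you would have to work with symmetrizations fibered over $\txt{Assoc}^{\otimes}$, prove that $\Dop_{/[n]}$ is an approximation to the \emph{symmetrization} of $\txt{BM}_{n}$, and then identify the resulting algebra \icats{} over $\txt{Assoc}^{\otimes}$ with the non-symmetric ones $\Alg^{1}_{\txt{BM}_{n}}(\mathcal{V})$ and $\Alg^{1}_{\Dop_{/[n]}}(\mathcal{V})$. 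None of this appears in your proposal, and it is where essentially all the content of the lemma lives; declaring that the equivalence ``follows formally from the approximation theorem'' does not discharge it. A secondary point: your contractibility sketch treats only the uniqueness of the refining chain as an object, but the cartesian-lift condition is a statement about mapping spaces, and there $f$ is genuinely not injective/surjective --- every vertex over $[0]$ collapses to the empty sequence, which has nullary operations to each $x_{jj}$ with no preimage in $\Dop_{/[n]}$. The condition holds only because active morphisms preserve endpoints, which anchors the endpoints of the refining chain to those of the given simplex and exactly compensates for the collapsed $[0]$-fibre; this is the point you flag as needing care but never resolve.

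For contrast, the paper's proof avoids approximation theory entirely and is short. After embedding $\mathcal{V}$ fully faithfully in a monoidal \icat{} compatible with colimits (harmless, since the comparison functor does not change underlying objects), both forgetful functors to $\Fun((\txt{BM}_{n})_{[1]}, \mathcal{V}) \simeq \Fun((\Dop_{/[n]})_{[1]}, \mathcal{V})$ are monadic right adjoints --- both fibres over $[1]$ being the set of pairs $(i,j)$ with $0 \leq i \leq j \leq n$ --- so by \cite{HA}*{Corollary 4.7.3.16} it suffices to compare free algebras, and the operadic left Kan extension formula of \cite{enr} identifies both monads with the same coproduct of tensor products indexed by chains $i = j_{0} \leq \cdots \leq j_{k} = i'$. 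Your approach, if completed, would buy a statement with no colimit reduction and could be seen as the non-symmetric counterpart of Lurie's comparison of $\Dop$ with $\txt{Assoc}^{\otimes}$; but completing it means proving the missing approximation theorem, a substantially longer undertaking than the paper's monad comparison.
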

\begin{proof}
  Without loss of generality we may assume that $\mathcal{V}$ is
  compatible with colimits, as any monoidal \icat{} is a full
  subcategory of one that is (possibly passing to a larger universe if
  $\mathcal{V}$ is large and not presentable).
  Then we have a commutative square
  \[
    \begin{tikzcd}
      \Alg^{1}_{\txt{BM}_{n}}(\mathcal{V}) \arrow{r} \arrow{d} &
      \Alg^{1}_{\Dop_{/[n]}}(\mathcal{V}) \arrow{d} \\
      \Fun((\txt{BM}_{n})_{[1]}, \mathcal{V}) \arrow{r} &
      \Fun((\Dop_{/[n]})_{[1]}, \mathcal{V}).
    \end{tikzcd}
  \]
  Here the vertical arrows are both monadic right adjoints (\eg{} by
  \cite{enr}*{Corollary A.5.6}), and the
  bottom horizontal arrow is an equivalence, since
  $(\Dop_{/[n]})_{[1]}$ and $(\txt{BM}_{n})_{[1]}$ are both the set of
  pairs $(i,j)$ with $0 \leq i \leq j \leq n$. To show that the top
  horizontal arrow is an equivalence it now suffices by 
  \cite{HA}*{Corollary 4.7.3.16} to check that the natural map between
  free algebras for the two monads is an equivalence. From the formula
  for (non-symmetric) operadic left Kan extensions (see \cite{enr}*{\S
    A.4}) we see that the corresponding monads are given by 
  \[ T_{\Dop_{/[n]}}\Phi(i,i') \simeq
  \coprod_{k=0}^{\infty}\coprod_{\substack{(j_{0},\ldots,j_{k})\\i=j_{0},j_{k}=i'}}
    \Phi(j_{0},j_{1})\otimes\cdots\otimes\Phi(j_{k-1},j_{k}) \simeq
    T_{\txt{BM}_{n}}\Phi,\] which gives the desired equivalence.
\end{proof}

\begin{defn}
  Let $\txt{BM}_{n}^{*}$ be the non-symmetric operad with objects
  $x_{ij}$ with $0 \leq i \leq j \leq n$, and multimorphisms given by 
\[ \Hom(x_{i_{1}j_{1}},\ldots, x_{i_{k}j_{k}}; x_{st}) =
\begin{cases}
  *, & s \leq i_{1} \leq j_{1} \leq i_{2} \leq\cdots \leq j_{k} \leq t, k > 0,\\
  *, & s \leq t, k = 0,\\
  \emptyset, & \txt{otherwise}.
\end{cases}
\]
There is an obvious map
$\pi_{n} \colon \txt{BM}_{n} \to \txt{BM}_{n}^{*}$. We let
$\BM_{n}^{*,\otimes} \to \Dop$ be the category of operators for
$\BM_{n}^{*}$ and denote the induced map $\BM_{n}^{\otimes} \to
\BM_{n}^{*,\otimes}$ also by $\pi_{n}$.
\end{defn}

\begin{remark}
  The operad $\txt{BM}_{n}^{*}$ is unital, \ie{} every object has a
  unique nullary operation. By the non-symmetric variant of
  \cite{HA}*{Proposition 2.3.1.11}, this means that for every monoidal
  \icat{} $\mathcal{V}$ the forgetful functor
  \[\Alg^{1}_{\BM_{n}^{*}}(\mathcal{V})_{I/} \simeq
    \Alg^{1}_{\BM_{n}^{*}}(\mathcal{V}_{I/}) \to
    \Alg^{1}_{\BM_{n}^{*}}(\mathcal{V})\]
  is an equivalence. In particular, the unit $I$, equipped with its
  unique $\BM_{n}^{*}$-algebra structure, is initial in
  $\Alg^{1}_{\BM_{n}^{*}}(\mathcal{V})$.
\end{remark}

\begin{propn}\label{propn:BMtoBMstar}
  The functor $\pi_{n}$ induces an equivalence
  \[ \Alg_{\BM_{n}^{*}}(\mathcal{V}) \to
    \Alg_{\BM_{n}}(\mathcal{V})_{I/}\]
  for every monoidal \icat{} $\mathcal{V}$.
\end{propn}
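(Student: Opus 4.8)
The plan is to mimic the monadic strategy of Lemma~\ref{lem:DoptoBM}. As there, I would first reduce to the case where $\mathcal{V}$ is compatible with colimits, since any monoidal \icat{} embeds as a full monoidal subcategory of one that is, and a fully faithful monoidal functor $\mathcal{V}\to\mathcal{W}$ induces fully faithful functors $\Alg_{\BM_{n}}(\mathcal{V})\to\Alg_{\BM_{n}}(\mathcal{W})$ and $\Alg_{\BM_{n}^{*}}(\mathcal{V})\to\Alg_{\BM_{n}^{*}}(\mathcal{W})$ compatible with $\pi_{n}$ and with the unit $I$; the claimed equivalence for $\mathcal{V}$ then follows by restricting the one for $\mathcal{W}$ to the relevant full subcategories. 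By the preceding remark the unit $I$ is initial in $\Alg_{\BM_{n}^{*}}(\mathcal{V})$, so every $\BM_{n}^{*}$-algebra $A$ receives a canonical map $I\to A$; applying $\pi_{n}^{*}$ and using that $\pi_{n}^{*}I$ is the $\BM_{n}$-algebra whose value at every color $x_{ij}$ is the unit, I obtain a natural lift $G\colon \Alg_{\BM_{n}^{*}}(\mathcal{V})\to\Alg_{\BM_{n}}(\mathcal{V})_{I/}$ of $\pi_{n}^{*}$. It then remains to prove that $G$ is an equivalence.

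To this end, write $S=\{(i,j):0\le i\le j\le n\}$ for the set of colors, so that $(\BM_{n})_{[1]}=S$, and consider the forgetful functors to $\Fun(S,\mathcal{V})$. The functor $\Alg_{\BM_{n}^{*}}(\mathcal{V})\to\Fun(S,\mathcal{V})$ is monadic by \cite{enr}*{Corollary A.5.6}, with monad $T^{*}$, while the composite $\Alg_{\BM_{n}}(\mathcal{V})_{I/}\to\Alg_{\BM_{n}}(\mathcal{V})\to\Fun(S,\mathcal{V})$ is conservative and preserves the colimits at issue (the coslice projection creates colimits, and $\Alg_{\BM_{n}}(\mathcal{V})\to\Fun(S,\mathcal{V})$ is monadic), hence is itself monadic, say with monad $T^{I}$. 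Since $G$ lies over $\Fun(S,\mathcal{V})$, it is induced by a map of monads $T^{I}\to T^{*}$, and by \cite{HA}*{Corollary 4.7.3.16} it suffices to check that this map is an equivalence, \ie{} that $G$ carries free algebras to free algebras.

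Concretely, the free $\BM_{n}^{*}$-algebra on $\Phi\in\Fun(S,\mathcal{V})$ is computed by the operadic left Kan extension formula (\cite{enr}*{\S A.4}) as
\[ (T^{*}\Phi)_{st} \simeq \coprod_{k\ge 0}\ \coprod_{s\le i_{1}\le j_{1}\le\cdots\le i_{k}\le j_{k}\le t} \Phi_{i_{1}j_{1}}\otimes\cdots\otimes\Phi_{i_{k}j_{k}}, \]
with the $k=0$ summand the unit $I$, whereas $T^{I}\Phi$ is the underlying object of the coproduct $I\amalg T_{\BM_{n}}\Phi$ in $\Alg_{\BM_{n}}(\mathcal{V})$, where $T_{\BM_{n}}\Phi$ is the free $\BM_{n}$-algebra computed in Lemma~\ref{lem:DoptoBM} as a sum over the strict chains $s=j_{0},j_{1},\ldots,j_{k}=t$. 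The heart of the argument is therefore the identification $I\amalg T_{\BM_{n}}\Phi\simeq T^{*}\Phi$: forming the free product with the all-units algebra $I$ should have the effect of inserting unit factors into the gaps of a strict chain, relaxing the condition $s=i_{1},\,j_{1}=i_{2},\,\ldots,\,j_{k}=t$ to $s\le i_{1}\le j_{1}\le\cdots\le j_{k}\le t$, and the canonical monad map $T^{I}\Phi\to T^{*}\Phi$ is precisely this identification.

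I expect this coproduct computation to be the main obstacle. The clean way to carry it out is to compute the coslice monad $T^{I}$ directly via the operadic left Kan extension formula of \cite{enr}*{\S A.4} — equivalently, to give an explicit description of coproducts in $\Alg_{\BM_{n}}(\mathcal{V})$ as a many-object analogue of the free product of associative algebras — and then to match the resulting bookkeeping of unit insertions against the $\le$-chains indexing $T^{*}$. The nondegeneracy of this matching (each $\le$-chain corresponding uniquely to a reduced word alternating between unit factors from $I$ and genuine factors from $T_{\BM_{n}}\Phi$) is what makes $T^{I}\to T^{*}$ an equivalence. Once this is established, \cite{HA}*{Corollary 4.7.3.16} yields that $G$ is an equivalence, completing the proof.
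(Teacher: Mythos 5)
Your setup --- reducing to the case where $\mathcal{V}$ is compatible with colimits, lifting $\pi_{n}^{*}$ to the coslice, monadicity of both sides over functors on the set of colours, and the reduction via \cite{HA}*{Corollary 4.7.3.16} to a comparison of free algebras --- is exactly the strategy of the paper's proof, and all of those steps are sound. But your proof is incomplete at precisely the point you yourself flag as ``the main obstacle'': the identification $F_{\BM_{n}}(\Phi) \amalg I \simeq F_{\BM_{n}^{*}}(\Phi)$. The route you propose for it --- an explicit many-object free-product description of coproducts in $\Alg_{\BM_{n}}(\mathcal{V})$, followed by bookkeeping of reduced alternating words --- is not something you can simply cite in the $\infty$-categorical setting, and actually establishing such a description (some filtration or bar-type argument for coproducts of algebras over a coloured non-symmetric operad) would be considerably more work than the rest of the argument combined. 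As written, the decisive computation is asserted rather than proved.

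The idea you are missing, which makes this computation essentially immediate, is that the unit algebra $I$ is itself \emph{free}: $I \simeq F_{\BM_{n}}(\delta)$, where $\delta \in \Fun((\BM_{n})_{[1]}, \mathcal{V})$ is given by $\delta(i,j) \simeq I$ when $j = i+1$ and $\delta(i,j) \simeq \emptyset$ (the initial object) otherwise. Indeed, by the free-algebra formula of Lemma~\ref{lem:DoptoBM} the only strict chain from $i$ to $i'$ all of whose $\delta$-factors are nonempty is the maximal chain $i, i+1, \ldots, i'$, contributing $I^{\otimes (i'-i)} \simeq I$. Since $F_{\BM_{n}}$ is a left adjoint it preserves coproducts, so
\[ F_{\BM_{n}}(\Phi) \amalg I \simeq F_{\BM_{n}}(\Phi) \amalg F_{\BM_{n}}(\delta) \simeq F_{\BM_{n}}(\Phi \amalg \delta),\]
and no description of coproducts of general algebras is needed: one applies the free-algebra formula directly to $\Phi \amalg \delta$. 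Expanding, each strict chain with factors in $\Phi \amalg \delta$ decomposes according to the subset $S$ of positions carrying a $\delta$-factor (which forces those steps to have length one), and the remaining $\Phi$-factors then occupy an arbitrary weakly increasing chain $i \leq i_{1} \leq j_{1} \leq \cdots \leq j_{k} \leq i'$; this reproduces exactly the formula for $F_{\BM_{n}^{*}}(\Phi)$, with the chain recovered as $(i, i+1, \ldots, i_{1}, i_{2}, \ldots)$ and $S$ marking the pairs not of the form $(i_{t}, j_{t})$. That equivalence of free algebras --- rather than any word-combinatorics for free products --- is what one then identifies with the canonical monad map to conclude via \cite{HA}*{Corollary 4.7.3.16}.
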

\begin{proof}
  Since $I \in \Alg_{\BM_{n}}(\mathcal{V})$ is the image of the initial object of
  $\Alg_{\BM_{n}^{*}}(\mathcal{V})$, the functor
  \[\pi_{n}^{*} \colon \Alg_{\BM_{n}^{*}}(\mathcal{V}) \to
  \Alg_{\BM_{n}}(\mathcal{V})\] factors uniquely through the forgetful
  functor from $\Alg_{\BM_{n}}(\mathcal{V})_{I/}$.

  We may again assume, without loss of generality, that $\mathcal{V}$
  is compatible with small colimits. Then we have a commutative square
  \[
  \begin{tikzcd}
    \Alg^{1}_{\txt{BM}_{n}}(\mathcal{V})_{I/} \arrow{r} \arrow{d} &
    \Alg^{1}_{\txt{BM}_{n}^{*}}(\mathcal{V}) \arrow{d} \\
    \Fun((\txt{BM}_{n})_{[1]}, \mathcal{V}) \arrow{r} &
    \Fun((\txt{BM}_{n}^{*})^{\simeq}_{[1]}, \mathcal{V}).
  \end{tikzcd}
  \]
  Here the vertical arrows are both monadic right adjoints; for the
  left one this is because it factors as a composite
  $\Alg^{1}_{\txt{BM}_{n}}(\mathcal{V})_{I/} \to
  \Alg^{1}_{\txt{BM}_{n}}(\mathcal{V}) \to \Fun((\txt{BM}_{n})_{[1]},
  \mathcal{V})$ where both functors are not only monadic right
  adjoints but also preserve sifted colimits. Moreover, the bottom
  horizontal arrow is clearly an isomorphism (note that we use the
  underlying \emph{groupoid} of
  $(\txt{BM}_{n}^{*})_{[1]}$). Therefore, we may again use
  \cite{HA}*{Corollary 4.7.3.16} to show that the top horizontal
  morphism is an equivalence by comparing the free algebras for the
  two monads. The left adjoint to the left-hand functor takes $\Phi$
  to $F_{\txt{BM}_{n}}(\Phi) \amalg I$, where the coproduct is taken
  in $\BM_{n}$-algebras. The formula for
  $F_{\txt{BM}_{n}}$ identifies $I$ with $F_{\txt{BM}_{n}}(\delta)$
  where 
  \[\delta(i,j) \simeq
  \begin{cases}
    I, & j=i+1,\\
    \emptyset, & \txt{otherwise}.
  \end{cases}
  \]
  Since $F_{\txt{BM}_{n}}$ preserves colimits, this means we have
  \[ F_{\txt{BM}_{n}}(\Phi) \amalg I \simeq F_{\txt{BM}_{n}}(\Phi \amalg
  \delta),\]
  and so 
  \[ 
  \begin{split}
    (F_{\txt{BM}_{n}}(\Phi) \amalg I)(i,i') & \simeq
    \coprod_{k=0}^{\infty}
    \coprod_{\substack{(j_{0},\ldots,j_{k})\\i=j_{0},j_{k}=i'}} (\Phi
    \amalg \delta)(j_{0},j_{1})\times\cdots\times (\Phi \amalg
    \delta)(j_{k-1},j_{k})  \\
    & \simeq \coprod_{k = 0}^{\infty}
    \coprod_{\substack{(j_{0},\ldots,j_{k})\\i=j_{0},j_{k}=i'}}
    \coprod_{\substack{S \subseteq \{1,\ldots,k\} \\ j_{s}=j_{s-1}+1, s
        \in S}} \bigotimes_{s \notin S} \Phi(j_{s-1},j_{s}).
  \end{split}
  \]
  In this coproduct we have a term of the form $\Phi(i_{1},j_{1})
  \otimes \cdots \otimes \Phi(i_{k},j_{k})$ whenever 
  \[i \leq i_{1} \leq j_{1} \leq i_{2} \leq \cdots \leq i_{k} \leq
  j_{k}\leq i',\]
  corresponding to $(i, i+1, \ldots i_{1}-1,i_{1},i_{2},i_{2}+1,\ldots,
  j_{k},j_{k}+1,\ldots,j-1,i')$ with $S$ identifying the pairs not
  of the form $(i_{t},j_{t})$. This gives equivalences
  \[ (F_{\txt{BM}_{n}}(\Phi) \amalg I)(i,i') \simeq
  I \amalg \coprod_{k=1}^{\infty} \coprod_{i \leq i_{1} \leq \cdots \leq
    j_{k}\leq i'} \Phi(i_{1},j_{1})\otimes \cdots \otimes
  \Phi(i_{k},j_{k}) \simeq F_{\txt{BM}_{n}^{*}}(i,i'),\]
  where the second equivalence again comes from the formula for
  operadic Kan extensions.
\end{proof}

\begin{cor}\label{cor:DopnbbSeq}
  If $\mathcal{C}$ is an \icat{} with finite coproducts, then  there
  is a natural equivalence of \icats{}
  \[ \Alg^{1}_{\Dop_{/[n]}}(\mathcal{C}^{\amalg}) \simeq
  \Fun(\bbS^{n,\op}, \mathcal{C}).\]
\end{cor}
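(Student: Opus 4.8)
The statement is a formal consequence of the three results assembled in this subsection, so the plan is simply to concatenate them, paying attention only to the passage through the unital operad $\txt{BM}_{n}^{*}$. The chain of equivalences I would exhibit is
\[ \Alg^{1}_{\Dop_{/[n]}}(\mathcal{C}^{\amalg}) \simeq \Alg^{1}_{\txt{BM}_{n}}(\mathcal{C}^{\amalg}) \simeq \Alg^{1}_{\txt{BM}_{n}}(\mathcal{C}^{\amalg})_{I/} \simeq \Alg^{1}_{\txt{BM}_{n}^{*}}(\mathcal{C}^{\amalg}) \simeq \Fun(\bbS^{n,\op}, \mathcal{C}). \]
The first equivalence is Lemma~\ref{lem:DoptoBM} applied to the monoidal \icat{} $\mathcal{C}^{\amalg}$, and the third is Proposition~\ref{propn:BMtoBMstar}; both are natural in the variable category, so the composite will be as well.

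The only step that is not purely formal is the second one. Since $\mathcal{C}$ has finite coproducts it has an initial object $\emptyset$, which is the monoidal unit $I$ of $\mathcal{C}^{\amalg}$; \ie{} $\mathcal{C}^{\amalg}$ is \emph{pointed}. The plan here is to argue that under this hypothesis the unit $\txt{BM}_{n}$-algebra is initial in $\Alg^{1}_{\txt{BM}_{n}}(\mathcal{C}^{\amalg})$, so that the forgetful functor from the slice under it is an equivalence. To see that the unit algebra is initial, I would compute the free $\txt{BM}_{n}$-algebra on the constant diagram at $\emptyset$: in the formula for $F_{\txt{BM}_{n}}$ recalled in the proof of Lemma~\ref{lem:DoptoBM}, every summand with $k>0$ contains a tensor factor $\emptyset$ and hence vanishes (as $\otimes = \amalg$), while the nullary term contributes the unit $\emptyset$ on the diagonal; thus the free algebra is again the constant diagram at $\emptyset$. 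Since $F_{\txt{BM}_{n}}$ is a left adjoint it preserves initial objects, so this is the initial object of $\Alg^{1}_{\txt{BM}_{n}}(\mathcal{C}^{\amalg})$. Comparing underlying diagrams and using that the forgetful functor is conservative identifies it with the unit algebra $I$ described in the proof of Proposition~\ref{propn:BMtoBMstar}.

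For the last equivalence I would record the identification of the underlying category of $\txt{BM}_{n}^{*}$: a unary multimorphism $x_{ij} \to x_{st}$ exists \IFF{} $s \leq i \leq j \leq t$, which is exactly the order relation of $\bbS^{n,\op}$, so $(\txt{BM}_{n}^{*})_{[1]} \simeq \bbS^{n,\op}$. As $\txt{BM}_{n}^{*}$ is unital, the non-symmetric form of \cite{HA}*{Proposition 2.4.3.16} then gives $\Alg^{1}_{\txt{BM}_{n}^{*}}(\mathcal{C}^{\amalg}) \simeq \Fun(\bbS^{n,\op}, \mathcal{C})$. Concatenating the four equivalences yields the asserted natural equivalence.

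The genuine mathematical content is already contained in Lemma~\ref{lem:DoptoBM} and Proposition~\ref{propn:BMtoBMstar}, so the main obstacle is the middle step: one must confirm that pointedness of $\mathcal{C}^{\amalg}$ collapses the slice $(-)_{I/}$. This is precisely where the hypothesis of finite coproducts (rather than a bare monoidal structure) enters, and it is the only place where the argument uses something specific to the cocartesian structure beyond what the two cited results already provide.
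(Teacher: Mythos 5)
Your chain of equivalences is exactly the paper's proof: the paper cites Lemma~\ref{lem:DoptoBM} and Proposition~\ref{propn:BMtoBMstar}, compresses your middle step into the single clause ``since $\mathcal{C}^{\amalg}$ is unital'', and finishes with the non-symmetric analogue of \cite{HA}*{Proposition 2.4.3.9} (equivalently 2.4.3.16) together with the observation that $(\txt{BM}_{n}^{*})_{[1]}$ is by definition the poset $\bbS^{n,\op}$. So the route is the same; the only substantive difference is that you spell out why pointedness of $\mathcal{C}^{\amalg}$ collapses the slice $(-)_{I/}$, which the paper leaves implicit.

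In that middle step, however, your stated justification is incorrect even though your conclusion is right. In the cocartesian monoidal structure the tensor product \emph{is} the coproduct, so a tensor product containing a factor $\emptyset$ does not vanish: $\emptyset$ is the monoidal unit, and $\emptyset \otimes X \simeq \emptyset \amalg X \simeq X$. The principle ``one factor $\emptyset$ kills the tensor product'' holds when $\otimes$ preserves colimits (in particular initial objects) in each variable, \eg{} for a closed monoidal structure --- and this is precisely what fails for $\amalg$. Your computation survives only because $\Phi$ is the \emph{constant} diagram at $\emptyset$: every factor of every summand in the formula for $F_{\txt{BM}_{n}}(\Phi)$ is then $\emptyset$, so each summand is a coproduct of initial objects, hence initial, and the $k=0$ terms contribute the unit, which is again $\emptyset$. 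A cleaner repair is to invoke the non-symmetric analogue of \cite{HA}*{Proposition 3.2.1.8}: whenever the monoidal unit of $\mathcal{V}$ is an initial object, the unit algebra is initial in the \icat{} of algebras over any non-symmetric operad, so the forgetful functor $\Alg^{1}_{\txt{BM}_{n}}(\mathcal{C}^{\amalg})_{I/} \to \Alg^{1}_{\txt{BM}_{n}}(\mathcal{C}^{\amalg})$ is an equivalence; this is what the paper's phrase ``since $\mathcal{C}^{\amalg}$ is unital'' is invoking. With that repair your argument is complete and agrees with the paper's.
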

\begin{proof}
  Since $\mathcal{C}^{\amalg}$ is unital, by
  Lemma~\ref{lem:DoptoBM} and Proposition~\ref{propn:BMtoBMstar} 
  we have natural equivalences
  $\Alg^{1}_{\Dop_{/[n]}}(\mathcal{C}^{\amalg}) \simeq
  \Alg^{1}_{\txt{BM}_{n}^{*}}(\mathcal{C}^{\amalg})$. Using the
  non-symmetric analogue of \cite{HA}*{Proposition 2.4.3.9} 
  it follows that the restriction functor
  $\Alg^{1}_{\txt{BM}_{n}^{*}}(\mathcal{C}^{\amalg}) \to
  \Fun((\BM_{n}^{*})_{[1]}, \mathcal{C})$
  is an equivalence. (Alternatively, we can use \cite{HA}*{Proposition
    2.4.3.9}  together with the formula for symmetrizations of
  ordinary non-symmetric operads from \cite{enr}*{Corollary 3.7.8},
  which does not change the fibre over $[1]$.) 
  Finally, observe that by definition $(\BM_{n}^{*})_{[1]}$ is
  precisely the partially ordered set $\bbS^{n,\op}$.
\end{proof}

\begin{remark}\label{rmk:Lambdaeq}
A variant of the same argument gives a similar equivalence
\[\Alg^{1}_{\bbL^{\op}_{/[n]}}(\mathcal{C}^{\amalg}) \simeq
\Fun(\bbL^{n,\op}, \mathcal{C}),\] compatible with that of
Corollary~\ref{cor:DopnbbSeq} in the sense
that we have a commutative square
\[
  \begin{tikzcd}
    \Alg^{1}_{\simp^{\op}_{/[n]}}(\mathcal{C}^{\amalg}) \arrow{r}{\sim}
    \arrow{d} & \Fun(\bbS^{n,\op}, \mathcal{C}) \arrow{d} \\
    \Alg^{1}_{\bbL^{\op}_{/[n]}}(\mathcal{C}^{\amalg}) \arrow{r}{\sim}
    & \Fun(\bbL^{n,\op}, \mathcal{C}).
  \end{tikzcd}
\]
Passing to left adjoints, we see that under these equivalences the
composite $\Dop_{/[n]}$-algebras in $\mathcal{C}^{\amalg}$ correspond
precisely to the functors $\bbS^{n,\op} \to \mathcal{C}$ that are left
Kan extended from $\bbL^{n,\op}$. Since the equivalences are natural
in $[n] \in \simp$, this implies:
\end{remark}
\begin{cor}
  If $\mathcal{C}$ is an \icat{} with finite colimits, we have a
  natural equivalence
  \[ \mathfrak{ALG}_{1}(\mathcal{C}^{\amalg}) \to
    \COSPAN^{+}_{1}(\mathcal{C})\]
  of category objects in $\CatI$, and so an equivalence
  \[ \mathfrak{alg}_{1}(\mathcal{C}^{\amalg}) \to
    \Cospan_{1}(\mathcal{C}) \]
  of \icats{}.
\end{cor}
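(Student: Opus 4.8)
The plan is to upgrade the family of objectwise equivalences from Corollary~\ref{cor:DopnbbSeq} to an equivalence of the cocartesian fibrations over $\Dop$ that present the two category objects, and then restrict to the full subfibrations cut out by the composite (respectively cartesian) condition. By Definition~\ref{defn:ALGn}, $\overline{\mathfrak{ALG}}_{1}(\mathcal{C}^{\amalg}) \to \Dop$ is the cocartesian fibration classifying $[n]\mapsto \Alg^{1}_{\Dop_{/[n]}}(\mathcal{C}^{\amalg})$, whereas $\OCOSPAN^{+}_{1}(\mathcal{C}) = \OSPAN^{+}_{1}(\mathcal{C}^{\op}) \to \Dop$ classifies $[n]\mapsto \Fun(\bbS^{n}, \mathcal{C}^{\op})$. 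First I would note that the equivalence $\Alg^{1}_{\Dop_{/[n]}}(\mathcal{C}^{\amalg}) \simeq \Fun(\bbS^{n,\op}, \mathcal{C})$ of Corollary~\ref{cor:DopnbbSeq} is natural in $[n]\in\simp$, so that these equivalences assemble into an equivalence of functors $\Dop \to \CatI$; by straightening/unstraightening this gives an equivalence of cocartesian fibrations $\overline{\mathfrak{ALG}}_{1}(\mathcal{C}^{\amalg}) \simeq \OCOSPAN^{+}_{1}(\mathcal{C})$.

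Next I would restrict to the relevant full subfibrations. On the Morita side these are the composite $\Dop_{/[n]}$-algebras, which by definition cut out $\mathfrak{ALG}_{1}(\mathcal{C}^{\amalg})$; on the cospan side they are the cartesian functors, \ie{} those left Kan extended from $\bbL^{n,\op}$, which cut out $\COSPAN^{+}_{1}(\mathcal{C})$. Remark~\ref{rmk:Lambdaeq} is exactly the compatibility needed to match these two conditions: its commuting square relating the $\Dop_{/[n]}$- and $\bbL^{\op}_{/[n]}$-versions shows, on passing to left adjoints, that under the equivalence the composite algebras correspond precisely to the functors left Kan extended from $\bbL^{n,\op}$. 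The equivalence of the first step therefore restricts to the asserted equivalence of category objects in $\CatI$,
\[ \mathfrak{ALG}_{1}(\mathcal{C}^{\amalg}) \isoto \COSPAN^{+}_{1}(\mathcal{C}). \]

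The second equivalence is then formal: passing to underlying left fibrations and then to underlying $1$-fold Segal spaces converts this into an equivalence of Segal spaces, and completing both sides yields $\mathfrak{alg}_{1}(\mathcal{C}^{\amalg}) \simeq \Cospan_{1}(\mathcal{C})$, the right-hand side already being complete by \cite{spans}*{Corollary 8.5}. The step I expect to demand the most care is the naturality asserted in the first paragraph: one must verify that the chain of identifications underlying Corollary~\ref{cor:DopnbbSeq} --- the operad maps $\Dop_{/[n]} \to \BM_{n}^{\otimes} \to \BM_{n}^{*,\otimes}$, the initiality of the unit, and the identification $(\BM_{n}^{*})_{[1]} \simeq \bbS^{n,\op}$ --- are natural in $[n]\in\simp$ and compatible with the simplicial functoriality of Definition~\ref{defn:ALGn}, keeping careful track of the $\mathcal{C}$-versus-$\mathcal{C}^{\op}$ variance (which becomes immaterial once one passes to underlying Segal spaces, where each fibre is replaced by its maximal subgroupoid, and which is all that is needed for the final equivalence $\mathfrak{alg}_{1}\simeq\Cospan_{1}$). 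Granting this naturality, the remaining steps are formal consequences of straightening/unstraightening and the definition of the completion functor.
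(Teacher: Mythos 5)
Your proposal is correct and follows essentially the same route as the paper: the paper likewise deduces the corollary directly from the naturality in $[n]\in\simp$ of the equivalence in Corollary~\ref{cor:DopnbbSeq}, using Remark~\ref{rmk:Lambdaeq} (passing to left adjoints) to match composite $\Dop_{/[n]}$-algebras with functors left Kan extended from $\bbL^{n,\op}$, and then passes to underlying Segal spaces and completions. Your explicit flagging of the naturality verification and of the $\mathcal{C}$-versus-$\mathcal{C}^{\op}$ variance is consistent with (indeed slightly more careful than) the paper, which asserts these points without further comment.
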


We can now prove the general case by induction:
\begin{cor} 
  If $\mathcal{C}$ is an \icat{} with finite coproducts,
  then we have a natural equivalence 
  \[\mathfrak{ALG}_{n}(\mathcal{C}^{\amalg}) \simeq
    \txt{COSPAN}_{n}^{+}(\mathcal{C})\]
  of $n$-uple category objects in $\CatI$.
\end{cor}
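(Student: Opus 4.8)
The plan is to induct on $n$, using the previous corollary (the case $n=1$) as the base case and stripping off one simplicial direction at each step. Write $I = ([i_{1}], I')$ with $I' = ([i_{2}],\ldots,[i_{n}]) \in \simp^{n-1,\op}$, and note the matching product decompositions $\Dnop \simeq \Dop \times \simp^{n-1,\op}$, $\DnIop \simeq \Dop_{/[i_{1}]} \times \simp^{n-1,\op}_{/I'}$, $\bbS^{I} \simeq \bbS^{i_{1}} \times \bbS^{I'}$, and $\bbL^{I} \simeq \bbL^{i_{1}} \times \bbL^{I'}$. Since an equivalence of $n$-uple category objects is detected objectwise and naturally in $I \in \Dnop$, it suffices to produce equivalences $\Alg^{n}_{\DnIop}(\mathcal{C}^{\amalg}) \simeq \Fun(\bbS^{I,\op}, \mathcal{C})$ that are natural in all $n$ simplicial directions and that match composite algebras with the cartesian functors cut out in the definition of $\COSPAN_{n}^{+}(\mathcal{C})$, exactly as in the identification of Corollary~\ref{cor:DopnbbSeq}.

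The heart of the induction is a Fubini equivalence for algebras over the product operad together with a recognition of the inner monoidal structure. Since $\mathcal{C}^{\amalg}$ is symmetric monoidal, all of its tensor products agree, so the factorization of $\DnIop$ yields a natural equivalence
\[ \Alg^{n}_{\DnIop}(\mathcal{C}^{\amalg}) \simeq \Alg^{1}_{\Dop_{/[i_{1}]}}\bigl(\mathcal{E}_{I'}\bigr), \qquad \mathcal{E}_{I'} := \Alg^{n-1}_{\simp^{n-1,\op}_{/I'}}(\mathcal{C}^{\amalg}), \]
where $\mathcal{E}_{I'}$ carries the pointwise monoidal structure induced by the first tensor product of $\mathcal{C}^{\amalg}$. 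By the inductive hypothesis applied at $I'$ there is a natural equivalence $\mathcal{E}_{I'} \simeq \Fun(\bbS^{I',\op}, \mathcal{C})$; the crucial point is that, because the structure on $\mathcal{C}^{\amalg}$ is \emph{cocartesian}, its pointwise extension to algebra diagrams is again cocartesian, the pointwise tensor being computed vertexwise by coproducts in $\mathcal{C}$, so that $\mathcal{E}_{I'} \simeq \Fun(\bbS^{I',\op}, \mathcal{C})^{\amalg}$ as symmetric monoidal \icats. Setting $\mathcal{D} := \Fun(\bbS^{I',\op}, \mathcal{C})$, which has finite coproducts since $\mathcal{C}$ does, Corollary~\ref{cor:DopnbbSeq} applied to $\mathcal{D}^{\amalg}$ gives
\[ \Alg^{1}_{\Dop_{/[i_{1}]}}(\mathcal{D}^{\amalg}) \simeq \Fun(\bbS^{i_{1},\op}, \mathcal{D}) \simeq \Fun(\bbS^{i_{1},\op} \times \bbS^{I',\op}, \mathcal{C}) \simeq \Fun(\bbS^{I,\op}, \mathcal{C}), \]
and composing the displays yields the desired natural equivalence $\overline{\mathfrak{ALG}}_{n}(\mathcal{C}^{\amalg}) \simeq \OCOSPAN_{n}^{+}(\mathcal{C})$.

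It then remains to restrict to composite algebras on the left and cartesian functors on the right. For this I would invoke Remark~\ref{rmk:Lambdaeq}, which in the case $n=1$ identifies composite $\Dop_{/[i]}$-algebras with functors $\bbS^{i,\op} \to \mathcal{C}$ left Kan extended from $\bbL^{i,\op}$. Since the inclusions $\bbL^{I} \hookrightarrow \bbS^{I}$ and the left adjoints defining compositeness both factor as products over the $n$ simplicial directions, a $\DnIop$-algebra is composite if and only if it is so in each factor, which corresponds under the equivalence above to being left Kan extended from $\bbL^{I,\op}$ in each variable, hence to a cartesian functor. Restricting the equivalence accordingly produces $\mathfrak{ALG}_{n}(\mathcal{C}^{\amalg}) \simeq \COSPAN_{n}^{+}(\mathcal{C})$.

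The main obstacle I expect is precisely the second paragraph: one must verify both that $\Alg^{n}_{\DnIop}(\mathcal{C}^{\amalg}) \simeq \Alg^{1}_{\Dop_{/[i_{1}]}}(\mathcal{E}_{I'})$ holds \emph{naturally in $I$ and compatibly with the inert maps} encoding the $n$-uple Segal structure, and that the equivalence of Corollary~\ref{cor:DopnbbSeq} upgrades to a symmetric monoidal equivalence carrying the pointwise tensor of algebras to the cocartesian structure on the functor category. Once these compatibilities are secured, the remaining work (the product decompositions of $\bbS^{I}$ and $\bbL^{I}$, the factorization of the compositeness condition, and the bookkeeping of op-conventions matching Corollary~\ref{cor:DopnbbSeq}) is routine.
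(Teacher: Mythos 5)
Your proposal follows essentially the same route as the paper's proof: an induction on $n$ whose inductive step combines the interchange equivalence $\Alg^{n}_{\DnIop}(\mathcal{C}^{\amalg}) \simeq \Alg^{1}_{\Dop_{/[i_{1}]}}(\Alg^{n-1}_{\simp^{n-1,\op}_{/I'}}(\mathcal{C}^{\amalg}))$ (for which the paper cites \cite{nmorita}*{Corollary A.77}, which settles the naturality-in-$I$ compatibility you flag as the main obstacle), the observation that the pointwise monoidal structure on $\Fun(\bbS^{I',\op},\mathcal{C})$ is the cocartesian one since it is given objectwise by coproducts in $\mathcal{C}$, an application of Corollary~\ref{cor:DopnbbSeq}, and the compatible $\bbL$-level equivalence of Remark~\ref{rmk:Lambdaeq} to match composite algebras with cartesian functors. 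The argument is correct and coincides with the paper's in all essentials.
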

\begin{proof}
  If $\mathcal{V}$ is a $\simp^{n+m}$-monoidal \icat{}, then
  $\Alg_{\simp^{m,\op}_{/J}}(\mathcal{V})$ has a natural
  $\simp^{n}$-monoidal structure, given objectwise by the tensor
  product in $\mathcal{V}$, such that there is
  a natural equivalence
  \[ \Alg_{\DnIop}^{n}(\Alg^{m}_{\simp^{m,\op}_{/J}}(\mathcal{V})) \simeq
    \Alg^{n+m}_{\simp^{n+m,\op}_{/(I,J)}}(\mathcal{V}),\]
  by \cite{nmorita}*{Corollary A.77}.

  Suppose we have a natural equivalence
  \[ \Alg_{\simp^{n-1,\op}_{/J}}(\mathcal{C}^{\amalg}) \simeq
    \Fun(\bbS^{J,\op}, \mathcal{C}).\]
  The canonical symmetric monoidal structure on the left-hand side corresponds
  to the cocartesian structure on the right, since this is the unique
  symmetric monoidal structure given objectwise in $\bbS^{J,\op}$ by
  the coproduct in $\mathcal{C}$. For $I = ([i], J)$ in $\simp^{n}$,
  using Corollary~\ref{cor:DopnbbSeq} we then have a natural
  equivalence
  \[
    \begin{split}
    \Alg^{n}_{\DnIop}(\mathcal{C}^{\amalg}) & \simeq
    \Alg^{1}_{\Dop_{/[i]}}(\Alg^{n-1}_{\simp^{n-1,\op}_{/J}}(\mathcal{C}^{\amalg}))
    \simeq \Alg^{1}_{\Dop_{/[i]}}(\Fun(\bbS^{J,\op},
    \mathcal{C})^{\amalg}) \\
    & \simeq \Fun(\bbS^{i,\op}, \Fun(\bbS^{J,\op}, \mathcal{C})) \simeq
    \Fun(\bbS^{I,\op}, \mathcal{C}).      
    \end{split}
\]
  As in Remark~\ref{rmk:Lambdaeq} we also have a compatible equivalence
  $\Alg^{n}_{\LnIop}(\mathcal{C}^{\amalg}) \simeq \Fun(\bbL^{I,\op},
  \mathcal{C})$ and hence a natural equivalence
  \[ \mathfrak{ALG}_{n}(\mathcal{C}^{\amalg})_{I} \simeq
    \COSPAN^{+}_{n}(\mathcal{C})_{I},\]
  as required.
\end{proof}

Passing to underlying $n$-fold Segal spaces we get, since the
symmetric monoidal structures are defined by delooping in both cases:
\begin{cor} \label{cor:cospanmorita}
  If $\mathcal{C}$ is an \icat{} with finite coproducts, then we have
  an equivalence of symmetric monoidal $(\infty,n)$-categories
  \[\mathfrak{alg}_{n}(\mathcal{C}^{\amalg}) \simeq
  \txt{Cospan}_{n}(\mathcal{C}).\]
\end{cor}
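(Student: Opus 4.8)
The plan is to deduce this from the preceding corollary, which gives a natural equivalence $\mathfrak{ALG}_{n}(\mathcal{C}^{\amalg}) \simeq \COSPAN^{+}_{n}(\mathcal{C})$ of $n$-uple category objects in $\CatI$, by observing that both $\mathfrak{alg}_{n}(\mathcal{C}^{\amalg})$ and $\Cospan_{n}(\mathcal{C})$ are extracted from these $n$-uple category objects by the \emph{same} functorial recipe, and that the symmetric monoidal structures on the two sides are both produced by delooping.

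First I would pass to underlying $(\infty,n)$-categories. Forgetting the non-cocartesian morphisms turns the equivalence of the preceding corollary into an equivalence of underlying $n$-uple Segal spaces, and applying $U_{\Seg}^{n}$ gives an equivalence of underlying $n$-fold Segal spaces. On the cospan side this $n$-fold Segal space is by definition $\Cospan_{n}(\mathcal{C})$, which is complete by \cite{spans}*{Corollary 8.5} (applied to $\mathcal{C}^{\op}$); on the Morita side $\mathfrak{alg}_{n}(\mathcal{C}^{\amalg})$ is by definition the \emph{completion} of this same $n$-fold Segal space. Since the two $n$-fold Segal spaces are equivalent and one of them is already complete, and completeness is invariant under equivalence, the Morita-side object is complete too and its completion does nothing; this yields an equivalence of $(\infty,n)$-categories $\mathfrak{alg}_{n}(\mathcal{C}^{\amalg}) \simeq \Cospan_{n}(\mathcal{C})$. (In particular this confirms Conjecture~\ref{conj:pointedcomplete} in the special case $\mathcal{V} = \mathcal{C}^{\amalg}$.) As the preceding corollary is natural in $\mathcal{C}$ and its construction is uniform in $n$, the resulting equivalences $\Phi_{m} \colon \mathfrak{alg}_{m}(\mathcal{C}^{\amalg}) \simeq \Cospan_{m}(\mathcal{C})$ are natural in $\mathcal{C}$ and available for every $m$.

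To upgrade $\Phi_{n}$ to a symmetric monoidal equivalence I would use that both symmetric monoidal structures arise by delooping, \ie{} by exhibiting the $(\infty,n)$-category as $\mathcal{D}_{0}$ in a sequence of $(\infty,n+i)$-categories $\mathcal{D}_{i}$ with chosen objects $x_{i}$ and equivalences $\mathcal{D}_{i}(x_{i},x_{i}) \simeq \mathcal{D}_{i-1}$, which (with $i$ ranging over all of $\mathbb{N}$) encodes a symmetric monoidal structure. On the cospan side one takes $\mathcal{D}_{i} = \Cospan_{n+i}(\mathcal{C})$ with $x_{i} = \emptyset$ the initial object of $\mathcal{C}$ (\ie{} the terminal object of $\mathcal{C}^{\op}$, which is the monoidal unit), the delooping identification $\Cospan_{n+i}(\mathcal{C})(\emptyset,\emptyset) \simeq \Cospan_{n+i-1}(\mathcal{C})$ coming from \cite{spans}*{Proposition 8.3} together with $(\mathcal{C}^{\op})_{/\emptyset,\emptyset} \simeq \mathcal{C}^{\op}$. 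On the Morita side one takes $\mathcal{D}_{i} = \mathfrak{alg}_{n+i}(\mathcal{C}^{\amalg})$ with $x_{i}$ the unit algebra $\emptyset$, the identification $\mathfrak{alg}_{n+i}(\mathcal{C}^{\amalg})(\emptyset,\emptyset) \simeq \mathfrak{alg}_{n+i-1}(\mathcal{C}^{\amalg})$ coming from \cite{nmorita}*{Theorem 5.49} together with $\txt{Mod}_{\emptyset,\emptyset}(\mathcal{C}^{\amalg}) \simeq \mathcal{C}^{\amalg}$ (bimodules over the monoidal unit recover the category). Reassuringly, both tensor products are the coproduct of $\mathcal{C}$, so one expects the two structures to coincide.

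The crux is therefore to check that the family $\{\Phi_{m}\}$ is a map of delooping sequences: each $\Phi_{m}$ sends the unit algebra to $\emptyset$ (on objects $\Phi_{m}$ is essentially the identity on $\mathcal{C}$), and the square comparing the two ``hom at the unit'' identifications with $\Phi_{m-1}$ must commute. This is the step where the real work lies. I expect it to follow from the naturality of the preceding corollary in $\mathcal{C}$: the two dimension-reducing identifications $\txt{Mod}_{\emptyset,\emptyset}(\mathcal{C}^{\amalg}) \simeq \mathcal{C}^{\amalg}$ and $(\mathcal{C}^{\op})_{/\emptyset,\emptyset} \simeq \mathcal{C}^{\op}$ both recover the input category together with its coproduct structure, and these recoveries correspond under $\Phi_{m-1}$ because the inductive construction of the preceding corollary proceeds by rewriting $\DnIop$-algebras as $\Dop_{/[i]}$-algebras valued in the lower-dimensional algebra category, matching exactly the passage to mapping objects of cospans. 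Granting this compatibility, the tower $\{\Phi_{m}\}$ respects all the delooping data, so the induced symmetric monoidal structures agree and $\Phi_{n} \colon \mathfrak{alg}_{n}(\mathcal{C}^{\amalg}) \simeq \Cospan_{n}(\mathcal{C})$ is an equivalence of symmetric monoidal $(\infty,n)$-categories, as required.
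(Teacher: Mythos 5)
Your proposal is correct and follows essentially the same route as the paper: the paper's proof likewise passes from the equivalence $\mathfrak{ALG}_{n}(\mathcal{C}^{\amalg}) \simeq \COSPAN^{+}_{n}(\mathcal{C})$ of $n$-uple category objects to underlying $n$-fold Segal spaces and then invokes the fact that both symmetric monoidal structures are defined by delooping. Your extra care about completeness (the cospan side being complete, so the completion in the definition of $\mathfrak{alg}_{n}$ is harmless) and the explicit identification of the delooping-compatibility as the remaining check are exactly the details the paper leaves implicit.
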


\section{Higher Categories of Coisotropic Correspondences}

Our goal in this section is to introduce the notion of (iterated)
coisotropic correspondences, and to construct higher categories where
these are the (higher) morphisms. In \S\ref{sect:derivedstacks} we
give a brief outline of the theory of formal localization in derived
algebraic geometry, as developed in \cite{CPTVV}. We then review the
notions of Poisson structures on derived stacks and coisotropic
structures on morphisms of derived stacks, also from \cite{CPTVV}, in
\S\ref{sec:poissonstr}.  We will avoid going into the technical
details of the various constructions, and we refer to \cite{CPTVV} and
to \cite{PantevVezzosi} for a more complete and precise treatment of
the subject. In \S\ref{subsec:cois} we first define coisotropic
correspondences between derived Poisson stacks, and then use the
results of the previous section to construct $(\infty,n)$-categories
of derived Poisson stacks and iterated coisotropic correspondences. We
finish by briefly discussing the expected relation of our higher
categories to higher categories of symplectic derived stacks in
\S\ref{sect:LagrangianCorrespondences}.

\subsection{Derived Stacks and Formal Localization}
\label{sect:derivedstacks}

We fix a base field $k$ of characteristic $0$. Let $\cdga^{\leq 0}$
denote the \icat{} of commutative algebras in non-positively graded
cochain complexes of $k$-modules. We write $\dSt$ for the \icat{} of
\emph{derived stacks}, \ie{} \'etale sheaves of (large) spaces on
$\cdga^{\leq 0}$. Representable (pre)sheaves give a fully faithful
functor $(\cdga^{\leq 0})^{\op} \to \dSt$, and we write
$\dAff \simeq (\cdga^{\leq 0})^{\op}$ for its image; objects of
$\dAff$ will be called \emph{derived affine schemes}.

We denote by $\dArt\subset \dSt$ the full subcategory of derived Artin stacks locally of finite presentation. This is a convenient \icat{} of derived stacks $X$ which admit perfect cotangent complexes $\bL_X$. The dual will be denoted by $\bT_X$.

Consider the 
inclusion functor
\[ i: \mathrm{calg}^{\red} \to \cdga^{\leq 0}, \] where
$\mathrm{calg}^{\red}$ is the full sub-\icat{} of discrete reduced commutative
$k$-algebras. The \icat{} $\mathrm{calg}^{\red}$ can be endowed with
the \'etale topology, and we let $\mathrm{St}_{\red}$ be the \icat{}
of stacks on the associated site. By restriction, we immediately get
a functor of \icats{}
\[ i^*\colon \dSt \to \mathrm{St}_{\red}  \]
which has both a left adjoint $i_!$ and a right adjoint $i_*$, as $i$ is both continous and cocontinous

\begin{defn}$ $
	\begin{itemize}
		\item The functor
		\[ (-)_{\dR} := i_*i^* : \dSt \to \dSt \] is called the \emph{de Rham stack functor}.
		\item The functor \[(-)_{\red} := i_!i^* : \dSt \to \dSt \] is called the \emph{reduced stack functor}.
	\end{itemize}
\end{defn}

Note that by adjunction, for any $X \in \dSt$ we have canonical
morphisms $X \to X_{\dR}$ and $X_{\red} \to X$. One can prove that if
$X\in \dSt$ is a derived stack, then $X_{\dR}$ is simply given by
\[\begin{array}{cccc} X_{\dR}: & (\dAff)^{\op}& \longrightarrow& \mathcal{S} \\
& A &\longmapsto &X(A^{\red}),
\end{array}\]
where $A^{\red}$ is the reduced $k$-algebra $\mathrm{H}^0(A)/\mathrm{Nilp}(\mathrm{H}^0(A))$. On the other hand, if $\Spec A \in \dAff$ is affine, then $(\Spec A)_{\red} \simeq \Spec(A^{\red})$.

The theory of formal localization mainly deals with the study of the projection $X \to X_{\dR}$. This map is of particular interest, as its fibers are precisely the formal completions of $X$ at its points. More concretely, let $\Spec A \to X_{\dR}$ be an $A$-point of $X_{\dR}$, and let $X_A$ be the fiber product
\[
\begin{tikzcd}
X_A \arrow{r}  \arrow{d} & X \arrow{d} \\
\Spec A \arrow{r} &X_{\dR}.
\end{tikzcd}
\]

It can be shown (see \cite[Proposition 2.1.8]{CPTVV}) that $X_A$ is equivalent to the formal completion of the map $\Spec A^{\red} \to X \times \Spec A$. This is easily seen to imply that $(X_A)_{\red} \simeq \Spec A^{\red}$. In other words, one can think of $X_A$ as a sort of ``formal thickening'' of $\Spec A^{\red}$. 
By the properties of the de Rham stack, the map $\Spec A \to X_{\dR}$ corresponds to a map $\Spec A^{\red} \to X$, which is induced by the map $\Spec A^{\red} \simeq (X_A)_{\red} \to X_A$, so that we get a commutative diagram
\[
\begin{tikzcd}
	& X_A \arrow{r} \arrow{d} & X \arrow{d} \\
	\Spec A^{\red} \arrow{r} \ar[ur] & \Spec A \arrow{r} &X_{\dR}
\end{tikzcd}
\]
of derived stacks, where the square on the right is cartesian.

The upshot of the above discussion is that we can think of $X \to X_{\dR}$ as a family of formal derived stacks, and, more explicitly, as the family of formal completions of $X$ at its closed points. By the general theorem of \cite{DAG-X}, these formal completions correspond to dg Lie algebras. However, these dg Lie algebras do not extend to form a sheaf of dg Lie algebras over $X_{\dR}$. Instead, the Chevalley--Eilenberg complexes of these dg Lie algebras extend globally, thus producing a sheaf of graded mixed algebras over $X_{\dR}$.

We are interested in studying prestacks on $X_{\dR}$, that is to say
functors out of the \icat{} $(\dAff_{/X_{\dR}})^{op}$. Let $\Mix$ be
the \icat{} of graded mixed dg modules (\ie{} the \icat{} underlying
the model category of these considered in \cite{CPTVV}). For notational convenience, we give the following definition.

\begin{defn} Let $X$ be a derived stack.
	\begin{itemize}
		\item We denote by $\cD_X$ the \icat{} of prestacks of ind-objects in graded mixed dg modules on $X_{\dR}$, that is to say
		\[ \cD_X := \Fun((\dAff_{/X_{\dR}})^{\op},
                  \Ind(\Mix)).\]
                We consider this as a symmetric monoidal \icat{} with
                respect to the pointwise tensor product coming from $\Ind(\Mix)$.
              \item We denote by $\cA_X$ the \icat{} of prestacks of
                graded mixed cdgas in ind-objects on $X_{\dR}$, that
                is to say
		\[ \cA_X := \Fun((\dAff_{/X_{\dR}})^{\op},
                  \CAlg(\Ind(\Mix))).\]
                Equivalently, since the tensor product on
                $\mathcal{D}_{X}$ is pointwise, we have
                \[ \cA_{X}\simeq \CAlg(\cD_{X}).\]
	\end{itemize}
\end{defn}

Notice that both assignments $X \mapsto \cD_X$ and $X \mapsto \cA_X$ are functorial, in the sense that if we have a map $f: X \to Y$ of derived stacks, we immediately get a functor $f^* : \cD_Y \to \cD_X$ (and similarly for $\cA_X$), simply given by pullback of prestacks. Equivalently, we can encode these functors into cocartesian fibrations $\cD \to \dSt^{\op}$ and $\cA\to \dSt^{\op}$.

Consider the following Ind-object in the \icat{} $\Mix$:
\[  k(\infty):= \{ k(0) \to k(1) \to \cdots \to k(i) \to k(i+1) \to \cdots  \}  \]
where $k(i)$ is the graded mixed module simply given by $k$ sitting in degree 0 and weight $i$, together with the trivial mixed structure. The maps $k(i) \to k(i+1)$ are the canonical morphisms in the \icat{} of graded mixed modules. 

The Ind-object $k(\infty)$ is a commutative algebra in the category $\Ind(\Mix)$, and it can be used to define two fundamental prestacks on $X_{\dR}$.

\begin{defn}
	\begin{enumerate}
		\item The \emph{twisted crystalline structure sheaf} of $X$ is defined to be
		\[\begin{array}{cccc} \bD_{X_{\dR}}^{\infty} : & (\dAff_{/X_{\dR}})^{\op} & \longrightarrow & \CAlg(\Ind(\Mix)) \\
		& (\Spec A \to X_{\dR}) &\longmapsto & \DR(A^{\red}/A) \otimes_k k(\infty),
		\end{array}\]
		\item The \emph{twisted prestack of principal parts} of $X$ is defined as
		\[\begin{array}{cccc} \cP_{X}^{\infty} : & (\dAff_{/X_{\dR}})^{\op} & \longrightarrow & \CAlg(\Ind(\Mix)) \\
		& (\Spec A \to X_{\dR}) &\longmapsto &  \DR(\Spec A^{\red} / X_A) \otimes_k k(\infty).
		\end{array}\]
	\end{enumerate}
\end{defn}

Both prestacks $\bD_{X_{\dR}}^{\infty}$ and $\cP_X^{\infty}$ are functorial in $X$, in the sense that they can be interpreted as sections of the coartesian fibration $\cA \to \dSt^{\op}$. We will denote the corresponding sections by $\bD^{\infty}$ and $\cP^{\infty}$ respectively. Notice however that given a map of derived stacks $f:X 	\to Y$, we have $f^*\bD_{Y_{\dR}}^{\infty} \simeq \bD_{X_{\dR}}^{\infty}$ but in general $f^*\cP_Y^\infty$ is not equivalent to $\cP_X^\infty$.
In other words, the section $\bD^{\infty}$ is cocartesian, while $\cP^{\infty}$ is not. We remark however that there is always an induced map $f^*_{\cP} : f^*\cP_Y^\infty \to \cP_X^\infty$.

For every derived stack $X$, there is a natural map $\bD_{X_{\dR}}^{\infty} \to \cP_X^{\infty}$ in the category $\cA_X$, which one can view as endowing $\cP_X^{\infty}$ with the structure of a $\bD_{X_{\dR}}^{\infty}$-algebra. 

For notational convenience, we give the following definition.

\begin{defn}
	The cocartesian fibration associated to the functor
	\[ X \mapsto \Mod_{\bD_{X_{\dR}}^\infty}(\cD_X) \]
	will be denoted $\cM \to \dSt^{\op}$.
\end{defn}

Notice that by definition we have an equivalence $\cA_X \simeq \CAlg(\mathcal{D}_{X})$
which in turn gives an equivalence
\[ (\cA_X)_{\bD^\infty_{X_{\dR}}/} \simeq
  \CAlg(\Mod_{\bD^\infty_{X_{\dR}}}(\mathcal{D}_{X})) \simeq
  \CAlg(\cM_X). \]
Thus, $\cP_X^\infty$ can be viewed as an object of $\CAlg(\cM_X)$, and
$\mathcal{P}^{\infty}$ as a section of the cocartesian fibration
$\cM_{\CAlg} \to \dSt^{\op}$ corresponding to $\CAlg(\cM_{(\blank)})$.

By a slight abuse of notation we denote by $\cM_{\CAlg}\rightarrow \dArt^{\op}$ the restriction of the cocartesian fibration $\cM_{\CAlg}\rightarrow \dSt^{\op}$ to the full subcategory of derived Artin stacks locally of finite presentation. The following is the key input we will need to apply the results of
the previous section to coisotropic correspondences:
\begin{propn}\label{propn:Ppullbacks}
	Suppose that the diagram
	\[ \begin{tikzcd}
		W \arrow{r}{f} \arrow{d}{p} & X \arrow{d}{q} \\
		Y \arrow{r}{g} & Z 
		\end{tikzcd}
		\]
	is a pullback of derived Artin stacks locally of finite presentation. Then the induced diagram
	\[ \begin{tikzcd} f^*q^*\cP^\infty_Z \arrow{r} \arrow{d} & f^*\cP^\infty_X \arrow{d} \\
		p^*\cP^\infty_Y \arrow{r} & \cP_W^\infty
	\end{tikzcd} \]
	is a pushout in the \icat{} $\CAlg(\cM_W)$.
\end{propn}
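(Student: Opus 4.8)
The plan is to reduce the statement to a pointwise computation over $W_{\dR}$ and then to recognize each resulting square of de Rham complexes as a pushout by passing to underlying graded cdgas.

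First I would unwind the target category. Since $\CAlg(\cM_W) \simeq (\cA_W)_{\bD^{\infty}_{W_{\dR}}/}$ and pushouts in an undercategory are created by the forgetful functor to the ambient category, it suffices to show the square is a pushout in $\cA_W = \Fun((\dAff_{/W_{\dR}})^{\op}, \CAlg(\mathrm{Ind}(\Mix)))$. Colimits here are computed objectwise, so I would fix an affine $\Spec B \to W_{\dR}$ and check the pushout there. Evaluating, the four corners become $\DR(\Spec B^{\red}/(-)_B)\otimes_k k(\infty)$ for $(-) = Z, X, Y, W$, where the formal completions are taken along the points of $X_{\dR}, Y_{\dR}, Z_{\dR}$ induced by $\Spec B \to W_{\dR}$ and the structure maps. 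Because $A \mapsto A \otimes_k k(\infty)$ is coproduct with a fixed object in $\CAlg(\mathrm{Ind}(\Mix))$, hence preserves pushouts, I am reduced to proving that the square
\[
\begin{tikzcd}
\DR(\Spec B^{\red}/Z_B) \arrow{r} \arrow{d} & \DR(\Spec B^{\red}/X_B) \arrow{d} \\
\DR(\Spec B^{\red}/Y_B) \arrow{r} & \DR(\Spec B^{\red}/W_B)
\end{tikzcd}
\]
is a pushout of graded mixed cdgas.

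The geometric input is that formal completions carry the pullback $W = X \times_Z Y$ to a pullback of completions. Since $i^{*}$ preserves both limits and colimits while $i_{*}$ is a right adjoint, the functor $(-)_{\dR} = i_{*}i^{*}$ preserves pullbacks, so $W_{\dR} \simeq X_{\dR}\times_{Z_{\dR}} Y_{\dR}$. Pasting the defining cartesian squares of $X_B, Y_B, Z_B$ with this identification then yields a natural equivalence $W_B \simeq X_B \times_{Z_B} Y_B$, compatibly with the sections from $\Spec B^{\red}$. It remains to see that $\DR(\Spec B^{\red}/-)$ sends this pullback to a pushout. Here I would first pass to underlying graded cdgas: the functor discarding the mixed structure (restriction along $k \to k[\epsilon]$) is conservative and preserves sifted colimits and coproducts, hence preserves pushouts in $\CAlg$ and detects equivalences, so it suffices to check the square is a pushout after forgetting the de Rham differential. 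The underlying graded cdga of $\DR(\Spec B^{\red}/U)$ is the free algebra $\Sym_{B^{\red}}(\bL_{\Spec B^{\red}/U}[-1])$. Via the equivalence between pointed formal moduli problems and dg Lie algebras \cite{DAG-X}, the completions correspond to dg Lie algebras $\ell_X, \ell_Y, \ell_Z, \ell_W$ over $B^{\red}$, and, the equivalence being limit-preserving, $\ell_W \simeq \ell_X \times_{\ell_Z} \ell_Y$. This is where the hypothesis ``locally of finite presentation'' is essential: the $\ell$'s have perfect underlying complexes, so dualizing turns this fiber product into a pushout, giving $\bL_{\Spec B^{\red}/W_B} \simeq \bL_{\Spec B^{\red}/X_B} \oplus_{\bL_{\Spec B^{\red}/Z_B}} \bL_{\Spec B^{\red}/Y_B}$ (the relative cotangent complexes being the shifted duals $\ell^{\vee}$). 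Finally $\Sym_{B^{\red}}(-[-1])$ is a left adjoint and so carries this pushout of $B^{\red}$-modules to the relative tensor product of free graded cdgas, which is exactly the claimed pushout.

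The main obstacle is this last paragraph: converting the limit $W_B \simeq X_B\times_{Z_B} Y_B$ into a colimit of de Rham complexes. The crux is the use of perfectness (the lfp hypothesis) to dualize a fiber product of tangent/Lie data into a pushout of cotangent data, together with the verification that the de Rham differentials are compatible with the comparison map, so that an equivalence detected on underlying graded cdgas upgrades to one of graded mixed cdgas. One could instead quote the corresponding compatibility of principal parts with pullbacks directly from the formal-localization results of \cite{CPTVV}, but the argument above isolates exactly where finite presentation enters.
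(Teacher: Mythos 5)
Your proposal tracks the paper's own argument step for step through every reduction: passing from $\CAlg(\cM_W)$ to the ambient category via the undercategory, evaluating pointwise at $\Spec B \to W_{\dR}$, discarding the factor $k(\infty)$ and then the mixed structure, identifying the underlying graded cdga of $\DR(\Spec B^{\red}/(-)_B)$ with $\Sym_{B^{\red}}(\bL_{\Spec B^{\red}/(-)_B}[-1])$ (the paper justifies this by algebraisability and \cite[Proposition 2.2.7]{CPTVV}, which you assert without comment), and using that $\Sym_{B^{\red}}$ preserves colimits to reduce everything to the claim that the square of relative cotangent complexes $\bL_{\Spec B^{\red}/(-)_B}$ is a pushout. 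At exactly that point the paper simply cites \cite[Lemma 3.5]{MelaniSafronov2}, whereas you attempt an independent argument through formal moduli problems and dg Lie algebras, and that argument has a genuine gap.

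The gap is your perfectness claim. The hypothesis ``locally of finite presentation'' makes $\bL_X$ perfect; it does \emph{not} make $\bL_{\Spec B^{\red}/X_B}$ perfect, because this relative cotangent complex contains the ``formal direction'' $\bL_{B^{\red}/B}$. Concretely, take $X = \Spec k$, which is certainly locally of finite presentation: then $X_B \cong \Spec B$, so $\bL_{\Spec B^{\red}/X_B} \simeq \bL_{B^{\red}/B}$, and for $B = k[\epsilon]/(\epsilon^{2})$ this is $\bL_{k/k[\epsilon]}$, an unbounded, non-perfect complex. With perfectness gone, your dualization step collapses: the formal-moduli dictionary identifies $\ell$ with (a shift of) the relative \emph{tangent} complex, so $\ell^{\vee}$ is the double dual $\bL^{\vee\vee}$ rather than $\bL$, and the pushout you produce is not the one you need. (There is a second, separate problem: the equivalence of \cite{DAG-X} is a statement over a field, while your base is the reduced ring $B^{\red}$; the paper's \S 3.1 emphasizes precisely that the Lie-algebra description does not behave well in families, which is why CPTVV work with graded mixed cdgas in the first place.) Note also that dualization and perfectness are not where the finite-presentation hypothesis should enter at all: $B^{\red}$-modules form a stable \icat{}, in which a square is cartesian \IFF{} it is cocartesian, so all that is needed is that the square of relative cotangent complexes is (co)cartesian. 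That statement is exactly \cite[Lemma 3.5]{MelaniSafronov2} --- a statement purely about cotangent complexes, requiring neither Lie algebras nor perfectness --- and the finite-presentation hypothesis feeds in only through the ambient formal-localization formalism (in particular algebraisability of the completions, which your own proof already needs in order to identify $\DR$ with $\Sym(\bL[-1])$).
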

\begin{proof}
  Since $\CAlg(\mathcal{M}_{W}) \simeq
  \CAlg(\mathcal{D}_{W})_{\bD^\infty_{W_{\dR}}/}$ and the forgetful
  functor to $\CAlg(\mathcal{D}_{W})$ detects weakly contractible
  colimits, it suffices to show that the underlying diagram in
  $\CAlg(\mathcal{D}_{W})$ is a pushout. But by definition
  $\mathcal{D}_{W}$ is a functor \icat{}, equipped with the pointwise
  symmetric monoidal structure, and so we have an equivalence
  \[ \CAlg(\mathcal{D}_{W}) \simeq \Fun((\dAff_{/W_{\dR}})^{\op},
    \CAlg(\Ind(\Mix))).\] It is therefore enough to check that the
  diagram is a pushout when evaluated at each object of
  $\dAff_{/W_{\dR}}$. In other words, given a point
  $\Spec A \to W_{\dR}$, we need to show that the diagram
		\[ \begin{tikzcd} \cP^\infty_Z(A) \arrow{r} \arrow{d} & \cP^\infty_X(A) \arrow{d} \\
	\cP^\infty_Y (A) \arrow{r} & \cP_W^\infty (A)
	\end{tikzcd} \]
	is a pushout in $\CAlg(\Ind(\Mix))$. Unraveling the definition of the twisted prestack of principal parts, we are left with proving that the diagram
	 	\[ \begin{tikzcd} \DR(\Spec A^{\red}/Z_A) \arrow{r} \arrow{d} & \DR(\Spec A^{\red}/X_A) \arrow{d} \\
	 \DR(\Spec A^{\red}/Y_A) \arrow{r} & \DR(\Spec A^{\red}/W_A)
	 \end{tikzcd} \]
	 is a pushout of graded mixed commutative algebras. The forgetful functor
	 \[ \CAlg(\Mix) \longrightarrow \CAlg(\mathrm{dg}^\mathrm{gr}) \]
	 creates colimits, hence it suffices to show that the above square is a pushout in the category of graded commutative algebras.
	 The derived stacks $X_A, Y_A, Z_A, W_A$ are algebraisable in the sense of \cite[Definition 2.2.1]{CPTVV}, so by \cite[Proposition 2.2.7]{CPTVV} we have an equivalence $\DR(\Spec A^{\red}/X_A)\simeq \Sym_{A^\red}(\bL_{\Spec A^\red / X_A}[-1])$ of graded commutative algebras and similarly for other stacks.
	 
	  Therefore we need to prove that the square 
	 \[ \begin{tikzcd} \Sym_{A^{\red}}(\bL_{\Spec A^\red / Z_A}[-1]) \arrow{r} \arrow{d} & \Sym_{A^{\red}}(\bL_{\Spec A^\red / X_A}[-1]) \arrow{d} \\
	 \Sym_{A^{\red}}(\bL_{\Spec A^\red / Y_A}[-1]) \arrow{r} & \Sym_{A^{\red}}(\bL_{\Spec A^\red / W_A}[-1])
	 \end{tikzcd} \]
	 is a pushout of graded commutative algebras. Since the functor $\Sym_{A^\red}(-)$ commutes with colimits, it is enough to prove that 
	 \[ \begin{tikzcd} \bL_{\Spec A^\red / Z_A} \arrow{r} \arrow{d} & \bL_{\Spec A^\red / X_A} \arrow{d} \\
	 \bL_{\Spec A^\red / Y_A} \arrow{r} & \bL_{\Spec A^\red / W_A}
	 \end{tikzcd} \]
	 is a pushout square. But this follows directly from \cite[Lemma 3.5]{MelaniSafronov2}.
\end{proof}

\begin{cor}\label{cor:Pfincolim}
  The section $\mathcal{P}^{\infty} \colon \dArt^{\op} \to
  \mathcal{M}_{\CAlg}$ preserves finite colimits.
\end{cor}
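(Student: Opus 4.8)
The plan is to reduce the statement to the two generating cases of finite colimits and dispatch each using results already in hand. Recall that an $\infty$-category has finite colimits precisely when it has an initial object and pushouts, and that a functor between such categories preserves finite colimits if and only if it preserves initial objects and pushouts. The category $\dArt^{\op}$ does have finite colimits, since $\dArt$ has a terminal object $\Spec k$ and is closed under pullbacks. Thus it suffices to check that $\cP^{\infty}$ preserves the initial object and preserves pushouts.

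For pushouts, note that a pushout square in $\dArt^{\op}$ is the same as a pullback square
\[
\begin{tikzcd}
W \arrow{r}{f}\arrow{d}{p} & X \arrow{d}{q}\\
Y \arrow{r}{g} & Z
\end{tikzcd}
\]
in $\dArt$, with $W$ the colimit corner (in $\dArt^{\op}$) of the span $X \leftarrow Z \rightarrow Y$. I would apply Lemma~\ref{lem:cocartcolim} to the cocartesian fibration $\cM_{\CAlg}\to \dArt^{\op}$ and this diagram. The hypotheses hold: each fibre $\CAlg(\cM_X)$ has pushouts, and each pullback functor preserves them, since colimits in $\CAlg(\cM_X)$ are detected in the functor category $\cA_X$, where they are computed pointwise in $\CAlg(\Ind(\Mix))$, while pullback is given by precomposition and hence preserves pointwise colimits (and commutes with $\bD^{\infty}$). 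By the lemma the colimit of $\cP^{\infty}$ along the span is obtained by pushing the values forward to the fibre over $W$ and forming the pushout there. Tracking the cocartesian pushforwards, the legs $X\to W$, $Z\to W$, $Y\to W$ in $\dArt^{\op}$ correspond to $f$, $qf=gp$, and $p$ in $\dArt$, so the pushed-forward span is
\[
f^{*}\cP^{\infty}_{X} \longleftarrow f^{*}q^{*}\cP^{\infty}_{Z} \longrightarrow p^{*}\cP^{\infty}_{Y}
\]
in $\CAlg(\cM_W)$, whose pushout is $f^{*}\cP^{\infty}_{X}\otimes_{f^{*}q^{*}\cP^{\infty}_{Z}}p^{*}\cP^{\infty}_{Y}$. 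The canonical comparison map from this pushout to $\cP^{\infty}_{W}=\cP^{\infty}(W)$ is exactly the map whose invertibility is asserted by Proposition~\ref{propn:Ppullbacks}. Hence the diagram $\cP^{\infty}\circ q$ is a colimit diagram, and pushouts are preserved.

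For the initial object, observe that $\Spec k$ is terminal in $\dArt$, hence initial in $\dArt^{\op}$; since $\Spec k$ is initial in the base of the cocartesian fibration, the initial object of $\cM_{\CAlg}$ is the initial (= unit) object $\bD^{\infty}_{(\Spec k)_{\dR}}$ of the fibre $\CAlg(\cM_{\Spec k})$. I would then simply compute $\cP^{\infty}_{\Spec k}$: since $(\Spec k)_{\dR}\simeq \Spec k$, the fibre $(\Spec k)_A$ is $\Spec A$, so $\DR(\Spec A^{\red}/(\Spec k)_{A})\simeq \DR(A^{\red}/A)$ and therefore $\cP^{\infty}_{\Spec k}\simeq \bD^{\infty}_{(\Spec k)_{\dR}}$. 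Thus $\cP^{\infty}$ sends the initial object to the initial object.

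The real content lies in the pushout case, but there almost all of the work has already been done: Proposition~\ref{propn:Ppullbacks} is precisely the nontrivial input, and the remaining task is bookkeeping. The main thing to get right is matching the cocartesian pushforwards produced by Lemma~\ref{lem:cocartcolim} with the maps $f^{*}q^{*}\cP^{\infty}_{Z}\to f^{*}\cP^{\infty}_{X}$ and $f^{*}q^{*}\cP^{\infty}_{Z}\to p^{*}\cP^{\infty}_{Y}$ appearing in the proposition, and confirming that the structure maps of the section $\cP^{\infty}$ assemble into exactly that pushout square. The only genuine verification obstacle is hypothesis (iii) of Lemma~\ref{lem:cocartcolim}, namely that the pullback functors on commutative algebras preserve pushouts; this is the same pointwise description of colimits in $\cA_X$ already exploited in the proof of Proposition~\ref{propn:Ppullbacks}.
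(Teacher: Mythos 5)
Your proof is correct and takes essentially the same route as the paper's: the pushout case is exactly Proposition~\ref{propn:Ppullbacks} fed through Lemma~\ref{lem:cocartcolim}, and the initial-object case is the same computation $(\Spec k)_A \simeq \Spec A$, identifying $\cP^{\infty}_{\Spec k}$ with the initial object $\bD^{\infty}_{(\Spec k)_{\dR}}$ of $\CAlg(\cM_{\Spec k})$. The additional bookkeeping you spell out (verifying the hypotheses of the lemma and matching the cocartesian pushforwards with the square in the proposition) is left implicit in the paper's one-line reduction, but it is the right verification.
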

\begin{proof}
  Proposition~\ref{propn:Ppullbacks} implies, via
  Lemma~\ref{lem:cocartcolim}, that $\mathcal{P}^{\infty}$ preserves
  pushouts. It thus only remains to show that it preserves the initial
  object, \ie{} that $\mathcal{P}^{\infty}_{\Spec k}$ is the initial
  object of $\CAlg(\mathcal{M}_{\Spec k})$, or equivalently that the
  canonical map $\bD_{(\Spec k)_{\dR}}^{\infty} \to
  \mathcal{P}^{\infty}_{\Spec k}$ is an equivalence. The functor $\bD_{(\Spec k)_{\dR}}^{\infty}$ sends $\Spec A\in\dAff$ to $\DR(A^{\red}/A)\otimes_k k(\infty)$. Similarly, the functor $\mathcal{P}^{\infty}_{\Spec k}$ sends $\Spec A\in\dAff$ to $\DR(A^{\red}/(\Spec k)_A)\otimes_k k(\infty)$. But by definition $(\Spec k)_A \cong \Spec A$, so the map $\bD_{(\Spec k)_{\dR}}^{\infty} \to \mathcal{P}^{\infty}_{\Spec k}$ is an equivalence.
\end{proof}

\subsection{Poisson and Coisotropic Structures}\label{sec:poissonstr}
In this subsection we recall the notions of Poisson and coisotropic
structures in the context of derived algebraic geometry. Let
$\dg$ be the symmetric monoidal model category of
cochain complexes of $k$-modules. We will often work with an arbitrary symmetric monoidal
\icat{} $\cC$ satisfying a set of assumptions (see \cite[Section
1.1]{CPTVV}; in particular, we refer there for a proof that the
\icats{} we consider here satisfy the assumptions).

\begin{assumption}\label{ModelCatAssumption}
Let $\mathbf{C}$ be a symmetric monoidal model category which is combinatorial as a model category. Assume the following:
\begin{enumerate}
\item $\mathbf{C}$ is tensored over $\dg$ compatibly with the model and symmetric monoidal structures.

\item For any cofibration $j\colon X\rightarrow Y$, any object $A\in \mathbf{C}$ and any morphism $u\colon A\otimes X\rightarrow C$ the pushout square
\[
  \begin{tikzcd}
    C  \arrow{r} & D \\
    A\otimes X \arrow{u}{u} \arrow{r}{\id\otimes j} & A\otimes Y \arrow{u}
  \end{tikzcd}
\]
is a homotopy pushout.

\item For a cofibrant object $X\in \mathbf{C}$, the functor $X\otimes (-)\colon \mathbf{C}\rightarrow \mathbf{C}$ preserves weak equivalences.

\item $\mathbf{C}$ is a tractable model category.

\item Weak equivalences in $\mathbf{C}$ are stable under filtered colimits and finite products.
\end{enumerate}
We denote by $\cC$ the localization of $\mathbf{C}$ with respect to
weak equivalences, which is a $k$-linear presentably symmetric
monoidal \icat{}. We will abuse notation and just say ``$\mathcal{C}$
satisfies Assumption~\ref{ModelCatAssumption}'', without explicitly
mentioning the model category $\mathbf{C}$.
\end{assumption}

Recall that $\bP_{s+1}$ is the dg operad controlling $s$-shifted
Poisson algebras (i.e. commutative algebras together with a compatible
Lie bracket of degree $-s$); the notation is chosen so that $\bP_{n}$
is the cohomology of the little discs operad $\mathbb{E}_{n}$ for $n
\geq 2$. The operad $\bP_{s+1}$ can be used to define Poisson structures on commutative algebras (see \cite[Theorem 3.2]{MelaniPois}, \cite[Theorem 1.4.9]{CPTVV} and \cite[Theorem 4.5]{MelaniSafronov1}).

\begin{defn}
Let $\cC$ be a $k$-linear symmetric monoidal \icat{} satisfying Assumption \ref{ModelCatAssumption}. We define $\Alg_{\bP_{s+1}}(\cC)$ to be the localization of the category of $\bP_{s+1}$-algebras in $\mathbf{C}$ along weak equivalences.
\end{defn}

By construction we have a forgetful functor
\[\Alg_{\bP_{s+1}}(\cC)\longrightarrow \CAlg(\cC).\]

\begin{defn}\label{defn:pois}
Let $\cC$ be a symmetric monoidal \icat{} as above. Let $A \in \CAlg(\cC)$ be a commutative algebra. The space $\Pois(A,s)$ of $s$-shifted Poisson structures on $A$ is the fiber of
\[\Alg_{\bP_{s+1}}(\cC) \to \CAlg(\cC)\]
taken at the point corresponding to the given commutative structure on $A$.
\end{defn}

Note that the operad $\bP_{s+1}$ has an involution given by changing
the sign of the bracket which preserves the map from the commutative
operad. Therefore, it induces an involution on $\Alg_{\bP_{s+1}}(\cC)$
which we consider as passing to the opposite $\bP_{s+1}$-algebra and,
similarly, an involution on $\Pois(A, s)$ that we denote by
$\pi_A\mapsto -\pi_A$. 

Let $X$ be a derived Artin stack locally of finite presentation. Recall from the previous section that one can associate to $X$ an \icat{} $\cM_X$, which in the language of \cite{CPTVV} corresponds to $\bD_{X_{\dR}}^\infty$-modules. Moreover, one has a canonical object in $\CAlg(\cM_X)$, given by $\cP_X^\infty$. We can define Poisson structures on $X$ in the following way (see \cite[Theorem 3.1.2]{CPTVV}).

\begin{defn}
\label{defn:poisstacks}
With notations as above, the space $\Pois(X, s)$ of $s$-shifted Poisson structures on $X$ is defined to be the space $\Pois(\cP_X^{\infty},s)$, where $\cP_X^{\infty}$ is considered as a commutative algebra in the \icat{} $\cM_X=\Mod_{\bD^\infty_{X_{\dR}}}(\cD_X)$ of $\bD_{X_{\dR}}^\infty$-modules.
\end{defn}

The notion of shifted Poisson structure on a derived stack admits a
relative version. To state this we will use the following result (Poisson
additivity) proved in \cite[Theorem 2.22]{SafronovAdditivity}.

\begin{thm}\label{thm:add}
Let $\cC$ be a symmetric monoidal \icat{} satisfying Assumption \ref{ModelCatAssumption}. Then there is an equivalence
\[\Alg_{\bP_{s+1}}(\cC) \simeq \Alg(\Alg_{\bP_s}(\cC))\]
of symmetric monoidal \icats{} satisfying the following compatibilities:
\begin{enumerate}
\item It is equivariant with respect to the involution on
  $\Alg_{\bP_{s+1}}(\cC)$ given by passing to the opposite
  $\bP_{s+1}$-algebra and the involution on $\Alg(\Alg_{\bP_s}(\cC))$
  given by passing to the opposite associative algebra.

\item It is compatible with the forgetful functors to $\CAlg(\cC)$, i.e. the diagram
\[
\begin{tikzcd}
  \Alg_{\bP_{s+1}}(\cC) \arrow{r}{\sim} \arrow{d}& \Alg(\Alg_{\bP_s}(\cC)) \arrow{d} \\
  \CAlg(\cC) \arrow{r}{\sim} & \Alg(\CAlg(\cC))
\end{tikzcd}
\]
commutes.
\end{enumerate}
\end{thm}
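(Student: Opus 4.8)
The plan is to deduce the statement from the Dunn--Lurie additivity theorem for the little cubes operads, together with an operadic comparison that identifies the shifted Poisson operads. First I would reformulate the right-hand side: since the associative operad is $\bE_1$, for any symmetric monoidal \icat{} $\cD$ one has $\Alg(\cD) \simeq \Alg_{\bE_1}(\cD)$, and applying this to $\cD = \Alg_{\bP_s}(\cC)$ and using that $\Alg_{\bE_1}(\Alg_{\bP_s}(\cC))$ is governed by the Boardman--Vogt tensor product of \iopds{}, the target is computed by $\bE_1 \otimes \bP_s$. The whole theorem then reduces to an equivalence of algebras, natural in $\cC$, that I would obtain from a comparison $\bE_1 \otimes \bP_s \simeq \bP_{s+1}$ of the governing \iopds{}. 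Morally this is the homology-level shadow of $\bE_{n+1} \simeq \bE_1 \otimes \bE_n$, but one cannot simply take homology: passing to homology does not commute with the tensor product of operads, so the equivalence genuinely needs to be constructed and verified.

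To build the comparison I would use the brace construction. The degree $-s$ bracket of a $\bP_{s+1}$-algebra is exactly the data needed to define brace operations, and the resulting brace/Deligne-type package upgrades the underlying object to an associative algebra in the \icat{} of $\bP_s$-algebras --- this is the shifted, commutative analogue of the fact that the Hochschild complex of an $\bE_n$-algebra carries an $\bE_{n+1}$-structure. Concretely, I would produce a map of \iopds{} from a brace resolution of $\bE_1 \otimes \bP_s$ to $\bP_{s+1}$, realizing the associative multiplication and the iterated brackets, and then check it is an equivalence. The verification is the technical heart: I would filter both operads by their natural weight grading (polynomial degree in the bracket) and compare associated graded pieces, where the problem linearizes and reduces to an identification of the underlying symmetric sequences, essentially a shifted Poincaré--Birkhoff--Witt computation. \textbf{The main obstacle} I expect to be precisely here: controlling the brace resolution and proving the comparison is an isomorphism on the associated graded, rather than merely a map that is the evident identity on the commutative and bracket generators.

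Finally, the two compatibilities should fall out of the construction once its generators-and-relations form is fixed. For compatibility with the forgetful functors, note that the brace package leaves the commutative multiplication untouched, so on the source the forgetful functor to $\CAlg(\cC)$ is matched, while on the target one uses the Eckmann--Hilton identification $\Alg(\CAlg(\cC)) \simeq \CAlg(\cC)$, under which $\Alg(\Alg_{\bP_s}(\cC)) \to \Alg(\CAlg(\cC))$ becomes the usual forgetful functor; since the comparison sends the commutative generator to the commutative generator, the square commutes. For the equivariance, reversing the sign of the bracket reverses the order in which the brace operations are composed, which is exactly passage to the opposite associative multiplication built by the construction, so the equivalence intertwines the sign involution on $\Alg_{\bP_{s+1}}(\cC)$ with the opposite-algebra involution on $\Alg(\Alg_{\bP_s}(\cC))$. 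I expect both compatibilities to be formal consequences of how the map acts on the two generators, in contrast to the genuinely hard equivalence statement above.
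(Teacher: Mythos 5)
The paper never proves Theorem \ref{thm:add} itself: it is imported verbatim from \cite[Theorem 2.22]{SafronovAdditivity}, so your attempt has to be measured against the proof given there. Your central instinct --- that the equivalence is produced by a brace (Tamarkin/Calaque--Willwacher) construction, and that the two compatibilities then follow formally from how that construction treats the commutative generator and the bracket --- is in fact the mechanism of the cited proof, including the point that the sign involution on the bracket matches passage to the opposite associative algebra. But the route you build around this instinct contains a step that would fail.

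The gap is the reduction to an equivalence of \iopds{} $\bE_1 \otimes \bP_s \simeq \bP_{s+1}$, to be verified ``by filtering both operads by weight and comparing associated graded pieces.'' The Boardman--Vogt tensor product of \iopds{} is precisely the object one cannot present: there is no known model of $\bE_1 \otimes \bP_s$ whose underlying symmetric sequence (equivalently, whose free algebras) can be written down, so there is nothing to filter --- computing its associated graded is not a linearized subproblem but is essentially equivalent to the theorem itself, and even the classical instance $\bE_1 \otimes \bE_n \simeq \bE_{n+1}$ (Dunn--Lurie) is a hard theorem proved by entirely different means. This is exactly why \cite{SafronovAdditivity} never forms the tensor product: there the equivalence of \icats{} is constructed directly, by passing through bar/cobar (Koszul) duality between $\bP_s$-algebras and coalgebras over the Koszul-dual Hopf cooperad, identifying associative algebras in $\bP_s$-algebras with brace algebras over that cooperad, and then identifying the resulting brace operad with $\bP_{s+1}$ following Calaque--Willwacher; the weight filtrations that do occur live on these explicit dg objects, not on a tensor product of \iopds{}. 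A second, smaller gap: in the statement, $\Alg_{\bP_s}(\cC)$ is defined as the localization of strict $\bP_s$-algebras in a model category $\mathbf{C}$ (Assumption \ref{ModelCatAssumption}), not as algebras over an \iopd{}, so even your first step --- invoking the universal property $\Alg_{\bE_1}(\Alg_{\bP_s}(\cC)) \simeq \Alg_{\bE_1 \otimes \bP_s}(\cC)$ --- silently requires a rectification theorem comparing the two notions (available in characteristic $0$, but it must be invoked, together with its symmetric monoidal and involution-equivariant refinements, before any operad-level statement can be brought to bear on the \icat{} in question).
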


\begin{cor}
Let $\cC$ be a symmetric monoidal \icat{} satisfying Assumption \ref{ModelCatAssumption}. Then there is an equivalence
\[\Alg_{\bP_{s+n}}(\cC)\simeq \Alg_{\bE_n}(\Alg_{\bP_s}(\cC))\]
of symmetric monoidal \icats{}.
\label{cor:Enadditivity}
\end{cor}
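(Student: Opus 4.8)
The plan is to prove the statement by induction on $n$, combining the single-shift Poisson additivity of Theorem~\ref{thm:add} with the Dunn--Lurie additivity theorem for $\bE_n$-algebras. The base case $n = 1$ is precisely Theorem~\ref{thm:add}, since $\Alg_{\bE_1}(\cD) \simeq \Alg(\cD)$ for any symmetric monoidal \icat{} $\cD$.

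For the inductive step, suppose the symmetric monoidal equivalence
\[ \Alg_{\bP_{s+n-1}}(\cC) \simeq \Alg_{\bE_{n-1}}(\Alg_{\bP_s}(\cC)) \]
holds. First I would apply Theorem~\ref{thm:add} to $\cC$ with the shift $s + n - 1$ in place of $s$, yielding a symmetric monoidal equivalence
\[ \Alg_{\bP_{s+n}}(\cC) \simeq \Alg(\Alg_{\bP_{s+n-1}}(\cC)). \]
Since the construction $\cD \mapsto \Alg(\cD)$ is functorial for symmetric monoidal \icats{} and symmetric monoidal functors, applying it to the inductive hypothesis gives a symmetric monoidal equivalence
\[ \Alg(\Alg_{\bP_{s+n-1}}(\cC)) \simeq \Alg(\Alg_{\bE_{n-1}}(\Alg_{\bP_s}(\cC))). \]
Finally, the Dunn--Lurie additivity theorem applied to the symmetric monoidal \icat{} $\Alg_{\bP_s}(\cC)$ identifies the right-hand side with $\Alg_{\bE_n}(\Alg_{\bP_s}(\cC))$. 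Composing these three symmetric monoidal equivalences completes the induction.

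The only points requiring care are bookkeeping about the symmetric monoidal structures. Throughout, Theorem~\ref{thm:add} is applied only to the fixed base $\cC$ (with varying shift), never to $\Alg_{\bP_s}(\cC)$, so no new instance of Assumption~\ref{ModelCatAssumption} needs to be verified; we only use that $\Alg_{\bP_s}(\cC)$ is symmetric monoidal, which is implicit in the statement. The main thing to track is that the equivalence of Theorem~\ref{thm:add} is symmetric monoidal---as asserted there---and that $\Alg(-)$ preserves symmetric monoidal equivalences, so that the middle equivalence above is again symmetric monoidal. Granting this, the expected obstacle is entirely formal: one must ensure that the symmetric monoidal structure produced by iterating $\Alg(-)$ matches the $\bE_n$-monoidal structure on $\Alg_{\bE_n}(\Alg_{\bP_s}(\cC))$ appearing in Dunn--Lurie additivity, which is exactly the content of the standard compatibility between iterated associative-algebra structures and $\bE_n$-monoidal structures.
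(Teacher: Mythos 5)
Your proof is correct and follows essentially the same route the paper intends: the paper states this as an immediate corollary of Theorem~\ref{thm:add}, the implicit argument being exactly your induction---iterate the one-shift additivity $\Alg_{\bP_{s'+1}}(\cC) \simeq \Alg(\Alg_{\bP_{s'}}(\cC))$ and identify the resulting $n$-fold iterated $\Alg(-)$ with $\Alg_{\bE_n}$ via Dunn--Lurie additivity. Your remark that Theorem~\ref{thm:add} is only ever applied to the fixed base $\cC$ (so Assumption~\ref{ModelCatAssumption} never needs to be re-verified) is a correct and worthwhile point of care.
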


For a symmetric monoidal \icat{} $\cC$ we denote by $\LMod(-)$ the $\infty$-category of pairs $(A, M)$ of an algebra $A\in \Alg(\cC)$ and a left $A$-module $M\in\cC$. Note that there is an equivalence
\[\LMod(\CAlg(\cC)) \simeq \Mor(\CAlg(\cC))\]
of \icats{} by \cite{HA}*{Proposition 2.4.3.16}, since the tensor
product in $\CAlg(\cC)$ is the coproduct. As a consequence, we get a forgetful functor
\[\LMod(\Alg_{\bP_s}(\cC)) \to \Mor(\CAlg(\cC))\]
defined for every integer $s$.

\begin{defn}\label{defn:cois}
Let $\cC$ be a symmetric monoidal \icat{} satisfying Assumption \ref{ModelCatAssumption}. Let $\phi\colon A\to B$ be a morphism of commutative algebras in $\cC$. The space $\Cois(\phi,s)$ of $s$-shifted coisotropic structures on $\phi$ is the fiber of
\[ \LMod(\Alg_{\bP_s}(\cC)) \to \Mor(\CAlg(\cC)) \]
taken at the point corresponding to $\phi$.
\end{defn}

We have a forgetful functor
\[\LMod(\Alg_{\bP_s}(\cC))\longrightarrow \Alg(\Alg_{\bP_s}(\cC))\simeq \Alg_{\bP_{s+1}}(\cC)\]
where the last equivalence is given by Theorem \ref{thm:add}, and this is compatible with the forgetful functor to $\CAlg(\cC)$. Therefore, we obtain a forgetful map
\[\Cois(\phi, s)\longrightarrow \Pois(A, s),\]
i.e. an $s$-shifted coisotropic structure on $A\rightarrow B$ encodes an $s$-shifted Poisson structure on $A$ together with some extra data.

\begin{remark}
It is also possible to give another definition of a shifted coisotropic structure. Namely, in \cite{SafronovPoisson} and \cite{MelaniSafronov1} the authors describe a 2-colored operad $\bP_{[s+1,s]}$. An important feature of this operad is that any $\bP_{[s+1,s]}$-algebra $(A,B)$ has an underlying morphism $A\to B$ of commutative algebras.

More specifically, there is a natural morphism of 2-colored operads $\Comm^{\Delta^1} \to \bP_{[s+1,s]}$, where $\Comm^{\Delta^1}$ is the operad of morphisms of commutative algebras. In turn, this induces a forgetful functor $\Alg_{\bP_{[s+1,s]}} \to \Mor(\CAlg)$, and one can define $s$-shifted coisotropic structures in terms of the fiber of this functor.

This alternative definition has the advantage of being somewhat more explicit, and it is proved to be equivalent to Definition \ref{defn:cois} in \cite[Section 3]{SafronovAdditivity}.
\end{remark}

Similarly to what we did in Definition \ref{defn:poisstacks}, we can now extend the notion of shifted coisotropic structure to general morphisms of derived stacks. Let $f\colon Z \rightarrow X$ be a morphism between derived Artin stacks locally of finite presentation. We have an induced symmetric monoidal functor $f^*\colon \cM_X \to \cM_Z$ and a natural map $f^*_{\cP}\colon f^*\cP_X^{\infty} \rightarrow \cP^\infty_Z$ in $\CAlg(\cM_Z)$. We can now give the following definition, which is \cite[Definition 2.1]{MelaniSafronov2}.

\begin{defn}
Let $f\colon Z \to X$ be a morphism of derived Artin stacks locally of finite presentation. The space $\Cois(f,s)$ of $s$-shifted coisotropic structures on $f$ is the pullback
\[
\begin{tikzcd}
	\Cois(f, s) \arrow{r} \arrow{d} & \Pois(X,s) \arrow{d} \\
	\Cois(f^*_{\cP}, s) \arrow{r} & \Pois(f^*\cP_X^{\infty}, s).
\end{tikzcd}
\]
\end{defn}

In other words, an $s$-shifted coisotropic structure on a map $f\colon Z \to X$ of derived stacks is given by an $s$-shifted Poisson structure on $X$, together with a compatible $\bP_{[s+1,s]}$-structure on the morphism $f^*_\cP \colon f^* \cP^\infty_X \to \cP^\infty_Z$.

\subsection{Coisotropic Correspondences}\label{subsec:cois}
We begin by giving the definition of what we mean by a shifted coisotropic correspondence.

\begin{defn}
Let
\[\begin{tikzcd} X & \ar[l, "f"'] Z \arrow{r}{g} & Y \end{tikzcd}\]
be a correspondence of derived Artin stacks locally of finite presentation. The space $\Cois(f, g; s)$ of $s$-shifted coisotropic structures on the correspondence $(f,g)$ is the pullback
\[
  \begin{tikzcd}
    \Cois(f,g;s) \arrow{r} \arrow{d} & \Pois(X,s) \times \Pois(Y,s) \arrow{d} \\
    \Cois((f,g) ; s) \arrow{r} & \Pois(X \times Y, s),
  \end{tikzcd}
\]
where $(f,g)$ is the induced map $Z \to X \times Y$, and the vertical morphism on the right sends a pair of Poisson structures $(\pi_X,\pi_Y)$ on $X$ and $Y$ to the Poisson structure $\pi_X-\pi_Y$ on $X\times Y$.
\end{defn}

The notion of an $s$-shifted coisotropic correspondence can be
reinterpreted in a nice algebraic manner. Namely, consider an
$s$-shifted coisotropic correspondence
\[\begin{tikzcd} X & \ar[l, "f"'] Z \arrow{r}{g} & Y 	\end{tikzcd}\]
between derived Artin stacks locally of finite presentation. The
$s$-shifted Poisson structures on $X$ and $Y$ correspond to
$\bP_{s+1}$-structures on $\cP_X^\infty$ and $\cP_Y^\infty$. By
Poisson additivity we can think of $\cP_X^\infty$ and $\cP_Y^\infty$
as associative algebras in the \icat{} of $\bP_s$-algebras. In other
words, they are objects of the \icats{} $\Alg(\Alg_{\bP_s}(\cM_X))$
and $\Alg(\Alg_{\bP_s}(\cM_Y))$ respectively. Moreover, these
$s$-shifted Poisson structures allow us to enhance
$f^*\cP_X^\infty\otimes g^*\cP_Y^\infty$ to an algebra object in
$\Alg_{\bP_s}(\cM_Z)$.

Next, the $s$-shifted coisotropic structure on $Z \to X \times Y$
endows $\cP_Z^\infty$ with a left module structure over
$f^*\cP_X^\infty\otimes g^*\cP_Y^\infty$ in $\Alg_{\bP_s}(\cM_Z)$. In
other words, $\cP_Z^\infty$ becomes an
$(f^*\cP_X^\infty, g^*\cP_Y^\infty)$-bimodule.

In this sense, coisotropic correspondences give a geometric
incarnation of bimodules. This fact is the main motivation for our
Morita approach to the construction of the \icat{} of coisotropic
correspondences.

\begin{remark}\label{rmk:coismorascorr}
  Note that a coisotropic morphism from $X$ to $Y$ corresponds to $X$
  viewed as a coisotropic correspondence from $\Spec k$ to $Y$.
\end{remark}

Following \S\ref{subsec:revspans}, we have a symmetric monoidal $(\infty, n)$-category $\Span_n(\dArt)$ which has the following informal description:
\begin{itemize}
\item Its objects are derived Artin stacks locally of finite presentation.

\item A 1-morphism from $X$ to $Y$ is given by a correspondence $X\leftarrow Z\rightarrow Y $.

\item Higher morphisms are given by iterated correspondences.
\end{itemize}

The symmetric monoidal structure on $\Span_n(\dArt)$ is given by the
product of derived Artin stacks with the unit given by the terminal object $\pt
= \Spec k$. Each object $X\in\Span_n(\dArt)$ is canonically self-dual with the evaluation and coevaluation maps given by
\[\begin{tikzcd} X\times X & \ar[l, "\Delta"'] X \arrow{r} & \pt \end{tikzcd},\qquad \begin{tikzcd} \pt & \arrow{l} X \arrow{r}{\Delta} & X\times X \end{tikzcd}\]
see \cite[Lemma 12.3]{spans}.

Next, using the notation introduced in the same section, we have a functor
\[\mathfrak{C}_{n} := \Cospan_n(\CAlg(\cM))\colon \dArt^{\op}\longrightarrow \Cat_{(\infty,n)}.\]
This sends a derived stack $X$ to the $(\infty, n)$-category
$\mathfrak{C}_{n}(X) := \Cospan_n(\CAlg(\cM_X))$, which has the following informal description:
\begin{itemize}
\item Its objects are commutative algebra objects in $\cM_X$.

\item A 1-morphism from $A$ to $B$ is given by a cospan $A \rightarrow C \leftarrow B$ of commutative algebras in $\cM_X$.

\item Higher morphisms are given by iterated cospans.
\end{itemize}

Following Section \ref{subsec:spancoeff}, we can also combine the two
$(\infty, n)$-categories we have introduced above into a symmetric
monoidal $(\infty, n)$-category $\Span_n(\dArt; \mathfrak{C}_{n})$
whose objects are pairs $(X, A)$ of a derived stack $X\in\dArt$ and a
commutative algebra $A\in\cM_X$. By Corollary \ref{cor:Pfincolim} the section $\cP^\infty\colon \dArt^{\op}\rightarrow \cM_\CAlg$ preserves finite colimits, so by Corollary \ref{cor:SPANCOSPAN} and Remark \ref{rmk:SPANCOSPANsection} it induces a symmetric monoidal functor
\[\Span_n(\dArt)\longrightarrow \Span_n(\dArt; \mathfrak{C}_{n}).\]

The cocartesian monoidal structure on $\CAlg(\cM_X)$ corresponds to the usual tensor product of algebras, so by Corollary \ref{cor:cospanmorita} we have an equivalence of diagrams of symmetric monoidal $(\infty, n)$-categories
\[\mathfrak{C}_{n} \simeq \mathfrak{alg}_n(\CAlg(\cM)),\]
where $\mathfrak{alg}_n(-)$ is the Morita $(\infty, n)$-category of
$\bE_n$-algebras. Therefore, we have an equivalence of symmetric monoidal $(\infty, n)$-categories
\[\Span_n(\dArt; \mathfrak{alg}_n(\CAlg(\cM)))\simeq\Span_n(\dArt; \mathfrak{C}_{n}).\]

Next, the forgetful functor
\[\Alg_{\bP_{s-n+1}}(\cM_X)\longrightarrow \CAlg(\cM_X)\]
is symmetric monoidal, so we obtain a forgetful functor of diagrams of symmetric monoidal $(\infty, n)$-categories
\[ \mathfrak{P}_{n}^{s} :=
  \mathfrak{alg}_n(\Alg_{\bP_{s-n+1}}(\cM))\longrightarrow
  \mathfrak{alg}_n(\CAlg(\cM)) \simeq \mathfrak{C}_{n},\]
and hence a forgetful functor of symmetric monoidal $(\infty, n)$-categories
\[\Span_n(\dArt; \mathfrak{P}_{n}^{s}) \longrightarrow \Span_n(\dArt; \mathfrak{C}_{n}).\]

\begin{defn}
The $(\infty, n)$-category $\CoisCorrns$ of $s$-shifted coisotropic correspondences is the pullback
\[
\begin{tikzcd}
\CoisCorrns \arrow{d} \arrow{r} & \Span_n(\dArt; \mathfrak{P}_{n}^{s}) \arrow{d} \\
\Span_n(\dArt) \arrow{r} & \Span_n(\dArt; \mathfrak{C}_{n})
\end{tikzcd}
\]
of $(\infty, n)$-categories.
\end{defn}

Let $\cM_{\bP_{s+1}}\rightarrow \dArt^{\op}$ be the cocartesian
fibration corresponding to the functor
\[\Alg_{\bP_{s+1}}(\cM)\simeq \Alg_{\bE_n}(\Alg_{\bP_{s+1-n}}(\cM)),\]
where the latter equivalence is given by Corollary
\ref{cor:Enadditivity}. For a complete $n$-fold Segal space $\cC$ we
denote by $\cC^\simeq := \cC_{0, \ldots, 0}\in\cS$ the space of objects
of $\cC$. The space $(\CoisCorrns)^{\simeq}$ is given by the pullback of spaces of objects
obtained from the defining pullback of $(\infty,n)$-categories. Since
the $n$-fold Segal spaces of iterated spans are already complete, we
have $\Span_{n}(\dArt)^{\simeq} \simeq \dArt^{\simeq}$, and by
Proposition~\ref{propn:spancospandesc} we have
\[\Span_n(\dArt; \mathfrak{C}_{n})^{\simeq} \simeq
  \cM_{\CAlg}^{\simeq}.\]
Using Lemma~\ref{lm:spandualadjoints} we can similarly identify  $\Span_n(\dArt;
\mathfrak{alg}_n(\Alg_{\bP_{s-n+1}}(\cM)))^{\simeq}$ in terms of the
fibration for $\mathfrak{alg}_n(\Alg_{\bP_{s-n+1}}(\cM))^{\simeq}$.
The symmetric monoidal \icat{} $\Alg_{\bP_{s-n+1}}(\cM)$ is pointed,
so if we assume Conjecture~\ref{conj:pointedcomplete} then we can
identify this space with $\cM_{\bP_{s+1}}^\simeq$. With this
assumption we thus get a pullback square

\[
\begin{tikzcd}
(\CoisCorrns)^\simeq \arrow{d} \arrow{r} & \cM_{\bP_{s+1}}^\simeq \arrow{d} \\
\dArt^\simeq \arrow{r}{\cP^\infty} & \cM_\CAlg^\simeq
\end{tikzcd}
\]
of spaces. Therefore, the space of objects of
$\CoisCorrns$ coincides with the space of derived Artin
stacks $X$ equipped with a lift of $\cP_X^\infty\in\CAlg(\cM_X)$ to a
$\bP_{s+1}$-algebra in $\cM_X$, i.e. an $s$-shifted Poisson structure
$\pi\in\Pois(X, s)$. One may analyze in a similar way the space of
1-morphisms, so let us present an informal summary:
\begin{itemize}
\item Objects of $\CoisCorrns$ are derived Artin stacks $X\in\dArt$ together with an $s$-shifted Poisson structure $\pi_X\in \Pois(X, s)$.

\item Its morphisms from $(X, \pi_X)$ to $(Y, \pi_Y)$ are given by correspondences $X\xleftarrow{f} Z\xrightarrow{g} Y$ of derived Artin stacks equipped with an $s$-shifted coisotropic structure $\gamma_Z\in \Cois(f, g; s)$ compatible with the given $s$-shifted Poisson structures $\pi_X$ and $\pi_Y$.

\item Higher morphisms are given by iterated correspondences.
\end{itemize}

Note that the diagram defining $\CoisCorrns$ is a
diagram of symmetric monoidal $(\infty, n)$-categories. Therefore,
$\CoisCorrns$ carries a natural symmetric monoidal
structure. This symmetric monoidal structure can also be defined by
delooping, \ie{} we have equivalences
\[ \CoisCorrns(\pt,\pt) \simeq \CoisCorr_{(\infty,n-1)}^{s-1}.\]

\begin{thm}\label{thm:coiscorradjoints}
  Assuming Conjecture \ref{conj:moritaadjoints}, the symmetric
  monoidal $(\infty, n)$-category $\CoisCorrns$ has
  duals (\ie{} its objects are dualizable and all $i$-morphisms for $i
  < n$ have adjoints).
\end{thm}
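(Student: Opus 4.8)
The plan is to read the result off the defining pullback square for $\CoisCorrns$ together with Corollary~\ref{cor:dualsadjointspullback}. Recall that $\CoisCorrns$ is constructed as the pullback of $\Span_n(\dArt; \mathfrak{P}_{n}^{s}) \to \Span_n(\dArt; \mathfrak{C}_{n}) \from \Span_n(\dArt)$ in symmetric monoidal $(\infty,n)$-categories. By Corollary~\ref{cor:dualsadjointspullback}(ii), to conclude that $\CoisCorrns$ has duals it therefore suffices to show that the two source corners $\Span_n(\dArt)$ and $\Span_n(\dArt; \mathfrak{P}_{n}^{s})$ have duals; crucially, no condition on the common target $\Span_n(\dArt; \mathfrak{C}_{n})$ is needed. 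The first corner is immediate: since $\dArt$ has all finite limits (it has the terminal object $\Spec k$ and is closed under pullbacks), the symmetric monoidal $(\infty,n)$-category $\Span_n(\dArt)$ has duals by the results recalled in \S\ref{subsec:revspans}, namely \cite{spans}*{Corollary 12.5}.

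For the second corner I would invoke Lemma~\ref{lm:spandualadjoints}(iv), which reduces the claim to showing that $\mathfrak{P}_{n}^{s}$ is a functor from $\dArt^{\op}$ to symmetric monoidal $(\infty,n)$-categories with duals. Since every symmetric monoidal functor automatically preserves duals and adjoints, and $\cM$ is a functor valued in symmetric monoidal \icats{} with symmetric monoidal transition functors, it is enough to check that each value $\mathfrak{P}_{n}^{s}(X) = \mathfrak{alg}_n(\Alg_{\bP_{s+1-n}}(\cM_X))$ has duals. This is precisely the assertion of Conjecture~\ref{conj:moritaadjoints} applied to $\mathcal{V} = \Alg_{\bP_{s+1-n}}(\cM_X)$, once we verify its two hypotheses: that $\Alg_{\bP_{s+1-n}}(\cM_X)$ is symmetric monoidal and compatible with $\Dnop$-colimits. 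Here I would argue that $\cM_X = \Mod_{\bD^\infty_{X_{\dR}}}(\cD_X)$ is presentably symmetric monoidal, so its tensor product preserves colimits in each variable; the Poisson operad is a Hopf operad, so $\Alg_{\bP_{s+1-n}}(\cM_X)$ inherits a symmetric monoidal structure whose underlying tensor product is computed in $\cM_X$, compatibly with the additivity equivalences of Theorem~\ref{thm:add} and Corollary~\ref{cor:Enadditivity}; and the forgetful functor to $\cM_X$ creates sifted colimits, so $\Alg_{\bP_{s+1-n}}(\cM_X)$ has geometric realizations, hence all $\Dnop$-colimits, computed on underlying objects, whence the induced tensor product preserves them in each variable.

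I expect the genuinely hard part to be entirely externalized into Conjecture~\ref{conj:moritaadjoints}: granting duals for the Morita $(\infty,n)$-category, the remainder is the formal combination of the pullback corollary with Lemma~\ref{lm:spandualadjoints}. The only real technical point left inside the proof is confirming that the Poisson-algebra \icats{} $\Alg_{\bP_{s+1-n}}(\cM_X)$ satisfy the $\Dnop$-colimit-compatibility hypothesis uniformly in $X$ (so that $\mathfrak{alg}_n$ is defined and symmetric monoidal there), which follows from presentability of $\cM_X$ and the fact that sifted colimits of Poisson algebras are created by the forgetful functor to $\cM_X$.
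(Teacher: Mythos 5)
Your proposal is correct and follows essentially the same route as the paper's proof: duals for $\Span_n(\dArt)$ from \cite{spans}, duals for the coefficient spans via Conjecture~\ref{conj:moritaadjoints} together with Lemma~\ref{lm:spandualadjoints}, and then Corollary~\ref{cor:dualsadjointspullback} applied to the defining pullback square. Your observation that no hypothesis on the common corner $\Span_n(\dArt;\mathfrak{C}_{n})$ is needed is accurate (the paper checks it anyway, harmlessly, since Conjecture~\ref{conj:moritaadjoints} covers it), and your verification that $\Alg_{\bP_{s+1-n}}(\cM_X)$ satisfies the hypotheses of the conjecture merely makes explicit what the paper leaves implicit in the construction of $\mathfrak{P}_{n}^{s}$.
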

\begin{proof}
  The symmetric monoidal $(\infty, n)$-category $\Span_n(\dArt)$ has
  duals by \cite[Theorem 12.4 and Corollary 12.5]{spans}. 

Assuming Conjecture~\ref{conj:moritaadjoints}, the symmetric monoidal $(\infty,
n)$-categories $\mathfrak{C}_{n}(X)$ and $\mathfrak{P}_{n}^{s}(X)$ have duals
for any derived Artin stack $X\in\dArt$. Thus, by Lemma
\ref{lm:spandualadjoints} the symmetric monoidal $(\infty,
n)$-categories $\Span_n(\dArt; \mathfrak{P}_{n}^{s})$ and
$\Span_{n}(\dArt; \mathfrak{C}_{n})$ have duals. 

The claim therefore follows from Corollary \ref{cor:dualsadjointspullback}.
\end{proof}

\begin{remark}
  Unwinding the definitions, if $X$ is an $s$-shifted derived Poisson
  stack, viewed as an object of $\CoisCorr^{s}_{1}$, then the dual
  $X^{\vee}$ has the same underlying derived stack $X$, but its 
  Poisson structure corresponds to the reversed multiplication on
  $\mathcal{P}^{\infty}_{X}$, viewed as an assocative algebra in
  $\Alg_{\bP_{s}}(\mathcal{M}_{X})$. By Theorem~\ref{thm:add}, in
  terms of $\Alg_{\bP_{s+1}}(\mathcal{M}_{X})$ this amounts to taking
  the negative of the Poisson bracket. Thus a coisotropic
  correspondence from $X$ to $Y$ is equivalent to a coisotropic
  correspondence from $\Spec\ k$ to $X^{\vee}\times Y$, or (using
  Remark~\ref{rmk:coismorascorr}) a coisotropic morphism to $X^{\vee}
  \times Y$.
\end{remark}

\subsection{Relationship with Lagrangian Correspondences}
\label{sect:LagrangianCorrespondences}

In this section we sketch a conjectural relationship between our $(\infty, n)$-category of $s$-shifted coisotropic correspondences and the $(\infty, n)$-category of $s$-shifted Lagrangian correspondences from \cite{spans} and \cite{aksz}.

Let $\cC$ be a symmetric monoidal \icat{} satisfying Assumption \ref{ModelCatAssumption}. Then one has the de Rham functor (see \cite[Section 1.3]{CPTVV})
\[\DR\colon \CAlg(\cC)\longrightarrow \CAlg(\Mix)\]
which sends a commutative algebra $A$ to the graded commutative algebra $\Hom_{\cC}(\mathbf{1}, \Sym_A(\bL_A[-1]))$ equipped with the de Rham differential. One can therefore define the functors
\[\cA^2(s), \cAcl(s)\colon \CAlg(\cC)\rightarrow \CAlg(\cS)\]
of $s$-shifted two-forms and closed $s$-shifted two-forms by
\begin{align*}
\cA^2(A, s) &= \Hom_{\dg^\mathrm{gr}}(k(2)[-s-2], \DR(A)) \\
\cAcl(A, s) &= \Hom_{\Mix}(k(2)[-s-2], \DR(A)),
\end{align*}
where $k(2)[-s-2]$ is the unit object concentrated in weight $2$ and cohomological degree $s+2$ with the trivial mixed structure. Note that by construction we have a natural forgetful map $\cAcl(s)\rightarrow \cA^2(s)$.

Applying the above construction to $\cC=\dg$, the $\infty$-category of complexes of $k$-modules, we obtain functors
\[\cA^2(s)\colon \cdga^{\leq 0}\longrightarrow \CAlg(\cS), \qquad \cAcl(s)\colon \cdga^{\leq 0}\longrightarrow \CAlg(\cS).\]
Let $\cA^2(s), \cAcl(s)\colon \dArt^{\op}\rightarrow \CAlg(\cS)$ be the corresponding right Kan extensions.

\begin{defn}
The $(\infty, n)$-category $\IsotCorrns$ of $s$-shifted isotropic correspondences is
\[\IsotCorrns := \Span_n(\dArt; \cAcl(s))\simeq \Span_{n}(\dArt_{/\cAcl(s)}).\]
\end{defn}

Now suppose $D$ is a finite category with an initial object
$\emptyset\in D$ and let $D^{\triangleright}$ be the category obtained
by formally adjoining a terminal object $\ast\in
D^{\triangleright}$. Suppose $X\colon D\rightarrow \dArt_{/\cAcl(s)}$
is a diagram of derived Artin stacks equipped with closed $s$-shifted
two-forms. Then we obtain a diagram $\bT_X\colon D\rightarrow
\QCoh(X_\emptyset)$ whose value on $d\in D$ is given by pulling back
$\bT_{X_d}$ along the unique map $X_\emptyset\rightarrow X_d$. Using
the closed two-forms we can extend this to a diagram $\bT_X\colon
D^{\triangleright}\rightarrow \QCoh(X_\emptyset)$ whose value on the
final object is $(\bT_X)_{\ast} := \bL_{X_\emptyset}[s]$. We say the
diagram $X\colon D\rightarrow \dArt_{/\cAcl(s)}$ is nondegenerate if
$\bT_X\colon D^{\triangleright}\rightarrow \QCoh(X_\emptyset)$ is a
limit diagram.

\begin{defn}
The $(\infty, n)$-category $\Lagns$ of $s$-shifted Lagrangian
correspondences is the subcategory $\Lagns\subset \IsotCorrns$ consisting of nondegenerate diagrams $\bbS^{i_1, \dots, i_n}\rightarrow \dArt_{/\cAcl(s)}$.
\end{defn}

Let $C$ be a symmetric monoidal category satisfying Assumption \ref{ModelCatAssumption} and $\cC$ its localization. We define $\Alg_{\bP_{s+1}}(C)^\omega$ to be the category whose objects are $\bP_{s+1}$-algebras equipped with a strictly closed two-form $\omega$. We have the following two functors
\[F_1, F_2\colon \Alg_{\bP_{s+1}}(C)^\omega\longrightarrow \Alg_{\bP_{s+1}}(\Mod_{k[\hbar]/\hbar^2}(C))\]
\begin{itemize}
\item Given a $\bP_{s+1}$-algebra $A\in\Alg_{\bP_{s+1}}(C)$, we define $F_1(A)$ to be the commutative algebra $A[\hbar]/\hbar^2$ equipped with the bracket $\{a, b\}_\hbar = (1+\hbar)\{a, b\}$ for $a,b\in A$.

\item Given a $\bP_{s+1}$-algebra $A\in\Alg_{\bP_{s+1}}(C)$ equipped with a closed two-form $\omega = \sum_i f_i\ddr g_i\wedge \ddr h_i$, we define $F_2(A)$ to be the commutative algebra $A[\hbar]/\hbar^2$ equipped with the bracket $\{a, b\}_\hbar = \{a, b\} \pm \hbar \sum_i f_i\{g_i, a\} \{h_i, b\}$ with the sign determined by the Koszul sign rule.
\end{itemize}

Note that both $F_1$ and $F_2$ modulo $\hbar$ are given by the forgetful functor $\Alg_{\bP_{s+1}}(C)^\omega\rightarrow \Alg_{\bP_{s+1}}(C)$ and they preserve weak equivalences. Therefore, after localization they give rise to a diagram of symmetric monoidal \icats{}
\[
\begin{tikzcd}
\Alg_{\bP_{s+1}}(\cC)^\omega \arrow[r, shift left=1.5, "F_1"] \arrow[r, shift right=1.5, "F_2"{below}] & \Alg_{\bP_{s+1}}(\Mod_{k[\hbar]/\hbar^2}(\cC)) \arrow[r] & \Alg_{\bP_{s+1}}(\cC)
\end{tikzcd}
\]
where the last functor is given by evaluating at $\hbar=0$. We denote the limit of the above diagram by $\Alg_{\bP_{s+1}}(\cC)^{\compat}$. This is the $\infty$-category of compatible pairs, see \cite[Definition 1.4.20]{CPTVV} and \cite[Definition 1.24]{PridhamPoisson}.

\begin{defn}
The $(\infty, n)$-category $\CompCorrns$ of $s$-shifted compatible correspondences is the pullback
\[
\begin{tikzcd}
\CompCorrns \arrow{d} \arrow{r} & \Span_n(\dArt; \mathfrak{alg}_n(\Alg_{\bP_{s-n+1}}(\cM)^{\compat})) \arrow{d} \\
\Span_n(\dArt) \arrow{r} & \Span_n(\dArt; \mathfrak{C}_{n})
\end{tikzcd}
\]
of $(\infty, n)$-categories.
\end{defn}

Note that by construction we have a symmetric monoidal forgetful functor
\[\CompCorrns \longrightarrow \CoisCorrns.\]

We expect that one may define nondegenerate coisotropic
correspondences $\CoisCorrnsnd\subset \CoisCorrns$ similarly to
$\Lagns \subset \IsotCorrns$. Denote
\[\CompCorrnsnd := \CoisCorrnsnd\times_{\CoisCorrns} \CompCorrns.\]

\begin{conjecture}$ $
\begin{enumerate}
\item The projection $\CompCorrnsnd\rightarrow \CoisCorrnsnd$ is an equivalence.

\item There is a symmetric monoidal functor $\CompCorrnsnd \rightarrow \IsotCorrns$.

\item The previous functor restricts to an equivalence $\CompCorrnsnd\rightarrow \Lagns$.
\end{enumerate}
\end{conjecture}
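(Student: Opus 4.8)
The plan is to treat the conjecture as the correspondence-level enhancement of the two pointwise comparison theorems already available in the literature: that a nondegenerate $s$-shifted Poisson structure is the same datum as an $s$-shifted symplectic structure \cite{CPTVV, PridhamPoisson}, and that a nondegenerate coisotropic structure is the same datum as a Lagrangian structure \cite{PridhamLagrangian, MelaniSafronov2}. The strategy throughout is to establish the relevant equivalences at the level of the \emph{coefficient functors} that feed into the various $\Span_{n}(\dArt; -)$ constructions, and then to transport them through the span machinery of \S\ref{subsec:spancoeff}, matching up the nondegeneracy conditions at the very end.

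For part (1), I would first reduce to a pointwise statement. By definition both $\CompCorrns$ and $\CoisCorrns$ are pullbacks over $\Span_{n}(\dArt)$ of span categories with coefficients in $\mathfrak{alg}_{n}$ of, respectively, $\Alg_{\bP_{s-n+1}}(\cM)^{\compat}$ and $\Alg_{\bP_{s-n+1}}(\cM)$, and the forgetful map $\CompCorrns \to \CoisCorrns$ is induced levelwise by forgetting the compatible-pair datum. The compatible-pair comparison theorem \cite{CPTVV} (see also \cite{PridhamPoisson}) says precisely that, over the nondegenerate locus, the forgetful functor $\Alg_{\bP_{s+1}}(\cM_{X})^{\compat} \to \Alg_{\bP_{s+1}}(\cM_{X})$ is an equivalence. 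If $\CoisCorrnsnd$ and $\CompCorrnsnd$ are cut out by the same limit-diagram condition on tangent complexes, this levelwise equivalence of coefficient functors should then yield an equivalence of the resulting subcategories. The one genuine input is to verify that the nondegeneracy condition is both preserved and reflected by the forgetful functor, so that the two nondegenerate loci correspond.

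For parts (2) and (3), the functor to isotropic correspondences comes from the de Rham datum: a compatible pair carries an underlying strictly closed two-form, giving a forgetful functor $\Alg_{\bP_{s+1}}(\cC)^{\compat} \to \cAcl(s)$, and since $\IsotCorrns = \Span_{n}(\dArt; \cAcl(s))$ this induces $\CompCorrns \to \IsotCorrns$ by functoriality of the span construction; symmetric monoidality follows because the functor is induced from a symmetric monoidal map of coefficients and both sides deloop as in Corollary~\ref{cor:SPANCOSPAN}. For the equivalence with $\Lagns$ I would combine part (1) with an induction on $n$ using the mapping-space formula of Proposition~\ref{propn:SpanCFmaps}: the induction bottoms out at objects (nondegenerate Poisson $=$ symplectic) and at $1$-morphisms (nondegenerate coisotropic $=$ Lagrangian), where the extracted closed two-form is nondegenerate exactly when the Poisson, respectively coisotropic, datum is, so the functor lands in the nondegenerate subcategory $\Lagns$ and is an equivalence there.

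The hard part will be the coherence needed to run this inductive comparison as an equivalence of $(\infty,n)$-categories rather than merely on objects and $1$-morphisms. Concretely, one must show that the compatible-pair formalism admits an additivity equivalence $\Alg_{\bP_{s+1}}(\cC)^{\compat} \simeq \Alg_{\bE_{n}}(\Alg_{\bP_{s+1-n}}(\cC)^{\compat})$ refining Corollary~\ref{cor:Enadditivity} and compatible with the de Rham functor, so that $\mathfrak{alg}_{n}(\Alg_{\bP_{s-n+1}}(\cM)^{\compat})$ really does compute iterated compatible correspondences; and one must match the nondegeneracy condition defining $\Lagns$ (a limit-diagram condition on the diagram $\bbS^{i_{1},\dots,i_{n}} \to \dArt_{/\cAcl(s)}$ of tangent complexes) with nondegeneracy of the coisotropic correspondence across the two a priori different models. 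Since $\Lagns$ itself is constructed only in the forthcoming \cite{aksz}, a fully rigorous argument must await that construction; reconciling the AKSZ-type model of $\Lagns$ with the Morita model underlying $\CoisCorrnsnd$ is, I expect, the crux of the conjecture.
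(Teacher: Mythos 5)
The statement you have attempted is an \emph{open conjecture} in the paper: the authors give no proof of it. What the paper offers as evidence is exactly what you assemble — claims (1) and (3) are known on the level of objects by \cite[Theorem 3.2.4]{CPTVV} and \cite[Theorem 3.33]{PridhamPoisson}, and on the level of 1-morphisms by \cite{PridhamLagrangian} and \cite[Theorem 4.22]{MelaniSafronov2} — together with the remark that claim (2) is closely related to \cite[Conjecture 1.3.1]{bvq}. So your proposal cannot be measured against a proof in the paper; it can only be assessed as an attack on an open problem, and as such it has genuine gaps, most of which you honestly flag yourself.

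Two of these gaps deserve sharper emphasis than you give them. First, for part (2) you assert that the functor $\CompCorrnsnd \to \IsotCorrns$ ``comes from the de Rham datum\dots{} by functoriality of the span construction.'' This glosses over the actual obstruction: the compatible-pair category $\Alg_{\bP_{s+1}}(\cC)^{\compat}$ is built as a limit involving $\Alg_{\bP_{s+1}}(C)^{\omega}$, whose objects carry \emph{strictly} closed two-forms at the model-category level, whereas $\IsotCorrns = \Span_n(\dArt; \cAcl(s))$ uses the $\infty$-categorical space of closed forms. Producing a functor of $\infty$-categories from the strict datum to $\cAcl(s)$, compatibly with the localization and with the limit defining $(-)^{\compat}$, is precisely the strictification problem of \cite[Conjecture 1.3.1]{bvq}, which the paper flags as open; it is not an application of functoriality of spans. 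Second, for part (1), note that $\CompCorrnsnd$ is \emph{defined} in the paper as the pullback $\CoisCorrnsnd \times_{\CoisCorrns} \CompCorrns$, so your worry that ``the two nondegenerate loci correspond'' is vacuous by definition (and, moreover, the paper only \emph{expects} that $\CoisCorrnsnd$ can be defined, so the condition you propose to compare against is not yet pinned down). The real content of (1) is that the forgetful functor admits a homotopy-coherent inverse over the nondegenerate locus in all higher-categorical degrees simultaneously — an iterated-bimodule refinement of the CPTVV/Pridham/Melani--Safronov comparisons — which in turn requires the compatible-pair additivity equivalence $\Alg_{\bP_{s+1}}(\cC)^{\compat} \simeq \Alg_{\bE_n}(\Alg_{\bP_{s+1-n}}(\cC)^{\compat})$ that you correctly identify but do not supply. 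Finally, part (3) depends on the model of $\Lagns$ from the forthcoming \cite{aksz}, so your induction via Proposition~\ref{propn:SpanCFmaps} cannot currently even be set up. In short: your outline matches the authors' own expectations of how the conjecture should eventually be proved, but it is a plan, not a proof, and the steps it defers are exactly the reasons the statement remains conjectural.
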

This conjecture would establish the existence of a symmetric monoidal
functor of $(\infty, n)$-categories
\[\Lagns \longrightarrow \CoisCorrns\]
which is an equivalence onto the subcategory $\CoisCorrnsnd$.

Let us note that there is a forgetful functor from
$\Alg_{\bP_{s+1}}(\cC)^\omega$ to the $\infty$-category of commutative
algebras equipped with a closed $s$-shifted two-form. Thus, the second
claim is closely related to \cite[Conjecture 1.3.1]{bvq}. The claims
(1) and (3) on the level of objects have been proven in \cite[Theorem
3.2.4]{CPTVV} and \cite[Theorem 3.33]{PridhamPoisson}. The same claims
on the level of 1-morphisms have been proven in
\cite{PridhamLagrangian} and \cite[Theorem 4.22]{MelaniSafronov2}.

\begin{remark}
  In \cite{aksz} it is also shown that every symplectic derived stack
  determines an oriented extended TQFT using the \emph{AKSZ
    construction} (defined in the derived algebro-geometric context in
  \cite{PTVV}). It is tempting to speculate that there exists an analogue
  of the AKSZ construction for derived Poisson stacks
  (cf.~\cite{JohnsonFreydAKSZ}), and that this can be used to
  construct, for every derived Poisson stack, oriented extended TQFTs
  \[ \txt{Bord}_{0,n}^{\txt{or}} \to \CoisCorrns.\]
\end{remark}

\appendix
\section{Twisted Arrows and Bifibrations}
Our goal in this appendix is to prove two somewhat technical
results, Corollary~\ref{cor:bifibseclfib} and
Proposition~\ref{propn:Funcartcocart}, which will allow us to describe
the higher category of spans with coefficients in cospans in
Proposition~\ref{propn:Ospancospanbifib}.

\subsection{Bifibrations}\label{subsec:bifib}
We begin with a preliminary discussion of bifibrations, in the
following sense:

\begin{defn}\label{defn:bifib}
  A \emph{bifibration} $(p,q) \colon \mathcal{E} \to \mathcal{A}
  \times \mathcal{B}$ consists of a cartesian fibration $p$ and a
  cocartesian fibration $q$ such that a morphism $f$ in $\mathcal{E}$ is
  \begin{itemize}
  \item $p$-cartesian \IFF{} $q(f)$ is an equivalence,
  \item $q$-cocartesian \IFF{} $p(f)$ is an equivalence.
  \end{itemize}
\end{defn}
\begin{remark}
  This definition is a model-independent version of
  \cite{HTT}*{Definition 2.4.7.2}.
\end{remark}

\begin{lemma}\label{lem:bifibautocart}
  Consider a commutative triangle of \icats{}
  \opctriangle{\mathcal{E}}{\mathcal{E}'}{\mathcal{A} \times
    \mathcal{B},}{f}{(p,q)}{(p',q')} where $(p,q)$ and $(p',q')$ are
  bifibrations. Then $f$ takes $q$-cocartesian morphisms to
  $q'$-cocartesian morphism, and $p$-cartesian morphisms to
  $p'$-cartesian morphisms.
\end{lemma}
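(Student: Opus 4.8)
The plan is to observe that, for a bifibration, both distinguished classes of morphisms are detected by a condition imposed on the \emph{other} projection, and that such a condition is visibly preserved by any functor over $\cA \times \mathcal{B}$. Concretely, writing $p = \mathrm{pr}_{\cA} \circ (p,q)$ and $q = \mathrm{pr}_{\mathcal{B}} \circ (p,q)$, and similarly $p' = \mathrm{pr}_{\cA} \circ (p',q')$ and $q' = \mathrm{pr}_{\mathcal{B}} \circ (p',q')$, the hypothesis that the triangle commutes over $\cA \times \mathcal{B}$ says exactly that $p' \circ f = p$ and $q' \circ f = q$ as functors $\cE \to \cA$ and $\cE \to \mathcal{B}$ respectively.

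First I would recall the two defining biconditionals of Definition~\ref{defn:bifib}: a morphism of $\cE$ is $q$-cocartesian \IFF{} its image under $p$ is an equivalence, and $p$-cartesian \IFF{} its image under $q$ is an equivalence; the analogous pair of biconditionals holds in $\cE'$ for the bifibration $(p',q')$. The point is that each biconditional reformulates the (a priori internal) property of being (co)cartesian as a property of an image under one of the projections.

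For the cocartesian claim, let $\phi$ be a $q$-cocartesian morphism of $\cE$. By the first biconditional for $(p,q)$, the morphism $p(\phi)$ is an equivalence in $\cA$. Since $p' \circ f = p$, the morphism $p'(f(\phi)) = p(\phi)$ is again an equivalence, so by the first biconditional for $(p',q')$ the morphism $f(\phi)$ is $q'$-cocartesian. The cartesian claim is proved by the symmetric argument: if $\phi$ is $p$-cartesian then $q(\phi)$ is an equivalence, whence $q'(f(\phi)) = q(\phi)$ is an equivalence by $q' \circ f = q$, and the second biconditional for $(p',q')$ gives that $f(\phi)$ is $p'$-cartesian.

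I expect essentially no obstacle here: the entire content is packaged into the ``if and only if'' form of Definition~\ref{defn:bifib}, which converts being (co)cartesian into the preservation of an equivalence under a projection that $f$ respects on the nose. The only point requiring any care is to keep the two bookkeeping identities straight, namely that a functor over the product $\cA \times \mathcal{B}$ yields the two \emph{separate} equalities $p' \circ f = p$ and $q' \circ f = q$; it is precisely these that let us transport equivalences in $\cA$ (respectively $\mathcal{B}$) between the two bifibrations and thereby match the two characterizations.
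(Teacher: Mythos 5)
Your proof is correct and is essentially identical to the paper's own argument: both use the defining biconditionals of a bifibration to convert "(co)cartesian" into "the other projection sends it to an equivalence," which is manifestly preserved by a functor over $\mathcal{A} \times \mathcal{B}$. The only cosmetic difference is that the paper writes the compatibility as an equivalence $p'f(\phi) \simeq p(\phi)$ rather than an on-the-nose equality, as is appropriate in the $\infty$-categorical setting, but this does not affect the argument.
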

\begin{proof}
  This is immediate from the definition as $f$ takes
  a morphism $\phi$ in $\mathcal{E}$ such that $p(\phi)$ is an
  equivalence to the morphism $f(\phi)$ where $p'f(\phi) \simeq
  p(\phi)$ is an equivalence, and similarly for $q$.
\end{proof}

\begin{propn}\label{propn:cartcocartfib}
  Suppose $(p,q) \colon \mathcal{E} \to \mathcal{A} \times
  \mathcal{B}$ is a functor such that $p$ is a cartesian fibration,
  $q$ is a cocartesian fibration, $p$ takes $q$-cocartesian morphisms
  to equivalences, and $q$ takes $p$-cartesian morphisms to
  equivalences. Then:
  \begin{enumerate}[(i)]
  \item The functor $q_{a}\colon \mathcal{E}_{a} \to \mathcal{B}$ on
    fibres at $a \in \mathcal{A}$ is a cocartesian fibration, and a
    morphism in $\mathcal{E}_{a}$ is $q_{a}$-cocartesian \IFF{} its
    image in $\mathcal{E}$ is $q$-cocartesian.
  \item The functor $p_{b} \colon \mathcal{E}_{b} \to \mathcal{A}$ on
    fibres at $b \in \mathcal{B}$ is a cartesian fibration, and a
    morphism in $\mathcal{E}_{b}$ is $p_{b}$-cartesian \IFF{} its
    image in $\mathcal{E}$ is $p$-cartesian.
  \end{enumerate}
\end{propn}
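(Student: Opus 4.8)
The plan is to prove (i) and deduce (ii) by passing to opposite categories: applying $(-)^{\op}$ turns $(p,q)$ into $(q^{\op},p^{\op})$, interchanging cartesian and cocartesian fibrations and swapping the two hypotheses, while exchanging the roles of $\mathcal{A}$ and $\mathcal{B}$; thus the statement about $p_{b}$ becomes exactly the statement about the fibre functor of a cocartesian fibration, i.e. (i). Note that for (i) the only hypothesis actually used is that $p$ carries $q$-cocartesian morphisms to equivalences (the other hypothesis is its mirror, used for (ii)). For (i) I would first establish the characterization, that a morphism $\gamma$ of $\mathcal{E}_{a}$ is $q_{a}$-cocartesian \IFF{} it is $q$-cocartesian in $\mathcal{E}$, and then separately produce $q_{a}$-cocartesian lifts.

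The ``if'' direction of the characterization is formal and needs no hypothesis on $(p,q)$. For $\gamma \colon e \to e'$ in $\mathcal{E}_{a}$ and a target $e'' \in \mathcal{E}_{a}$, the square of mapping spaces expressing that $\gamma$ is $q_{a}$-cocartesian is precisely the fibre over $\id_{a} \in \Map_{\mathcal{A}}(a,a)$ of the square expressing that $\gamma$ is $q$-cocartesian, taken along $p$: the map to the constant square $\Map_{\mathcal{A}}(a,a) \xto{\id} \Map_{\mathcal{A}}(a,a)$ over $\ast \to \ast$ is well-defined because $p(\gamma) = \id_{a}$, and that target square is a pullback. Since fibres of pullback squares are pullback squares, $q$-cocartesian implies $q_{a}$-cocartesian. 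For existence, given $e \in \mathcal{E}_{a}$ and $\beta \colon q(e) \to b'$ in $\mathcal{B}$, I would choose a $q$-cocartesian lift $\bar\beta \colon e \to \bar e$ of $\beta$; by hypothesis $p(\bar\beta) \colon a \to \bar a$ is an equivalence, so I may choose a $p$-cartesian lift $\chi \colon e_{1} \to \bar e$ of $p(\bar\beta)$, which then has $e_{1} \in \mathcal{E}_{a}$ and is itself an equivalence since it lies over one \cite{HTT}*{Proposition 2.4.1.5}. The universal property of $\chi$ factors $\bar\beta$ as $\chi \circ \gamma$ with $\gamma \colon e \to e_{1}$ lying over $\id_{a}$, so $\gamma \in \mathcal{E}_{a}$, and the cancellation property of cocartesian morphisms \cite{HTT}*{Proposition 2.4.1.7} shows $\gamma$ is $q$-cocartesian, hence $q_{a}$-cocartesian by the previous step. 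This yields a $q_{a}$-cocartesian lift of $\beta$ up to post-composition with the equivalence $q(\chi)$ of $\mathcal{B}$.

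Finally, the ``only if'' direction would follow from essential uniqueness of cocartesian lifts: a given $q_{a}$-cocartesian $\gamma$ and the lift constructed above over the same morphism of $\mathcal{B}$ are both $q_{a}$-cocartesian, hence equivalent, and since the constructed one is $q$-cocartesian, so is $\gamma$. I expect the main obstacle to be the existence step: organizing the transport of $\bar e$ back into the fibre $\mathcal{E}_{a}$ via a cartesian lift, verifying that it is exactly the hypothesis ``$p$ sends $q$-cocartesian morphisms to equivalences'' that makes this transport an equivalence, and then the bookkeeping needed to absorb the residual base equivalence $q(\chi)$ --- passing from a cocartesian lift defined only up to an equivalence of $\mathcal{B}$ to an honest one, which is legitimate because being a cocartesian fibration is invariant under equivalence and may be checked after replacing $q_{a}$ by an equivalent isofibration, through which the residual equivalence lifts.
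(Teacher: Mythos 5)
Your proposal is correct, and it diverges from the paper's proof precisely where it matters. Both arguments establish the ``if'' half of the characterization the same way: the square expressing $q_a$-cocartesianness is the fibre over $\id_a$ of the square expressing $q$-cocartesianness, and fibres of cartesian squares are cartesian. For the ``only if'' half, however, the paper argues directly with mapping spaces: to test a $q_a$-cocartesian $\phi$ against an arbitrary $y\in\mathcal{E}$, it decomposes the relevant square into fibres over each $\psi\in\Map_{\mathcal{A}}(a,py)$ and uses a $p$-cartesian lift $\bar\psi\colon\psi^*y\to y$ to identify the fibre at $\psi$ with the fibrewise test square against $\psi^*y\in\mathcal{E}_a$ --- and it is exactly here that the second hypothesis enters, since one needs $q(\bar\psi)$ to be an equivalence. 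You instead deduce ``only if'' from existence plus essential uniqueness of cocartesian lifts, which needs only the first hypothesis. Consequently your argument proves something slightly sharper than what is visible in the paper's proof: part (i) holds using only the hypothesis that $p$ sends $q$-cocartesian morphisms to equivalences (the paper's proof of (i) invokes both hypotheses, the second one in the ``only if'' step). What the paper's route buys in exchange is that the characterization is verified test-object by test-object, with no need to move a cocartesian lift back into the fibre; that rectification is where your proof pays, in the detour through the $p$-cartesian morphism $\chi$ and the residual equivalence $q(\chi)$. Note, though, that in the paper's model-independent setting --- fibres are pullbacks in $\CatI$, and ``lying over $\beta$'' means an identification with $\beta$ in $\mathcal{B}_{b/}$ --- the bookkeeping you flag essentially evaporates: a $q$-cocartesian lift of $\beta$ starting in $\mathcal{E}_a$ lies over an equivalence in $\mathcal{A}$ by hypothesis, and hence, together with that identification, already \emph{is} a morphism of $\mathcal{E}_a$ lying over $\beta$. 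This is how the paper disposes of existence in a single sentence, and adopting it would let you drop both the transport along $\chi$ and the isofibration-replacement manoeuvre, which are valid but unnecessary.
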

\begin{proof}
  We prove (i); the proof of (ii) is the same. Suppose $x \xto{\phi} x'$ is a
  morphism in $\mathcal{E}_{a}$, \ie{} a morphism in $\mathcal{E}$
  over $b \to b'$ in $\mathcal{B}$ and $\id_{a}$ in
  $\mathcal{A}$. Then for $y \in \mathcal{E}$ we have a commutative
  diagram
  \[
    \begin{tikzcd}
      \Map_{\mathcal{E}}(x', y) \arrow{r} \arrow{d} &
      \Map_{\mathcal{E}}(x,y) \arrow{d} \\
      \Map_{\mathcal{A}}(a,py) \times \Map_{\mathcal{B}}(b', qy)
      \arrow{d} \arrow{r} & \Map_{\mathcal{A}}(a,py) \times
      \Map_{\mathcal{B}}(b, qy) \arrow{d} \\
      \Map_{\mathcal{B}}(b', qy)
      \arrow{r} & 
      \Map_{\mathcal{B}}(b, qy).\\
    \end{tikzcd}
  \]
  Here the bottom square is cartesian (since $p\phi$ is an
  equivalence in $\mathcal{A}$), and so the top square
  is cartesian \IFF{} the outer square is cartesian.

  Suppose first that $\phi$ is $q$-cocartesian, so that the outer
  square is cartesian
  for any $y$. If $py \simeq a$, then we can take fibres in the top square at $\id_{a}
  \in \Map_{\mathcal{A}}(a,a) \simeq \Map_{\mathcal{A}}(a,py)$, giving
  a square
  \[
    \begin{tikzcd}
      \Map_{\mathcal{E}_{a}}(x',y) \arrow{d} \arrow{r} &
      \Map_{\mathcal{E}_{a}}(x,y) \arrow{d} \\
      \Map_{\mathcal{B}}(b', qy)
      \arrow{r} & 
      \Map_{\mathcal{B}}(b, qy),\\
    \end{tikzcd}
  \]
  which is cartesian since the top square is cartesian. This exhibits
  $\phi$ as $q_{a}$-cocartesian. Moreover, since $q$-cocartesian
  morphisms exist, so do $q_{a}$-cocartesian morphisms, \ie{} $q_{a}$
  is a cocartesian fibration.

  Now suppose that $\phi$ is $q_{a}$-cocartesian. To show that $\phi$
  is also $q$-cocartesian we must prove that the top square in the
  diagram above is cartesian for all $y \in \mathcal{E}$. For a given
  $y$ this will follow if we can show that for every map $\psi \colon
  a \to py$ the square
  \[
    \begin{tikzcd}
      \Map_{\mathcal{E}}(x',y)_{\psi} \arrow{d} \arrow{r} &
      \Map_{\mathcal{E}}(x,y)_{\psi} \arrow{d} \\
      \Map_{\mathcal{B}}(b', qy)
      \arrow{r} & 
      \Map_{\mathcal{B}}(b, qy)\\
    \end{tikzcd}
  \]
  of fibres at $\phi$ is cartesian. Let $\bar{\psi} \colon \psi^{*}y
  \to y$ be a $p$-cartesian morphism over $\psi$; then $q\bar{\psi}$
  is an equivalence, so this square is equivalent to 
  \[
    \begin{tikzcd}
      \Map_{\mathcal{E}_{a}}(x',\psi^{*}y) \arrow{d} \arrow{r} &
      \Map_{\mathcal{E}_{a}}(x,\psi^{*}y) \arrow{d} \\
      \Map_{\mathcal{B}}(b', qy)
      \arrow{r} & 
      \Map_{\mathcal{B}}(b, qy),\\
    \end{tikzcd}
  \]
  and this is cartesian since $\phi$ is by assumption $q_{a}$-cocartesian.
\end{proof}

\begin{cor}\label{cor:bifibcrit}
  Suppose
  $(p,q) \colon \mathcal{E} \to \mathcal{A} \times \mathcal{B}$ is as
  in Proposition~\ref{propn:cartcocartfib}. Then the following are
  equivalent:
  \begin{enumerate}[(1)]
  \item $(p,q)$ is a bifibration.
  \item $q_{a}$ is a left fibration for all $a \in \mathcal{A}$.
  \item $p_{b}$ is a right fibration for all $b \in \mathcal{B}$.
  \item The fibre $\mathcal{E}_{a,b}$ is an $\infty$-groupoid for all
    $a \in \mathcal{A}, b \in \mathcal{B}$.
  \end{enumerate}
\end{cor}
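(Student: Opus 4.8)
The plan is to prove the result by first observing that conditions (2), (3), and (4) are immediate reformulations of one another via Proposition~\ref{propn:cartcocartfib}, and then establishing (1)$\Leftrightarrow$(4), where the genuine content lies in the implication (4)$\Rightarrow$(1). I would begin by recalling the standard fact that a cocartesian fibration is a left fibration \IFF{} all of its fibres are $\infty$-groupoids, and dually that a cartesian fibration is a right fibration \IFF{} all of its fibres are $\infty$-groupoids (see \cite{HTT}). By Proposition~\ref{propn:cartcocartfib}(i) the functor $q_{a}\colon \mathcal{E}_{a}\to \mathcal{B}$ is a cocartesian fibration whose fibre over $b$ is exactly $\mathcal{E}_{a,b}$, so $q_{a}$ is a left fibration \IFF{} $\mathcal{E}_{a,b}$ is an $\infty$-groupoid for every $b$; quantifying over $a$ yields (2)$\Leftrightarrow$(4). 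Symmetrically, Proposition~\ref{propn:cartcocartfib}(ii) presents $p_{b}$ as a cartesian fibration with fibres $\mathcal{E}_{a,b}$, giving (3)$\Leftrightarrow$(4). These equivalences do not involve (1), so there is no circularity in using them later.

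For (1)$\Rightarrow$(4) I would take an arbitrary morphism $f$ in a fibre $\mathcal{E}_{a,b}$, so that $p(f)=\id_{a}$ and $q(f)=\id_{b}$. Since $p(f)$ is an equivalence, the bifibration condition of Definition~\ref{defn:bifib} forces $f$ to be $q$-cocartesian; as it lies over the equivalence $\id_{b}$ in $\mathcal{B}$, it is itself an equivalence. Hence every morphism of $\mathcal{E}_{a,b}$ is invertible, \ie{} $\mathcal{E}_{a,b}$ is an $\infty$-groupoid.

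The substance of the argument is the converse (4)$\Rightarrow$(1). The hypotheses of Proposition~\ref{propn:cartcocartfib} already supply one half of Definition~\ref{defn:bifib} (namely that $q$-cocartesian morphisms have $p$-image an equivalence, and $p$-cartesian morphisms have $q$-image an equivalence), so what remains is to establish the two reverse implications: that $p(f)$ an equivalence makes $f$ $q$-cocartesian, and dually that $q(f)$ an equivalence makes $f$ $p$-cartesian. For the first, I would take $f\colon x\to x'$ with $p(f)$ an equivalence, choose the $q$-cocartesian lift $g\colon x\to x''$ of $q(f)$, and use its universal property to factor $f$ as $x\xto{g} x''\xto{h} x'$ with $h$ lying over $\id_{q(x')}$. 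Then $p(g)$ is an equivalence by the Proposition's hypothesis, so $p(h)$ is an equivalence as well; since $h$ lives in the fibre $\mathcal{E}_{q(x')}$, on which $p$ restricts to the right fibration $p_{q(x')}$ by (3) (valid because (3)$\Leftrightarrow$(4)), and right fibrations are conservative, $h$ is an equivalence, whence $f\simeq g$ is $q$-cocartesian. The second reverse implication is entirely symmetric, using that $q_{a}$ is a left fibration and that left fibrations are conservative. The main obstacle I anticipate is exactly this step: cleanly extracting the in-fibre morphism $h$ from the cocartesian factorization and verifying that it lands in the fibre whose restricted projection is the right fibration $p_{q(x')}$ of Proposition~\ref{propn:cartcocartfib}(ii), after which conservativity closes the argument.
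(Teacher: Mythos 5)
Your proof is correct, but it routes the main equivalence somewhat differently from the paper. You share with the paper the backbone $(2)\Leftrightarrow(4)\Leftrightarrow(3)$: a cocartesian (resp.\ cartesian) fibration is a left (resp.\ right) fibration \IFF{} its fibres are $\infty$-groupoids, applied to the fibrations $q_{a}$ and $p_{b}$ supplied by Proposition~\ref{propn:cartcocartfib}. The difference is how condition (1) enters. The paper leans on the full strength of Proposition~\ref{propn:cartcocartfib}, namely the ``iff'' clauses identifying $q_{a}$-cocartesian (resp.\ $p_{b}$-cartesian) morphisms with those whose image in $\mathcal{E}$ is $q$-cocartesian (resp.\ $p$-cartesian); with these, the bifibration condition translates verbatim into ``every morphism of $\mathcal{E}_{a}$ is $q_{a}$-cocartesian'', \ie{} into (2), and dually into (3), so the corollary is essentially immediate. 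You instead prove $(1)\Leftrightarrow(4)$ by hand: $(1)\Rightarrow(4)$ because cocartesian lifts of equivalences are equivalences, and $(4)\Rightarrow(1)$ by factoring a morphism $f$ with $p(f)$ an equivalence through a $q$-cocartesian lift of $q(f)$ and then killing the residual fibre morphism $h$ using conservativity of the right fibration $p_{q(x')}$ (and dually). This only uses the fibration structure on the fibres from the Proposition, not its comparison of fibrewise versus global (co)cartesian edges, so your argument is more self-contained; the cost is that the factorization-plus-conservativity step essentially redoes work already packaged into the Proposition. Two small points you elide are standard and harmless at the paper's level of rigor, but worth naming: in $(1)\Rightarrow(4)$ you implicitly use that the fibre inclusion $\mathcal{E}_{a,b}\to\mathcal{E}$ is conservative (a morphism of the fibre that becomes an equivalence in $\mathcal{E}$ is an equivalence \emph{in the fibre}), and in $(4)\Rightarrow(1)$ the morphism $h$ lies over an equivalence rather than literally over an identity, so ``lives in the fibre $\mathcal{E}_{q(x')}$'' means after the usual transport along that equivalence.
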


\begin{proof}
  Part (i) of Proposition~\ref{propn:cartcocartfib} implies that $(p,q)$ is a
  bifibration \IFF{} every morphism in $\mathcal{E}_{a}$ is
  $q_{a}$-cocartesian for all $a$, \ie{} $q_{a}$ is a left
  fibration. Similarly, part (ii) implies that (1) is equivalent to
  (3). Finally, since $q_{a}$ is by assumption a cocartesian
  fibration, it is a left fibration \IFF{} its fibres
  $\mathcal{E}_{a,b}$ are $\infty$-groupoids for all $b \in
  \mathcal{B}$, so (2) is equivalent to (4).
\end{proof}

We will now show that we can replace bifibrations by left fibrations,
and vice versa, using the following constructions:
\begin{construction}\label{constr:bifibdual}\ 
  \begin{enumerate}[(i)]
  \item Suppose
    $(p,q) \colon \mathcal{E} \to \mathcal{A} \times \mathcal{B}$ is a
    bifibration. Then we have a commutative triangle
    \[
      \begin{tikzcd}
        \mathcal{E} \arrow{rr}{(p,q)} \arrow{dr}{p} & & \mathcal{A} \times
        \mathcal{B} \arrow{dl} \\
         & \mathcal{A}
      \end{tikzcd}
      \]
    where the diagonal maps are cartesian fibrations, and the
    horizontal map takes $p$-cartesian morphisms to cartesian
    morphisms for the projection $\mathcal{A} \times \mathcal{B} \to
    \mathcal{A}$, as these are precisely the morphisms that project to
    equivalences in $\mathcal{B}$. Let $p^{\vee} \colon \mathcal{E}^{\ell} \to
    \mathcal{A}$ be the cocartesian fibration dual to $p$, then
    dualization gives a commutative triangle
    \[
      \begin{tikzcd}
        \mathcal{E}^{\ell} \arrow{rr} \arrow{dr}{p^{\vee}} & & \mathcal{A}^{\op} \times
        \mathcal{B} \arrow{dl} \\
         & \mathcal{A}^{\op}
      \end{tikzcd}
    \]
    where the diagonal maps are cocartesian fibrations and the
    horizontal map preserves cocartesian morphisms.
  \item Suppose $(p,q) \colon \mathcal{F} \to \mathcal{A}^{\op} \times
    \mathcal{B}$ is a left fibration. Then we have a commutative
    triangle
    \[
      \begin{tikzcd}
        \mathcal{F} \arrow{rr}{(p,q)} \arrow{dr}{p} & & \mathcal{A}^{\op} \times
        \mathcal{B} \arrow{dl} \\
         & \mathcal{A}^{\op}
      \end{tikzcd}
    \]
    where the diagonal maps are cocartesian fibrations. A morphism
    $\phi \colon x
    \to x'$ in
    $\mathcal{F}$ is $p$-cocartesian \IFF{} $q(\phi)$ is an
    equivalence in $\mathcal{B}$: In the commutative diagram
    \[
      \begin{tikzcd}
        \Map_{\mathcal{F}}(x', y) \arrow{d} \arrow{r} &
        \Map_{\mathcal{F}}(x,y) \arrow{d} \\
        \Map_{\mathcal{A}^{\op}}(px',py)\times \Map_{\mathcal{B}}(qx',qy)
        \arrow{r} \arrow{d} &  \Map_{\mathcal{A}^{\op}}(px,py)\times
        \Map_{\mathcal{B}}(qx,qy) \arrow{d}\\
        \Map_{\mathcal{A}^{\op}}(px',py) \arrow{r} & \Map_{\mathcal{A}^{\op}}(px,py)        
      \end{tikzcd}
    \]
    the top square is cocartesian since $(p,q)$ is a left fibration,
    while the bottom square is cartesian if $q(\phi)$ is an
    equivalence, hence such a morphism is $p$-cocartesian; since such
    $p$-cocartesian morphisms always exist, by uniqueness all
    $p$-cocartesian morphisms must map to equivalences in
    $\mathcal{B}$. Thus $(p,q)$ preserves cocartesian morphisms in the
    triangle above, and so if $p^{\vee}\colon \mathcal{F}^{b} \to
    \mathcal{A}$ denotes the cartesian fibration dual to $p$, we get a dual triangle
    \[
      \begin{tikzcd}
        \mathcal{F}^{b} \arrow{rr}{(p^{\vee},q')} \arrow{dr}{p^{\vee}} & & \mathcal{A} \times
        \mathcal{B} \arrow{dl} \\
         & \mathcal{A},
      \end{tikzcd}
    \]
    where the diagonal maps are cartesian fibrations and the
    horizontal map preserves cartesian morphisms.
  \end{enumerate}
\end{construction}

\begin{propn}\label{propn:bifibLR}
  We keep the notation of Construction~\ref{constr:bifibdual}.
  \begin{enumerate}[(i)]
  \item Suppose $(p,q) \colon \mathcal{E} \to \mathcal{A} \times
    \mathcal{B}$ is a bifibration. Then $(p^{\vee},q') \colon
    \mathcal{E}^{\ell} \to \mathcal{A}^{\op} \times \mathcal{B}$ is a left fibration.
  \item Suppose $(p,q) \colon \mathcal{F} \to \mathcal{A}^{\op} \times
    \mathcal{B}$ is a left fibration. Then $(p^{\vee},q') \colon
    \mathcal{F}^{b} \to \mathcal{A} \times \mathcal{B}$ is a bifibration.
  \end{enumerate}
\end{propn}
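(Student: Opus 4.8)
The plan is to exploit the single structural feature of the dual-fibration construction recalled in Construction~\ref{constr:bifibdual}: passing from a cartesian fibration over $\cA$ to the cocartesian fibration over $\cA^{\op}$ (and back) leaves the fibres unchanged and is functorial for maps that preserve (co)cartesian edges. Concretely, in part (i) the fibre of $p^{\vee}\colon \cE^{\ell}\to\cA^{\op}$ over $a$ is canonically $\cE_{a}$, and the restriction of $q'$ to this fibre is exactly the functor $q_{a}\colon\cE_{a}\to\mathcal{B}$; dually in part (ii) the fibre of $p^{\vee}\colon\cF^{b}\to\cA$ over $a$ is $\cF_{a}$ with $q'$ restricting to $q_{a}\colon\cF_{a}\to\mathcal{B}$. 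Since the fibres over a pair $(a,b)$ are therefore unchanged by dualization, the condition ``$\cE_{a,b}$ is an $\infty$-groupoid'' transfers for free; this is the observation that does the real work, in view of the characterisations in Corollary~\ref{cor:bifibcrit}.

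For part (i), I would show that $(p^{\vee},q')$ is a \emph{left fibration} by verifying the two standard conditions: it is a cocartesian fibration, and its fibres are $\infty$-groupoids. The fibres $\cE^{\ell}_{a,b}\simeq\cE_{a,b}$ are groupoids because $(p,q)$ is a bifibration (Corollary~\ref{cor:bifibcrit}). For the cocartesian property I would assemble cocartesian lifts along the two factors: a morphism $(a,b)\to(a',b')$ in $\cA^{\op}\times\mathcal{B}$ factors as $(a,b)\to(a',b)\to(a',b')$, the first lifted by a $p^{\vee}$-cocartesian morphism (which, by Construction~\ref{constr:bifibdual}(i), is carried by $q'$ to an equivalence, so indeed lies over $(a,b)\to(a',b)$), and the second lifted inside the fibre $\cE_{a'}$ using that $q_{a'}\colon\cE_{a'}\to\mathcal{B}$ is a left fibration, again by Corollary~\ref{cor:bifibcrit}. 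That these assemble into genuine $(p^{\vee},q')$-cocartesian morphisms is precisely the cocartesian-direction analogue of Proposition~\ref{propn:cartcocartfib}, and I would prove it by the same mapping-space manipulation used there, now read in $\cA^{\op}$.

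For part (ii), I would instead verify directly that $(p^{\vee},q')\colon\cF^{b}\to\cA\times\mathcal{B}$ satisfies the hypotheses of Proposition~\ref{propn:cartcocartfib} and then conclude via Corollary~\ref{cor:bifibcrit}(4). Here $p^{\vee}$ is a cartesian fibration by construction; the fibre restriction $q_{a}\colon\cF_{a}\to\mathcal{B}$ is a left fibration because it is the base change of the left fibration $(p,q)$ along $\{a\}\times\mathcal{B}\hookrightarrow\cA^{\op}\times\mathcal{B}$, whence in particular a cocartesian fibration with groupoid fibres. The condition that $q'$ send $p^{\vee}$-cartesian morphisms to equivalences follows from Construction~\ref{constr:bifibdual}(ii), since dualization interchanges $p$-cocartesian and $p^{\vee}$-cartesian morphisms while preserving their images in $\mathcal{B}$, and $p$-cocartesian morphisms are exactly the $q$-equivalences. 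Assembling the fibrewise left fibrations $q_{a}$ over the cartesian fibration $p^{\vee}$ shows that $q'$ is a cocartesian fibration and that $p^{\vee}$ sends $q'$-cocartesian morphisms to equivalences, putting us in the situation of Proposition~\ref{propn:cartcocartfib}; since the fibres $\cF^{b}_{a,b}\simeq\cF_{a,b}$ are groupoids, Corollary~\ref{cor:bifibcrit} then identifies $(p^{\vee},q')$ as a bifibration.

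The main obstacle will be the two ``assembly'' steps: recognising that fibrewise cocartesian (resp.\ left) fibration structure, compatible with the pushforward/pullback along the base in the $\cA$-direction, globalises to an honest fibration over the product $\cA^{\op}\times\mathcal{B}$ (resp.\ $\cA\times\mathcal{B}$). This is not quite literally Proposition~\ref{propn:cartcocartfib}, but its cocartesian-direction variant, and I expect the cleanest route is to prove that variant once and for all by repeating the fibrewise mapping-space computation of Proposition~\ref{propn:cartcocartfib} in the opposite category, after which both (i) and (ii) follow formally, together with the fibre-preservation property of the dual fibration and the criteria of Corollary~\ref{cor:bifibcrit}.
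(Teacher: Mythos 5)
Your proposal is correct and takes essentially the same route as the paper: the two ``assembly'' statements you single out as the main obstacle are exactly what the paper proves beforehand as Lemma~\ref{lem:firstoftwoCart} (with Remark~\ref{rmk:firstoftwoR} handling the compatibility condition, automatic since your fibres are left fibrations) and Lemma~\ref{lem:projcart}, by the same factor-lift-compose mapping-space argument you sketch. Given those, the paper's proof of the proposition is precisely your combination of fibre-preservation under dualization, the preservation properties from Construction~\ref{constr:bifibdual}, and the criteria of Corollary~\ref{cor:bifibcrit}.
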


We prove general versions of the criteria we will use to establish
this proposition:
\begin{lemma}\label{lem:firstoftwoCart}
  Suppose given a commutative triangle
  \opctriangle{\mathcal{E}}{\mathcal{D}}{\mathcal{C}}{f}{p}{q} 
  of functors between \icats{} such that:
  \begin{enumerate}
  \item $p$ and $q$ are cartesian fibrations.
  \item $f$ takes $p$-cartesian edges to
    $q$-cartesian edges.
  \item For each object $c \in \mathcal{C}$ the induced map on fibres
    $f_{c} \colon \mathcal{E}_{c} \to \mathcal{D}_{c}$ is a cartesian fibration.
  \item Suppose given a commutative square
    \csquare{\phi^{*}e'}{e'}{\phi^{*}e}{e}{\alpha}{\beta}{\gamma}{\delta}
    in $\mathcal{E}$ lying over the degenerate square
    \csquare{c'}{c}{c'}{c}{\phi}{\id_{c'}}{\phi}{\id_{c}}
    in $\mathcal{C}$, where $\alpha$ and $\delta$ are $p$-cartesian
    edges and $\gamma$ is $f_{c}$-cartesian. Then $\beta$ is
    $f_{c'}$-cartesian. (In other words, the induced functor $\phi^{*} \colon
    \mathcal{E}_{c} \to \mathcal{E}_{c'}$ takes $f_{c}$-cartesian
    edges to $f_{c'}$-cartesian edges.)
  \end{enumerate}
  Then $f$ is also a cartesian fibration.
\end{lemma}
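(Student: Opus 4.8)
The plan is to verify directly that $f$ admits enough $f$-cartesian lifts, building each such lift as a composite of a $p$-cartesian morphism and a fibrewise $f_c$-cartesian morphism, and then to check the mapping-space criterion for the composite by reducing to the fibres over $\cC$. Since producing an $f$-cartesian lift of every incoming morphism in $\cD$ is exactly the defining property of a cartesian fibration, this is all that is required.

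Concretely, I would start from an object $e' \in \cE$ and a morphism $\bar g \colon d \to f(e')$ in $\cD$; writing $c = q(d)$, $c' = p(e') = q(f(e'))$ and $\phi = q(\bar g) \colon c \to c'$, I would first choose a $p$-cartesian lift $\psi \colon \phi^* e' \to e'$ of $\phi$. By hypothesis (2) the morphism $f(\psi)$ is $q$-cartesian over $\phi$, so the universal property of $q$-cartesian edges lets me factor $\bar g \simeq f(\psi) \circ \bar h$ for an essentially unique $\bar h \colon d \to f(\phi^* e')$ lying in the fibre $\cD_c$. Since $f_c$ is a cartesian fibration by (3), I can then choose an $f_c$-cartesian lift $\eta \colon e \to \phi^* e'$ of $\bar h$ in $\cE_c$ and set $g := \psi \circ \eta$, which satisfies $f(g) \simeq \bar g$ by construction.

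The crux of the argument is to show that this $g$ is $f$-cartesian, \ie{} that for every $w \in \cE$ the commutative square relating $\Map_{\cE}(w,e)$, $\Map_{\cE}(w,e')$, $\Map_{\cD}(f(w),f(e))$ and $\Map_{\cD}(f(w),f(e'))$, with horizontal maps given by composition with $g$ and $\bar g$ and vertical maps by $f$, is a pullback. Setting $c'' = p(w)$, all four corners project to $\Map_{\cC}(c'', -)$ compatibly with $\phi \circ (-)$, so it suffices to check the square fibrewise over a point $\alpha \colon c'' \to c$ of $\Map_{\cC}(c'', c)$. Using the standard description of mapping spaces in a cartesian fibration, the fibre of the square at $\alpha$ is precisely the $f_{c''}$-mapping-space square associated to the morphism $\alpha^*(\eta) \colon \alpha^* e \to \alpha^*(\phi^* e') \simeq (\phi\alpha)^* e'$ in $\cE_{c''}$; here I use hypothesis (2) again to identify the cartesian transport of $f(e)$ with $f$ applied to the transport, so that the bottom row is indeed the correct fibre. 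Hypothesis (4), applied to $\alpha$, guarantees that $\alpha^*(\eta)$ is again $f_{c''}$-cartesian, whence this fibre square is a pullback; as this holds for all $\alpha$, the total square is a pullback and $g$ is $f$-cartesian.

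The main obstacle I expect is exactly this last reduction: one must carefully match the fibrewise cartesian transport along $p$ and $q$ with the functor $f$, which is the content of hypotheses (2) and (4), and confirm that post-composition with the $p$-cartesian edge $\psi$ induces, on fibres over $\alpha$, the canonical identification $(\phi\alpha)^* e' \simeq \alpha^*(\phi^* e')$, so that the horizontal maps really are given by $\alpha^*(\eta)$. Once the fibre square is correctly identified, its cartesianness is immediate from (4) together with the mapping-space characterization of $f_{c''}$-cartesian edges, completing the proof.
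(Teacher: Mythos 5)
Your proposal is correct and follows essentially the same route as the paper's proof: both construct the candidate lift as a composite of a fibrewise cartesian edge (your $\eta$, the paper's $\epsilon$) with a $p$-cartesian edge (your $\psi$, the paper's $\beta$), after factoring the given morphism of $\cD$ through the image of the $p$-cartesian lift using hypothesis (2), and both then verify $f$-cartesianness via the mapping-space criterion by passing to fibres over mapping spaces in $\cC$, where hypotheses (2) and (4) supply exactly the needed identifications and the fibrewise cartesianness. The only difference is organizational: the paper splits the verification into two pullback squares, disposing of the $p$-cartesian factor by pasting of cartesian squares and checking only the fibrewise factor's square fibrewise, whereas you check the square for the full composite at once, absorbing the $p$-cartesian factor into the canonical identification $\alpha^{*}(\phi^{*}e') \simeq (\phi\alpha)^{*}e'$ --- both are valid.
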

\begin{proof}
  Suppose given $e
  \in \mathcal{E}$ lying over $d \in \mathcal{D}$ and $c \in
  \mathcal{C}$ (i.e. $d \simeq f(e)$ and $c \simeq p(e) \simeq q(d)$)
  and a morphism $\delta \colon d' \to d$ in $\mathcal{D}$ lying over
  $\gamma \colon c' \to c$ in $\mathcal{C}$. Then we must show that
  there exists an $f$-cartesian morphism $e' \to e$ over $\delta$.

  Since $p$ is a cartesian fibration, there exists a $p$-cartesian
  morphism $\beta \colon \gamma^{*}e \to e$ over $\gamma$, and as $f$ takes
  $p$-cartesian edges to $q$-cartesian edges, its image in
  $\mathcal{D}$ is a $q$-cartesian edge $f(\beta) \colon \gamma^{*}d \to d$. There is
  then an essentially unique factorization of $\delta$ through
  $f(\beta)$, as 
  \[ d' \xto{\alpha} \gamma^{*}d \xto{f(\beta)} d.\]
  Now $\alpha$ is a morphism in $\mathcal{D}_{c'}$, so since $f_{c'}$
  is a cartesian fibration there exists an $f_{c'}$-cartesian edge
  $\epsilon \colon \alpha^{*}\gamma^{*}e \to \gamma^{*}e$. We will
  show that the composite $\beta \circ \epsilon \colon
  \alpha^{*}\gamma^{*}e \to \gamma^{*}e \to e$ is an $f$-cartesian
  morphism over $\delta$.

  To see this, we consider the commutative diagram
  \[ %
\begin{tikzpicture}[cross line/.style={preaction={draw=white, -,
line width=6pt}}] %
\matrix (m) [matrix of math nodes,row sep=3em,column sep=2.5em,text height=1.5ex,text depth=0.25ex]
{ 
\Map_{\mathcal{E}}(x, \alpha^{*}\gamma^{*}e) & \Map_{\mathcal{E}}(x,
\gamma^{*}e) & \Map_{\mathcal{E}}(x,e) \\
\Map_{\mathcal{D}}(f(x), d') & \Map_{\mathcal{D}}(f(x), \gamma^{*}d) &
\Map_{\mathcal{D}}(f(x), d) \\
\Map_{\mathcal{C}}(p(x), c') & \Map_{\mathcal{C}}(p(x), c') &
\Map_{\mathcal{C}}(p(x), c),
 \\ };
\path[->,font=\footnotesize] %
(m-1-1) edge (m-1-2)
(m-1-2) edge (m-1-3)
(m-2-1) edge (m-2-2)
(m-2-2) edge (m-2-3)
(m-3-1) edge node[below] {$\id$} (m-3-2)
(m-3-2) edge (m-3-3)
(m-1-1) edge (m-2-1)
(m-1-2) edge (m-2-2)
(m-1-3) edge (m-2-3)
(m-2-1) edge (m-3-1)
(m-2-2) edge (m-3-2)
(m-2-3) edge (m-3-3)
;
\end{tikzpicture}%
\]%
where $x$ is an arbitary object of $\mathcal{E}$. By
\cite{HTT}*{Proposition 2.4.4.3} to see that $\beta
\circ \epsilon$ is $f$-cartesian we must show that the composite of
the two upper squares is cartesian. We will prove this by showing that
both of the upper squares are cartesian. By construction $\beta$ is
$p$-cartesian and $f(\beta)$ is $q$-cartesian, so the composite of the
two right squares and the bottom right square are both cartesian,
hence so is the upper right square.

Since a commutative square of spaces is cartesian \IFF{} the induced
maps on all fibres are equivalences, to see that the upper left square
is cartesian it suffices to show that the square
\nolabelcsquare{\Map_{\mathcal{E}}(x,
  \alpha^{*}\gamma^{*}e)_{\mu}}{\Map_{\mathcal{E}}(x,
  \gamma^{*}e)_{\mu}}{\Map_{\mathcal{D}}(f(x),
  d')_{\mu}}{\Map_{\mathcal{D}}(f(x), \gamma^{*}d)_{\mu}} obtained by
taking the fibre at $\mu \colon p(x) \to c'$ is cartesian for every
map $\mu$. Now taking $p$- and $q$-cartesian pullbacks along $\mu$ we
can (since $f$ takes $p$-cartesian morphisms to $q$-cartesian
morphisms) identify this with the square
\nolabelcsquare{\Map_{\mathcal{E}_{p(x)}}(x,
  \mu^{*}\alpha^{*}\gamma^{*}e)}{\Map_{\mathcal{E}_{p(x)}}(x,
  \mu^{*}\gamma^{*}e)}{\Map_{\mathcal{D}_{p(x)}}(f(x),
  \mu^{*}d')}{\Map_{\mathcal{D}_{p(x)}}(f(x), \mu{*}\gamma^{*}d).}
But this is cartesian since by assumption the map
$\mu^{*}\alpha^{*}\gamma^{*}e \to \mu^{*}\gamma^{*}e$ is
$f_{p(x)}$-cartesian (because $\epsilon$ is $f_{c'}$-cartesian).
\end{proof}
\begin{remark}\label{rmk:firstoftwoR}
  In the situation of Lemma~\ref{lem:firstoftwoCart}, if the maps on
  fibres $f_{c}$ are right fibrations for all $c \in \mathcal{C}$,
  then condition (4) is automatically satisfied, since every morphism
  is $f_{c}$-cartesian.
\end{remark}

\begin{lemma}\label{lem:projcart}
  Suppose $\pi \colon \mathcal{E} \to \mathcal{I} \times \mathcal{J}$ is a
  functor of \icats{} such that
  \begin{enumerate}[(i)]
  \item the composite $\pi_{\mathcal{I}} \colon \mathcal{E} \to \mathcal{I}$ is a cartesian
    fibration, and $\pi$ preserves cartesian morphisms,\footnote{The second part of this
      condition is missing in the published version of the paper.}
  \item for every $i \in \mathcal{I}$, the functor $\pi_{i} \colon \mathcal{E}_{i}
    \to \mathcal{J}$ on fibres over $i$ is a cocartesian fibration.
  \end{enumerate}
  Then the composite
  $\pi_{\mathcal{J}} \colon \mathcal{E} \to \mathcal{J}$ is a
  cocartesian fibration, and $\pi$ preserves cocartesian morphisms.
\end{lemma}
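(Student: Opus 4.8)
The plan is to recognise this lemma as the converse, fibrewise direction of the criterion established in Proposition~\ref{propn:cartcocartfib}, provable by the very same device: instead of assuming $\pi_{\mathcal{J}}$ cocartesian and deducing the fibrewise statement, I would construct the would-be $\pi_{\mathcal{J}}$-cocartesian morphisms by hand inside the fibres $\mathcal{E}_i$ and then verify they remain cocartesian in the total space. Concretely, given $e \in \mathcal{E}$ with $\pi(e) = (i,j)$ and a morphism $\beta \colon j \to j'$ in $\mathcal{J}$, hypothesis (ii) furnishes a $\pi_i$-cocartesian lift $\phi \colon e \to e'$ inside $\mathcal{E}_i$, necessarily with $\pi(\phi) = (\id_i, \beta)$, so that $\pi_{\mathcal{I}}(\phi) = \id_i$ and $\pi_{\mathcal{J}}(\phi) = \beta$. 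The entire content of the lemma is then the assertion that such a $\phi$ is already $\pi_{\mathcal{J}}$-cocartesian; once this is known, cocartesian lifts exist for every object and every morphism of $\mathcal{J}$, and hence $\pi_{\mathcal{J}}$ is a cocartesian fibration.

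The heart of the argument --- and the step I expect to be the main obstacle --- is checking that $\phi$ is $\pi_{\mathcal{J}}$-cocartesian, \ie{} that for every $y \in \mathcal{E}$, writing $i_y := \pi_{\mathcal{I}}(y)$ and $j_y := \pi_{\mathcal{J}}(y)$, the square
\[
  \begin{tikzcd}
    \Map_{\mathcal{E}}(e', y) \arrow{r} \arrow{d} & \Map_{\mathcal{E}}(e, y) \arrow{d} \\
    \Map_{\mathcal{J}}(j', j_y) \arrow{r}{-\circ\beta} & \Map_{\mathcal{J}}(j, j_y)
  \end{tikzcd}
\]
is cartesian. I would verify this fibrewise over $\mathcal{I}$: since $\pi_{\mathcal{I}}$ is a cartesian fibration, the fibre of $\Map_{\mathcal{E}}(e,y) \to \Map_{\mathcal{I}}(i, i_y)$ over a map $\psi \colon i \to i_y$ is $\Map_{\mathcal{E}_i}(e, \psi^* y)$, where $\bar\psi \colon \psi^* y \to y$ is a $\pi_{\mathcal{I}}$-cartesian lift of $\psi$, and likewise for $e'$. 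The crucial use of hypothesis (i) is that $\bar\psi$, being $\pi_{\mathcal{I}}$-cartesian, projects to an equivalence in $\mathcal{J}$, so $\pi_{\mathcal{J}}(\psi^* y) \simeq j_y$ --- this is exactly the part of the hypothesis flagged in the footnote as necessary. With that identification in hand, the induced square of fibres over $\psi$ becomes the defining cartesian square for $\pi_i$-cocartesianness of $\phi$ inside $\mathcal{E}_i$, and is therefore cartesian; since a square of spaces is cartesian \IFF{} its squares of fibres are, this finishes the verification. Because this is precisely the computation carried out in the second half of the proof of Proposition~\ref{propn:cartcocartfib}, I would present it as a reduction to that argument rather than repeating it.

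Finally, for the claim that $\pi$ preserves cocartesian morphisms I would argue by uniqueness of cocartesian lifts: any $\pi_{\mathcal{J}}$-cocartesian morphism is equivalent to one of the lifts $\phi$ constructed above, which lies entirely in a single fibre $\mathcal{E}_i$ and so has $\pi_{\mathcal{I}}(\phi) = \id_i$ invertible. Hence $\pi$ carries every $\pi_{\mathcal{J}}$-cocartesian morphism to a morphism of $\mathcal{I} \times \mathcal{J}$ whose $\mathcal{I}$-component is an equivalence, \ie{} to a cocartesian morphism for the projection $\mathcal{I} \times \mathcal{J} \to \mathcal{J}$, which completes the proof.
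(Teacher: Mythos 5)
Your proposal is correct and follows essentially the same route as the paper's proof: take a $\pi_i$-cocartesian lift $\phi$ in the fibre $\mathcal{E}_i$, verify it is $\pi_{\mathcal{J}}$-cocartesian by passing to fibres over $\Map_{\mathcal{I}}(i,i_y)$ using cartesian lifts $\bar\psi \colon \psi^*y \to y$ --- with the footnoted part of hypothesis (i) entering exactly where you say, to guarantee $\pi_{\mathcal{J}}(\bar\psi)$ is an equivalence so the fibre square becomes the $\pi_i$-cocartesianness square --- and then read off preservation of cocartesian morphisms from the description of the lifts. The only (cosmetic) difference is that the paper first enlarges the bottom row of the test square to $\Map_{\mathcal{J}}(j',k)\times\Map_{\mathcal{I}}(i,l) \to \Map_{\mathcal{J}}(j,k)\times\Map_{\mathcal{I}}(i,l)$ so that all four corners live over $\Map_{\mathcal{I}}(i,l)$ before taking fibres, whereas you take fibres of the top row alone.
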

\begin{proof}
  Given $e \in \mathcal{E}$ lying over $j \in \mathcal{J}$ and a
  morphism $\phi \colon j \to j'$, we must
  show that there exists a cocartesian morphism in $\mathcal{E}$ over
  $\phi$ with source $e$. Suppose $e$ lies over $i \in \mathcal{I}$,
  and let $\bar{\phi} \colon e \to e'$ be a cocartesian morphism over $\phi$ in
  $\mathcal{E}_{i}$. We will show that $\bar{\phi}$ is also a
  cocartesian morphism in $\mathcal{E}$. Thus we wish to prove that
  the commutative square
  \csquare{\Map_{\mathcal{E}}(e', x)}{\Map_{\mathcal{E}}(e,
    x)}{\Map_{\mathcal{J}}(j', k)}{\Map_{\mathcal{J}}(j,
    k)}{\bar{\phi}^*}{}{}{\phi^*}
  is cartesian for every $x \in \mathcal{E}$ lying over $k \in
  \mathcal{J}$. It
  suffices to prove that the square
    \csquare{\Map_{\mathcal{E}}(e', x)}{\Map_{\mathcal{E}}(e,
    x)}{\Map_{\mathcal{J}}(j', k) \times \Map_{\mathcal{I}}(i, l)}{\Map_{\mathcal{J}}(j,
    k) \times \Map_{\mathcal{I}}(i,l)}{\bar{\phi}^*}{}{}{\phi^* \times
  \id}
  is cartesian, where $x$ lies over $l$ in $\mathcal{I}$. But to show
  this, it's enough to show the commutative square
  \csquare{\Map_{\mathcal{E}}(e', x)_f}{\Map_{\mathcal{E}}(e,
    x)_f}{\Map_{\mathcal{J}}(j', k)}{\Map_{\mathcal{J}}(j,
    k)}{\bar{\phi}^*}{}{}{\phi^*}
  on fibres over $f \colon i \to l$ is cartesian for all $f$. Since
  $\mathcal{E} \to \mathcal{I}$ is a cartesian fibration, we can
  rewrite this as
  \csquare{\Map_{\mathcal{E}_i}(e', f^* x)}{\Map_{\mathcal{E}_i}(e,
    f^* x)}{\Map_{\mathcal{J}}(j', k)}{\Map_{\mathcal{J}}(j,
    k)}{\bar{\phi}^*}{}{}{\phi^*}
  where $f^{*}x \to x$ is a cartesian morphism over $f$. But now this
  square is cartesian since $\bar{\phi}$ is by assumption cocartesian
  in $\mathcal{E}_{i}$. The assertion that $\pi$ preserves cocartesian
  morphisms amounts to $\pi$ taking $\pi_{\mathcal{J}}$-cocartesian
  morphisms to equivalences in $\mathcal{I}$, which is clear from our
  description of $\pi_{\mathcal{J}}$-cocartesian morphisms.
\end{proof}

\begin{proof}[Proof of Proposition~\ref{propn:bifibLR}]
  We first prove case (i). It follows from
  Corollary~\ref{cor:bifibcrit} and Lemma~\ref{lem:firstoftwoCart} (using
  Remark~\ref{rmk:firstoftwoR}) that $(p^{\vee},q')$ is a cocartesian
  fibration. Moreover, the fibre $\mathcal{E}^{\ell}_{a,b}$ is by
  construction equivalent to the fibre $\mathcal{E}_{a,b}$, which is
  an $\infty$-groupoid, hence $(p^{\vee},q')$ is a left fibration.

  In case (ii), Lemma~\ref{lem:projcart} implies that $q'$ is a
  cocartesian fibration, and that $q'$-cocartesian morphisms map to
  equivalences under $p^{\vee}$. Since we also know that $q'$ takes
  $p^{\vee}$-cartesian morphisms to equivalences,
  Corollary~\ref{cor:bifibcrit} implies that $(p^{\vee},q')$ is a
  bifibration since the fibres $(\mathcal{F}^{b})_{a,b} \simeq
  \mathcal{F}_{a,b}$ are $\infty$-groupoids.
\end{proof}

\begin{remark}
  Dually, we can replace a bifibration $\mathcal{E} \to \mathcal{A}
  \times \mathcal{B}$ by a right fibration $\mathcal{E}^{r} \to
  \mathcal{A} \times \mathcal{B}^{\op}$ and vice versa.
\end{remark}

\begin{remark}
  Let $\Cat_{\infty/\mathcal{A} \times \mathcal{B}}^{\txt{bifib}}$
  denote the full subcategory of $\Cat_{\infty/\mathcal{A} \times
    \mathcal{B}}$ spanned by the bifibrations, and let similarly
  $\Cat_{\infty/\mathcal{C}}^{L}$ and $\Cat_{\infty/\mathcal{C}}^{R}$
  denote the full subcategories of $\Cat_{\infty/\mathcal{C}}$ spanned
  by the left and right fibrations, respectively. Since dualizing
  fibrations is an equivalence of \icats{},  the constructions
  in Proposition~\ref{propn:bifibLR} and their dual versions give equivalences
  \[ \Cat_{\infty/\mathcal{A} \times \mathcal{B}}^{\txt{bifib}} \simeq
    \Cat_{\infty/\mathcal{A}^{\op} \times \mathcal{B}}^{\txt{L}}
    \simeq \Cat_{\infty/\mathcal{A} \times \mathcal{B}^{\op}}^{\txt{R}}.\]
\end{remark}

\subsection{Sections of Bifibrations}
In this subsection we will describe sections of a bifibration in terms
of the corresponding left and right fibrations.

\begin{propn}
  Let $\mathcal{I}$ be an \icat{}. Then the functor
  $(\txt{ev}_{0},\txt{ev}_{1}) \colon \mathcal{I}^{\Delta^{1}} \to
  \mathcal{I} \times \mathcal{I}$ is the \emph{free} bifibration on
  $\mathcal{I}$, in the sense that the map
  \[ \Map_{/\mathcal{I} \times \mathcal{I}}(\mathcal{I}^{\Delta^{1}},
    \mathcal{E}) \isoto \Map_{/\mathcal{I} \times
      \mathcal{I}}(\mathcal{I}, \mathcal{E}),\]
  induced by composition with the canonical map $\txt{const} \colon \mathcal{I} \to
  \mathcal{I}^{\Delta^{1}}$, is an equivalence for every bifibration
  $\mathcal{E} \to \mathcal{I} \times \mathcal{I}$.
\end{propn}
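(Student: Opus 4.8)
The plan is to upgrade the assertion to a statement about lift $\infty$-categories and then pass to cores: I will show that the restriction functor
\[ \mathrm{const}^{*} \colon \mathrm{Fun}_{/\mathcal{I} \times \mathcal{I}}(\mathcal{I}^{\Delta^{1}}, \mathcal{E}) \to \mathrm{Fun}_{/\mathcal{I} \times \mathcal{I}}(\mathcal{I}, \mathcal{E}) \]
is an equivalence, where $\mathcal{I} \to \mathcal{I} \times \mathcal{I}$ is the diagonal (since $(\mathrm{ev}_{0},\mathrm{ev}_{1}) \circ \mathrm{const} = \Delta$). Taking maximal subgroupoids then yields the claimed equivalence of mapping spaces. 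Throughout write $\pi = (p,q) = (\mathrm{ev}_{0},\mathrm{ev}_{1}) \colon \mathcal{E} \to \mathcal{I} \times \mathcal{I}$ for the given bifibration, so that $p$ is cartesian, $q$ is cocartesian, every fibre of $\pi$ is an $\infty$-groupoid, and, crucially, a morphism of $\mathcal{E}$ is $q$-cocartesian exactly when its image under $p$ is an equivalence (all by Corollary~\ref{cor:bifibcrit} and the definition of a bifibration).

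The geometric input is the counit $\eta \colon \mathrm{const} \circ \mathrm{ev}_{0} \Rightarrow \mathrm{id}_{\mathcal{I}^{\Delta^{1}}}$ of the adjunction $\mathrm{const} \dashv \mathrm{ev}_{0}$: at an object $f \colon x \to y$ it is the morphism $\mathrm{id}_{x} \to f$ given by the square with identity top edge and $f$ on the right, whose image under $\pi$ is $(\mathrm{id}_{x}, f) \colon (x,x) \to (x,y)$. I first record that the comma $\infty$-category $\mathrm{const}_{/f} = \mathcal{I} \times_{\mathcal{I}^{\Delta^{1}}} (\mathcal{I}^{\Delta^{1}})_{/f}$ is equivalent to $\mathcal{I}_{/x}$ with $x = \mathrm{ev}_{0}(f)$, because a morphism $\mathrm{const}(w) \to f$ is precisely the datum of a map $w \to x$; in particular it has a terminal object, corresponding to $\eta_{f}$. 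Since $q$ admits cocartesian lifts along morphisms of the form $(\mathrm{id}, -)$, every lift $s$ of the diagonal admits a $\pi$-relative left Kan extension along $\mathrm{const}$: the comma categories have terminal objects, so the relevant $\pi$-relative colimits reduce to the $q$-cocartesian transport of $s(x)$ along $(\mathrm{id}_{x}, f)$. By the theory of relative Kan extensions \cite{HTT}*{\S 4.3.2}, $\mathrm{const}^{*}$ then restricts to an equivalence from the full subcategory of $\pi$-left Kan extensions in $\mathrm{Fun}_{/\mathcal{I} \times \mathcal{I}}(\mathcal{I}^{\Delta^{1}}, \mathcal{E})$ onto $\mathrm{Fun}_{/\mathcal{I} \times \mathcal{I}}(\mathcal{I}, \mathcal{E})$.

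It remains to show that \emph{every} lift of $(\mathrm{ev}_{0},\mathrm{ev}_{1})$ is already such a $\pi$-left Kan extension, and this is exactly where the bifibration hypothesis does the work. Because each $\mathrm{const}_{/f}$ has terminal object $\eta_{f}$, a lift $\tilde{s}$ is a relative left Kan extension precisely when, for every $f \colon x \to y$, the morphism $\tilde{s}(\eta_{f}) \colon \tilde{s}(\mathrm{id}_{x}) \to \tilde{s}(f)$ is $q$-cocartesian. But $\tilde{s}$ lies over $\pi$, so $p(\tilde{s}(\eta_{f})) = \mathrm{ev}_{0}(\eta_{f}) = \mathrm{id}_{x}$ is an equivalence; by the defining characterization of $q$-cocartesian morphisms in a bifibration, $\tilde{s}(\eta_{f})$ is therefore automatically $q$-cocartesian. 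Hence the full subcategory of relative left Kan extensions is all of $\mathrm{Fun}_{/\mathcal{I} \times \mathcal{I}}(\mathcal{I}^{\Delta^{1}}, \mathcal{E})$, so $\mathrm{const}^{*}$ is an equivalence, and passing to cores gives the statement.

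I expect the main obstacle to be the careful invocation of the relative Kan extension formalism: since $\pi$ is merely a bifibration and not a cocartesian fibration, one must check that the required $\pi$-relative colimits exist using only the $q$-cocartesian lifts over morphisms $(\mathrm{id},-)$, and that being a $\pi$-left Kan extension is detected fibrewise through the terminal objects $\eta_{f}$ of the comma categories. The conceptual heart, by contrast, is the one-line observation of the third paragraph, that a bifibration forces any lift to carry the counit morphisms to $q$-cocartesian morphisms; this is precisely what collapses the two descriptions. As an alternative one may run the symmetric argument with the unit $\mathrm{id} \Rightarrow \mathrm{const} \circ \mathrm{ev}_{1}$, $p$-cartesian transport, and relative right Kan extensions.
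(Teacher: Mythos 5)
Your proof is correct, but it takes a genuinely different route from the paper's. The paper's proof is a two-step reduction to a known universal property: by \cite{freepres}*{Theorem 4.5}, $\mathrm{ev}_{0}\colon \mathcal{I}^{\Delta^{1}}\to\mathcal{I}$ is the \emph{free cartesian fibration} on $\id_{\mathcal{I}}$, so composition with $\mathrm{const}$ identifies cartesian-morphism-preserving maps out of $\mathcal{I}^{\Delta^{1}}$ over $\mathcal{I}$ with arbitrary maps out of $\mathcal{I}$; applying this to both $p\colon\mathcal{E}\to\mathcal{I}$ and $\mathrm{pr}_{1}\colon\mathcal{I}\times\mathcal{I}\to\mathcal{I}$ and passing to fibres over the point corresponding to the diagonal yields the statement, the only extra input being Lemma~\ref{lem:bifibautocart}, which makes the condition ``preserves cartesian morphisms'' automatic for maps of bifibrations over $\mathcal{I}\times\mathcal{I}$. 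You instead argue directly with relative Kan extensions: sections over the diagonal correspond to $\pi$-left Kan extensions along $\mathrm{const}$ (via the terminal objects $\eta_{f}$ of the comma categories), and the bifibration axiom forces every lift of $(\mathrm{ev}_{0},\mathrm{ev}_{1})$ to be such a Kan extension. Both proofs pivot on the same ``automatic'' feature of Definition~\ref{defn:bifib}; yours buys independence from \cite{freepres} (trading it for \cite{HTT}*{\S 4.3.2}) and actually proves the stronger statement that $\mathrm{const}^{*}$ is an equivalence of functor \icats{} rather than just of section spaces, while the paper's is shorter given the cited theorem. Two of the checks you defer are genuine and should be written out, though both are routine: (a) the Kan extension condition at $f$ is that $\tilde{s}(\eta_{f})$ be \emph{$\pi$-cocartesian}, not $q$-cocartesian; the bridge is that $(\id_{x},f)$ is $\mathrm{pr}_{2}$-cocartesian in $\mathcal{I}\times\mathcal{I}$, so the dual of \cite{HTT}*{Proposition 2.4.1.3} shows that a morphism whose $p$-image is an equivalence is $\pi$-cocartesian exactly when it is $q$-cocartesian; and (b) for existence you need a cocartesian lift of $f$ at $s(x)$ lying over $(\id_{x},f)$ on the nose, which is supplied by Proposition~\ref{propn:cartcocartfib}(i) (lift inside the $p$-fibre $\mathcal{E}_{x}$, whose cocartesian morphisms map to $q$-cocartesian ones), not merely by $q$ being a cocartesian fibration on all of $\mathcal{E}$. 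Finally, \cite{HTT}*{\S 4.3.2} is formulated for full subcategory inclusions, so strictly speaking one should replace $\mathrm{const}$ by the inclusion of the full subcategory of $\mathcal{I}^{\Delta^{1}}$ spanned by the identity morphisms, which $\mathrm{const}$ identifies with $\mathcal{I}$ up to equivalence; with these insertions your argument is complete.
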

\begin{proof}
  By \cite{freepres}*{Theorem 4.5}, the functor $\txt{ev}_{0} \colon
  \mathcal{I}^{\Delta^{1}} \to \mathcal{I}$ is the free cartesian
  fibration on $\id_{\mathcal{I}}$. Composition with $\txt{const}$
  therefore induces an equivalence
  \[ \Map_{/\mathcal{I}}^{\txt{cart}}(\mathcal{I}^{\Delta^{1}},
    \mathcal{C}) \isoto \Map_{/\mathcal{I}}(\mathcal{I}, \mathcal{C})\]
  for any cartesian fibration $\mathcal{C} \to \mathcal{I}$. In our
  case we then
  have a commutative square
  \csquare{\Map_{/\mathcal{I}}^{\txt{cart}}(\mathcal{I}^{\Delta^{1}},
    \mathcal{E})}{\Map_{/\mathcal{I}}(\mathcal{I}, \mathcal{E})}%
  {\Map_{/\mathcal{I}}^{\txt{cart}}(\mathcal{I}^{\Delta^{1}},
    \mathcal{I} \times \mathcal{I})}%
  {\Map_{/\mathcal{I}}(\mathcal{I}, \mathcal{I}\times \mathcal{I}),}{\sim}{}{}{\sim}
  where the horizontal maps are equivalences.
  On the fibre over $(\txt{ev}_{1},\txt{ev}_{0}) \colon
  \mathcal{I}^{\Delta^{1}} \to \mathcal{I} \times \mathcal{I}$ (which
  corresponds to the diagonal $\Delta \colon \mathcal{I} \to
  \mathcal{I} \times \mathcal{I}$) we get an equivalence
  \[ \Map_{/\mathcal{I} \times \mathcal{I}}(\mathcal{I}^{\Delta^{1}},
    \mathcal{E}) \isoto \Map_{/\mathcal{I} \times
      \mathcal{I}}(\mathcal{I}, \mathcal{E}),\] since the morphisms in
  the source automatically preserve cartesian morphisms by
  Lemma~\ref{lem:bifibautocart}.
\end{proof}

Describing the spaces of sections of a bifibration in
terms of the corresponding left and right fibrations turns out
to involve the twisted arrow \icat{}:
\begin{defn}\label{def:Tw}
  If $\mathcal{C}$ is an \icat{}, we define 
  $\Tw^{r}(\mathcal{C})$ as the simplicial space
  \[ \Map([n] \star [n]^{\op}, \mathcal{C}).\]
  Restricting to the factors $[n]$ and $[n]^{\op}$ we get a projection
  \[ \Tw^{r}(\mathcal{C}) \to \mathcal{C} \times \mathcal{C}^{\op}.\]
  We also define $\Tw^{\ell}(\mathcal{C}) :=
  \Tw^{r}(\mathcal{C})^{\op}$, which as a simplicial space is
  $\Map([n]^{\op} \star [n], \mathcal{C})$.
\end{defn}

The following is essentially \cite{HA}*{Proposition 5.2.1.3} or
\cite{BarwickGlasmanBurnside}*{Proposition 1.1}. Since we have defined
$\Tw^{r}(\mathcal{C})$ as a Segal space rather than a quasicategory,
we briefly discuss how to adapt the proof to this setting.
\begin{propn}\label{propn:TwCicat}\
  \begin{enumerate}[(i)]
  \item If $\mathcal{C}$ is a Segal space, then so is
    $\Tw^{r}\mathcal{C}$.
  \item If $\mathcal{C}$ is a Segal space, then the morphism
    $\Tw^{r}\mathcal{C} \to \mathcal{C} \times
    \mathcal{C}^{\op}$ is a right fibration.
  \item If $\mathcal{C}$ is a complete Segal space, then so is $\Tw^{r}\mathcal{C}$.
  \end{enumerate}
\end{propn}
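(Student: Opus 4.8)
The plan is to prove (ii) by a direct computation with the simplicial structure, and then to deduce (i) and (iii) as formal consequences. Recall that $[n] \star [n]^{\op}$ is, as a linearly ordered set, isomorphic to $[2n+1]$, with $\{0,\dots,n\}$ the copy of $[n]$ and $\{n+1,\dots,2n+1\}$ the copy of $[n]^{\op}$ in reverse order (so the vertex $i' \in [n]^{\op}$ sits in position $2n+1-i$). Thus there is a functor $Q \colon \simp \to \simp$, $Q([n]) := [n] \star [n]^{\op} \cong [2n+1]$, and $\Twr(\mathcal{C}) = \mathcal{C} \circ Q^{\op}$; in particular $\Twr(\mathcal{C})_n \simeq \mathcal{C}_{2n+1}$, $\Twr(\mathcal{C})_0 \simeq \mathcal{C}_1$, and the projection to $\mathcal{C} \times \mathcal{C}^{\op}$ is induced by the two inclusions $\{0,\dots,n\},\, \{n+1,\dots,2n+1\} \hookrightarrow [2n+1]$.

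For (ii), I will use that a map of simplicial spaces is a right fibration precisely when, for every $n$, the square obtained by applying the last-vertex map $\{n\} \hookrightarrow [n]$ is a homotopy pullback. Applying $Q$ to the last-vertex map $[0] \to [n]$, $0 \mapsto n$, yields the inclusion of the middle edge $\{n,n+1\} \hookrightarrow [2n+1]$; correspondingly the last-vertex maps of $\mathcal{C}$ and of $\mathcal{C}^{\op}$ select the vertices $n$ and $n+1$ of $[2n+1]$, which are exactly the two endpoints of this middle edge. Hence the right-fibration square for $\Twr(\mathcal{C}) \to \mathcal{C} \times \mathcal{C}^{\op}$ at level $n$ is identified with
\[ \mathcal{C}_{2n+1} \longrightarrow \mathcal{C}_{\{n,n+1\}} \times_{\mathcal{C}_{\{n\}} \times \mathcal{C}_{\{n+1\}}} \bigl(\mathcal{C}_{\{0,\dots,n\}} \times \mathcal{C}_{\{n+1,\dots,2n+1\}}\bigr). \]
This is precisely the equivalence expressing the Segal condition for $\mathcal{C}$ along the decomposition of $[2n+1]$ into the subintervals $\{0,\dots,n\}$, $\{n,n+1\}$, $\{n+1,\dots,2n+1\}$ glued at $\{n\}$ and $\{n+1\}$, and so it is an equivalence because $\mathcal{C}$ is a Segal space. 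This proves (ii). (Note that the first-vertex map would instead produce the long edge $\{0,2n+1\}$, whose associated square is not a pullback, reflecting that the projection is a right, not a left, fibration.)

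Parts (i) and (iii) are now formal, and are the Segal-space analogues of the standard facts underlying \cite{HA}*{Proposition 5.2.1.3} and \cite{BarwickGlasmanBurnside}*{Proposition 1.1}. Since products and opposites of (complete) Segal spaces are again (complete) Segal spaces, $\mathcal{C} \times \mathcal{C}^{\op}$ is a Segal space, and a complete one when $\mathcal{C}$ is complete. For (i), the total space of a right fibration $p \colon X \to Y$ over a Segal space is again a Segal space: writing $X = \Twr(\mathcal{C})$ and $Y = \mathcal{C} \times \mathcal{C}^{\op}$, the last-vertex equivalences $X_m \simeq X_0 \times_{Y_0} Y_m$ combine with the Segal condition on $Y$ to yield the Segal condition on $X$. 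For (iii), an edge of $X$ is invertible if and only if its image in $Y$ is (the fibres of a right fibration being $\infty$-groupoids), so the space of invertible edges of $X$ is $X_0 \times_{Y_0} (Y_1)^{\mathrm{eq}}$; completeness of $Y$ gives $Y_0 \simeq (Y_1)^{\mathrm{eq}}$, whence this space is equivalent to $X_0$ compatibly with the degeneracy, i.e. $X$ is complete.

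The main obstacle is the combinatorial bookkeeping in (ii): one must track correctly how $Q$ transforms the various vertex inclusions — in particular the contravariant reindexing on the $[n]^{\op}$ factor — in order to recognize the last-vertex square as the middle-edge Segal decomposition of $[2n+1]$. Once this identification is in place, the Segal condition on $\mathcal{C}$ does all the work, and (i) and (iii) reduce to the stability of (completeness of) right fibrations over (complete) Segal spaces.
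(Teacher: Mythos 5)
Your proposal is correct, and it reorganizes the proof in a genuinely different way. The paper proves (i) first and directly: writing $\Tw^{r}$ as restriction along $[n] \mapsto [n]\star[n]^{\op}$, it reduces the Segal condition to the maps $\epsilon^{r}(\Lambda^{n}_{i}) \to \epsilon^{r}(\Delta^{n})$ being Segal equivalences, which it obtains by citing the inner-anodyne argument in the proof of \cite{HA}*{Proposition 5.2.1.3}; with (i) in hand, (ii) then requires only the level-$1$ square, which the paper identifies with the decomposition $\Delta^{\{0,1\}}\amalg_{\Delta^{\{1\}}}\Delta^{\{1,2\}}\amalg_{\Delta^{\{2\}}}\Delta^{\{2,3\}} \to \Delta^{3}$ --- precisely the $n=1$ case of your middle-edge decomposition of $[2n+1]$. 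You instead verify the last-vertex pullback squares at \emph{every} level $n$ (as you must, since the level-$1$ criterion for maps of Segal spaces is unavailable before (i) is known) and then extract (i) formally, using that the total space of a right fibration over a Segal space is again Segal. Your route is more self-contained: one combinatorial identification drives all three parts, with no appeal to Lurie's simplicial-set combinatorics; the paper's route is shorter on the page because it outsources (i) to a citation and can then afford the minimal $\Delta^{3}$ check. The two arguments for (iii) coincide (conservativity of right fibrations plus completeness of $\mathcal{C}\times\mathcal{C}^{\op}$). Two places where you are terse but not wrong: deducing the Segal condition on $\Tw^{r}(\mathcal{C})$ from the equivalences $X_{m}\simeq X_{0}\times_{Y_{0}}Y_{m}$ takes a short induction rewriting $X_{1}\times_{X_{0}}\cdots\times_{X_{0}}X_{1}$ as $Y_{1}\times_{Y_{0}}\cdots\times_{Y_{0}}Y_{1}\times_{Y_{0}}X_{0}$; and conservativity of right fibrations needs a small argument beyond ``the fibres are $\infty$-groupoids'', since an edge lying over a non-identity equivalence does not live in a fibre --- but the paper asserts this with equal brevity (``it is easy to see''), so neither point is a gap by the paper's own standard.
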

\begin{proof}
  To see that $\Tw^{r}\mathcal{C}$ is a Segal space, it
  suffices to prove that the morphisms
  $\epsilon^{r}(\Lambda^{n}_{i}) \to \epsilon^{r}(\Delta^{n})$ for $0
  < i < n$ are
  Segal equivalences (\ie{} local equivalences for the localization to
  Segal spaces), where $\epsilon^{r}$ denotes the colimit-preserving functor
  $\mathcal{P}(\simp) \to \mathcal{P}(\simp)$ extending $[n] \mapsto
  [n] \star [n]^{\op}$. This follows from the proof of \cite[Proposition
  5.2.1.3]{HA}, where this map is shown to be inner anodyne in
  simplicial sets.

  A morphism $\mathcal{E} \to \mathcal{B}$ of Segal spaces is a right
  fibration \IFF{} the commutative square
  \[
    \begin{tikzcd}
      \mathcal{E}_{1} \arrow{r} \arrow{d}{d_{0}} & \mathcal{B}_{1} \arrow{d}{d_{0}}
      \\
      \mathcal{E}_{0} \arrow{r} & \mathcal{B}_{0}
    \end{tikzcd}
  \]
  is cartesian. For $\Tw^{r}\mathcal{C} \to \mathcal{C} \times
  \mathcal{C}^{\op}$, we have $(\Tw^{r}\mathcal{C})_{1} \simeq
  \Map(\epsilon^{r}(\Delta^{1}), \mathcal{C})$ where
  $\epsilon^{r}(\Delta^{1}) \simeq \Delta^{1} \star \Delta^{1,\op}
  \simeq \Delta^{3}$, and the square can be rewritten as
  \[
    \begin{tikzcd}
      \Map(\Delta^{3}, \mathcal{C}) \arrow{r} \arrow{d} & \Map(\Delta^{\{0,1\}}
      \amalg \Delta^{\{2,3\}}, \mathcal{C}) \arrow{d} \\
      \Map(\Delta^{\{1,2\}},  \mathcal{C}) \arrow{r} & 
      \Map(\Delta^{\{1\}} \amalg \Delta^{\{2\}}, \mathcal{C}).
    \end{tikzcd}
  \]
  This is cartesian since \[\Delta^{\{0,1\}} \amalg_{\Delta^{\{1\}}}
    \Delta^{\{1,2\}} \amalg_{\Delta^{\{2\}}} \Delta^{\{2,3\}} \to
    \Delta^{3}\] is a (generating) Segal equivalence.
  
  It is easy to see that any right fibration $\mathcal{E} \to \mathcal{B}$ is conservative,
  and so gives a pullback square
  \[
    \begin{tikzcd}
      \mathcal{E}^{\txt{eq}}_{1} \arrow{r} \arrow{d}{d_{0}} & \mathcal{B}^{\txt{eq}}_{1} \arrow{d}{d_{0}}
      \\
      \mathcal{E}_{0} \arrow{r} & \mathcal{B}_{0}.
    \end{tikzcd}
  \]
  Thus if $\mathcal{B}$ is complete then so is $\mathcal{E}$, which
  means that 
  (ii) implies (iii).
\end{proof}

\begin{propn}\label{propn:Twrcartfib}
  The projection $\Tw^{r}(\mathcal{C}) \to \mathcal{C}^{\op}$ is the
  cartesian fibration corresponding to the cocartesian fibration
  $\txt{ev}_{1} \colon \mathcal{C}^{\Delta^{1}}\to \mathcal{C}$.
\end{propn}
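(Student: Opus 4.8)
The plan is to realize $p \colon \Twr(\cC) \to \cC^{\op}$ as a composite of cartesian fibrations, whose cartesian edges are easy to read off, and then to compute the functor $\cC \to \CatI$ classifying it, checking that it agrees with the functor classifying $\mathrm{ev}_{1} \colon \cC^{\Delta^{1}} \to \cC$ under the correspondence between cartesian fibrations over $\cC^{\op}$ and cocartesian fibrations over $\cC$.

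First I would observe that $p$ factors as
\[ \Twr(\cC) \xrightarrow{r} \cC \times \cC^{\op} \xrightarrow{\mathrm{pr}_{2}} \cC^{\op}, \]
where $r$ is the projection of Definition~\ref{def:Tw}. By Proposition~\ref{propn:TwCicat}(ii) the map $r$ is a right fibration, hence in particular a cartesian fibration in which \emph{every} morphism is $r$-cartesian; and $\mathrm{pr}_{2}$ is a cartesian fibration whose cartesian morphisms are exactly those whose $\cC$-component is an equivalence. By the behaviour of cartesian fibrations under composition (\cite{HTT}*{Proposition 2.4.2.3}), $p$ is therefore a cartesian fibration, and a morphism of $\Twr(\cC)$ is $p$-cartesian precisely when its image under the other projection $\Twr(\cC) \to \cC$ is an equivalence. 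Concretely, writing a morphism of twisted arrows as a commuting square with legs $\phi$ (on sources) and $\psi$ (on targets), the $p$-cartesian morphisms are those with $\phi$ invertible.

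Next I would identify the classifying functor $\cC \simeq (\cC^{\op})^{\op} \to \CatI$ of $p$ on the level of objects and morphisms. Restricting $r$ over an object $y \in \cC^{\op}$ yields the right fibration over $\cC$ whose straightening is $\Map_{\cC}(-, y) \colon \cC^{\op} \to \cS$, by Proposition~\ref{propn:TwCicat}(ii) together with \cite{HA}*{Proposition 5.2.1.3}; its total space is the slice $\cC_{/y}$, so the fibre of $p$ over $y$ is $\cC_{/y}$. Using the explicit $p$-cartesian morphisms found above, the cartesian transport attached to a morphism $y \to y'$ of $\cC^{\op}$ --- that is, to a morphism $g \colon y' \to y$ of $\cC$ --- sends $(c \to y') \in \cC_{/y'}$ to $(c \to y' \xrightarrow{g} y) \in \cC_{/y}$, i.e. is postcomposition with $g$. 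These are precisely the fibres and the cocartesian transports of $\mathrm{ev}_{1}$, which also has fibre $\cC_{/y}$ over $y$ and cocartesian pushforward given by postcomposition.

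To upgrade this fibrewise matching to an equivalence of fibrations I would argue directly with classifying functors rather than objectwise. The right fibration $r$ over the product $\cC \times \cC^{\op}$ is classified by the mapping-space functor $\Map_{\cC} \colon \cC^{\op} \times \cC \to \cS$; straightening only the source variable and keeping $\cC^{\op}$ as base produces the cartesian fibration over $\cC^{\op}$ classified by the composite $\cC \xrightarrow{Y} \cP(\cC) \hookrightarrow \CatI$ of the Yoneda embedding with the total-space (unstraightening) functor, namely $y \mapsto \cC_{/y}$ with postcomposition functoriality. The same functor classifies the dual cartesian fibration of $\mathrm{ev}_{1}$, so $p$ and that dual have equivalent straightenings, giving the claim. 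The main obstacle is exactly this last step: performing the partial straightening in a single variable of a fibration over a product and checking its compatibility with the Yoneda composite \emph{coherently}, not merely on objects and morphisms, is where the real content lies, since everything before it is formal. In the Segal-space model used here I would phrase it through the compatibility of (un)straightening with products, so that naturality in $y$ is automatic.
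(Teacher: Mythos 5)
Your proposal takes a different route from the paper's, and while your steps (1) and (2) are fine --- the factorization $\Twr(\cC) \to \cC \times \cC^{\op} \to \cC^{\op}$, the identification of the $p$-cartesian edges as those whose $\cC$-component is invertible, and the objectwise computation of fibres and transport functors are all correct --- the argument has a genuine gap exactly where you flag it, and the flag is not a fix. Asserting that both $p$ and the dual of $\txt{ev}_{1}$ are classified by the composite of the Yoneda embedding $\cC \to \cP(\cC)$ with unstraightening $\cP(\cC) \to \CatI$ requires three \emph{coherent} identifications, none of which you establish: (i) that the right fibration $\Twr(\cC) \to \cC \times \cC^{\op}$ is classified by $\Map_{\cC}$ --- this is \cite{HA}*{Proposition 5.2.1.11}, but in the Segal-space model used here it is not available off the shelf (Proposition~\ref{propn:TwCicat} deliberately establishes only the right-fibration property, not the classification); (ii) a partial-straightening statement identifying the $\CatI$-valued straightening in one variable of a fibration over a product; and (iii) that the straightening of $\txt{ev}_{1}$ itself coincides, as a functor $\cC \to \CatI$ and not just objectwise, with $y \mapsto \cC_{/y}$ given by that Yoneda composite. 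Your remark that ``naturality in $y$ is automatic'' is precisely what fails to be automatic: matching fibres and transport functors one object and one morphism at a time never assembles into an equivalence of functors to $\CatI$ without additional coherence input, and compatibility of (un)straightening with products does not by itself supply (i) or (iii).

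The paper's proof is engineered to avoid this problem entirely, and this is its one real idea. It starts from the dual cartesian fibration $\mathcal{E} \to \cC^{\op}$ of $\txt{ev}_{1}$ (so the straightening of $\txt{ev}_{1}$ is never computed), dualizes the triangle $\cC^{\Delta^{1}} \to \cC \times \cC \to \cC$ to get a map $\mathcal{E} \to \cC \times \cC^{\op}$, checks that this map is a right fibration, and then invokes the recognition criterion \cite{HA}*{Corollary 5.2.1.22}: a right fibration over $\cC \times \cC^{\op}$ is equivalent to $\Twr(\cC)$ over $\cC \times \cC^{\op}$ \IFF{} its restrictions over each variable have terminal objects and the two notions of fibrewise terminal object agree. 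This replaces the coherence problem by the verification of a \emph{property}, which is then checked on the explicit fibres $\cC_{/c}$ and $(\cC_{c/})^{\op}$. If you want to salvage your approach, the cleanest repair is the same move: do not compute classifying functors at all, but compare $\Twr(\cC)$ and $\mathcal{E}$ directly as right fibrations over $\cC \times \cC^{\op}$ using that criterion; an equivalence over $\cC \times \cC^{\op}$ in particular lives over $\cC^{\op}$, which is all the proposition asks for.
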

\begin{proof}
  Let $\pi \colon \mathcal{E} \to \mathcal{C}^{\op}$ be this dual cartesian
  fibration. Observe that we have a commutative triangle
  \opctriangle{\mathcal{C}^{\Delta^{1}}}{\mathcal{C} \times
    \mathcal{C}}{\mathcal{C}}{(\txt{ev}_{0},
    \txt{ev}_{1})}{\txt{ev}_{1}}{}
  where the downward maps are cocartesian fibrations, and the
  horizontal map preserves cocartesian morphisms. Dualizing, this
  corresponds to a diagram
    \opctriangle{\mathcal{E}}{\mathcal{C} \times
    \mathcal{C}^{\op}}{\mathcal{C}^{\op}}{\phi}{\pi}{}
  where $\phi$ preserves cartesian morphisms. We claim that $\phi$ is
  in fact a right fibration. To prove this we first use
  \cite{HTT}*{Proposition 2.4.2.11} to see that $\phi$ is a locally
  cartesian fibration since fibrewise over $\mathcal{C}^{\op}$ it is
  given by $\mathcal{E}_{x} \simeq (\mathcal{C}^{\Delta^{1}})_{x}
  \simeq \mathcal{C}_{/x} \to \mathcal{C}$ which is a right
  fibration; since the fibres are moreover spaces, this implies that
  $\phi$ is a right fibration.

  We can now use \cite{HA}*{Corollary 5.2.1.22} to conclude that
  $\mathcal{E}$ is equivalent to $\Tw^{r}(\mathcal{C})$ over
  $\mathcal{C} \times \mathcal{C}^{\op}$ \IFF{}
  \begin{enumerate}[(i)]
  \item for $c \in \mathcal{C}$ the fibre $\mathcal{E}_{c,\mathcal{C}}$ has a
    terminal object,
  \item for $c \in \mathcal{C}^{\op}$ the fibre
    $\mathcal{E}_{c,\mathcal{C}^{\op}}$ has
    a terminal object,
  \item an object $x \in \mathcal{E}$ over $(a,b)$ is terminal in
    $\mathcal{E}_{a,\mathcal{C}}$ \IFF{} it is terminal in
    $\mathcal{E}_{b,\mathcal{C}^{\op}}$.
  \end{enumerate}
  In our case, the fibre $\mathcal{E}_{c,\mathcal{C}^{\op}}$ is
  equivalent to $\mathcal{C}_{/c}$, and
  the fibre at $c \in \mathcal{C}$ is $(\mathcal{C}_{c/})^{\op}$ (as
  this is the dualization of the fibre $\mathcal{C}_{c/}\to
  \mathcal{C}$ of $\mathcal{C}^{\Delta^{1}} \to \mathcal{C} \times
  \mathcal{C}$ at $c$, and dualization preserves pullbacks.) Both of these clearly have
  terminal objects. An element in the fibre over $(a,b) \in \mathcal{C}
  \times \mathcal{C}^{\op}$ can be identified with a morphism $b \to
  a$, and in both cases the criterion for this to be a fibrewise
  terminal object is that this morphism must be an equivalence.
\end{proof}

\begin{cor}
  The left and right fibrations corresponding to the bifibration
  $\mathcal{I}^{\Delta^{1}} \to \mathcal{I} \times \mathcal{I}$ are
  the left and right twisted arrow \icats{}
  \[ \Tw^{\ell}\mathcal{I} \to \mathcal{I}^{\op} \times \mathcal{I},
    \quad \Tw^{r}\mathcal{I} \to \mathcal{I} \times
    \mathcal{I}^{\op},\]
  respectively.
\end{cor}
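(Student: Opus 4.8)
The plan is to treat the two fibrations separately, reading off the right-handed one directly from Proposition~\ref{propn:Twrcartfib} and obtaining the left-handed one from its evident companion. By \cite{freepres} the functor $\txt{ev}_{0}\colon \mathcal{I}^{\Delta^{1}}\to\mathcal{I}$ is the free cartesian fibration, so in the bifibration $(\txt{ev}_{0},\txt{ev}_{1})\colon \mathcal{I}^{\Delta^{1}}\to\mathcal{I}\times\mathcal{I}$ the cartesian factor is $p=\txt{ev}_{0}$ and the cocartesian factor is $q=\txt{ev}_{1}$. According to Construction~\ref{constr:bifibdual} (and the dual remark following Proposition~\ref{propn:bifibLR}) the associated right fibration over $\mathcal{I}\times\mathcal{I}^{\op}$ is obtained by dualizing the cocartesian fibration $q=\txt{ev}_{1}$ to a cartesian fibration over $\mathcal{I}^{\op}$, leaving the other factor untouched. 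This is precisely the content of Proposition~\ref{propn:Twrcartfib}, which identifies the cartesian fibration dual to $\txt{ev}_{1}$ with $\Twr(\mathcal{I})\to\mathcal{I}^{\op}$, compatibly over the whole product $\mathcal{I}\times\mathcal{I}^{\op}$. Hence the right fibration is $\Twr(\mathcal{I})\to\mathcal{I}\times\mathcal{I}^{\op}$, with no further work.

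For the left fibration I would dualize instead the cartesian factor $p=\txt{ev}_{0}$ to a cocartesian fibration over $\mathcal{I}^{\op}$, keeping the factor $\mathcal{B}=\mathcal{I}$. The claim is then that this cocartesian fibration is $\Twl(\mathcal{I})\to\mathcal{I}^{\op}$, i.e.\ the $\txt{ev}_{0}$/$\Twl$ analogue of Proposition~\ref{propn:Twrcartfib}. I would establish this companion statement in either of two ways. The most transparent is to rerun the proof of Proposition~\ref{propn:Twrcartfib} with $\txt{ev}_{0}$ in place of $\txt{ev}_{1}$: one computes that the dual cocartesian fibration has fibre $\mathcal{I}_{c/}$ over $c$ with functoriality given by precomposition, and then applies the dual of \cite{HA}*{Corollary 5.2.1.22} to recognise the total space as $\Twl(\mathcal{I})$ over $\mathcal{I}^{\op}\times\mathcal{I}$; this matches the classifying functor $\mathcal{I}^{\op}\to\CatI$, $c\mapsto \mathcal{I}_{c/}$, of the dual of $\txt{ev}_{0}$. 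The second, more economical route is to apply Proposition~\ref{propn:Twrcartfib} to $\mathcal{I}^{\op}$ and take opposites, using the natural equivalence $\Twl(\mathcal{I})\simeq\Twr(\mathcal{I}^{\op})$ (immediate from $\Tw^{\ell}:=(\Tw^{r})^{\op}$ of Definition~\ref{def:Tw} together with the self-duality of the join $[n]\star[n]^{\op}$) and the identification $(\mathcal{I}^{\Delta^{1}})^{\op}\simeq(\mathcal{I}^{\op})^{\Delta^{1}}$, which intertwines $\txt{ev}_{0}$ and $\txt{ev}_{1}$.

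The remaining verifications — that the two descriptions agree over the untouched product factor, and that the fibrations produced by Construction~\ref{constr:bifibdual} genuinely are the dualizations of $p$ and $q$ — are formal, since all the identifications above are made over the full product $\mathcal{I}^{\op}\times\mathcal{I}$ (resp.\ $\mathcal{I}\times\mathcal{I}^{\op}$) and dualizing one factor does not disturb the other. The one genuinely delicate point, and the step I expect to demand the most care, is the opposite-category bookkeeping in the second route: one must distinguish dualizing a fibration from passing to its opposite (these have fibrewise-opposite classifying functors) and must keep track of which of the two product projections is being dualized. For this reason I would present the left-hand case through the classifying-functor computation of the first route, where the matching of $\Twl(\mathcal{I})\to\mathcal{I}^{\op}$ with the dual of $\txt{ev}_{0}$ is manifest, and relegate the opposite-duality comparison to a remark.
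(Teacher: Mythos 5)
Your proposal is correct and takes essentially the same route as the paper: the paper's proof likewise reads off the right fibration from Proposition~\ref{propn:Twrcartfib} together with Construction~\ref{constr:bifibdual}, and disposes of $\Tw^{\ell}\mathcal{I}$ by the ``similar'' $\txt{ev}_{0}$-analogue of that proposition, which is exactly your first route (rerunning the argument with fibres $\mathcal{I}_{c/}$ and the dual of \cite{HA}*{Corollary 5.2.1.22}). Your attention to the distinction between dualizing a fibration and passing to its opposite, and to the identification holding over the full product, is sound but is left implicit in the paper's one-line proof.
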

\begin{proof}
  By Proposition~\ref{propn:Twrcartfib}, $\Tw^{r} \mathcal{I} \to
  \mathcal{I}^{\op}$ is the cartesian fibration corresponding to
  $\txt{ev}_{1} \colon \mathcal{I}^{\Delta^{1}} \to \mathcal{I}$, and
  similarly for $\Tw^{\ell}\mathcal{I}$, so this follows from
  Construction~\ref{constr:bifibdual}.
\end{proof}

From this we obtain a useful description of the sections of a bifibration:
\begin{cor}\label{cor:bifibseclfib}
  Suppose $\mathcal{E} \to \mathcal{A} \times \mathcal{B}$ is a
  bifibration. Then for functors $\alpha \colon \mathcal{C} \to
  \mathcal{A}, \beta \colon \mathcal{C} \to \mathcal{B}$, the
  space of sections
  \[
    \begin{tikzcd}
      {} & \mathcal{E} \arrow{d} \\
      \mathcal{C} \arrow{ur} \arrow{r}[below]{(\alpha,\beta)} & \mathcal{A}
      \times \mathcal{B}
    \end{tikzcd}
  \]
  is equivalent to the spaces of commutative squares
  \[
    \left\{
      \begin{tikzcd}
        \mathcal{C}^{\Delta^{1}} \arrow{r} \arrow{d} & \mathcal{E}
        \arrow{d} \\
        \mathcal{C} \times \mathcal{C} \arrow{r}{\alpha \times \beta} &
        \mathcal{A} \times \mathcal{B}
    \end{tikzcd}
  \right\}
  \simeq
\left\{ \begin{tikzcd}
  \Tw^{r}(\mathcal{C}) \arrow{r} \arrow{d} & \mathcal{E}^{r} \arrow{d}\\
  \mathcal{C} \times \mathcal{C}^{\op} \arrow{r}{\alpha \times
    \beta^{\op}} & \mathcal{A} \times \mathcal{B}^{\op}
\end{tikzcd}
 \right\}
 \simeq
 \left\{ \begin{tikzcd}
  \Tw^{\ell}(\mathcal{C}) \arrow{r} \arrow{d} & \mathcal{E}^{\ell} \arrow{d}\\
  \mathcal{C} \times \mathcal{C}^{\op} \arrow{r}{\alpha^{\op} \times
    \beta} & \mathcal{A}^{\op} \times \mathcal{B}
\end{tikzcd}
 \right\}.
\]
\end{cor}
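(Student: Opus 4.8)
The plan is to reduce the entire statement to the free bifibration over the square $\mathcal{C}\times\mathcal{C}$, where the preceding results apply verbatim, and then pull everything back along $\alpha\times\beta$.

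First I would set $\mathcal{E}' := (\alpha\times\beta)^{*}\mathcal{E}$, the pullback of $\mathcal{E}$ along $\alpha\times\beta\colon\mathcal{C}\times\mathcal{C}\to\mathcal{A}\times\mathcal{B}$. Base change preserves cartesian and cocartesian fibrations together with their (co)cartesian morphisms, and the characterisation of (co)cartesian morphisms in Definition~\ref{defn:bifib} is fibrewise, so $\mathcal{E}'\to\mathcal{C}\times\mathcal{C}$ is again a bifibration (one may check this directly, or via the criterion of Corollary~\ref{cor:bifibcrit}, whose conditions are stable under base change). Since $(\alpha,\beta) = (\alpha\times\beta)\circ\Delta$, the universal property of the pullback identifies the space of sections of $\mathcal{E}\to\mathcal{A}\times\mathcal{B}$ along $(\alpha,\beta)$ with the space of sections of $\mathcal{E}'$ along the diagonal $\Delta\colon\mathcal{C}\to\mathcal{C}\times\mathcal{C}$, i.e.\ with $\Map_{/\mathcal{C}\times\mathcal{C}}(\mathcal{C},\mathcal{E}')$. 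Likewise the three square-shaped spaces in the statement are identified by the same universal property with $\Map_{/\mathcal{C}\times\mathcal{C}}(\mathcal{C}^{\Delta^{1}},\mathcal{E}')$, with $\Map_{/\mathcal{C}\times\mathcal{C}^{\op}}(\Twr(\mathcal{C}),(\mathcal{E}')^{r})$, and with $\Map_{/\mathcal{C}^{\op}\times\mathcal{C}}(\Twl(\mathcal{C}),(\mathcal{E}')^{\ell})$ respectively, using that dualisation of fibrations commutes with base change, so that $(\alpha\times\beta^{\op})^{*}\mathcal{E}^{r}\simeq(\mathcal{E}')^{r}$ and $(\alpha^{\op}\times\beta)^{*}\mathcal{E}^{\ell}\simeq(\mathcal{E}')^{\ell}$.

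With this reduction the first equivalence is exactly the assertion that $\mathcal{C}^{\Delta^{1}}\to\mathcal{C}\times\mathcal{C}$ is the free bifibration on $\mathcal{C}$: the preceding proposition gives a natural equivalence $\Map_{/\mathcal{C}\times\mathcal{C}}(\mathcal{C}^{\Delta^{1}},\mathcal{E}')\simeq\Map_{/\mathcal{C}\times\mathcal{C}}(\mathcal{C},\mathcal{E}')$. For the remaining two equivalences I would invoke the dualisation equivalences $\Cat_{\infty/\mathcal{C}\times\mathcal{C}}^{\txt{bifib}}\simeq\Cat_{\infty/\mathcal{C}\times\mathcal{C}^{\op}}^{R}\simeq\Cat_{\infty/\mathcal{C}^{\op}\times\mathcal{C}}^{L}$ of Proposition~\ref{propn:bifibLR} and its dual (as packaged in the final remark of \S\ref{subsec:bifib}), under which $\mathcal{C}^{\Delta^{1}}$ corresponds to $\Twr(\mathcal{C})$ and $\Twl(\mathcal{C})$ by the preceding corollary, and $\mathcal{E}'$ to $(\mathcal{E}')^{r}$ and $(\mathcal{E}')^{\ell}$. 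Being equivalences of \icats{} they induce equivalences on mapping spaces; moreover Lemma~\ref{lem:bifibautocart} shows that the mapping space in $\Cat_{\infty/\mathcal{C}\times\mathcal{C}}^{\txt{bifib}}$ is the full space $\Map_{/\mathcal{C}\times\mathcal{C}}(\mathcal{C}^{\Delta^{1}},\mathcal{E}')$, since every functor over the base automatically preserves cartesian and cocartesian morphisms, while for left and right fibrations every morphism is already (co)cartesian, so the corresponding mapping spaces are the full ones as well. This produces $\Map_{/\mathcal{C}\times\mathcal{C}}(\mathcal{C}^{\Delta^{1}},\mathcal{E}')\simeq\Map_{/\mathcal{C}\times\mathcal{C}^{\op}}(\Twr(\mathcal{C}),(\mathcal{E}')^{r})\simeq\Map_{/\mathcal{C}^{\op}\times\mathcal{C}}(\Twl(\mathcal{C}),(\mathcal{E}')^{\ell})$, which together with the first paragraph completes the claimed chain.

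The main obstacle I anticipate is bookkeeping rather than anything conceptual: one must verify that forming the dual fibration commutes with base change and keep precise track of which factors are opposed ($\mathcal{B}$ versus $\mathcal{B}^{\op}$, $\mathcal{A}$ versus $\mathcal{A}^{\op}$), so that the pulled-back duals $(\alpha\times\beta^{\op})^{*}\mathcal{E}^{r}$ and $(\alpha^{\op}\times\beta)^{*}\mathcal{E}^{\ell}$ really are identified with $(\mathcal{E}')^{r}$ and $(\mathcal{E}')^{\ell}$. The subtle points to confirm carefully are that $\mathcal{E}'$ remains a bifibration and that the three dualisation equivalences are compatible with the base-change identifications of the square-shaped mapping spaces, so that the separate equivalences assemble into the single natural chain asserted in the corollary.
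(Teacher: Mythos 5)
Your proof is correct and is essentially the argument the paper intends: the corollary is stated without an explicit proof precisely because it follows, after base change along $\alpha\times\beta$, from the free-bifibration property of $\mathcal{C}^{\Delta^{1}}\to\mathcal{C}\times\mathcal{C}$, the preceding corollary identifying the left and right fibrations associated to this bifibration with $\Tw^{\ell}(\mathcal{C})$ and $\Tw^{r}(\mathcal{C})$, and the dualization equivalences of Proposition~\ref{propn:bifibLR} (as packaged in the closing remark of \S\ref{subsec:bifib}). The two bookkeeping points you flag --- stability of bifibrations under pullback and compatibility of dualization with base change --- are indeed the only details the paper leaves implicit, and both hold for the reasons you indicate.
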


\subsection{Fibrations of Functor $\infty$-Categories}
In this subsection we will prove the following result:
\begin{propn}\label{propn:Funcartcocart}
  Let $\mathcal{F} \to \mathcal{I}$ be the cocartesian fibration for a
  functor $F \colon \mathcal{I} \to \CatI$ and $\mathcal{G} \to
  \mathcal{I}$ be the cartesian fibration for a functor $G \colon
  \mathcal{I}^{\op} \to \CatI$. If $\mathcal{H} \to \mathcal{I}$ is
  the cocartesian fibration for the functor $H := \Fun(F(\blank),
  G(\blank)) \colon \mathcal{I} \to \CatI$, then there is a natural
  equivalence of \icats{}
  \[ \Fun_{\mathcal{I}}(\mathcal{I}, \mathcal{H}) \simeq
  \Fun_{\mathcal{I}}(\mathcal{F}, \mathcal{G}).\]
  Under this equivalence, the cocartesian sections of $\mathcal{H}$
  correspond to the functors $\mathcal{F} \to \mathcal{G}$ that take
  cartesian morphisms to cocartesian morphisms.
\end{propn}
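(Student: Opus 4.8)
The plan is to construct a comparison functor $\Theta \colon \Fun_{\mathcal{I}}(\mathcal{F}, \mathcal{G}) \to \Fun_{\mathcal{I}}(\mathcal{I}, \mathcal{H})$ and to check it is an equivalence by reducing to the cases $\mathcal{I} = \Delta^{0}$ and $\mathcal{I} = \Delta^{1}$. Conceptually, a functor $\mathcal{F} \to \mathcal{G}$ over $\mathcal{I}$ out of the cocartesian fibration $\mathcal{F}$ and into the cartesian fibration $\mathcal{G}$ is the same datum as a \emph{lax} natural transformation $F \Rightarrow G$ of the straightenings, and the total space of such lax transformations is by construction the $\infty$-category of sections of the fibration classified by $i \mapsto \Fun(F(i), G(i))$, \ie{} of $\mathcal{H}$. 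To turn this heuristic into a proof I would build $\Theta$ by restriction to fibres: over $i \in \mathcal{I}$ a functor $\Phi$ restricts to $\Phi_{i} \colon \mathcal{F}_{i} = F(i) \to G(i) = \mathcal{G}_{i}$, an object of $\mathcal{H}_{i}$, and I would organize the assignment $i \mapsto \Phi_{i}$ into a section via straightening/unstraightening so that all higher coherences are supplied automatically.

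The key local computation is the case $\mathcal{I} = \Delta^{1}$, which both pins down $\Theta$ and identifies the distinguished morphisms. Writing $f = F(0 \to 1) \colon F_{0} \to F_{1}$ for the cocartesian pushforward and $g \colon G_{1} \to G_{0}$ for the cartesian pullback, a functor $\Phi \colon \mathcal{F} \to \mathcal{G}$ over $\Delta^{1}$ unwinds to a triple $(\Phi_{0}, \Phi_{1}, \eta)$ with $\Phi_{0} \in \Fun(F_{0}, G_{0})$, $\Phi_{1} \in \Fun(F_{1}, G_{1})$, and a natural comparison $\eta \colon \Phi_{0} \to g \circ \Phi_{1} \circ f$ obtained by applying $\Phi$ to the cocartesian lifts in $\mathcal{F}$. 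This is exactly the data of a morphism $\Phi_{0} \to \Phi_{1}$ in $\mathcal{H}$ over $0 \to 1$, so $\Fun_{\Delta^{1}}(\mathcal{F}, \mathcal{G}) \simeq \Fun_{\Delta^{1}}(\Delta^{1}, \mathcal{H})$. Tracking invertibility of $\eta$ then identifies the distinguished (cocartesian) sections of $\mathcal{H}$ with the functors $\mathcal{F} \to \mathcal{G}$ that preserve the distinguished morphisms, \ie{} that send cartesian morphisms to cocartesian ones, as in the statement.

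To globalize I would use the canonical presentation $\mathcal{I} \simeq \operatorname*{colim}_{\sigma \colon \Delta^{n} \to \mathcal{I}} \Delta^{n}$. Since $\mathcal{F} \simeq \operatorname*{colim}_{\sigma} \sigma^{*}\mathcal{F}$ in $\Cat_{\infty/\mathcal{I}}$ and $\Fun_{\mathcal{I}}(-, \mathcal{G})$ carries colimits to limits, one gets $\Fun_{\mathcal{I}}(\mathcal{F}, \mathcal{G}) \simeq \lim_{\sigma} \Fun_{\Delta^{n}}(\sigma^{*}\mathcal{F}, \sigma^{*}\mathcal{G})$, and likewise $\Fun_{\mathcal{I}}(\mathcal{I}, \mathcal{H}) \simeq \lim_{\sigma} \Fun_{\Delta^{n}}(\Delta^{n}, \sigma^{*}\mathcal{H})$, where $\sigma^{*}\mathcal{H}$ is again the functor fibration for the pulled-back data, as the constructions commute with base change. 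It therefore suffices to treat $\mathcal{I} = \Delta^{n}$, where a cocartesian (resp.\ cartesian) fibration is determined by its restriction to the spine; the Segal gluing of $\Delta^{n}$ from copies of $\Delta^{1}$ along $\Delta^{0}$ reduces the claim to the cases $\Delta^{0}$ (trivial) and $\Delta^{1}$ (the mate computation above), naturality of $\Theta$ ensuring the gluings agree.

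The hard part will be this globalization bookkeeping: making $\Theta$ genuinely natural in $\mathcal{I}$ so that it is compatible with base change along $\sigma$, and verifying the two exchange properties — that $\Fun_{\mathcal{I}}(-,\mathcal{G})$ and the section functor convert the simplicial colimit presenting $\mathcal{F}$ (resp.\ $\mathcal{I}$) into the same limit over $\simp_{/\mathcal{I}}$. Once these coherence and (co)limit-exchange statements are in place, the local $\Delta^{\leq 1}$ analysis propagates formally to arbitrary $\mathcal{I}$. I would supply the coherence either through the straightening equivalence $\Cat_{\infty/\mathcal{I}}^{\mathrm{cocart}} \simeq \Fun(\mathcal{I}, \CatI)$, under which $\Theta$ becomes the identification of lax natural transformations with sections, or, if an explicit model is wanted, through the marked-simplicial-set mapping complexes of \cite{HTT}*{\S 3.2}.
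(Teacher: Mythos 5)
Your proposal takes a genuinely different route from the paper, which never reduces to simplices: there, both $\Fun_{\mathcal{I}}(\mathcal{I},\mathcal{H})$ and $\Fun_{\mathcal{I}}(\mathcal{F},\mathcal{G})$ are rewritten as honest limits over (iterated) twisted arrow categories of $\mathcal{I}$ using \cite{freepres}*{Corollaries 7.6 and 7.7}, and the two limits are then matched by the cofinality arguments of Lemmas~\ref{lem:Twcof}, \ref{lem:Tw2cof} and~\ref{lem:cocartcof}. Several ingredients of your plan are sound: co/cartesian fibrations are flat (\cite{HA}*{\S B.3}), so pulling back the decomposition $\mathcal{I}\simeq\colim_{\sigma}\Delta^{n}$ does give $\mathcal{F}\simeq\colim_{\sigma}\sigma^{*}\mathcal{F}$ in $\Cat_{\infty/\mathcal{I}}$, the functor $\Fun_{/\mathcal{I}}(-,\mathcal{G})$ does convert this into a limit, and your $\Delta^{1}$ computation identifying both sides with triples $(\Phi_{0},\Phi_{1},\eta\colon\Phi_{0}\to g\Phi_{1}f)$ is correct.

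The genuine gap sits exactly where you put the label ``globalization bookkeeping'': that step is not bookkeeping, it is the proposition. To compare the two diagrams $(\simp_{/\mathcal{I}})^{\op}\to\CatI$ you need a map of diagrams, i.e.\ equivalences $\Fun_{\Delta^{n}}(\sigma^{*}\mathcal{F},\sigma^{*}\mathcal{G})\simeq\Fun_{\Delta^{n}}(\Delta^{n},\sigma^{*}\mathcal{H})$ chosen coherently in $\sigma$; objectwise equivalences, however canonical they look, do not assemble into an equivalence of limits. Constructing that coherent family is equivalent to constructing $\Theta$ itself, and $\Theta$ cannot be produced ``fibrewise, with straightening supplying the higher coherences automatically'': because $H$ mixes the variances of $F$ and $G$, the assignment $i\mapsto\Phi_{i}$ is only \emph{laxly} natural, and the assertion that such lax data is the same as a section of the unstraightening of the pointwise $\Fun$ is essentially the statement being proved --- the technical device that makes it accessible is \cite{freepres}*{Corollary 7.7}, which is the very first thing the paper's proof invokes. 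Your fallback via \cite{HTT}*{Corollary 3.2.2.13} hits the same wall one step later: that result produces a fibration $\widetilde{\mathcal{H}}$ with the tautological property $\Fun_{/\mathcal{I}}(K,\widetilde{\mathcal{H}})\simeq\Fun_{/\mathcal{I}}(K\times_{\mathcal{I}}\mathcal{F},\mathcal{G})$, but one must then identify $\widetilde{\mathcal{H}}$ with $\mathcal{H}$, the unstraightening of $H$, and knowing that two fibrations over $\mathcal{I}$ have equivalent fibres and equivalent transition functors over every $\Delta^{1}$ does not yield an equivalence of fibrations --- that is coherence data of exactly the kind you are missing. Indeed, the paper's proof is best read as precisely this identification (its closing sentence observes that $\mathcal{H}$ has been shown to enjoy the universal property of \cite{HTT}*{Corollary 3.2.2.13}, which is also how the distinguished sections are recognized); the twisted-arrow limits and cofinality lemmas are what stand in for the coherence you hope to get for free. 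Until $\Theta$ (equivalently, an evaluation map $\mathcal{H}\times_{\mathcal{I}}\mathcal{F}\to\mathcal{G}$ over $\mathcal{I}$) is constructed as a genuinely natural map, the descent scaffolding has nothing to transport.
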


The proof requires understanding a variant of the twisted
arrow category:
\begin{defn}\label{defn:Tw2}
  For an \icat{} $\mathcal{C}$, viewed as a complete Segal space, we
  define $\Tw_{2}(\mathcal{C})$ to be the simplicial space
  \[ \Tw^{r}_{2}(\mathcal{C})_{n} \simeq \Map([n] \star [n]^{\op} \star
  [n], \mathcal{C}).\]
  Since $[n] \star [n]^{\op} \star
  [n]$ can be identified with the pushout of \icats{} $([n] \star
  [n]^{\op}) \amalg_{[n]^{\op}} ([n]^{\op} \star [n])$, the simplicial
  space $\Tw^{r}_{2}(\mathcal{C})$ is given by the pullback
  \[
  \begin{tikzcd}
    \Tw^{r}_{2}(\mathcal{C}) \arrow{r} \arrow{d} &
    \Tw^{r}(\mathcal{C}) \arrow{d} \\
    \Tw^{r}(\mathcal{C})^{\op} \arrow{r} & \mathcal{C}^{\op}.
  \end{tikzcd}
  \]
  This implies in particular that $\Tw^{r}_{2}(\mathcal{C})$ is a
  complete Segal space, \ie{} an \icat{}.
\end{defn}

\begin{lemma}\label{lem:Twrfib}
  Let $f \colon \mathcal{E} \to \mathcal{B}$ be any functor of
  \icats{}. Then \[\Tw^{r}(\mathcal{B}) \times_{\mathcal{B}}
  \mathcal{E} \to \mathcal{B}^{\op}\] is a cartesian fibration,
  corresponding to the functor $\mathcal{B} \to \CatI$ given by
  \[ b \mapsto \mathcal{B}_{/b} \times_{\mathcal{B}} \mathcal{E}.\]
\end{lemma}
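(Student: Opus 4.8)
The plan is to realize $\Twr(\mathcal{B}) \times_{\mathcal{B}} \mathcal{E}$ as a pullback of cartesian fibrations over $\mathcal{B}^{\op}$ and then to read off both the fibration property and the associated functor from the straightening equivalence, which preserves limits. By Proposition~\ref{propn:Twrcartfib} the target projection $\Twr(\mathcal{B}) \to \mathcal{B}^{\op}$ is a cartesian fibration; arguing as in the proof there (dualization preserves fibres, and the fibre of $\txt{ev}_{1} \colon \mathcal{B}^{\Delta^{1}} \to \mathcal{B}$ over $b$ is $\mathcal{B}_{/b}$), its straightening is the functor $\mathcal{B} \to \CatI$ given by $b \mapsto \mathcal{B}_{/b}$. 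First I would record this identification, together with the description of its cartesian morphisms as the duals of the $\txt{ev}_{1}$-cocartesian morphisms of $\mathcal{B}^{\Delta^{1}} \to \mathcal{B}$, which are precisely the morphisms whose source (domain) component is an equivalence.

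Next I would package the source and target projections into the map $\Twr(\mathcal{B}) \to \mathcal{B} \times \mathcal{B}^{\op}$, viewed as a map over $\mathcal{B}^{\op}$, and observe that under straightening it corresponds to the natural transformation from $b \mapsto \mathcal{B}_{/b}$ to the constant functor $\underline{\mathcal{B}}$ given objectwise by the domain projection $\mathcal{B}_{/b} \to \mathcal{B}$. Likewise $f \times \id \colon \mathcal{E} \times \mathcal{B}^{\op} \to \mathcal{B} \times \mathcal{B}^{\op}$ is a map of (constant) cartesian fibrations over $\mathcal{B}^{\op}$ straightening to the constant transformation $\underline{\mathcal{E}} \to \underline{\mathcal{B}}$ at $f$. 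Forming the pullback $\Twr(\mathcal{B}) \times_{\mathcal{B} \times \mathcal{B}^{\op}} (\mathcal{E} \times \mathcal{B}^{\op})$ over $\mathcal{B}^{\op}$ and unravelling identifies it with $\Twr(\mathcal{B}) \times_{\mathcal{B}} \mathcal{E}$ — the fibre product along the source projection and $f$ — whose residual map to $\mathcal{B}^{\op}$ is the target projection. Since straightening is an equivalence it preserves this pullback, and limits of functors into $\CatI$ are objectwise; hence $\Twr(\mathcal{B}) \times_{\mathcal{B}} \mathcal{E} \to \mathcal{B}^{\op}$ is cartesian with straightening the objectwise pullback $b \mapsto \mathcal{B}_{/b} \times_{\mathcal{B}} \mathcal{E}$, which is exactly the asserted functor.

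The one point requiring care — and the main obstacle — is that a pullback computed in $\CatI_{/\mathcal{B}^{\op}}$ need not coincide with the pullback of cartesian fibrations unless the two maps being pulled back preserve cartesian morphisms. For $f \times \id$ this is immediate, since the cartesian morphisms of a constant fibration are those with an invertible fibre component and $f$ preserves equivalences. For the source–target projection it holds because it straightens to a genuine natural transformation, but it is worth seeing concretely: by the description of cartesian morphisms recorded in the first step, the source projection sends every cartesian morphism of $\Twr(\mathcal{B}) \to \mathcal{B}^{\op}$ to an equivalence in $\mathcal{B}$. Consequently these cartesian morphisms admit canonical lifts to the fibre product (pairing them with degenerate data on the $\mathcal{E}$-factor), and a direct check shows the lifts are again cartesian. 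This simultaneously verifies that the projection is a cartesian fibration, that its cartesian morphisms are detected in the $\Twr(\mathcal{B})$-factor, and that the pullback in $\CatI_{/\mathcal{B}^{\op}}$ agrees with the pullback of cartesian fibrations, closing the argument.
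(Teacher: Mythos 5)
Your proof is correct and takes essentially the same route as the paper: both identify $\Tw^{r}(\mathcal{B}) \times_{\mathcal{B}} \mathcal{E}$ with the fibre product $\Tw^{r}(\mathcal{B}) \times_{\mathcal{B} \times \mathcal{B}^{\op}} (\mathcal{E} \times \mathcal{B}^{\op})$ of cartesian fibrations over $\mathcal{B}^{\op}$, invoke Proposition~\ref{propn:Twrcartfib} to identify the factor $\Tw^{r}(\mathcal{B}) \to \mathcal{B}^{\op}$ with $b \mapsto \mathcal{B}_{/b}$, and read off the straightening as the objectwise fibre product of the three functors, the other two being constant. The only difference is one of bookkeeping: you make explicit the check that the structure maps preserve cartesian morphisms (so that the pullback in $\Cat_{\infty/\mathcal{B}^{\op}}$ computes the pullback of cartesian fibrations), a point the paper leaves implicit.
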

\begin{proof}
  This functor factors as the composite
  \[ \Tw^{r}(\mathcal{B}) \times_{\mathcal{B}} \mathcal{E} \to
    \mathcal{E} \times \mathcal{B}^{\op} \to \mathcal{B}^{\op},\]
  where the first functor is a cartesian fibration, being a pullback
  of $\Tw^{r}(\mathcal{B}) \to \mathcal{B} \times \mathcal{B}^{\op}$,
  and the second is obviously a cartesian fibration. Moreover, we can
  write $\Tw^{r}(\mathcal{B}) \times_{\mathcal{B}} \mathcal{E}$ as the
  fibre product
  $\Tw^{r}(\mathcal{B}) \times_{\mathcal{B} \times \mathcal{B}^{\op}}
  \mathcal{E} \times \mathcal{B}^{\op}$ of cartesian fibrations over
  $\mathcal{B}^{\op}$. This identifies the corresponding functors as
  the fibre product of the functors associated to the three factors;
  as $\Tw^{r}(\mathcal{B}) \to \mathcal{B}^{\op}$ corresponds to
  $b \mapsto \mathcal{B}_{/b}$ by Proposition~\ref{propn:Twrcartfib}
  and the two other fibrations correspond to constant functors, this
  gives the result.
\end{proof}

\begin{lemma}\label{lem:Twslice}
  There are natural equivalences of \icats{}
  \[ \Tw^{r}(\mathcal{C}_{/x}) \simeq (\mathcal{C}_{/x})^{\op}
  \times_{\mathcal{C}^{\op}} \Tw^{r}(\mathcal{C}),\]
  \[ \Tw^{r}(\mathcal{C}_{x/}) \simeq \mathcal{C}_{x/}
  \times_{\mathcal{C}} \Tw^{r}(\mathcal{C}).\]
\end{lemma}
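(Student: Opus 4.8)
The plan is to prove both equivalences simultaneously by comparing the two sides \emph{levelwise} as simplicial spaces and recognizing each level as a mapping space out of a triple join into $\mathcal{C}$. Recall that $\Tw^{r}(\mathcal{C})_{n}=\Map([n]\star[n]^{\op},\mathcal{C})$, and that the over- and undercategories are corepresented by joins, via natural equivalences $\Map(K,\mathcal{C}_{/x})\simeq\Map(K\star[0],\mathcal{C})\times_{\mathcal{C}}\{x\}$ and $\Map(K,\mathcal{C}_{x/})\simeq\Map([0]\star K,\mathcal{C})\times_{\mathcal{C}}\{x\}$, where the fibre product over $\mathcal{C}$ records the image of the adjoined vertex, required to be $x$. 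Taking $K=[n]\star[n]^{\op}$, the first formula gives $\Tw^{r}(\mathcal{C}_{/x})_{n}\simeq\Map([n]\star[n]^{\op}\star[0],\mathcal{C})\times_{\mathcal{C}}\{x\}$, and the second gives $\Tw^{r}(\mathcal{C}_{x/})_{n}\simeq\Map([0]\star[n]\star[n]^{\op},\mathcal{C})\times_{\mathcal{C}}\{x\}$.

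Next I would compute the fibre products on the right-hand sides levelwise. For the coslice equivalence, the projection $\Tw^{r}(\mathcal{C})\to\mathcal{C}$ appearing in $\mathcal{C}_{x/}\times_{\mathcal{C}}\Tw^{r}(\mathcal{C})$ is the source projection, i.e.\ restriction along $[n]\hookrightarrow[n]\star[n]^{\op}$, while $\mathcal{C}_{x/}\to\mathcal{C}$ is restriction along $[n]\hookrightarrow[0]\star[n]$. Hence $(\mathcal{C}_{x/}\times_{\mathcal{C}}\Tw^{r}(\mathcal{C}))_{n}$ is the space of pairs of functors on $[0]\star[n]$ and on $[n]\star[n]^{\op}$ agreeing on $[n]$, that is $\Map\bigl(([0]\star[n])\amalg_{[n]}([n]\star[n]^{\op}),\mathcal{C}\bigr)\times_{\mathcal{C}}\{x\}$. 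The key input is then the identification in $\CatI$ of this pushout with the triple join $[0]\star[n]\star[n]^{\op}$ --- precisely the join decomposition already used in Definition~\ref{defn:Tw2} --- which makes it agree with the levelwise formula for $\Tw^{r}(\mathcal{C}_{x/})_{n}$ above. The slice equivalence runs identically, using instead the target projection $\Tw^{r}(\mathcal{C})\to\mathcal{C}^{\op}$ (restriction along $[n]^{\op}$), the identity $\Map([n],(\mathcal{C}_{/x})^{\op})\simeq\Map([n]^{\op}\star[0],\mathcal{C})\times_{\mathcal{C}}\{x\}$, and the decomposition $([n]\star[n]^{\op})\amalg_{[n]^{\op}}([n]^{\op}\star[0])\simeq[n]\star[n]^{\op}\star[0]$.

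Finally I would verify that these levelwise equivalences are natural in $[n]\in\simp^{\op}$, so that they assemble into maps of simplicial spaces; since all four objects are complete Segal spaces (by Proposition~\ref{propn:TwCicat}, together with the fact that slices of \icats{} are \icats{}), a levelwise equivalence is automatically an equivalence of \icats{}. The main obstacle I anticipate is essentially bookkeeping: keeping straight which of the two projections of $\Tw^{r}(\mathcal{C})$ (to $\mathcal{C}$ or to $\mathcal{C}^{\op}$) is used in each fibre product, and on which extremal factor of the triple join the adjoined vertex $[0]$ sits, so that the cone-point condition ``$\mapsto x$'' lands on the correct factor and is preserved under the gluing along the middle term. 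The substantive categorical point --- that gluing two joins along their common middle term computes the triple join after mapping into an \icat{} --- is the same Segal-type statement already invoked for $\Tw_{2}$, so no new input beyond Definition~\ref{defn:Tw2} is required.
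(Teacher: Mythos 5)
Your proposal is correct and is essentially the paper's own proof: both arguments work levelwise in simplicial spaces, use the join universal property of (co)slices to write $\Tw^{r}(\mathcal{C}_{/x})_{n}$ (resp.\ $\Tw^{r}(\mathcal{C}_{x/})_{n}$) as maps out of a triple join with the adjoined vertex sent to $x$, and identify the fibre product levelwise via the same gluing fact --- your identification of the pushout $([0]\star[n])\amalg_{[n]}([n]\star[n]^{\op})$ with the triple join is exactly what the paper asserts when it declares the corresponding square of restriction maps among the joins to be a pullback. The only difference is presentational: the paper writes out the slice case as a pasted diagram of pullback squares and calls the coslice case ``similar'', while you detail the coslice case and sketch the slice case.
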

\begin{proof}
  We will prove the first equivalence; the proof of the second is
  similar. By the universal property of $\mathcal{C}_{/x}$ and the
  definition of the twisted arrow \icat{}, we have a natural pullback
  square
  \[
  \begin{tikzcd}
    \Map([n], \Tw^{r}(\mathcal{C})_{/x}) \arrow{d} \arrow{r} &
    \Map([n] \star [n]^{\op} \star [0], \mathcal{C}) \arrow{d} \\
    \{x\} \arrow{r} & \Map([0], \mathcal{C}).
  \end{tikzcd}
  \]
  On the other hand, we have a pullback square
  \[
  \begin{tikzcd}
    \Map([n], (\mathcal{C}_{/x})^{\op}
  \times_{\mathcal{C}^{\op}} \Tw^{r}(\mathcal{C})) \arrow{d} \arrow{r} &
    \Map([n]^{\op}, \mathcal{C}_{/x}) \arrow{d} \\
    \Map([n] \star [n]^{\op}, \mathcal{C}) \arrow{r} & \Map([n]^{\op}, \mathcal{C}).
  \end{tikzcd}
  \]
  We can expand this to a commutative diagram
  \[
  \begin{tikzcd}
    \Map([n], (\mathcal{C}_{/x})^{\op}
  \times_{\mathcal{C}^{\op}} \Tw^{r}(\mathcal{C})) \arrow{d} \arrow{r} &
    \Map([n]^{\op}, \mathcal{C}_{/x}) \arrow{d} \arrow{r} & \{x\}
    \arrow{d} \\
    \Map([n] \star [n]^{\op} \star [0], \mathcal{C}) \arrow{r}
    \arrow{d} & \Map([n]^{\op} \star [0], \mathcal{C}) \arrow{d}
    \arrow{r} & \Map([0], \mathcal{C}) \\
    \Map([n] \star [n]^{\op}, \mathcal{C}) \arrow{r} & \Map([n]^{\op}, \mathcal{C}),
  \end{tikzcd}
  \]
  where all the squares are pullbacks. In particular the composite
  square in the top row is a pullback, which shows that $\Map([n], (\mathcal{C}_{/x})^{\op}
  \times_{\mathcal{C}^{\op}} \Tw^{r}(\mathcal{C}))$ is equivalent to
  $\Map([n], \Tw^{r}(\mathcal{C})_{/x})$, naturally in $[n]$ and $x$,
  as required.
\end{proof}

\begin{lemma}\label{lem:wcfibcofcoinit}
  Suppose $\pi \colon \mathcal{E} \to \mathcal{B}$ is a cartesian
  fibration whose fibres are all weakly contractible. Then $\pi$ is
  both cofinal and coinitial.
\end{lemma}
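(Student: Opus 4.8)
The plan is to treat cofinality and coinitiality separately, in each case applying the $\infty$-categorical form of Quillen's Theorem~A, \cite{HTT}*{Theorem 4.1.3.1}. This reduces the cofinality of $\pi$ to the weak contractibility of the comma \icats{} $\mathcal{E}\times_{\mathcal{B}}\mathcal{B}_{b/}$ for every object $b\in\mathcal{B}$, and the coinitiality of $\pi$ to that of $\mathcal{E}\times_{\mathcal{B}}\mathcal{B}_{/b}$ for every $b$.

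First I would observe that each of these comma \icats{} is again a cartesian fibration with weakly contractible fibres, now lying over a weakly contractible base. Indeed, cartesian fibrations are stable under base change (\cite{HTT}*{Proposition 2.4.2.3(2)}), so the projections $\mathcal{E}\times_{\mathcal{B}}\mathcal{B}_{b/}\to\mathcal{B}_{b/}$ and $\mathcal{E}\times_{\mathcal{B}}\mathcal{B}_{/b}\to\mathcal{B}_{/b}$ are cartesian fibrations; the fibre over an object lying above $b'\in\mathcal{B}$ is the fibre $\mathcal{E}_{b'}$, which is weakly contractible by hypothesis. Moreover the slice \icat{} $\mathcal{B}_{b/}$ has an initial object and $\mathcal{B}_{/b}$ has a terminal object, so both slices are themselves weakly contractible.

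It then remains to prove the following assertion, which I expect to be the main obstacle: a cartesian fibration $q\colon X\to T$ with weakly contractible fibres over a weakly contractible base $T$ has weakly contractible total space. I would deduce this from the description of the $\infty$-localization (groupoid completion) $L\colon\CatI\to\mathcal{S}$ of the total space of a cocartesian fibration as a colimit over the base of the localizations of the fibres, together with the fact that the colimit of the constant diagram at $\pt$ over an \icat{} computes its classifying space (\cite{HTT}*{Corollary 3.3.4.6}). Passing to the opposite cocartesian fibration $q^{\op}\colon X^{\op}\to T^{\op}$ and using $L(Y^{\op})\simeq L(Y)$, the hypothesis that every fibre is weakly contractible makes the relevant diagram constant at $\pt$, so that $L(X)\simeq L(X^{\op})\simeq\operatorname*{colim}_{T^{\op}}\pt\simeq L(T^{\op})\simeq L(T)\simeq\pt$, as desired. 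Feeding this input into the two families of comma \icats{} above then yields both cofinality and coinitiality.

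For the cofinal half one can avoid the colimit formula: since $\mathcal{B}_{b/}$ has an initial object $i$, the cartesian lifts of the unique morphisms $i\to q(x)$ assemble into a natural transformation exhibiting the inclusion of the fibre $X_i$ as a coreflective localization of $X$ (\ie{} as a right adjoint to $x\mapsto (i\to q(x))^{*}x$), and any adjunction induces an equivalence on groupoid completions, whence $L(X)\simeq L(X_i)\simeq\pt$. This shortcut is genuinely special to the cofinal case, however: for a cartesian fibration the coinitiality comma \icats{} live over bases with a \emph{terminal} rather than initial object, where no such (co)reflection onto a fibre is available, so the colimit formula seems unavoidable there and is the one step I would expect to require the most care.
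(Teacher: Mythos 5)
Your proof is correct, but it takes a genuinely different route from the paper's. The paper handles cofinality by directly citing \cite{HTT}*{Lemma 4.1.3.2}, which states exactly that a cartesian fibration with weakly contractible fibres is cofinal, and then proves coinitiality by a Kan extension argument: right Kan extension along a cartesian fibration is computed fibrewise, so $\pi_{*}\pi^{*}F(b) \simeq \lim_{\mathcal{E}_{b}} F(b) \simeq F(b)$ using weak contractibility of $\mathcal{E}_{b}$, whence $\pi_{*}\pi^{*}F \simeq F$ and $\lim_{\mathcal{E}}\pi^{*}F \simeq \lim_{\mathcal{B}}F$ for every $F \colon \mathcal{B} \to \mathcal{C}$. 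You instead run both halves through Quillen's Theorem A, reducing everything to a single statement --- a cartesian fibration with weakly contractible fibres over a weakly contractible base has weakly contractible total space --- which you prove via the colimit formula for the groupoid completion of the total space of a (co)cartesian fibration. This is valid and pleasantly uniform (one lemma serves both directions), at the price of heavier input (Theorem A plus the localization formula); the paper's coinitiality argument needs only the pointwise formula for right Kan extensions along cartesian fibrations. Note also that your key contractibility lemma admits a cheaper proof: by \cite{HTT}*{Lemma 4.1.3.2} the fibration $X \to T$ is itself cofinal, and cofinal functors are weak homotopy equivalences \cite{HTT}*{Proposition 4.1.1.3}, so $L(X) \simeq L(T) \simeq \pt$; this would let you bypass the colimit formula entirely.

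One small slip in your cofinal shortcut, which does not affect the conclusion: for a cartesian fibration $q \colon X \to T$ over a base with initial object $i$, the fibre inclusion $\iota \colon X_{i} \hookrightarrow X$ is the \emph{left} adjoint, with the pullback $x \mapsto (i \to q(x))^{*}x$ as its right adjoint (the coreflector). Indeed, the universal property of cartesian lifts gives $\Map_{X}(\iota y, x) \simeq \Map_{X_{i}}(y, (i \to q(x))^{*}x)$, using that $\Map_{T}(i,-)$ is contractible; the adjunction cannot go the way your parenthetical asserts, since $\Map_{X}(x, \iota y)$ lies over $\Map_{T}(q(x), i)$, which is typically empty. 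As an adjunction in either direction induces an equivalence on groupoid completions, your conclusion $L(X) \simeq L(X_{i}) \simeq \pt$ still stands.
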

\begin{proof}
  The functor $\pi$ is cofinal by \cite{HTT}*{Lemma 4.1.3.2}. To see
  that it is also coinitial, observe that for any functor
  $F \colon \mathcal{B} \to \mathcal{C}$ the right Kan extension
  $\pi_{*}\pi^{*}F$ exists, and
  $\pi_{*}\pi^{*}F(b) \simeq \lim_{\mathcal{E}_{b}} F(b) \simeq F(b)$
  where the second equivalence uses that $\mathcal{E}_{b}$ is weakly
  contractible; thus $\pi_{*}\pi^{*}F \simeq F$. The limit of
  $\pi^{*}F$ over $\mathcal{E}$ is the limit over $\mathcal{B}$ of
  $\pi_{*}\pi^{*}F \simeq F$, hence $\pi$ is indeed coinitial.
\end{proof}

\begin{lemma}\label{lem:Twcof}
  For any \icat{} $\mathcal{C}$, the functors $\Tw^{r}(\mathcal{C}) \to
  \mathcal{C}, \mathcal{C}^{\op}$ are both cofinal and coinitial.
\end{lemma}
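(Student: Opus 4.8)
The plan is to deduce both statements from Lemma~\ref{lem:wcfibcofcoinit}, using the fibration description of $\Twr(\cC)$ furnished by Proposition~\ref{propn:Twrcartfib}, together with a duality argument to pass between the two projections.

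First I would treat the projection $\Twr(\cC) \to \cC^{\op}$. By Proposition~\ref{propn:Twrcartfib} this map is a cartesian fibration, and the proof of that proposition identifies its fibre over an object $b$ with the slice $\cC_{/b}$. Since $\cC_{/b}$ has a terminal object, namely $\id_b$, it is weakly contractible. Thus $\Twr(\cC)\to\cC^{\op}$ is a cartesian fibration all of whose fibres are weakly contractible, and Lemma~\ref{lem:wcfibcofcoinit} immediately gives that it is both cofinal and coinitial.

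For the remaining projection $\Twr(\cC)\to\cC$ I would apply the previous step with $\cC^{\op}$ in place of $\cC$: the map $\Twr(\cC^{\op})\to(\cC^{\op})^{\op}\simeq\cC$ is then both cofinal and coinitial. It remains to match this with $\Twr(\cC)\to\cC$. Using that $(A\star B)^{\op}\simeq B^{\op}\star A^{\op}$, one has $([n]\star[n]^{\op})^{\op}\simeq[n]\star[n]^{\op}$, so passing to opposites yields a natural equivalence of simplicial spaces $\Twr(\cC^{\op})\simeq\Twr(\cC)$. Under this equivalence the two structure maps to $\cC\times\cC^{\op}$ get interchanged, so that the projection $\Twr(\cC^{\op})\to\cC$ (to the factor arising from $(\cC^{\op})^{\op}$) is identified with the projection $\Twr(\cC)\to\cC$ (to the factor arising from $\cC$). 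This transports cofinality and coinitiality to the desired map, completing the proof.

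The only delicate point—and the step I would be most careful about—is this last identification of projections. One must verify that the self-duality $([n]\star[n]^{\op})^{\op}\simeq[n]\star[n]^{\op}$ is natural in $[n]\in\simp$, and, crucially, that it \emph{swaps} the two half-joins, so that it genuinely exchanges the projections down to $\cC$ and to $\cC^{\op}$ rather than preserving them; otherwise the two claims would collapse into one. Everything else reduces to a direct appeal to Proposition~\ref{propn:Twrcartfib} and Lemma~\ref{lem:wcfibcofcoinit}.
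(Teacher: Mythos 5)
Your proposal is correct, and its first half coincides with the paper's argument; the difference is in how the projection to $\cC$ is handled. The paper treats both projections symmetrically and directly: it observes that $\Tw^{r}(\cC) \to \cC$ and $\Tw^{r}(\cC) \to \cC^{\op}$ are \emph{both} cartesian fibrations (each being the composite of the right fibration $\Tw^{r}(\cC) \to \cC \times \cC^{\op}$ with a projection off the product), with fibres $(\cC_{x/})^{\op}$ and $\cC_{/x}$ respectively -- both identifications are already recorded in the proof of Proposition~\ref{propn:Twrcartfib} -- and then applies Lemma~\ref{lem:wcfibcofcoinit} to each, since both fibres are weakly contractible. You instead handle only the projection to $\cC^{\op}$ this way and obtain the projection to $\cC$ by the self-duality $\Tw^{r}(\cC^{\op}) \simeq \Tw^{r}(\cC)$. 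That route is also valid: the point you flag as delicate does check out, since $(A \star B)^{\op} \simeq B^{\op} \star A^{\op}$ identifies $([n]\star[n]^{\op})^{\op}$ with $[n]\star[n]^{\op}$ \emph{with the two half-joins exchanged}, naturally in $[n]$ (a map $\phi$ induces $\phi \star \phi^{\op}$, which is fixed by this identification), so the equivalence really does swap the projections to $\cC$ and to $\cC^{\op}$; and cofinality and coinitiality are invariant under equivalences of diagrams. The trade-off is minor: the paper's version is shorter and needs no naturality-of-duality bookkeeping, at the cost of identifying the second fibre as $(\cC_{x/})^{\op}$; your version avoids that fibre computation entirely, at the cost of the swap verification, which you correctly isolated as the one point requiring care.
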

\begin{proof}
  We know that $\Tw^{r}(\mathcal{C}) \to \mathcal{C}$ and
  $\Tw^{r}(\mathcal{C}) \to \mathcal{C}^{\op}$ are cartesian
  fibrations, with fibres $(\mathcal{C}_{x/})^{\op}$ and
  $\mathcal{C}_{/x}$, respectively. These \icats{} are weakly
  contractible, hence these functors are both cofinal and coinitial by
  Lemma~\ref{lem:wcfibcofcoinit}.
\end{proof}

\begin{lemma}\label{lem:Tw2cocart}\ 
  Let $\pi_{0}, \pi_{2} \colon \Tw^{r}_{2}(\mathcal{C}) \to
  \mathcal{C}$ be the projections induced by restriction to the first
  and second copy of $[n]$ in $[n] \star [n]^{\op} \star [n]$,
  respectively. Then
  \begin{enumerate}[(i)]
  \item $\pi_{0}$ is a cartesian fibration, corresponding to the functor
    $x \mapsto \Tw^{r}(\mathcal{C}_{x/})^{\op}$.
  \item $\pi_{2}$ is a cocartesian fibration, corresponding to the
    functor $x \mapsto \Tw^{r}(\mathcal{C}_{/x})$,
  \end{enumerate}
\end{lemma}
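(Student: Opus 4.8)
My plan is to prove (i) in general and then deduce (ii) by passing to opposite categories. One checks directly from Definition~\ref{defn:Tw2} that $\Twr_2(\cC)^{\op}\simeq\Twr_2(\cC^{\op})$, an equivalence under which $\pi_0$ and $\pi_2$ are exchanged and cartesian fibrations correspond to cocartesian ones; it therefore suffices to establish (i) for every $\cC$ and apply it to $\cC^{\op}$. (Alternatively, (ii) follows from the evident cocartesian analogue of Lemma~\ref{lem:Twrfib}, argued symmetrically.)

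To prove (i), I would start from the pullback presentation of Definition~\ref{defn:Tw2}, which writes $\Twr_2(\cC)$ as $\Twr(\cC)\times_{\cC^{\op}}\Twl(\cC)$, the fibre product of the target projection $\Twr(\cC)\to\cC^{\op}$ with the source projection $\Twl(\cC)\to\cC^{\op}$. The fibre of $\pi_0$ is computed directly by collapsing the (initial, constant) first copy of $[n]$ in the join: a functor $[n]\star[n]^{\op}\star[n]\to\cC$ whose restriction to the first $[n]$ is constant at $x$ is the same datum as a functor $[n]^{\op}\star[n]\to\cC_{x/}$, so the fibre of $\pi_0$ over $x$ is $\Twl(\cC_{x/})=\Twr(\cC_{x/})^{\op}$.

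For the fibration statement and the identification of the classifying functor I would apply Lemma~\ref{lem:Twrfib} with $\mathcal{B}=\cC^{\op}$ and $\mathcal{E}=\Twl(\cC)$ mapping to $\cC^{\op}$ by its source leg. The key point is to use the canonical equivalence $\Twr(\cC)\simeq\Twr(\cC^{\op})$ that swaps the source and target legs, so that the first factor $\Twr(\cC)$ is identified with $\Twr(\cC^{\op})$ in such a way that $\pi_0$ becomes its \emph{target} projection. Under this identification $\Twr_2(\cC)\simeq\Twr(\cC^{\op})\times_{\cC^{\op}}\Twl(\cC)$ is exactly the total space appearing in Lemma~\ref{lem:Twrfib}, which then yields that $\pi_0$ is a cartesian fibration over $(\cC^{\op})^{\op}=\cC$, classified by the functor $x\mapsto(\cC^{\op})_{/x}\times_{\cC^{\op}}\Twl(\cC)$. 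Finally I would identify this fibre via Lemma~\ref{lem:Twslice}: since $(\cC^{\op})_{/x}\simeq(\cC_{x/})^{\op}$, and since opposing the second equivalence of Lemma~\ref{lem:Twslice} gives $\Twr(\cC_{x/})^{\op}\simeq(\cC_{x/})^{\op}\times_{\cC^{\op}}\Twl(\cC)$, the classifying functor is $x\mapsto\Twr(\cC_{x/})^{\op}$, in agreement with the direct fibre computation above.

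The main obstacle is the variance bookkeeping: one must orient the source and target legs of all the twisted-arrow \icats{} consistently, so that $\pi_0$ genuinely appears as the target projection that Lemma~\ref{lem:Twrfib} makes cartesian. This is exactly where the leg-swapping equivalence $\Twr(\cC)\simeq\Twr(\cC^{\op})$ and the correct orientation of Lemma~\ref{lem:Twslice} enter, and getting these opposites right—rather than accidentally producing a cocartesian fibration, or the wrong fibre $\Twr(\cC_{/x})$—is the delicate part of the argument.
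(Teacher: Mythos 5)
Your proposal is correct and follows essentially the same route as the paper: for (i) both arguments combine the defining pullback $\Tw^{r}(\mathcal{C})\times_{\mathcal{C}^{\op}}\Tw^{r}(\mathcal{C})^{\op}$ with Lemma~\ref{lem:Twrfib} (via the leg-swapping equivalence $\Tw^{r}(\mathcal{C})\simeq\Tw^{r}(\mathcal{C}^{\op})$) and then identify the classifying functor using Lemma~\ref{lem:Twslice}. Your deduction of (ii) by opposing the whole statement is just a repackaging of the paper's application of the op'ed version of Lemma~\ref{lem:Twrfib}, which you also note as an alternative, and your direct fibre computation via the join--slice adjunction is a harmless extra consistency check.
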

\begin{proof}
  From the definition of $\Tw^{r}_{2}(\mathcal{C})$ we have a pullback
  square
  \[
  \begin{tikzcd}
    \Tw^{r}_{2}(\mathcal{C}) \arrow{r} \arrow{d} &
    \Tw^{r}(\mathcal{C}) \arrow{d} \\
    \Tw^{r}(\mathcal{C})^{\op} \arrow{r} & \mathcal{C}^{\op}.
  \end{tikzcd}
  \]
  Now Lemma~\ref{lem:Twrfib} applied to $\Tw^{r}(\mathcal{C})^{\op}
  \to \mathcal{C}^{\op}$ (using the equivalence $\Tw^{r}(\mathcal{C})
  \simeq \Tw^{r}(\mathcal{C}^{\op})$) gives that $\pi_{0}$ is a cartesian fibration
  corresponding to the functor
  \[ x \mapsto (\mathcal{C}_{x/})^{\op} \times_{\mathcal{C}^{\op}}
  \Tw^{r}(\mathcal{C})^{\op}.\]
  Similarly, using the op'ed version of Lemma~\ref{lem:Twrfib} we see
  that $\pi_{2}$ is the cocartesian fibration for the functor
  $x \mapsto (\mathcal{C}_{/x})^{\op} \times_{\mathcal{C}^{\op}}
  \Tw^{r}(\mathcal{C})$.
  Now Lemma~\ref{lem:Twslice} identifies these functors with
  $\Tw^{r}(\mathcal{C}_{\blank/})^{\op}$ and 
  $\Tw^{r}(\mathcal{C}_{/\blank})$, respectively.
\end{proof}

\begin{lemma}\label{lem:cocartcof}
  Consider a diagram \[
  \begin{tikzcd}
    \mathcal{E} \arrow{rr}{\phi} \arrow{dr}[below left]{p} & & \mathcal{F} \arrow{dl}{q} \\
     & \mathcal{B}
  \end{tikzcd}
\]
  where $p$ and $q$ are cocartesian fibrations and $\phi$ preserves
  cocartesian morphisms. If the functor $\phi_{b} \colon \mathcal{E}_{b} \to
  \mathcal{F}_{b}$ is cofinal for every $b \in \mathcal{B}$, then
  $\phi$ is cofinal, as is $\phi \times_{\mathcal{B}} \mathcal{B}'$
  for any functor $\mathcal{B}' \to \mathcal{B}$.
\end{lemma}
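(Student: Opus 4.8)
The plan is to verify cofinality of $\phi$ via the slice criterion of \cite{HTT}*{Theorem 4.1.3.1}: it suffices to show that for every object $f \in \mathcal{F}$ the \icat{} $\mathcal{E} \times_{\mathcal{F}} \mathcal{F}_{f/}$ is weakly contractible. So I fix such an $f$ and write $b := q(f) \in \mathcal{B}$. An object of $\mathcal{E} \times_{\mathcal{F}} \mathcal{F}_{f/}$ is a pair $(e, \gamma)$ with $e \in \mathcal{E}$ and $\gamma \colon f \to \phi(e)$ a morphism in $\mathcal{F}$; applying $q$ to $\gamma$ (equivalently $p$ to $e$, since $\phi$ lies over $\mathcal{B}$) yields a morphism $b \to p(e)$, and hence a projection $\pi \colon \mathcal{E} \times_{\mathcal{F}} \mathcal{F}_{f/} \to \mathcal{B}_{b/}$.

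Next I would show that $\pi$ is a cocartesian fibration with weakly contractible fibres. For the cocartesian lifts: given $(e, \gamma)$ over $(b \to p(e))$ and a morphism in $\mathcal{B}_{b/}$ determined by some $v \colon p(e) \to b''$, I choose a $p$-cocartesian lift $\bar{v} \colon e \to v_! e$ of $v$; since $\phi$ preserves cocartesian morphisms, $\phi(\bar v)$ is $q$-cocartesian, so $(e, \gamma) \to (v_! e, \phi(\bar v) \circ \gamma)$ is a lift of the given morphism, and one checks it is $\pi$-cocartesian using the universal property of $\bar v$ together with the pullback description of $\mathcal{E} \times_{\mathcal{F}} \mathcal{F}_{f/}$. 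For the fibres: over an object $u \colon b \to b'$ of $\mathcal{B}_{b/}$, a morphism $\gamma \colon f \to \phi(e)$ lying over $u$ is, via the cocartesian lift $f \to u_! f$, the same datum as a morphism $u_! f \to \phi(e)$ in the fibre $\mathcal{F}_{b'}$; thus the fibre of $\pi$ over $u$ is the slice $\mathcal{E}_{b'} \times_{\mathcal{F}_{b'}} (\mathcal{F}_{b'})_{u_! f/}$ of the fibrewise functor $\phi_{b'}$. Since $\phi_{b'}$ is cofinal by hypothesis, this slice is weakly contractible by \cite{HTT}*{Theorem 4.1.3.1} again.

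Finally I would conclude as follows. The opposite functor $\pi^{\op}$ is a cartesian fibration whose fibres are the opposites of the fibres of $\pi$, hence again weakly contractible, so Lemma~\ref{lem:wcfibcofcoinit} shows $\pi^{\op}$, and therefore $\pi$, is both cofinal and coinitial; in particular $\pi$ induces a weak homotopy equivalence $\lvert \mathcal{E} \times_{\mathcal{F}} \mathcal{F}_{f/} \rvert \simeq \lvert \mathcal{B}_{b/} \rvert$. As $\mathcal{B}_{b/}$ has the initial object $\mathrm{id}_b$ it is weakly contractible, whence so is $\mathcal{E} \times_{\mathcal{F}} \mathcal{F}_{f/}$, establishing cofinality of $\phi$. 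For the base-change assertion I would simply apply the statement just proved to the diagram pulled back along $\mathcal{B}' \to \mathcal{B}$: the functors $\mathcal{E} \times_{\mathcal{B}} \mathcal{B}' \to \mathcal{B}'$ and $\mathcal{F} \times_{\mathcal{B}} \mathcal{B}' \to \mathcal{B}'$ are again cocartesian fibrations, $\phi \times_{\mathcal{B}} \mathcal{B}'$ still preserves cocartesian morphisms, and its fibre over $b' \in \mathcal{B}'$ is the fibre of $\phi$ over the image of $b'$, which is cofinal by hypothesis. The main obstacle is the middle step, namely verifying that $\pi$ is genuinely a cocartesian fibration and that its fibres are exactly the fibrewise slices; this is where the hypothesis that $\phi$ preserves cocartesian morphisms enters essentially, and the universal-property check for the constructed lifts is the one genuinely technical point.
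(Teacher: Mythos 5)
Your proposal is correct, but it takes a genuinely different route from the paper's proof. You verify Joyal's criterion (\cite{HTT}*{Theorem 4.1.3.1}) directly: you fibre the comma $\infty$-category $\mathcal{E} \times_{\mathcal{F}} \mathcal{F}_{f/}$ over $\mathcal{B}_{b/}$, check that this projection $\pi$ is a cocartesian fibration (this is exactly where the hypothesis that $\phi$ preserves cocartesian morphisms enters, via the lifts $(\bar v, \phi(\bar v)\circ \gamma)$ built from $p$-cocartesian morphisms), identify its fibres with the fibrewise comma categories $\mathcal{E}_{b'} \times_{\mathcal{F}_{b'}} (\mathcal{F}_{b'})_{u_! f/}$ --- weakly contractible by fibrewise cofinality and \cite{HTT}*{Theorem 4.1.3.1} again --- and then deduce weak contractibility of the total space from Lemma~\ref{lem:wcfibcofcoinit} (applied to $\pi^{\op}$) together with contractibility of $\mathcal{B}_{b/}$. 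The paper argues instead via the characterization of cofinality as preservation of limits of space-valued functors: for $f \colon \mathcal{F}^{\op} \to \mathcal{S}$ one has
\[\lim_{\mathcal{F}^{\op}} f \simeq \lim_{b \in \mathcal{B}^{\op}} \lim_{\mathcal{F}_{b}^{\op}} f|_{\mathcal{F}_{b}^{\op}} \simeq \lim_{b \in \mathcal{B}^{\op}} \lim_{\mathcal{E}_{b}^{\op}} (f\phi)|_{\mathcal{E}_{b}^{\op}} \simeq \lim_{\mathcal{E}^{\op}} f\phi,\]
using that limits over the total space of a cocartesian fibration decompose as a limit over the base of the fibrewise limits, with fibrewise cofinality giving the middle equivalence. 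The paper's argument is three lines, but it leans on the iterated-limit decomposition and leaves implicit the naturality in $b$ of the middle equivalence (which again uses that $\phi$ preserves cocartesian morphisms); yours is longer and its technical heart is the cocartesian-fibration check for $\pi$, which you only sketch but whose verification is standard and goes exactly as you indicate, and it has the pleasant feature of recycling the paper's own Lemma~\ref{lem:wcfibcofcoinit}. Both proofs dispose of the base-change assertion identically: the hypotheses are stable under pullback along any $\mathcal{B}' \to \mathcal{B}$, so the fibrewise criterion applies to the pulled-back triangle.
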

\begin{proof}
  It suffices to check that composition with $\phi^{\op}$ preserves
  limits for functors $f \colon \mathcal{F}^{\op} \to
  \mathcal{S}$. But here we have natural equivalences
  \[\lim_{\mathcal{F}^{\op}} f \simeq
    \lim_{b \in \mathcal{B}^{\op}} \lim_{\mathcal{F}_{b}^{\op}}
    f|_{\mathcal{F}_{b}^{\op}} \simeq \lim_{b \in \mathcal{B}^{\op}}
    \lim_{\mathcal{E}_{b}^{\op}} (f\phi)|_{\mathcal{F}_{b}^{\op}} \simeq
    \lim_{\mathcal{E}^{\op}} f\phi.\]
  Since the same condition holds for the pullback of $\phi$ along any
  map $\mathcal{B}' \to \mathcal{B}$, any such pullback of $\phi$ is
  also cofinal.
\end{proof}

\begin{lemma}\label{lem:Tw2cof}
  There is a natural inclusion of posets $[n] \times [1] \to [n] \star
  [n]^{\op} \star [n]$, extending the inclusion of two copies of
  $[n]$, which induces a functor of \icats{}
  \[ \Phi \colon\Tw^{r}_{2}(\mathcal{C}) \to \mathcal{C}^{\Delta^{1}}.\]
  This functor is both cofinal and coinitial.
\end{lemma}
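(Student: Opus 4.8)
The plan is to establish both properties fibrewise, by combining the two fibration structures on $\Tw^{r}_{2}(\mathcal{C})$ recorded in Lemma~\ref{lem:Tw2cocart} with Lemma~\ref{lem:cocartcof}. First I would record that $\Phi$ is compatible with the projections to $\mathcal{C} \times \mathcal{C}$: restricting a functor $[n] \star [n]^{\op} \star [n] \to \mathcal{C}$ to the first and last copies of $[n]$ factors through its restriction to $[n] \times [1]$, so that $\txt{ev}_{0} \circ \Phi \simeq \pi_{0}$ and $\txt{ev}_{1} \circ \Phi \simeq \pi_{2}$, where $\pi_{0}, \pi_{2}$ are the projections of Lemma~\ref{lem:Tw2cocart} and $(\txt{ev}_{0}, \txt{ev}_{1}) \colon \mathcal{C}^{\Delta^{1}} \to \mathcal{C} \times \mathcal{C}$ is the free bifibration. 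Concretely, on an object $a \to b \to c$ of $\Tw^{r}_{2}(\mathcal{C})$ the functor $\Phi$ returns the composite arrow $a \to c$, which makes these identifications transparent.

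For cofinality I would view $\Phi$ as a functor over $\mathcal{C}$ via $\pi_{2}$ and $\txt{ev}_{1}$, both of which are cocartesian fibrations by Lemma~\ref{lem:Tw2cocart}(ii) and Proposition~\ref{propn:Twrcartfib}. The induced functor on fibres over $x$ is the first-copy projection $\Tw^{r}(\mathcal{C}_{/x}) \to \mathcal{C}_{/x}$, which is cofinal by Lemma~\ref{lem:Twcof}. Granting that $\Phi$ preserves cocartesian morphisms, Lemma~\ref{lem:cocartcof} then yields that $\Phi$ is cofinal. For coinitiality I would instead use $\pi_{0}$ and $\txt{ev}_{0}$, which are cartesian fibrations by Lemma~\ref{lem:Tw2cocart}(i), and apply Lemma~\ref{lem:cocartcof} to the opposite functor $\Phi^{\op} \colon \Tw^{r}_{2}(\mathcal{C})^{\op} \to (\mathcal{C}^{\Delta^{1}})^{\op}$ over $\mathcal{C}^{\op}$: its fibres are identified with the projections $\Tw^{r}(\mathcal{C}_{x/}) \to (\mathcal{C}_{x/})^{\op}$, which are cofinal by Lemma~\ref{lem:Twcof}. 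Provided $\Phi$ preserves cartesian morphisms (so that $\Phi^{\op}$ preserves cocartesian ones), this shows $\Phi^{\op}$ is cofinal and hence $\Phi$ is coinitial.

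The remaining point, which I expect to be the main obstacle, is the preservation of (co)cartesian morphisms. Here I would argue through the bifibration characterization of Definition~\ref{defn:bifib}: since $\txt{ev}_{0}$ is the cartesian and $\txt{ev}_{1}$ the cocartesian leg of the bifibration $\mathcal{C}^{\Delta^{1}} \to \mathcal{C} \times \mathcal{C}$, a morphism of $\mathcal{C}^{\Delta^{1}}$ is $\txt{ev}_{1}$-cocartesian exactly when its $\txt{ev}_{0}$-image is an equivalence, and $\txt{ev}_{0}$-cartesian exactly when its $\txt{ev}_{1}$-image is an equivalence. As $\txt{ev}_{0} \circ \Phi \simeq \pi_{0}$ and $\txt{ev}_{1} \circ \Phi \simeq \pi_{2}$, it therefore suffices to check that every $\pi_{2}$-cocartesian morphism has invertible $\pi_{0}$-image, and dually that every $\pi_{0}$-cartesian morphism has invertible $\pi_{2}$-image.

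Both of these reduce to the explicit description of the (co)cartesian morphisms supplied by Lemma~\ref{lem:Tw2cocart}. The $\pi_{2}$-cocartesian morphisms are the cocartesian lifts of the post-composition functors $\Tw^{r}(\mathcal{C}_{/x}) \to \Tw^{r}(\mathcal{C}_{/x'})$ induced by $\mathcal{C}_{/x} \to \mathcal{C}_{/x'}$; these fix the first-copy vertex $a$, so their $\pi_{0}$-image is an identity, hence an equivalence, and $\Phi$ carries them to $\txt{ev}_{1}$-cocartesian morphisms. Symmetrically, the $\pi_{0}$-cartesian morphisms arise from the pre-composition functors $\mathcal{C}_{x'/} \to \mathcal{C}_{x/}$, which fix the last-copy vertex $c$, so their $\pi_{2}$-image is an equivalence and $\Phi$ sends them to $\txt{ev}_{0}$-cartesian morphisms. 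This bookkeeping, organized entirely around Lemma~\ref{lem:Tw2cocart}, is the only genuinely computational part of the argument; once it is in place, the two applications of Lemma~\ref{lem:cocartcof} finish the proof.
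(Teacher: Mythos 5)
Your proof is correct and takes essentially the same route as the paper's: the same two commutative triangles over $\mathcal{C}$ (via $\pi_{2}$, $\mathrm{ev}_{1}$ and via $\pi_{0}$, $\mathrm{ev}_{0}$), the same fibrewise identification of $\Phi$ with the projections covered by Lemma~\ref{lem:Twcof}, and the same two applications of Lemma~\ref{lem:cocartcof} (passing to opposites for coinitiality). The only difference is that where the paper asserts that $\Phi$ ``clearly'' preserves the relevant cocartesian and cartesian morphisms, you actually verify it, using the bifibration characterization of $(\mathrm{ev}_{0},\mathrm{ev}_{1})\colon \mathcal{C}^{\Delta^{1}} \to \mathcal{C}\times\mathcal{C}$ together with the explicit description of (co)cartesian edges from Lemma~\ref{lem:Tw2cocart} --- a correct and worthwhile filling-in of that detail.
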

\begin{proof}
  We have commutative diagrams
  \[
  \begin{tikzcd}
    \Tw^{r}_{2}(\mathcal{C}) \arrow{rr}{\Phi} \arrow{dr}[below left]{\pi_{2}} & &
    \mathcal{C}^{\Delta^{1}} \arrow{dl}{\txt{ev}_{1}} \\
    & \mathcal{C}
  \end{tikzcd}
  \qquad
  \begin{tikzcd}
    \Tw^{r}_{2}(\mathcal{C}) \arrow{rr}{\Phi} \arrow{dr}[below left]{\pi_{0}} & &
    \mathcal{C}^{\Delta^{1}} \arrow{dl}{\txt{ev}_{0}} \\
    & \mathcal{C}.
  \end{tikzcd}
  \]
  In the first diagram the diagonal morphisms are both cocartesian
  fibrations, while in the second they are cartesian fibrations;
  moreover, the functor $\Phi$ clearly preserves cocartesian and
  cartesian morphisms for these fibrations.  To show that the top
  morphism is cofinal or coinitial it therefore suffices by
  Lemma~\ref{lem:cocartcof} to show that the induced morphisms on
  fibres are all cofinal in the first diagram and coinitial in the
  second diagram.  At $x \in \mathcal{C}$ we can identify these with
  the projections $\Tw^{r}(\mathcal{C}_{/x}) \to \mathcal{C}_{/x}$ and
  $\Tw^{r}(\mathcal{C}_{x/})^{\op} \to \mathcal{C}_{x/}$,
  respectively. These are both cofinal and coinitial by
  Lemma~\ref{lem:Twcof}.
\end{proof}

\begin{proof}[Proof of Proposition~\ref{propn:Funcartcocart}]
  By \cite{freepres}*{Corollary 7.7} the \icat{}
  $\Fun_{\mathcal{I}}(\mathcal{I}, \mathcal{H})$ is the limit
  \[ \lim_{i \to j \in \Tw^{r}(\mathcal{I})^{\op}} \Fun(\mathcal{I}_{/i},
  H(j)) \simeq \lim_{i \to j \in \Tw^{r}(\mathcal{I})^{\op}}
  \Fun(\mathcal{I}_{/i} \times F(j), G(j)).\]
  Similarly, $\Fun_{\mathcal{I}}(\mathcal{F}, \mathcal{G})$ is the
  limit
  \[ \lim_{i \to j \in \Tw^{r}(\mathcal{I})^{\op}}
  \Fun(\mathcal{I}_{/i} \times_{\mathcal{I}} \mathcal{F},
  G(j)).\]
  Here $\mathcal{I}_{/i} \times_{\mathcal{I}} \mathcal{F} \to
  \mathcal{I}_{/i}$ is a cartesian fibration, equivalent by
  \cite{freepres}*{Corollary 7.6} to the colimit
  \[ \colim_{x \to y \in \Tw^{r}(\mathcal{I}_{/i})} \mathcal{I}_{/x}
  \times F(y).\]
  Thus the \icat{} $\Fun_{\mathcal{I}}(\mathcal{F}, \mathcal{G})$ is
  the limit
  \[\lim_{i \to j \in \Tw^{r}(\mathcal{I})^{\op}} \lim_{x \to y \in
    \Tw^{r}(\mathcal{I}_{/i})^{\op}} \Fun(\mathcal{I}_{/x} \times
  F(y), G(j)).\]
  Let $\Tw^{r}_{3}(\mathcal{I})$ denote the pullback $\Tw^{r}_{2}(\mathcal{I})
  \times_{\mathcal{I}} \Tw^{r}(\mathcal{I})$, where $\Tw_{2}^{r}(\mathcal{I})$
  is defined in Definition~\ref{defn:Tw2}; by
  Lemma~\ref{lem:Tw2cocart} the projection $\Tw^{r}_{3}(\mathcal{I}) \to
  \Tw^{r}(\mathcal{I})$ is then the cocartesian fibration for the
  functor taking $i \to j$ in $\Tw^{r}(\mathcal{I})$ to
  $\Tw^{r}(\mathcal{I}_{/i})$. Combining the limits in the expression
  above, we may therefore identify $\Fun_{\mathcal{I}}(\mathcal{F},
  \mathcal{G})$ with the limit
  \[ \lim_{x \to y \to i \to j \in \Tw_{3}^{r}(\mathcal{I})^{\op}}
  \Fun(\mathcal{I}_{/x} \times F(y), G(j)).\]
  We may also identify $\Tw_{3}(\mathcal{I})$ with the pullback
  $\Tw^{r}(\mathcal{I}) \times_{\mathcal{I}^{\op}}
  \Tw_{2}^{r}(\mathcal{I})^{\op}$.
  The functor whose limit we are taking clearly factors
  through
  \[\Tw^{r}(\mathcal{I}) \times_{\mathcal{I}^{\op}} \Phi^{\op} \colon
  \Tw^{r}(\mathcal{I}) \times_{\mathcal{I}^{\op}}
  \Tw_{2}^{r}(\mathcal{I})^{\op} \to \Tw^{r}(\mathcal{I})
  \times_{\mathcal{I}^{\op}} (\mathcal{I}^{\Delta^{1}})^{\op},\]
  where $\Phi$ is the functor of Lemma~\ref{lem:Tw2cof}. This functor
  is cofinal by Lemma~\ref{lem:cocartcof} since $\Phi$ is fibrewise
  cofinal and coinitial, and so this is the pullback of a fibrewise cofinal
  morphism of cocartesian fibrations over $\mathcal{I}^{\op}$. This
  means we may identify $\Fun_{\mathcal{I}}(\mathcal{F}, \mathcal{G})$
  with the limit
  \[ \lim_{x \to y \to j \in \Tw^{r}(\mathcal{I})^{\op}
    \times_{\mathcal{I}} \mathcal{I}^{\Delta^{1}}}
  \Fun(\mathcal{I}_{/x} \times F(y), G(j)).\]
  Now consider the commutative triangle
  \[
  \begin{tikzcd}
    \mathcal{C} \arrow{rr}{c} \arrow[equals]{dr} & &
    \mathcal{C}^{\Delta^{1}} \arrow{dl}{\txt{ev}_{0}} \\
     & \mathcal{C},
  \end{tikzcd}
  \]
  where $c$ is the functor induced by composition with
  $\Delta^{1} \to \Delta^{0}$, taking an object to its identity
  morphism. This is a morphism of cartesian fibrations, given on
  fibres by $\{x\} \to \mathcal{C}_{x/}$, which is clearly coinitial;
  hence $c$ is itself coinitial, as is its pullback along any morphism
  to the base $\mathcal{C}$. In particular, the induced functor
  $\Tw^{r}(\mathcal{I})^{\op} \to \Tw^{r}(\mathcal{I})^{\op}
  \times_{\mathcal{I}} \mathcal{I}^{\Delta^{1}}$
  is coinitial. Thus $\Fun_{\mathcal{I}}(\mathcal{F}, \mathcal{G})$
  can finally be identified with
  \[ \lim_{x \to j \in \Tw^{r}(\mathcal{I})^{\op}}    
  \Fun(\mathcal{I}_{/x} \times F(j), G(j)),\]
  which is the same as our first expression for
  $\Fun_{\mathcal{I}}(\mathcal{I}, \mathcal{H})$. To identify the
  cocartesian sections, observe that our work so far shows that the
  cocartesian fibration $\mathcal{H} \to \mathcal{I}$ has the same
  universal property as the cocartesian fibration given by (the dual
  of) \cite{HTT}*{Corollary 3.2.2.13}, whose cocartesian sections are
  shown there to be given by functors $\mathcal{F} \to \mathcal{G}$
  that take cartesian morphisms to cocartesian ones.
\end{proof}

\begin{cor}\label{cor:Funfib}
  Let $\mathcal{E} \to \mathcal{C}$ be a cartesian fibration
  corresponding to a functor
  $\epsilon \colon \mathcal{C}^{\op} \to \CatI$, and
  $\mathcal{F} \to \mathcal{D}$ be a cocartesian fibration
  corresponding to a functor $\phi \colon \mathcal{D} \to \CatI$. Then
  if $\mathcal{G} \to \mathcal{C} \times \mathcal{D}$ is the
  cocartesian fibration corresponding to $\Fun(\epsilon, \phi) \colon
  \mathcal{C} \times \mathcal{D} \to \CatI$, then $\mathcal{G}$
  satisfies
  \[ \Fun_{/\mathcal{C} \times \mathcal{D}}(\mathcal{I}, \mathcal{G})
    \simeq \Fun_{/\mathcal{D}}(\mathcal{I} \times_{\mathcal{C}}
    \mathcal{E}, \mathcal{F})\]
  for any functor $\mathcal{I} \to \mathcal{C} \times
  \mathcal{D}$. Under this equivalence, a cocartesian morphism in
  $\mathcal{G}$ corresponds to a functor
  \[ \Delta^{1} \times_{\mathcal{C}} \mathcal{E} \to \mathcal{F}\]
  that takes cartesian morphisms for $\Delta^{1} \times_{\mathcal{C}}
  \mathcal{E} \to \Delta^{1}$ to cocartesian morphisms in
  $\mathcal{F}$.
\end{cor}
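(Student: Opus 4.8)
The plan is to deduce this from Proposition~\ref{propn:Funcartcocart} applied over the base $\mathcal{I}$, after pulling the three fibrations back along the given functor $\mathcal{I} \to \mathcal{C} \times \mathcal{D}$. Write $a \colon \mathcal{I} \to \mathcal{C}$ and $b \colon \mathcal{I} \to \mathcal{D}$ for its two components.

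First I would replace the relative functor \icats{} over $\mathcal{C} \times \mathcal{D}$ and $\mathcal{D}$ by sections over $\mathcal{I}$. A functor $\mathcal{I} \to \mathcal{G}$ over $\mathcal{C} \times \mathcal{D}$ is, by the universal property of the pullback, the same as a section of $\mathcal{G}_{\mathcal{I}} := \mathcal{I} \times_{\mathcal{C} \times \mathcal{D}} \mathcal{G}$, so that $\Fun_{/\mathcal{C} \times \mathcal{D}}(\mathcal{I}, \mathcal{G}) \simeq \Fun_{/\mathcal{I}}(\mathcal{I}, \mathcal{G}_{\mathcal{I}})$. Setting $\mathcal{E}_{\mathcal{I}} := \mathcal{I} \times_{\mathcal{C}} \mathcal{E}$ and $\mathcal{F}_{\mathcal{I}} := \mathcal{I} \times_{\mathcal{D}} \mathcal{F}$, the same universal property gives $\Fun_{/\mathcal{I}}(\mathcal{E}_{\mathcal{I}}, \mathcal{F}_{\mathcal{I}}) \simeq \Fun_{/\mathcal{D}}(\mathcal{I} \times_{\mathcal{C}} \mathcal{E}, \mathcal{F})$, since a functor $\mathcal{E}_{\mathcal{I}} \to \mathcal{I} \times_{\mathcal{D}} \mathcal{F}$ over $\mathcal{I}$ is exactly a functor $\mathcal{E}_{\mathcal{I}} \to \mathcal{F}$ over $\mathcal{D}$ (using the structure map $\mathcal{E}_{\mathcal{I}} \to \mathcal{I} \to \mathcal{D}$, the second arrow being $b$). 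Thus it remains to produce an equivalence $\Fun_{/\mathcal{I}}(\mathcal{I}, \mathcal{G}_{\mathcal{I}}) \simeq \Fun_{/\mathcal{I}}(\mathcal{E}_{\mathcal{I}}, \mathcal{F}_{\mathcal{I}})$.

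Next I would identify the classifying functors of the pullbacks. Since pulling back a (co)cartesian fibration precomposes its classifying functor, $\mathcal{E}_{\mathcal{I}} \to \mathcal{I}$ is the cartesian fibration for $i \mapsto \epsilon(a(i))$, $\mathcal{F}_{\mathcal{I}} \to \mathcal{I}$ is the cocartesian fibration for $i \mapsto \phi(b(i))$, and $\mathcal{G}_{\mathcal{I}} \to \mathcal{I}$ is the cocartesian fibration for $i \mapsto \Fun(\epsilon(a(i)), \phi(b(i)))$. Because $\Fun(-,-)$ is contravariant in its first variable and covariant in its second, this last functor is exactly $\Fun$ of the classifying functor of $\mathcal{E}_{\mathcal{I}}$ (occupying the source slot, and coming from a cartesian fibration) and that of $\mathcal{F}_{\mathcal{I}}$ (occupying the target slot, and coming from a cocartesian fibration). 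This is precisely the input of Proposition~\ref{propn:Funcartcocart} over the base $\mathcal{I}$, which therefore supplies the required equivalence $\Fun_{/\mathcal{I}}(\mathcal{I}, \mathcal{G}_{\mathcal{I}}) \simeq \Fun_{/\mathcal{I}}(\mathcal{E}_{\mathcal{I}}, \mathcal{F}_{\mathcal{I}})$; concatenating with the two identifications above gives the stated equivalence.

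Finally, for the clause on cocartesian morphisms I would take $\mathcal{I} = \Delta^{1}$: a morphism in $\mathcal{G}$ is a functor $\Delta^{1} \to \mathcal{G}$ over $\mathcal{C} \times \mathcal{D}$, which under the equivalence corresponds to a functor $\Delta^{1} \times_{\mathcal{C}} \mathcal{E} \to \mathcal{F}$ over $\mathcal{D}$, and the cocartesian-section clause of Proposition~\ref{propn:Funcartcocart} identifies the cocartesian such morphisms with those corresponding to functors sending the cartesian morphisms of $\Delta^{1} \times_{\mathcal{C}} \mathcal{E} \to \Delta^{1}$ to cocartesian morphisms of $\mathcal{F}$. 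The argument is essentially formal, so the only point demanding care is the variance bookkeeping in the previous paragraph: one must check that the source and target slots of $\Fun(\epsilon, \phi)$ line up with the cartesian fibration $\mathcal{E}$ and the cocartesian fibration $\mathcal{F}$ respectively, exactly as needed to feed into the proposition. I expect this matching to be the main (and only) obstacle; once the classifying functors of the pullbacks are pinned down there is nothing substantive left to prove.
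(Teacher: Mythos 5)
Your proposal is correct and is exactly the paper's own argument: the paper's entire proof reads ``Apply Proposition~\ref{propn:Funcartcocart} to the pullback of the fibrations to $\mathcal{I}$,'' and your write-up simply makes explicit the two pullback identifications of relative functor $\infty$-categories, the identification of the classifying functors of the pulled-back fibrations, and the $\mathcal{I}=\Delta^{1}$ specialization for the cocartesian-morphism clause. Your attention to the variance bookkeeping (cartesian fibration in the source slot, cocartesian in the target slot) is also the right reading of Proposition~\ref{propn:Funcartcocart}, whose stated hypotheses have the two roles inadvertently swapped relative to its proof and to this application.
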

\begin{proof}
  Apply Proposition~\ref{propn:Funcartcocart} to the pullback of the
  fibrations to $\mathcal{I}$.
\end{proof}

\begin{bibdiv}
  \begin{biblist}
    \bib{BaranovskyGinzburg}{article}{
  author={Baranovsky, Vladimir},
  author={Ginzburg, Victor},
  title={Gerstenhaber-{B}atalin-{V}ilkovisky structures on coisotropic intersections},
  journal={Math. Res. Lett.},
  volume={17},
  number={2},
  date={2010},
  pages={211--229},
  eprint={arXiv:0907.0037},
}

\bib{BarwickThesis}{book}{
  author={Barwick, Clark},
  title={$(\infty ,n)$-{C}at as a closed model category},
  note={Thesis (Ph.D.)--University of Pennsylvania},
  date={2005},
}

\bib{BarwickGlasmanBurnside}{article}{
  author={Barwick, Clark},
  author={Glasman, Saul},
  title={On the fibrewise effective {B}urnside category},
  date={2016},
  eprint={arXiv:1607.02786},
}

\bib{BarwickSchommerPriesUnicity}{article}{
  author={Barwick, Clark},
  author={Schommer-Pries, Christopher},
  title={On the unicity of the homotopy theory of higher categories},
  eprint={arXiv:1112.0040},
  year={2011},
}

\bib{BehrendFantechi}{article}{
  author={Behrend, Kai},
  author={Fantechi, Barbara},
  title={Gerstenhaber and {B}atalin-{V}ilkovisky structures on {L}agrangian intersections},
  booktitle={Algebra, arithmetic, and geometry: in honor of {Y}u. {I}. {M}anin. {V}ol. {I}},
  series={Progr. Math.},
  publisher={Birkh\"{a}user Boston, Inc., Boston, MA},
  date={2009},
  volume={269},
  number={4},
  pages={1--47},
}

\bib{aksz}{article}{
  author={Calaque, Damien},
  author={Haugseng, Rune},
  author={Scheimbauer, Claudia},
  title={The {A}{K}{S}{Z} construction in derived algebraic geometry as an extended topological quantum field theory},
  date={2019},
  note={In preparation.},
}

\bib{CPTVV}{article}{
  author={Calaque, Damien},
  author={Pantev, Tony},
  author={To{\"e}n, Bertrand},
  author={Vaqui{\'e}, Michel},
  author={Vezzosi, Gabriele},
  title={Shifted {P}oisson structures and deformation quantization},
  eprint={arXiv:1506.03699},
  journal={J. Topol.},
  volume={10},
  date={2017},
  number={2},
  pages={483--584},
}

\bib{GaitsgoryRozenblyum1}{book}{
  author={Gaitsgory, Dennis},
  author={Rozenblyum, Nick},
  title={A study in derived algebraic geometry. Vol. I. Correspondences and duality},
  series={Mathematical Surveys and Monographs},
  volume={221},
  publisher={American Mathematical Society, Providence, RI},
  date={2017},
  note={Available from \url {http://www.math.harvard.edu/~gaitsgde/GL/}.},
}

\bib{GaitsgoryRozenblyum2}{book}{
  author={Gaitsgory, Dennis},
  author={Rozenblyum, Nick},
  title={A study in derived algebraic geometry. Vol. II. Deformations, Lie theory and formal geometry},
  series={Mathematical Surveys and Monographs},
  volume={221},
  publisher={American Mathematical Society, Providence, RI},
  date={2017},
  note={Available from \url {http://www.math.harvard.edu/~gaitsgde/GL/}.},
}

\bib{enr}{article}{
  author={Gepner, David},
  author={Haugseng, Rune},
  title={Enriched $\infty $-categories via non-symmetric $\infty $-operads},
  journal={Adv. Math.},
  volume={279},
  pages={575--716},
  eprint={arXiv:1312.3178},
  date={2015},
}

\bib{freepres}{article}{
  author={Gepner, David},
  author={Haugseng, Rune},
  author={Nikolaus, Thomas},
  title={Lax colimits and free fibrations in $\infty $-categories},
  eprint={arXiv:1501.02161},
  journal={Doc. Math.},
  volume={22},
  date={2017},
  pages={1225--1266},
}

\bib{bvq}{article}{
  author={Gwilliam, Owen},
  author={Haugseng, Rune},
  title={Linear {B}atalin--{V}ilkovisky quantization as a functor of $\infty $-categories},
  journal={Selecta Math.},
  volume={24},
  number={2},
  pages={1247--1313},
  eprint={arXiv:1608.01290},
  date={2018},
}

\bib{GwilliamScheimbauer}{article}{
  author={Gwilliam, Owen},
  author={Scheimbauer, Claudia},
  title={Duals and adjoints in higher Morita categories},
  eprint={arXiv:1804.10924},
  date={2018},
}

\bib{spans}{article}{
  author={Haugseng, Rune},
  title={Iterated spans and classical topological field theories},
  journal={Math. Z.},
  volume={289},
  issue={3},
  pages={1427--1488},
  date={2018},
  eprint={arXiv:1409.0837},
}

\bib{nmorita}{article}{
  author={Haugseng, Rune},
  title={The higher {M}orita category of $E_{n}$-algebras},
  date={2017},
  eprint={arXiv:1412.8459},
  journal={Geom. Topol.},
  volume={21},
  issue={3},
  pages={1631--1730},
}

\bib{JohnsonFreydAKSZ}{article}{
  author={Johnson-Freyd, Theo},
  title={Poisson AKSZ theories and their quantizations},
  conference={ title={String-Math 2013}, },
  book={ series={Proc. Sympos. Pure Math.}, volume={88}, publisher={Amer. Math. Soc., Providence, RI}, },
  date={2014},
  pages={291--306},
  eprint={arXiv:1307.5812},
}

\bib{HTT}{book}{
  author={Lurie, Jacob},
  title={Higher Topos Theory},
  series={Annals of Mathematics Studies},
  publisher={Princeton University Press},
  address={Princeton, NJ},
  date={2009},
  volume={170},
  note={Available from \url {http://math.harvard.edu/~lurie/}},
}

\bib{LurieCob}{article}{
  author={Lurie, Jacob},
  title={On the classification of topological field theories},
  conference={ title={Current developments in mathematics, 2008}, },
  book={ publisher={Int. Press, Somerville, MA}, },
  date={2009},
  pages={129--280},
  eprint={http://math.harvard.edu/~lurie/papers/cobordism.pdf},
}

\bib{DAG-X}{article}{
  author={Lurie, Jacob},
  title={Derived algebraic geometry {X}: formal moduli problems},
  date={2011},
  eprint={http://math.harvard.edu/~lurie/papers/DAG-X.pdf},
}

\bib{HA}{book}{
  author={Lurie, Jacob},
  title={Higher Algebra},
  date={2017},
  note={Available at \url {http://math.harvard.edu/~lurie/}.},
}

\bib{SAG}{book}{
  author={Lurie, Jacob},
  title={Spectral algebraic geometry},
  date={2018},
  note={Available at \url {http://math.harvard.edu/~lurie/}.},
}

\bib{MelaniPois}{article}{
  author={Melani, Valerio},
  title={Poisson bivectors and Poisson brackets on affine derived stacks},
  journal={Adv. Math.},
  volume={288},
  number={4},
  date={2016},
  pages={1097--1120},
  eprint={arXiv:1409.1863},
}

\bib{MelaniSafronov1}{article}{
  author={Melani, Valerio},
  author={Safronov, Pavel},
  title={Derived coisotropic structures I: affine case},
  journal={Selecta Math.},
  volume={24},
  issue={4},
  date={2018},
  pages={3061--3118},
  eprint={arXiv:1608.01482},
}

\bib{MelaniSafronov2}{article}{
  author={Melani, Valerio},
  author={Safronov, Pavel},
  title={Derived coisotropic structures II: stacks and quantization},
  journal={Selecta Math.},
  volume={24},
  issue={4},
  date={2018},
  pages={3119--3173},
  eprint={arXiv:1704.03201},
}

\bib{PTVV}{article}{
  author={Pantev, Tony},
  author={To{\"e}n, Bertrand},
  author={Vaqui{\'e}, Michel},
  author={Vezzosi, Gabriele},
  title={Shifted symplectic structures},
  journal={Publ. Math. Inst. Hautes \'Etudes Sci.},
  volume={117},
  date={2013},
  pages={271--328},
  eprint={arXiv:1111.3209},
}

\bib{PantevVezzosi}{article}{
  author={Pantev, Tony},
  author={Vezzosi, Gabriele},
  title={Symplectic and Poisson derived geometry and deformation quantization},
  conference={ title={Algebraic Geometry 2015 - Proc. of the AMS summer institute}, },
  book={ series={Proc. of Symposia in Pure Math.}, volume={97}, },
  year={2018},
  eprint={arXiv:1603.02753},
}

\bib{PridhamPoisson}{article}{
  author={Pridham, J. P.},
  title={Shifted Poisson and symplectic structures on derived $N$-stacks},
  journal={J. Topol.},
  volume={10},
  date={2017},
  number={1},
  pages={178--210},
  eprint={arXiv:1504.01940},
}

\bib{PridhamLagrangian}{article}{
  author={Pridham, J. P.},
  title={Quantisation of derived Lagrangians},
  eprint={arXiv:1607.01000},
  date={2016},
}

\bib{RezkCSS}{article}{
  author={Rezk, Charles},
  title={A model for the homotopy theory of homotopy theory},
  journal={Trans. Amer. Math. Soc.},
  volume={353},
  date={2001},
  number={3},
  pages={973--1007 (electronic)},
}

\bib{RiehlVerityAdj}{article}{
  author={Riehl, Emily},
  author={Verity, Dominic},
  title={Homotopy coherent adjunctions and the formal theory of monads},
  journal={Adv. Math.},
  volume={286},
  date={2016},
  pages={802--888},
  eprint={arXiv:1310.8279},
}

\bib{SafronovPoisson}{article}{
  author={Safronov, Pavel},
  title={Poisson reduction as a coisotropic intersection},
  journal={Higher Structures},
  volume={1},
  issue={1},
  date={2017},
  pages={87--121},
  eprint={arXiv:1509.08081},
}

\bib{SafronovAdditivity}{article}{
  author={Safronov, Pavel},
  title={Braces and Poisson additivity},
  journal={Compos. Math.},
  volume={154},
  issue={8},
  date={2018},
  pages={1698--1745},
  eprint={arXiv:1611.09668},
}

\bib{ScheimbauerThesis}{article}{
  author={Scheimbauer, Claudia},
  title={Factorization homology as a fully extended topological field theory},
  date={2014},
  note={Thesis (Ph.D.)--Eidgenössische Technische Hochschule, Zürich},
  eprint={http://guests.mpim-bonn.mpg.de/scheimbauer/},
}

\bib{HAG2}{article}{
  author={To{\"e}n, Bertrand},
  author={Vezzosi, Gabriele},
  title={Homotopical algebraic geometry {I}{I}: geometric stacks and applications},
  journal={Mem. Amer. Math. Soc.},
  volume={193},
  date={2008},
  number={902},
  eprint={arXiv:math/0404373},
}

\bib{WehrheimWoodward}{article}{
  author={Wehrheim, Katrin},
  author={Woodward, Chris T.},
  title={Functoriality for Lagrangian correspondences in Floer theory},
  journal={Quantum Topol.},
  volume={1},
  date={2010},
  number={2},
  pages={129--170},
  issn={1663-487X},
  eprint={arXiv:0708.2851},
}

\bib{WeinsteinCoisotropic}{article}{
  author={Weinstein, Alan},
  title={Coisotropic calculus and {P}oisson groupoids},
  journal={J. Math. Soc. Japan},
  volume={40},
  date={1988},
  number={4},
  pages={705--727},
  issn={0025-5645},
}

\bib{WeinsteinSymplCat}{article}{
  author={Weinstein, Alan},
  title={The symplectic ``category''},
  conference={ title={Differential geometric methods in mathematical physics (Clausthal, 1980)}, },
  book={ series={Lecture Notes in Math.}, volume={905}, publisher={Springer, Berlin-New York}, },
  date={1982},
  pages={45--51},
}

\bib{WeinsteinSymplCat2}{article}{
  author={Weinstein, Alan},
  title={Symplectic categories},
  journal={Port. Math.},
  volume={67},
  date={2010},
  number={2},
  pages={261--278},
  issn={0032-5155},
  eprint={arXiv:0911.4133},
}
\end{biblist}
\end{bibdiv}

\end{document}